\theoremstyle{plain}
\newtheorem{thm}{\protect\theoremname}[section]
\newtheorem{prop}[thm]{\protect\propositionname}
\newtheorem{lem}[thm]{\protect\lemmaname}
\newtheorem{cor}[thm]{Corollary}
\newtheorem{conjecture}[thm]{\protect\conjecturename}
\theoremstyle{definition}
\newtheorem{defn}[thm]{\protect\definitionname}
\newtheorem{rmk}[thm]{Remark}
\newtheorem{rmks}[thm]{Remarks}
\providecommand{\conjecturename}{Conjecture}
\providecommand{\definitionname}{Definition}
\providecommand{\lemmaname}{Lemma}
\providecommand{\propositionname}{Proposition}
\providecommand{\theoremname}{Theorem}
\newcommand{\ri}{\mathcal O} \newcommand{\mm}{\mathfrak m} \newcommand{\Q}{\mathbf Q}
\newcommand{\Z}{\mathbf Z}
\newcommand{\F}{\mathbf F}
\DeclareMathOperator{\Spd}{Spd}
\newcommand{\solid}{{\scalebox{0.5}{$\square$}}}
\DeclareMathOperator{\D}{\mathcal D} 
\DeclareMathOperator{\Dqcohri}{\D^a_\solid}
\newcommand{\DqcohriX}[1]{\Dqcohri(\ri^+_{#1}/\pi)}
\DeclareMathOperator{\Dqcohrishriek}{\D^{a!}_\solid}
\newcommand{\Dcatrepsldasm}[2]{\D^{\sm,a}_\solid(#1, #2)}
\newcommand{\infcatinf}{\mathcal Cat_\infty}
\newcommand{\isom}{\cong}
\newcommand{\from}{\mathbin{\leftarrow}}
\newcommand{\xto}[1]{\mathbin{\xrightarrow{#1}}}   \newcommand{\isoto}{\xto\sim}
\newcommand{\injto}{\mathrel{\hookrightarrow}}
\newcommand{\surjto}{\mathrel{\twoheadrightarrow}}
\newcommand{\comp}{\mathbin{\circ}}
\newcommand{\isect}{\mathbin{\cap}}
\newcommand{\tensor}{\otimes}
\newcommand{\dsum}{\oplus}
\newcommand{\bigdsum}{\bigoplus}
\newcommand{\et}{{\mathrm{\acute{e}t}}}
\newcommand{\oc}{{\mathrm{oc}}}
\DeclareMathOperator{\cts}{\mathcal C}
\DeclareMathOperator{\Ind}{Ind}
 \DeclareMathOperator{\GL}{GL} 
\DeclareMathOperator{\id}{id}
\DeclareMathOperator{\Hom}{Hom}
\DeclareMathOperator{\Ext}{Ext}
\DeclareMathOperator{\Fun}{Fun}
\DeclareMathOperator{\IHom}{\underline{\Hom}}
\DeclareMathOperator{\Tot}{Tot}
\DeclareMathOperator{\vStackspi}{vStack^\sharp_\pi}
\DeclareMathOperator{\vStackspip}{vStack^\sharp_{\pi|p}}
\DeclareMathOperator{\vStacksCoeff}{vStack_\Lambda}
\newcommand{\vsite}{{\mathrm{v}}} \DeclareMathOperator{\Spa}{Spa}
\newcommand{\opp}{{\mathrm{op}}} 
\newcommand{\sm}{{\mathrm{sm}}}
\DeclareMathOperator{\Corr}{Corr}
\DeclareMathOperator{\cd}{cd}
\newlist{defenum}{enumerate}{1}
\setlist[defenum]{label=(\alph*), ref=\thedefn.(\alph*)}
\newlist{thmenum}{enumerate}{1}
\setlist[thmenum]{label=(\roman*), ref=\thethm.(\roman*)}
\newlist{propenum}{enumerate}{1}
\setlist[propenum]{label=(\roman*), ref=\theprop.(\roman*)}
\newlist{lemenum}{enumerate}{1}
\setlist[lemenum]{label=(\roman*), ref=\thelem.(\roman*)}
\newlist{corenum}{enumerate}{1}
\setlist[corenum]{label=(\roman*), ref=\thecor.(\roman*)}
\newlist{rmksenum}{enumerate}{1}
\setlist[rmksenum]{label=(\roman*), ref=\thermks.(\roman*)}
\newlist{examplesenum}{enumerate}{1}
\setlist[examplesenum]{label=(\roman*), ref=\theexamples.(\alph*)}
\begin{document}

\title{$p$-adic sheaves on classifying stacks, and the $p$-adic Jacquet-Langlands correspondence}

\author{David Hansen and Lucas Mann}

\begin{abstract} We establish several new properties of the $p$-adic Jacquet-Langlands functor defined by Scholze in terms of the cohomology of the Lubin-Tate tower. In particular, we reprove Scholze's basic finiteness theorems, prove a duality theorem,  and show a kind of partial K\"unneth formula. Using these results, we deduce bounds on Gelfand-Kirillov dimension, together with some new vanishing and nonvanishing results.

Our key new tool is the six functor formalism with solid almost $\mathcal{O}^+/p$-coefficients developed recently by the second author \cite{mann-p-adic-6-functors}. One major point of this paper is to extend the domain of validity of the $!$-functor formalism developed in \cite{mann-p-adic-6-functors} to allow certain ``stacky" maps. In the language of this extended formalism, we show that if $G$ is a $p$-adic Lie group, the structure map of the classifying small v-stack $B\underline{G}$ is $p$-cohomologically smooth.
\end{abstract}

\maketitle

\tableofcontents{}

\section{Introduction}

\subsection{Prologue}

This paper has two goals. Our immediate goal is to prove some new properties of the $p$-adic Jacquet-Langlands functor defined by Scholze. Unlike the classical Jacquet-Langlands correspondence, which is defined by a simple character identity, Scholze's functor is defined inexplicitly, and has turned out to be rather difficult to study. Scholze himself proved a basic finiteness theorem, and established a local-global compatibility result in some cases. We reprove Scholze's finiteness theorem, and prove a duality theorem and a kind of ``partial'' K\"unneth formula. As consequences, we deduce bounds on Gelfand-Kirillov dimension and prove some new vanishing and nonvanishing results for Scholze's functor.

To explain our second goal, let us briefly recall some recent history. The study of the classical local Langlands correspondence has recently been given new life through the geometrization program proposed by Fargues \cite{fargues-overview}. This program has already seen spectacular success, most notably in the work of Fargues-Scholze \cite{FS}. A key technical ingredient in this program is a sufficiently flexible six-functor formalism for $\ell$-adic \'etale cohomology of diamonds and v-stacks. This formalism was developed by Scholze \cite{etale-cohomology-of-diamonds}, and then extended further in  \cite{enhanced-ell-adic-6-functors}.

Our second goal is to lay some technical foundations for the geometrization of the $p$-adic Langlands correspondence. In particular, we extend the $p$-adic six functor formalism developed in \cite{mann-p-adic-6-functors} to allow certain stacky maps, in parallel with the extension provided in \cite{enhanced-ell-adic-6-functors}, and show that classifying stacks of $p$-adic Lie groups fit perfectly into this formalism. This stacky six-functor machinery allows for an elegant formulation of the proof of our main results. With it in hand, one can also begin a meaningful study of $p$-adic sheaves on $\mathrm{Bun}_G$. We will pursue this in a subsequent paper.

\subsection{Context and duality}

Let us begin by recalling the classical Jacquet-Langlands correspondence. Fix a finite extension $F/\mathbf{Q}_p$ and an integer $n>1$. Set $G=\mathrm{GL}_n(F)$, and let $D/F$ be the central division algebra of invariant $1/n$.

\begin{thm}[Jacquet-Langlands, Deligne-Kazhdan-Vign\'eras, Rogawski] There is a natural bijection \[ \mathrm{JL} : \mathrm{Irr}_{L^2}(G) \overset{\sim}{\to} \mathrm{Irr}(D^\times) \]from the irreducible essentially square-integrable complex representations of $G$ to the irreducible representations of $D^\times$, characterized by the requirement that $\chi_{\mathrm{JL}(\pi)}(g')=(-1)^{n-1} \chi_\pi (g)$ for all pairs of matching elliptic elements $g\in G$ and $g'\in D^\times$. 

This bijection is compatible with twisting and passing to contragredient representations.
 
\end{thm}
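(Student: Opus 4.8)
The plan is to deduce this local statement from a global comparison of trace formulas, in the style of Deligne--Kazhdan--Vign\'eras. First I would globalize the situation: choose a number field $k$ with a distinguished place $v_0$ satisfying $k_{v_0}\isom F$, together with a central division algebra $\mathbb D$ over $k$ of dimension $n^2$ whose local invariants vanish outside $v_0$ and one auxiliary finite place $v_1$, with $\mathrm{inv}_{v_0}(\mathbb D)=1/n$ (so that $\mathbb D_{v_0}\isom D$) and $\mathrm{inv}_{v_1}(\mathbb D)=-1/n$; such $\mathbb D$ exists because the invariants of a global division algebra sum to zero. Then $G'=\mathbb D^\times$ is an inner form of $G^*=\GL_n$ over $k$, and since $\mathbb D$ is a division algebra $G'$ is anisotropic modulo its centre, so its trace formula requires no truncation.

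Next I would compare trace formulas. On the $G'$ side the formula is simply an identity between $\sum_{\Pi'}m(\Pi')\,\mathrm{tr}\,\Pi'(f')$ and a sum of weighted orbital integrals over elliptic conjugacy classes. On the $G^*=\GL_n$ side I would use the simple trace formula of Deligne--Kazhdan, valid once the test function $f=\bigotimes_v f_v$ is taken cuspidal (a pseudo-coefficient of a supercuspidal) at $v_1$ and at one further place, which kills the continuous spectrum and all non-elliptic geometric terms. Choosing $f'=\bigotimes_v f'_v$ with $f'_v=f_v$ at the places where $\mathbb D$ splits and $f_v\leftrightarrow f'_v$ having matching orbital integrals at $v_0,v_1$, the elliptic geometric sides coincide term by term: the centralizer of a regular elliptic element is the \emph{same} torus in $G^*$ and in $G'$, so ``transfer'' at $v_0,v_1$ is just the matching of conjugacy classes through characteristic polynomials, and the volume factors agree. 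Hence the discrete spectral sides agree:
\[
\sum_{\Pi}m(\Pi)\prod_v\mathrm{tr}\,\Pi_v(f_v)=\sum_{\Pi'}m(\Pi')\prod_v\mathrm{tr}\,\Pi'_v(f'_v).
\]

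To extract the correspondence, fix an irreducible essentially square-integrable $\pi$ of $\GL_n(F)$ and globalize it to a cuspidal automorphic representation $\Pi$ of $\GL_n(\mathbf A_k)$ with $\Pi_{v_0}\isom\pi$, supercuspidal at the auxiliary place(s) and unramified outside a controlled finite set. Feeding $\Pi$ into the identity and using strong multiplicity one and rigidity for $\GL_n$, together with linear independence of characters, isolates $\Pi$ on the left and forces a \emph{unique} $\Pi'$ with $m(\Pi')\neq 0$ and $\Pi'_v\isom\Pi_v$ at all split $v$; comparing the remaining $v_0$-factors gives $\mathrm{tr}\,\Pi'_{v_0}(f'_{v_0})=\mathrm{tr}\,\pi(f_{v_0})$ for all matching pairs. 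By the Weyl integration formula --- and the fact that every regular element of $D^\times$ is elliptic, so that a virtual character of $D^\times$ vanishing on regular elliptic elements vanishes identically --- this is equivalent to $\chi_{\Pi'_{v_0}}(g')=(-1)^{n-1}\chi_\pi(g)$ on matching regular elliptic elements, the sign being the Kottwitz sign $e(D^\times)=(-1)^{\,\mathrm{rk}\,\GL_n-\mathrm{rk}\,D^\times}=(-1)^{n-1}$ entering the comparison of orbital integrals. Setting $\mathrm{JL}(\pi):=\Pi'_{v_0}$, linear independence of characters on the elliptic set shows this is well defined and injective; surjectivity follows since every irreducible (necessarily essentially square-integrable) representation of $D^\times$ is the $v_0$-component of some global $\Pi'$ whose $\GL_n$-transfer is essentially square-integrable at $v_0$. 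The compatibilities are then formal: $\det g=\mathrm{Nrd}(g')$ for matching $g,g'$ gives compatibility with twisting, and $g\leftrightarrow g'\iff g^{-1}\leftrightarrow g'^{-1}$ together with $\chi_{\sigma^\vee}(x)=\chi_\sigma(x^{-1})$ gives compatibility with contragredients.

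I expect the main obstacle to lie not in this bookkeeping but in the analytic and globalization inputs: justifying the simple trace formula on the $\GL_n$ side with a clean enough error term (the Deligne--Kazhdan truncation argument and the vanishing of the continuous and non-elliptic contributions once $f$ is cuspidal at two places), and globalizing an \emph{arbitrary} essentially square-integrable $\pi$ with prescribed local components --- treating general discrete series rather than just twists of Steinberg typically requires first settling the generalized Steinberg case and then propagating through the Zelevinsky classification, using compatibility of $\mathrm{JL}$ with normalized parabolic induction and Langlands quotients. (An alternative, closer to the theme of the present paper, would realize both sides of $\mathrm{JL}$ inside the $\ell$-adic cohomology of the Lubin--Tate tower and invoke its known description; but the trace formula argument is the most self-contained.)
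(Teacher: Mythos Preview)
The paper does not actually prove this theorem. It is stated in the introduction purely as historical context, attributed to Jacquet--Langlands, Deligne--Kazhdan--Vign\'eras, and Rogawski, with no proof or sketch given; the paper's own results concern Scholze's $p$-adic functor $\mathcal J$, not the classical correspondence. So there is nothing to compare your proposal against.

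That said, your outline is essentially the standard Deligne--Kazhdan--Vign\'eras argument via comparison of simple trace formulas, and as a sketch of that classical proof it is broadly accurate. A few points to watch if you ever flesh it out: the globalization of an arbitrary essentially square-integrable $\pi$ as a local component of a cuspidal automorphic representation is genuinely delicate (particularly for generalized Steinberg representations, as you note); the passage from the trace identity $\mathrm{tr}\,\Pi'_{v_0}(f'_{v_0})=\mathrm{tr}\,\pi(f_{v_0})$ to the character identity on regular elliptic elements requires care with the choice of measures and the local character expansion; and one must also handle the auxiliary place $v_1$, since $\mathbb D$ is ramified there too and the local correspondence at $v_1$ is in principle part of what is being proved---this is usually dealt with inductively or by a careful choice of test functions. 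But these are exactly the ``analytic and globalization inputs'' you flag yourself, so you have correctly identified where the real work lies.
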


A very similar statement holds with $\overline{\mathbf{F}_\ell}$-coefficients instead of complex coefficients, cf. work of Dat and Vign\'eras.

In recent years, a $p$-adic version of the Langlands program has emerged, many aspects of which are still conjectural. In the context of this program, a candidate for a $p$-adic Jacquet-Langlands functor has been proposed by Scholze \cite{scholze-LT}. Let us begin by recalling the definition of Scholze's functor.  Set $C=\widehat{\overline{F}}$, and let $\mathcal{M}_{\infty}$ be the infinite level Lubin-Tate
space over $\mathrm{Spa}\,C$. Recall that this is a perfectoid space
with commuting continuous actions of $G$ and $D^\times$, and with a natural
surjective period map $\pi_{\mathrm{GH}}:\mathcal{M}_{\infty}\to\mathbf{P}_{C}^{n-1}$.
The period map $\pi_{\mathrm{GH}}$ is a $G$-torsor, and is $D^\times$-equivariant for a suitable action of $D^\times$ on projective space. 

Using this geometry, Scholze defines an exact functor, denoted $\pi\mapsto\mathcal{F}_{\pi}$,
from the category $\mathrm{Rep}_{\mathbf{F}_{p}}^{\mathrm{sm}}(G)$ of smooth $G$-representations towards the category of  \'etale $\mathbf{F}_{p}$-sheaves on $\mathbf{P}_{C}^{n-1}$.
This functor is defined by a simple and explicit recipe: given any \'etale map
$U\to\mathbf{P}_{C}^{n-1}$, one defines $\mathcal{F}_{\pi}(U)=\mathcal{C}_{G}(|U\times_{\mathbf{P}_{C}^{n-1}}\mathcal{M}_{\infty}|,\pi)$. Since $\pi_{\mathrm{GH}}$ is $D^\times$-equivariant, the sheaf $\mathcal{F}_\pi$ is also $D^\times$-equivariant, and the \'etale cohomology groups
\[\mathcal{J}^{i}(\pi):=H^{i}(\mathbf{P}_{C}^{n-1},\mathcal{F}_{\pi})\]
are naturally smooth $D^\times$-representations which also carry a natural action of the Weil group $W_F$. By general vanishing theorems, it is easy to see that $\mathcal{J}^i(\pi)=0$ for all $i>2n-2$, and it is also easy to compute $\mathcal{J}^0(\pi)$ explicitly.  However, these are the only obvious properties of $\mathcal{J}^i.$ One of the main results of \cite{scholze-LT} is a highly nonobvious finiteness theorem: if $\pi$ is an \emph{admissible} $G$-representation, then also each $\mathcal{J}^i(\pi)$ is an admissible $D^\times$-representation. As a key step in the proof of this finiteness result, Scholze also proved a form of the primitive comparison theorem: if $\pi$ is admissible, the natural map
\[H^{i}(\mathbf{P}_{C}^{n-1},\mathcal{F}_{\pi}) \otimes \mathcal{O}_C/p \to H^{i}(\mathbf{P}_{C}^{n-1},\mathcal{F}_{\pi} \otimes \mathcal{O}^+ /p) \]
is an almost isomorphism.

Consideration of simple examples shows that in general, $\mathcal{J}^i(\pi)$ can be nonzero for multiple values of $i$.\footnote{This already happens for the simplest example $\pi=\mathbf{1}$, in which case $\mathcal{F}_\pi =\F_p$ and $\mathcal{J}^i(\pi)=H^{i}(\mathbf{P}_{C}^{n-1},\F_p)$ is the mod $p$ \'etale cohomology of projective space, which by the usual computation is nonzero in degrees $0,2,4,\dots,2n-2$.} As such, we prefer to package the individual cohomologies $\mathcal{J}^i$ into a single operation on the derived category. More precisely, by the exactness of the association $\pi \mapsto \mathcal{F}_\pi$, it naturally upgrades to a functor $\D(G):=\D(\mathrm{Rep}_{\mathbf{F}_{p}}^{\mathrm{sm}}(G))\to \D(\mathbf{P}^{n-1}_{C,\mathrm{\acute{e}t}},\mathbf{F}_{p})$,
and we write $\mathcal{J}$ for the composite functor
\[
\D(G)\overset{V \mapsto \mathcal{F}_V}{\longrightarrow} \D(\mathbf{P}^{n-1}_{C,\mathrm{\acute{e}t}},\mathbf{F}_p)\overset{R\Gamma}{\to}\D(D^\times).
\]
By the finiteness theorems mentioned above, the functor $\mathcal{J}$ preserves bounded complexes with admissible cohomology.

Now, one would guess that $\mathcal{J}$ intertwines some natural duality operation on $G$-representations and $D^\times$-representations. However, in stark contrast with the case of complex or $\ell$-adic representations, the naive functor of passing to smooth duals on mod-$p$ representations of $p$-adic Lie groups is essentially useless. To remedy this, Kohlhaase has introduced a family of ``derived'' smooth duality functors $\mathcal{S}^i_H$ on the smooth mod-$p$ representations of any $p$-adic Lie group $H$, which can be packaged into a single endofunctor $\mathcal{S}_H: \D(H) \to \D(H)$.  This functor has excellent properties, which we review in \S \ref{smoothreps} below.

Our first main result can now be stated as follows.

\begin{thm}\label{dualitymaintheorem}
There is a natural equivalence $\mathcal{S}_{D^\times}\circ\mathcal{J}\cong(\mathcal{J}\circ\mathcal{S}_{G})[2n-2](n-1)$
of functors $\D_{\mathrm{adm}}(G)\to \D_{\mathrm{adm}}(D^\times)$. 
\end{thm}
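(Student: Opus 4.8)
The plan is to realize Scholze's functor $\mathcal{J}$ as a composite of six-functor operations on classifying stacks inside the extended $!$-formalism, and then to read off the duality from the standard compatibility of Verdier duality with proper pushforward and with cohomologically smooth pullback. Writing $S=\Spa C$, consider the correspondence of small v-stacks over $S$
\[
B\underline{G}\;\xleftarrow{\ g\ }\;X\;\xrightarrow{\ h\ }\;B\underline{D^\times},\qquad X:=[\mathcal{M}_\infty/(\underline{G}\times\underline{D^\times})],
\]
where $g$ and $h$ are induced by $\mathcal{M}_\infty\to S$ together with the two projections of the group. Since $\pi_{\mathrm{GH}}$ is a $\underline{G}$-torsor we have $[\mathcal{M}_\infty/\underline{G}]\isom\mathbf{P}^{n-1}_C$, so $X\isom[\mathbf{P}^{n-1}_C/\underline{D^\times}]$. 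The first step is to check, using the equivalence between $\D(G)=\D(\mathrm{Rep}^{sm}_{\F_p}(G))$ and solid almost $\mathcal{O}^+/p$-sheaves on $B\underline{G}$ established earlier in the paper (together with the primitive comparison theorem, to relate Scholze's étale $\F_p$-definition to the $\mathcal{O}^+/p$-version), that under this equivalence $\mathcal{J}\isom h_*\comp g^*$: the sheaf $\mathcal{F}_\pi$ on $\mathbf{P}^{n-1}_C$ is precisely the restriction of $g^*$ of the sheaf attached to $\pi$, and $R\Gamma(\mathbf{P}^{n-1}_C,-)$ with its residual $D^\times$-action is $h_*$. Equivalently $\mathcal{J}\isom h_!\comp g^*$, since $h$ is proper.

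Second, one records the geometric input, which rests on the main theorem of the paper (via \cite{mann-p-adic-6-functors}): the map $h$ is proper and $g$ is $p$-cohomologically smooth, with relative dualizing complex $\omega_g\isom h^*\bigl(\mathcal{O}^+/p(n-1)[2n-2]\bigr)$. Both the properness and the smoothness are checked after base change along the $p$-cohomologically smooth covers $S\to B\underline{G}$ and $S\to B\underline{D^\times}$: over $S$ the map $g$ becomes $[\mathcal{M}_\infty/\underline{D^\times}]\to S$, which is $p$-cohomologically smooth because $\mathcal{M}_\infty$ is pro-finite-\'etale over the smooth rigid space $\mathcal{M}_0$ of dimension $n-1$ and $\mathcal{M}_\infty\to[\mathcal{M}_\infty/\underline{D^\times}]$ is a torsor under the $0$-dimensional group $\underline{D^\times}$, while $h$ becomes the proper map $\mathbf{P}^{n-1}_C\to S$. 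To identify $\omega_g$ one pulls back along the torsor to $\mathcal{M}_\infty$, where the dualizing complex is $\mathcal{O}^+/p(n-1)[2n-2]$; the point to verify is that its $D^\times$-equivariant structure is trivial, which follows because in the $\mathcal{O}^+/p$-formalism the identification of the dualizing complex of a smooth morphism of dimension $d$ with $\mathcal{O}^+/p(d)[2d]$ is canonical, hence preserved by every automorphism over the base.

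Third comes the formal computation. By \S\ref{smoothreps}, under the above equivalence Kohlhaase's functor $\mathcal{S}_H$ is identified with the Verdier duality $\IHom_{B\underline{H}}(-,\omega_{B\underline{H}})$, where $\omega_{B\underline{H}}$ is the dualizing complex of $B\underline{H}\to S$; this complex is invertible precisely by the $p$-cohomological smoothness of $B\underline{H}$. For $M\in\D_{\mathrm{adm}}(G)$ one then computes
\[
\mathcal{S}_{D^\times}(\mathcal{J}(M))=\IHom_{B\underline{D^\times}}\!\bigl(h_*g^*M,\,\omega_{B\underline{D^\times}}\bigr)\isom h_*\IHom_X\!\bigl(g^*M,\,h^!\omega_{B\underline{D^\times}}\bigr)
\]
using $h_!\dashv h^!$ and $h_!\isom h_*$; moreover $h^!\omega_{B\underline{D^\times}}$ is the dualizing complex of $X\to S$, which also equals $g^!\omega_{B\underline{G}}\isom g^*\omega_{B\underline{G}}\tensor\omega_g$ since $g$ is $p$-cohomologically smooth. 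Substituting, and using that $\omega_g$ is invertible and that $g^*$ commutes with internal $\Hom$ (again by $p$-cohomological smoothness of $g$), one obtains
\[
\mathcal{S}_{D^\times}(\mathcal{J}(M))\isom h_*\bigl(g^*\IHom_{B\underline{G}}(M,\omega_{B\underline{G}})\tensor\omega_g\bigr)\isom h_*\bigl(g^*\mathcal{S}_G(M)\bigr)\tensor\mathcal{O}^+/p(n-1)[2n-2],
\]
where the last step uses $\omega_g\isom h^*\bigl(\mathcal{O}^+/p(n-1)[2n-2]\bigr)$ and the projection formula. The right-hand side is $\mathcal{J}(\mathcal{S}_G(M))[2n-2](n-1)$. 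All identifications are natural in $M$, and the comparison equivalences transfer the resulting equivalence to $\F_p$-coefficients; the finiteness theorems (Scholze's theorem, reproved here, together with the corresponding property of $\mathcal{S}_H$) ensure everything stays within $\D_{\mathrm{adm}}$.

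I expect the main obstacle to lie not in this formal manipulation but in the input it rests on: extending the $!$-formalism to the stacky maps $g,h$ and proving that $\mathcal{J}$ is genuinely computed by $h_*g^*$ there, which is exactly where the classifying-stack comparison and the passage between $\F_p$- and $\mathcal{O}^+/p$-coefficients do their work. Within the argument proper, the one genuinely delicate point is the triviality of the $D^\times$-equivariant structure on $\omega_g$, i.e. that $D^\times$ acts trivially, through the period morphism, on the canonical $\mathcal{O}^+/p$-orientation of $\mathbf{P}^{n-1}_C$.
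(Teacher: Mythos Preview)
Your overall plan matches the paper's: realize $\mathcal{J}$ via the fundamental diagram and perform a duality computation in the solid 6-functor formalism. However, there are two issues worth flagging.

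First, your identification of $\mathcal{S}_H$ with $\IHom_{B\underline{H}}(-,\omega_{B\underline{H}})$ is incorrect. Proposition~\ref{kohlhaaseconceptual} identifies Kohlhaase's functor with $\IHom(-,\mathbf{1})$, internal hom towards the \emph{monoidal unit}, not towards the dualizing complex of $B\underline{H}\to S$; these differ by an invertible twist involving a shift by $\dim H$ and a possibly nontrivial dualizing character. With the correct identification the computation is the one in Lemma~\ref{solidduality}: in your notation, $\IHom(h_*g^*M,\mathbf{1}) = h_*\IHom(g^*M, h^!\mathbf{1}) = h_*\IHom(g^*M,\mathbf{1})[2n-2](n-1) = h_*g^*\IHom(M,\mathbf{1})[2n-2](n-1)$, using only that $h$ is proper smooth and that $g^*$ commutes with $\IHom$ by smoothness of $g$. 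Your detour through $\omega_g$ and its $D^\times$-equivariant trivialization is unnecessary, and in your version the extra twists by $\omega_{B\underline{G}}$ and $\omega_{B\underline{D^\times}}$ would still need to be shown to cancel before you recover the stated theorem.

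Second, you treat the passage between $\F_p$- and $\ri^+/p$-coefficients too lightly. The Riemann--Hilbert functor $\D^\sm(H,\F_p)\hookrightarrow\Dqcohri(\ri^+_{B\underline{H}}/p)^\varphi$ is a fully faithful embedding, not an equivalence, so the solid duality isomorphism does not automatically descend. The paper's mechanism is Proposition~\ref{prop:admissiblecriterion}, which characterizes the essential image on admissibles intrinsically (bounded, discrete, with discrete dual, and reflexive) and shows that the solid dual agrees there with $\mathcal{S}_H$; these conditions are then visibly preserved by $\mathcal{J}_\solid$ together with the solid duality isomorphism. Your appeal to the primitive comparison theorem and Scholze's finiteness theorem as inputs is circular: in the paper's logic these are \emph{consequences} of this very argument (Theorem~\ref{mainthm}(i),(ii)), not hypotheses.
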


In other words, Scholze's functor $\mathcal{J}$ intertwines Kohlhaase's derived duality on $G$-representations and $D^\times$-representations, up to an explicit shift and twist. This result was conjectured by the first author, after observing that for certain $V$'s arising from the completed cohomology of Shimura varieties, both $\mathcal{S}_{D^\times}\circ\mathcal{J}(V)$ and  $(\mathcal{J}\circ\mathcal{S}_{G})(V)[2n-2](n-1)$ can be computed by hand, and turn out to be abstractly isomorphic.

In the remainder of this section, we give a detailed sketch of the proof of Theorem \ref{dualitymaintheorem}. The first key ingredient is a more conceptual interpretation of Kohlhaase's derived duality functor.

\begin{prop}\label{kohlhaaseconceptual}For any $p$-adic Lie group $G$, the functor $\mathcal{S}_G(-): \D(G) \to \D(G)$ is naturally isomorphic to the functor $\IHom_G(-,\mathbf{1})$. Here $\IHom_G$ denotes the natural internal Hom in $\D(G)$.
\end{prop}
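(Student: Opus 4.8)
The plan is to unwind both functors, reduce to a compact torsion-free group, and match them there using the Auslander--Gorenstein property of the Iwasawa algebra; the only real subtlety is promoting the identification from a generating subcategory to all of $\D(G)$. Recall (\S\ref{smoothreps}) that, up to the normalization making $\mathcal{S}_G(\mathbf{1})\simeq\mathbf{1}$, Kohlhaase's functor is, on restriction to a compact open subgroup $K\le G$, the duality of the Iwasawa algebra $\Lambda:=\mathbf{F}_p[[K]]$: under the Pontryagin anti-equivalence between $\mathrm{Rep}^{\mathrm{sm}}_{\mathbf{F}_p}(K)$ and pseudocompact $\Lambda$-modules it is $M\mapsto R\Hom_{\Lambda}(M,\omega_{\Lambda})$ for the dualizing complex $\omega_{\Lambda}$. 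Now $\mathcal{S}_G$ and $\IHom_G(-,\mathbf{1})$ are both exact contravariant functors on $\D(G)$ that are compatible with restriction to open subgroups: for $\IHom$ because $\mathrm{Res}^G_K$ is symmetric monoidal and a vector is $G$-smooth precisely when it is $K$-smooth, so $\mathrm{Res}^G_K\IHom_G(-,\mathbf{1})\simeq\IHom_K(\mathrm{Res}^G_K(-),\mathbf{1})$; for $\mathcal{S}_G$ this is part of the construction. Since $\mathrm{Res}^G_K$ is conservative for $K$ compact open, it is enough to produce a natural comparison map $\mathcal{S}_G(V)\to\IHom_G(V,\mathbf{1})$ valid for every $G$ -- adjoint to the natural pairing $\mathcal{S}_G(V)\otimes V\to\mathbf{1}$ coming from the construction of $\mathcal{S}_G$ -- and check it becomes an equivalence after restriction to a compact open $K$; shrinking $K$, we may take $K$ torsion-free, e.g. uniform pro-$p$.

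So fix $K$ torsion-free compact of dimension $d$ and $\Lambda=\mathbf{F}_p[[K]]$. The crucial input is that $\Lambda$ is Auslander regular (hence Gorenstein) of dimension $d$, and that its orientation character $K\to\mathbf{F}_p^{\times}$ is trivial -- because $K$ is pro-$p$ and $|\mathbf{F}_p^{\times}|$ is prime to $p$ -- so that $\omega_{\Lambda}\simeq\Lambda[d]$ with trivial $K$-action and $R\Hom_{\Lambda}(\mathbf{F}_p,\omega_{\Lambda})\simeq\mathbf{F}_p$; this is precisely why no shift and no twist appear in the statement. Consequently $R\Hom_{\Lambda}(-,\omega_{\Lambda})$ restricted to finite-length $\Lambda$-modules is (up to the trivial orientation twist) just the Pontryagin-duality involution, which on the representation side carries a finite-dimensional $V$ to its contragredient $V^{\vee}$; hence $\mathcal{S}_K(V)\simeq V^{\vee}\simeq\IHom_K(V,\mathbf{1})$ for $V$ finite-dimensional, compatibly with the comparison map. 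Every smooth representation is the filtered colimit of its finite-dimensional subrepresentations, and $\IHom_K(-,\mathbf{1})$ sends colimits to (derived) limits, being internal Hom into a fixed object; granting the same for $\mathcal{S}_K$, the comparison map is then an equivalence on all of $\D(K)$, hence on $\D(G)$ by conservativity of restriction.

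The main obstacle is exactly this last point, and the care it demands about limits in $\D(G)$, which are not computed naively -- already a tower with surjective transition maps can carry higher $R^{i}\lim$ contributions -- so one must check that $\mathcal{S}_K$ and $\IHom_K(-,\mathbf{1})$ produce the same such contributions. The clean way to conclude is that $\mathbf{1}$ is a compact generator of $\D(K)$, because $\Lambda$ has finite global dimension $d$; thus $\D(K)\simeq\mathrm{Mod}_{H^{*}(K,\mathbf{F}_p)}$, and a colimit-to-limit contravariant endofunctor of $\D(K)$, together with its natural transformations, is determined by its value on $\mathbf{1}$ and the induced $H^{*}(K,\mathbf{F}_p)$-module structure; since $\mathcal{S}_K$ and $\IHom_K(-,\mathbf{1})$ both carry $\mathbf{1}$ to $\mathbf{1}$ and the comparison map is an isomorphism there (and a map of $H^{*}(K,\mathbf{F}_p)$-modules, by naturality), it is an equivalence. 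What remains is the soft but genuine verification that $\mathcal{S}_K$ sends colimits to limits -- where one must pass correctly between the pseudocompact and smooth worlds -- and the check that the comparison map is natural in $K$, so that the equivalence does descend from $\D(K)$ to $\D(G)$.
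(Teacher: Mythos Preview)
Your strategy can be made to work, but it is considerably more circuitous than the paper's argument, and it has one real imprecision. The paper does not reduce to a single compact $K$ and then extend by abstract nonsense; instead it computes both sides \emph{directly} as colimits over the open compact subgroups $H$. The key lemma (\cref{intexthom}) gives
\[
\IHom_G(V,\mathbf{1}) \;\cong\; \operatorname{colim}_{H} R\mathrm{Hom}_{\D(H)}(V,\mathbf{1}),
\]
and one then checks, using a fixed finite free resolution $P_\bullet \to \F_p$ over $\F_p[[G_0]]$ and Pontryagin duality, that $\mathrm{Hom}_{\F_p[[H]]}(P_\bullet, V^*) \cong \mathrm{Hom}_{\mathrm{Rep}^{\mathrm{sm}}_{\F_p}(H)}(V,I^\bullet) \cong R\mathrm{Hom}_{\D(H)}(V,\mathbf{1})$ termwise for each $H$. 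So the identification is made at each stage of the colimit, and there is no need to verify colimit-to-limit behaviour of $\mathcal{S}_K$, no need for a comparison map built from a pairing, and no appeal to compact generation.

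The imprecision in your argument is the line $\D(K)\simeq \mathrm{Mod}_{H^*(K,\F_p)}$: Schwede--Shipley gives an equivalence with modules over the \emph{derived} endomorphism ring $R\Gamma(K,\F_p)$, and passing to its cohomology would require formality, which you have not established. Fortunately you do not actually need that description: once you have a natural transformation between two exact contravariant functors sending coproducts to products, it is an isomorphism as soon as it is one on a compact generator, and that argument goes through without any module-category reformulation. With that fix your outline is correct, but the detour through finite-dimensional representations is unnecessary (checking on $\mathbf{1}$ alone already suffices), and the ``soft'' verifications you defer---that $\mathcal{S}_K$ sends colimits to limits and that the comparison map is natural in $K$---are precisely the points the paper's direct computation avoids.
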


Be aware that in this paper, $\IHom$ will always denote the \emph{derived} internal hom (which is often denoted by $R\IHom$ in the literature) -- see \cref{sec:intro.notation} for more remarks on that choice. In the case of mod-$\ell$ coefficients with $\ell\neq p$, $\IHom_G(-,\mathbf{1})$ is actually $t$-exact and therefore restricts to a functor on the hearts, where it coincides with the usual functor of smooth duality (\cref{ellnotpduality}). In this light, Proposition \ref{kohlhaaseconceptual} puts Kohlhaase's duality on the same footing as classical smooth duality. We also note that Proposition \ref{kohlhaaseconceptual} was independently discovered by Schneider-Sorensen \cite{schneider-sorensen-dual-reps}.

Next, we invoke a slightly different formula for the functor $\mathcal{J}$. For this, we briefly return to the geometry of the Lubin-Tate tower. Along with the surjective period map $\pi_{\mathrm{GH}}:\mathcal{M}_{\infty}\to\mathbf{P}_{C}^{n-1}$ mentioned earlier, there is a second period map $\pi_{\mathrm{HT}}:\mathcal{M}_{\infty}\to\Omega_{C}^{n-1}$, presenting $\mathcal{M}_{\infty}$ as a $D^\times$-torsor over Drinfeld space.
Both period maps are $G\times D^\times$ -equivariant,
with $G$ (resp. $D^\times$) acting trivially on $\mathbf{P}_{C}^{n-1}$
(resp. on $\Omega_{C}^{n-1}$). In particular, we can pass to the stack
quotient $X$ of $\mathcal{M}_{\infty}$ by the whole $G\times D^\times$-action,
obtaining a small v-stack $X$ which admits two natural presentations \[X\cong[\mathbf{P}_{C}^{n-1}/\underline{D^\times}]\cong[\Omega_{C}^{n-1}/\underline{G}].\] We think of this stack as sitting in a diagram
\[
\xymatrix{ & X\ar[dl]_{f}\ar[dr]^{g}\\
B\underline{G}=[\mathrm{Spa}\,C/\underline{G}] &  & B\underline{D^\times}=[\mathrm{Spa}\,C/\underline{D^\times}]
}
\]
where $f$ is smooth, and $g$ is smooth and proper, with both maps being of pure relative dimension $n-1$. Let us call this
\textbf{the fundamental diagram}. Now, for any small v-stack $X$, the formalism in \cite{etale-cohomology-of-diamonds} defines an associated category $\mathcal{D}_{\et}(X,\F_p)$ of \'etale $\F_p$-sheaves. We will see below (Proposition \ref{Detclassifying}) that there is a natural identification $\D(H) = \mathcal{D}_{\et}(B\underline{H},\F_p)$ of symmetric monoidal categories for any $p$-adic Lie group $H$. Under this identification, it is not hard to show that $\mathcal{J}$ is simply given by the operation $g_{\et \ast} f_{\et}^{\ast}$ (where by $g_{\et\ast}$ we denote the \emph{derived} pushforward, often denoted $Rg_{\et\ast}$ -- cf. \cref{sec:intro.notation}). At this point, it is heuristically clear how the argument should go: $f$ is smooth and $g$ is proper smooth, so the operation $g_{\et \ast} f_{\et}^{\ast}$ should commute with internal hom towards the monoidal unit, up to an explicit shift and twist. However, it is entirely unclear how to make this heuristic rigorous, because the $!$-functors associated with the maps $f$ and $g$ are conspicuously absent from the $\mathcal{D}_{\et}$-formalism.\footnote{Of course, this absence is a special feature of the $\ell=p$ setting of the present paper; one of the main points of \cite{etale-cohomology-of-diamonds} is to construct the $!$-functors in the $\mathcal{D}_{\et}$-formalism when the coefficient ring is killed by an integer prime to $p$.} 

The key new tool which lets us circumvent this issue is the formalism of \emph{solid almost $\mathcal{O}^+/p$-sheaves} recently developed by the second author \cite{mann-p-adic-6-functors}. This is a theory which attaches to any small v-stack $X$ over $\Spa C$ a symmetric monoidal stable $\infty$-category $\Dqcohri(\mathcal{O}^{+}_X/p)$, whose objects are (to first approximation) complexes of solid quasicoherent almost $\mathcal{O}^{+}_X/p$-sheaves on the \'etale site of $X$. There is also a variant $\Dqcohri(\mathcal{O}^{+}_X/p)^{\varphi}$ where the objects are equipped with a suitable $\varphi$-module structure, which comes with an evident forgetful functor $\Dqcohri(\mathcal{O}^{+}_X/p)^{\varphi} \to \Dqcohri(\mathcal{O}^{+}_X/p)$. Crucially, the sheaf theories $X \mapsto \Dqcohri(\mathcal{O}^{+}_X/p)$ and $\Dqcohri(\mathcal{O}^{+}_X/p)^{\varphi}$ underlie \emph{full} six functor formalisms, and all six functors are compatible with the operation of forgetting the $\varphi$-module structure. In particular, for any map of small v-stacks $f:X \to Y$ over $\Spa C$, we have functors $f_{\ast}$ and $f^{\ast}$, and if $f$ is a so-called bdcs\footnote{``$p$-\textbf{b}ounde\textbf{d}, locally \textbf{c}ompactifiable, and representable in locally \textbf{s}patial diamonds"} map we also have functors $f_!$ and $f^!$ with all expected properties. Moreover, there is a notion of $p$-cohomologically smooth maps, for which $f^!$ is an invertible twist of $f^\ast$. Finally, if $H$ is a $p$-adic Lie group, there is a natural identification $\Dqcohri(\mathcal{O}^{+}_{B\underline{H}}/p)= \D^{\sm,a}_\solid(H,\ri_C/p)$, where the right-hand side denotes the derived $\infty$-category of smooth $H$-representations on complexes of solid almost $\mathcal{O}_C / p$-modules (and similarly with $\varphi$-structures).

Returning to the fundamental diagram, we can define a solid variant $\mathcal{J}_{\solid}$ of the functor $\mathcal{J}$ by the formula \[\mathcal{J}_{\solid} = g_{\ast} f^{\ast}: \Dqcohri(\mathcal{O}^{+}_{B\underline{G}}/p)^{\varphi} \to \Dqcohri(\mathcal{O}^{+}_{B\underline{D^\times}}/p)^{\varphi}. \]
Using the formalism developed in \cite{mann-p-adic-6-functors}, it is entirely straightforward to prove a solid variant of our goal, i.e. an isomorphism $\mathcal{S}_{D^\times,\solid} \circ \mathcal{J}_\solid \cong (\mathcal{J}_\solid \circ \mathcal{S}_{G,\solid})[2n-2](n-1)$,
where (for any $p$-adic Lie group $H$) $\mathcal{S}_{H,\solid}$ denotes the internal hom towards the monoidal unit in $\Dqcohri(\mathcal{O}^{+}_{B\underline{H}}/p)^{\varphi}$. However, we have potentially lost sight of our goal of understanding duality for the functor $\mathcal{J}$. 

At this point we use another key feature of the formalism developed in \cite{mann-p-adic-6-functors}, namely the \emph{$p$-torsion Riemann-Hilbert correspondence}, which gives a fully faithful embedding $ - \otimes \mathcal{O}^{+a}_X/p: \D_{\et}(X,\F_p)^{\dagger} \injto \Dqcohri(\mathcal{O}^{+}_X/p)^{\varphi}$ for any small v-stack $X$. Here the superscripted $\dagger$ denotes the restriction to the full subcategory of overconvergent sheaves. This embedding comes with a right adjoint $M\to M^\varphi$ which is (morally) the functor of $\varphi$-invariants. Using the basic properties of these functors, it is formal to check that $\mathcal{J}$ can be recovered from $\mathcal{J}_{\solid}$, in the sense that $\mathcal{J}(A) \cong \mathcal{J}_{\solid}(A \otimes \mathcal{O}^{+a}_{B\underline{G}}/p )^{\varphi}$ for any $A \in \D(G) = \mathcal{D}_{\et}(B\underline{G},\F_p)^{\dagger}$. We now apply a key new result, Proposition \ref{prop:admissiblecriterion}, which gives a practical criterion for an object  $M \in \Dqcohri(\mathcal{O}^{+}_{B\underline{H}}/p)^{\varphi}$ to arise from a bounded admissible complex in $\D(H)$ via the Riemann-Hilbert embedding, and shows that in this case the solid dual $\mathcal{S}_{H,\solid}$ is the Riemann-Hilbert embedding of the smooth dual $\mathcal{S}_H$. It turns out that in order to apply this criterion to $\mathcal{J}_{\solid}$, we only need to know two things:
\begin{enumerate}
    \item $\mathcal{J}_{\solid}$ preserves discrete objects, and
    
    \item for any admissible $A\in \D(G)$, $\mathcal{J}_{\solid}(A \otimes \mathcal{O}^{+a}_{B\underline{G}}/p )$ is isomorphic to its solid double dual.
\end{enumerate}

We now conclude by noting that (1) is formal, and (2) is a consequence of the solid duality theorem. For the details of this argument, see Theorem \ref{solidduality} and its proof.

Let us emphasize that in the course of running this argument, we will reprove Scholze's finiteness and primitive comparison theorems for $\mathcal{J}$. Just as in Scholze's proof, the key point here is the properness of $\mathbf{P}^{n-1}_{C}$, or equivalently the properness of the map $g$. However, Scholze exploits this properness in a very indirect way, by a Cartan-Serre style argument with shrinking covers and an elaborate use of spectral sequences. For us, the properness of $g$ is used entirely through the identification $g_!=g_{\ast}$.

\subsection{The stacky 6-functor formalism} In the process of proving our main results we also extend the 6-functor formalism from \cite{mann-p-adic-6-functors} to ``stacky'' maps and study the behavior of this extended 6-functor formalism in the setting of classifying stacks. More concretely, we say that a map $f\colon Y \to X$ of small v-stacks is \emph{$p$-fine} if it admits a cover $g\colon Z \to Y$ such that $g$ admits universal $p$-codescent and $f \comp g$ is bdcs (see \cref{def:p-fine-maps}). Using the results on abstract 6-functor formalisms in \cite{mann-p-adic-6-functors}, it is formal that the 6-functor formalism extends to $p$-fine maps, i.e. to any $p$-fine map $f$ we get associated functors $f_!$ and $f^!$ with all the expected properties. The typical example of a map satisfying universal $p$-codescent is a $p$-cohomologically smooth cover, but it is important to not restrict to smooth covers in the definition of $p$-fine maps.

Now, if $G$ is a profinite group which has finite $p$-cohomological dimension, we prove that the structure map $f\colon */G \to *$ is $p$-fine and that $f_! = f_*$ (see \cref{rslt:finite-cd-p-implies-classifying-stack-is-pfine}). The key point here is to show that the obvious cover $* \to */G$, which is not $p$-cohomologically smooth, nevertheless admits universal $p$-codescent (see Lemma \ref{rslt:p-codescent-for-classifying-stack}); the argument for this crucially uses Mathew's notion of descendability. This result easily implies that for all \emph{locally} profinite groups $G$ which \emph{locally} have finite $p$-cohomological dimension, the map $*/G \to *$ is $p$-fine. Moreover, we provide a precise criterion on $G$ determining whether or not $*/G \to *$ is $p$-cohomologically smooth (see \cref{rslt:locally-p-Poincare-equiv-p-cohomo-smoothness}). If this is the case then we say that $G$ is \emph{virtually $p$-Poincaré}. Using classical results of Lazard \cite{lazard-monster}, we show that every $p$-adic Lie group $G$ is virtually $p$-Poincaré (see \cref{padicLiePoincare}), so that for these groups the associated map $*/G \to *$ is $p$-cohomologically smooth. This applies to groups like $\GL_n(\Q_p)$ and $D^\times$. With these results in hand, we deduce a $p$-adic version of the $\ell$-adic smoothness criterion \cite[Proposition 24.2]{etale-cohomology-of-diamonds}:

\begin{prop}[see \cref{rslt:p-adic-smoothness-criterion-for-quotient}]
Let $G$ be a locally profinite group which is virtually $p$-Poincaré (e.g. $G$ could be any $p$-adic Lie group). Suppose $G$ acts on a small v-stack $Y$ and $f\colon Y \to X$ is a $G$-equivariant separated bdcs map of small v-stacks, where $G$ acts trivially on $X$. Then the induced map $Y/G \to X$ is $p$-fine, and it is $p$-cohomologically smooth as soon as $f$ is $p$-cohomologically smooth.
\end{prop}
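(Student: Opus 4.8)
The plan is to transport the $\ell$-adic argument of \cite[Proposition 24.2]{etale-cohomology-of-diamonds} into the extended formalism by factoring $q\colon Y/G\to X$ through a relative classifying stack. Since $G$ acts trivially on $X$, one has $[X/\underline G]\isom X\times(*/G)$, and the $G$-equivariant map $f$ descends to a map $q'\colon Y/G\to X\times(*/G)$ of quotient v-stacks whose composite with the projection $p_X\colon X\times(*/G)\to X$ is $q$. The projection $p_X$ is the base change of $*/G\to *$ along $X\to *$, and as $G$ is virtually $p$-Poincaré this structure map is $p$-fine and $p$-cohomologically smooth by \cref{rslt:locally-p-Poincare-equiv-p-cohomo-smoothness} (invoking \cref{padicLiePoincare} when $G$ is a $p$-adic Lie group); both properties are stable under base change, so $p_X$ inherits them. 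Thus, using stability of $p$-fineness and of $p$-cohomological smoothness under composition, everything reduces to showing that $q'$ is $p$-fine, and that $q'$ is $p$-cohomologically smooth once $f$ is.

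The key step I would carry out next is to observe that the square
\[
\xymatrix{Y \ar[r]^{f}\ar[d]_{\pi_Y} & X \ar[d]^{\pi_X}\\ Y/G\ar[r]_{q'} & X\times(*/G)}
\]
is Cartesian, where $\pi_Y\colon Y\to[Y/\underline G]=Y/G$ is the canonical atlas and $\pi_X=\id_X\times(*\to */G)$; both vertical maps are $\underline G$-torsors, obtained by base change of the universal torsor $*\to */G$ along $Y/G\to */G$ resp.\ $X\to *$. Two inputs about this universal torsor are then needed. First, it is bdcs: after any base change it is a $\underline G$-torsor with $\underline G$ a locally profinite set, hence $*\to */G$ is separated, representable in locally spatial diamonds, $p$-bounded (of bound $0$), and locally compactifiable. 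Second, it admits \emph{universal} $p$-codescent; for profinite $G$ this is \cref{rslt:p-codescent-for-classifying-stack} (proved via Mathew's descendability), and the locally profinite case -- the one in force here -- follows as explained after \cref{rslt:finite-cd-p-implies-classifying-stack-is-pfine}. Base-changing, $\pi_X$ is bdcs and $\pi_Y$ admits universal $p$-codescent.

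With this in hand the conclusion is formal. For $p$-fineness of $q'$ I would apply \cref{def:p-fine-maps} to the cover $\pi_Y\colon Y\to Y/G$: it admits universal $p$-codescent, and $q'\comp\pi_Y=\pi_X\comp f$ is bdcs, being a composite of bdcs maps; hence $q'$ is $p$-fine and $q'^!$ is defined. If moreover $f$ is $p$-cohomologically smooth, then, since the base change of $q'$ along the v-cover $\pi_X$ of its target is $f$ by the Cartesian square and $p$-cohomological smoothness is v-local on the target, $q'$ is $p$-cohomologically smooth. Composing $q'$ with $p_X$ then yields that $q\colon Y/G\to X$ is $p$-fine, and $p$-cohomologically smooth whenever $f$ is, as desired. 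The only steps I expect to cost any effort are bookkeeping: setting up the Cartesian square for quotient v-stacks cleanly, and checking that $*\to */G$ is bdcs for non-compact $G$ -- where the ``locally compactifiable'' clause, rather than genuine compactifiability, is what must be used. The substantive inputs, namely universal $p$-codescent of $*\to */G$ and $p$-cohomological smoothness of $*/G\to *$, are already available.
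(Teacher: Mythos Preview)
Your argument is correct and follows essentially the same route as the paper: both factor $Y/G \to X$ through $X/G = X \times (*/G)$ and handle the two legs separately, using \cref{rslt:locally-p-Poincare-equiv-p-cohomo-smoothness} for the projection $X/G \to X$ and v-locality of $p$-cohomological smoothness on the target for $q'\colon Y/G \to X/G$.

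The one genuine difference is in how you establish the $p$-fineness of $q'$. The paper proves the stronger fact that $q'$ is actually \emph{bdcs}, by checking that each of the defining conditions (separated, compactifiable, $p$-bounded, representable in locally spatial diamonds) descends along the pro-\'etale cover $X \to X/G$, invoking specific descent results from \cite{etale-cohomology-of-diamonds} and \cite{mann-p-adic-6-functors} for each one. You instead show only that $q'$ is $p$-fine, by exhibiting the cover $\pi_Y\colon Y \to Y/G$ and noting $q'\comp\pi_Y=\pi_X\comp f$ is bdcs. This suffices for the statement and is arguably more in keeping with the stacky formalism, though the paper's version records slightly more.

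One small caveat: your claim that $* \to */G$ admits universal $p$-codescent for \emph{locally} profinite $G$ is not established in the paper---\cref{rslt:p-codescent-for-classifying-stack} treats only profinite $G$, and there is no such discussion after \cref{rslt:finite-cd-p-implies-classifying-stack-is-pfine}. The claim is nonetheless correct and easy: choose a compact open $H \subseteq G$ with $\cd_p H < \infty$ and factor $* \to */H \to */G$; the first map has universal $p$-codescent by \cref{rslt:p-codescent-for-classifying-stack}, the second is an \'etale cover and hence has universal $p$-codescent, and composites of bdcs maps with universal $p$-codescent again have universal $p$-codescent.
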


Note that in the $\ell$-adic case, one proves \cite[Proposition 24.2]{etale-cohomology-of-diamonds} directly, and then deduces that classifying stacks of $p$-adic Lie groups are $\ell$-cohomologically smooth. Such a direct argument seems very unlikely in the $p$-adic setting, due to the nonexistence of Haar measures.

The above results on the stacky 6-functor formalism and classifying stacks are useful for proving the crucial \cref{prop:admissiblecriterion}. However, all we really need in that proof is the more classical statement that smooth group cohomology of $p$-adic Lie groups satisfies Poincaré duality (as proved by Lazard). The real power of the stacky 6-functor formalism will reveal itself when we apply it to $\mathrm{Bun}_G$ in subsequent papers.

\subsection{Applications to representation theory}

One of our main motivations for proving the duality theorem is that Kohlhaase's duality functors are intimately related to the \emph{Gelfand-Kirillov dimension} $\dim_G \pi$ of an admissible representation $\pi$. Recall that this is a nonnegative integer measuring the asymptotic size of $\pi$; for instance, $\dim_G \pi = 0$ exactly when $\dim_{\F_p} \pi < \infty$.

Computing the dimension of admissible representations arising in nature is generally quite difficult. However, our duality theorem gives a powerful tool for understanding how the functor $\mathcal{J}$ interacts with Gelfand-Kirillov dimension. Our first application in this direction is the following result, which gives a quantitative form of Scholze's finiteness theorem. 

\begin{thm}\label{diminequality}
Let $\pi$ be an admissible smooth $G=\mathrm{GL}_{n}(F)$-representation,
and let $N_{\pi}\geq0$ be the least integer such that $\mathcal{J}^{i}(\pi)=0$
for all $i>N_{\pi}$. Then
\[
\dim_{D^{\times}}\mathcal{J}^{i}(\pi)\leq\dim_{G}(\pi)+N_{\pi}
\]
for all $i$. In particular, since $N_{\pi}\leq2n-2$, we have
\[
\dim_{D^{\times}}\mathcal{J}^{i}(\pi)\le\dim_{G}(\pi)+2n-2
\]
for all $\pi$ and all $i$.
\end{thm}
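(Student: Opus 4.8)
The plan is to deduce everything from the duality theorem (Theorem~\ref{dualitymaintheorem}) and the conceptual description of Kohlhaase duality (Proposition~\ref{kohlhaaseconceptual}). The first step is to reformulate Gelfand--Kirillov dimension in terms of $\mathcal{S}_H = \IHom_H(-,\mathbf{1})$. By Lazard's theory (reviewed in \S\ref{smoothreps}), for a torsion-free compact open $H_0 \subseteq H$ the ring $\mathcal{O}_C/p[[H_0]]$ is Auslander--Gorenstein of global dimension $d = \dim H$, and in terms of Proposition~\ref{kohlhaaseconceptual} this translates into the statement that for admissible $V$ the complex $\mathcal{S}_H(V)$ is concentrated in cohomological degrees $[-\dim_H V,\, 0]$ with non-vanishing bottom cohomology; equivalently $\dim_H V = -\min\{\, j : \mathcal{S}^j_H(V) \neq 0 \,\}$. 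Since over an Auslander--Gorenstein ring the grade of a bounded complex is computed from that of its cohomology objects, for any $C \in \D^b_{\mathrm{adm}}(H)$ with cohomology in degrees $\geq 0$ one obtains $\dim_H H^i(C) \leq \dim_H(C) := -\min\{\, j : \mathcal{S}^j_H(C) \neq 0 \,\}$ for all $i$. As $\mathcal{J}(\pi) \in \D^{[0, N_\pi]}_{\mathrm{adm}}(D^\times)$ — by exactness of $V \mapsto \mathcal{F}_V$, the definition of $N_\pi$, and the finiteness theorem (reproved along the way) — it thus suffices to show the single estimate $\dim_{D^\times}(\mathcal{J}(\pi)) \leq \dim_G(\pi) + N_\pi$.

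The second step applies the duality theorem: $\mathcal{S}_{D^\times}(\mathcal{J}(\pi)) \cong \mathcal{J}(\mathcal{S}_G(\pi))[2n-2](n-1)$, so $\dim_{D^\times}(\mathcal{J}(\pi))$ equals $2n-2$ minus the lowest cohomological degree occurring in $\mathcal{J}(\mathcal{S}_G(\pi))$. By the first step applied to $\pi$ in degree $0$, $\mathcal{S}_G(\pi) \in \D^{[-\dim_G\pi,\,0]}_{\mathrm{adm}}(G)$. Writing $\mathcal{J} = g_{\et\ast}f_{\et}^{\ast}$ via the fundamental diagram, where $f$ is smooth and $g$ is proper and smooth of relative dimension $n-1$, the functor $f_{\et}^\ast$ is $t$-exact while $g_{\et\ast} = g_{\et!}$ has cohomological amplitude $[0, 2n-2]$ (properness together with the dimension of $\mathbf{P}^{n-1}_C$); hence $\mathcal{J}$ maps $\D^{[a,b]}$ into $\D^{[a,\,b+2n-2]}$ and in particular preserves connectivity, so $\mathcal{J}(\mathcal{S}_G(\pi)) \in \D^{\geq -\dim_G\pi}$. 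This yields $\dim_{D^\times}(\mathcal{J}(\pi)) \leq \dim_G(\pi) + (2n-2)$, and hence the ``in particular'' assertion.

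The sharpening from $2n-2$ to $N_\pi$ requires improving the lower bound on $\mathcal{J}(\mathcal{S}_G(\pi))$ to degree $2n-2-\dim_G\pi-N_\pi$, i.e.\ showing $\mathcal{J}^s(\mathcal{S}^t_G(\pi)) = 0$ for $s+t < 2n-2-\dim_G\pi-N_\pi$. Here one must use that $\mathcal{J}$ is built from the proper map $g$, so $g_{\et!} = g_{\et\ast}$: Poincar\'e--Serre duality on $\mathbf{P}^{n-1}_C$ within the formalism of \cite{mann-p-adic-6-functors} bounds the bottom degree of $\mathcal{J}$ applied to a complex by $2n-2$ minus the top degree of $\mathcal{J}$ applied to its Kohlhaase dual, and Kohlhaase biduality then feeds this back to $N_\pi$ — most transparently after reducing to irreducible $\pi$, where $\mathcal{S}_G(\pi)$ is concentrated in a single degree and is again irreducible. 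I expect this last step to be the main technical obstacle; the headline bound $\dim_G\pi + 2n-2$, by contrast, drops out formally once the duality theorem and Proposition~\ref{kohlhaaseconceptual} are in place.
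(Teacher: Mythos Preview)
Your proof has a sign error that propagates throughout. By Proposition~\ref{dimbasics}, for admissible $V$ the complex $\mathcal{S}_H(V)$ lives in cohomological degrees $[0,\dim_H V]$ (not $[-\dim_H V,0]$), and $\dim_H V = \sup\{j : \mathcal{S}^j_H(V) \neq 0\}$, not $-\min$. With the correct sign, your Step~2 does recover the paper's first move: $\mathcal{S}_G(\pi)\in \D^{[0,d]}$ with $d=\dim_G\pi$, hence $M:=\mathcal{S}_{D^\times}(\mathcal{J}(\pi))\cong \mathcal{J}(\mathcal{S}_G(\pi))[2n-2]\in \D^{\le d}$. But from here your shortcut fails: knowing only the amplitude of $M$ does not immediately bound $\dim_{D^\times}\mathcal{J}^i(\pi)$ for each individual $i$, because there is no clean formula expressing the dimensions of the cohomologies of a complex $C$ in terms of the amplitude of $\mathcal{S}_H(C)$---the spectral sequence mixes the $\mathcal{J}^i(\pi)$ together.

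The paper's argument, which you are missing, is a direct spectral sequence analysis: in $E_2^{i,j}=\mathcal{S}_{D^\times}^i(\mathcal{J}^{-j}(\pi))\Rightarrow H^{i+j}(M)$, take $c=\max_i \dim_{D^\times}\mathcal{J}^i(\pi)$ and let $i_0$ be the least index realizing this maximum. Then $E_2^{c,-i_0}$ has dimension $c$, while every incoming differential has source $\mathcal{S}^{c-r}_{D^\times}(\dots)$ of dimension $\le c-r<c$ (Proposition~\ref{dimbasics} again), and there are no outgoing differentials; so a nonzero piece survives to $E_\infty$, forcing $H^{c-i_0}(M)\neq 0$ and hence $c\le d+i_0\le d+N_\pi$. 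This single step gives the full $N_\pi$ bound at once---there is no separate ``technical obstacle'' beyond the weak bound. Your Step~3 sketch also relies on the assertion that for irreducible $\pi$ the dual $\mathcal{S}_G(\pi)$ is concentrated in a single degree; this is the Cohen--Macaulay condition and is \emph{not} automatic for irreducibles, so that reduction would not go through even if the rest were repaired.
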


Note that the possible increase of dimension by $2n-2$ is quite small compared to the maximal Gelfand-Kirillov dimension in this setting, which is $\dim G = \dim D^\times = n^2[F:\mathbf{Q}_p]$. Nevertheless, we conjecture that Theorem \ref{diminequality} is not optimal, and that in fact one always has an inequality
\[\dim_{D^{\times}}\mathcal{J}^{i}(\pi)\leq\dim_{G}(\pi),\]
cf. Conjecture \ref{diminequalityoptimal} below. This conjecture in turn implies that $\mathcal{J}^{2n-2}(\pi)$ is always a finite-dimensional vector space, which can be computed explicitly (Proposition \ref{Jtopdegree}).

Under a stronger assumption on $\pi$, we can also prove a bound in the other direction, namely that for some $i$ the association $\pi \rightsquigarrow \mathcal{J}^i(\pi)$ cannot decrease the dimension by very much. Here is a precise statement.

\begin{thm}\label{dimlowerbound}Let $\pi$ be an admissible smooth $G$-representation which is Cohen-Macaulay in the sense that $\mathcal{S}_{G}^i(\pi)$ is nonzero for a single value of $i$. Then either $\mathcal{J}(\pi)=0$ or
\[ \max_i \dim_{D^\times} \mathcal{J}^i(\pi) \geq \dim_G \pi - (2n-2).
\]
\end{thm}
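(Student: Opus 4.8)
The plan is to deduce the result formally from the duality theorem (\cref{dualitymaintheorem}) together with the dimension theory of Kohlhaase's functors recalled in \S\ref{smoothreps}. Assume $\mathcal{J}(\pi)\neq 0$; we must bound $d := \max_i \dim_{D^\times}\mathcal{J}^i(\pi)$ from below.

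First I would exploit the Cohen-Macaulay hypothesis. Let $j$ be the unique degree with $\mathcal{S}^j_G(\pi)\neq 0$ and set $\rho := \mathcal{S}^j_G(\pi)$. By the dimension theory of Iwasawa algebras, $j$ (being the smallest degree in which $\mathcal{S}_G$ of $\pi$ is nonzero) equals $\dim G - \dim_G\pi$, and $\rho$ is again an admissible $G$-representation, so $\mathcal{S}_G(\pi)\cong\rho[-j]$ in $\D_{\mathrm{adm}}(G)$. Applying the exact functor $\mathcal{J}$ and invoking \cref{dualitymaintheorem},
\[
\mathcal{S}_{D^\times}\bigl(\mathcal{J}(\pi)\bigr) \;\cong\; \mathcal{J}\bigl(\mathcal{S}_G(\pi)\bigr)[2n-2](n-1) \;\cong\; \mathcal{J}(\rho)\,[2n-2-j](n-1).
\]
Since $\mathcal{S}_{D^\times}$ is conservative on $\D^b_{\mathrm{adm}}(D^\times)$ (Kohlhaase biduality), the left-hand side is nonzero, hence so is $\mathcal{J}(\rho)$; and since $\rho$ is an honest representation, $\mathcal{J}(\rho)$ has cohomology concentrated in degrees $[0,2n-2]$ by Scholze's finiteness theorem. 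Fix $m_0\in[0,2n-2]$ with $\mathcal{J}^{m_0}(\rho)\neq 0$.

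Next I would estimate the cohomological amplitude of $\mathcal{S}_{D^\times}\mathcal{J}(\pi)$ from below in terms of $d$. For any nonzero admissible $D^\times$-representation $\sigma$, Kohlhaase's theory gives $\mathcal{S}^p_{D^\times}(\sigma)=0$ whenever $p < \dim D^\times - \dim_{D^\times}\sigma$. Feeding this into the Postnikov filtration of $\mathcal{J}(\pi)$ (equivalently, into the spectral sequence $\mathcal{S}^p_{D^\times}(\mathcal{J}^{-q}(\pi))\Rightarrow H^{p+q}(\mathcal{S}_{D^\times}\mathcal{J}(\pi))$), and using that $\mathcal{J}^i(\pi)=0$ for $i\notin[0,2n-2]$, one sees that $H^n(\mathcal{S}_{D^\times}\mathcal{J}(\pi))=0$ for $n < \dim D^\times - d - (2n-2)$; the extremal contribution comes from the degree-$(2n-2)$ cohomology of $\mathcal{J}(\pi)$, whose dual is supported in degrees shifted to the left by $2n-2$. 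Comparing with the displayed isomorphism, this reads $\mathcal{J}^m(\rho)=0$ for $m < \dim D^\times - d - j$. Since $\mathcal{J}^{m_0}(\rho)\neq 0$ and $m_0\le 2n-2$, we obtain $\dim D^\times - d - j\le 2n-2$, i.e.
\[
d \;\ge\; \dim D^\times - j - (2n-2) \;=\; \dim_G\pi - (2n-2),
\]
using $j = \dim G - \dim_G\pi$ and $\dim G = \dim D^\times = n^2[F:\mathbf{Q}_p]$. This is the asserted inequality.

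The hard part is entirely the dimension bookkeeping in the third step: one has to make sure all objects in sight genuinely lie in $\D^b_{\mathrm{adm}}$, so that grade, canonical dimension and Gelfand-Kirillov dimension are available and agree, and one has to track carefully how the bounded amplitude $[0,2n-2]$ of $\mathcal{J}$ interacts both with the shift $[2n-2]$ in the duality theorem and with the one-sided truncation estimate $\dim D^\times - \dim_{D^\times}(-)$ for Kohlhaase's functors. The Cohen-Macaulay hypothesis is precisely what makes the estimate meaningful: it collapses $\mathcal{S}_G(\pi)$ to a single cohomological degree, so that $\mathcal{J}(\mathcal{S}_G(\pi))$ still has amplitude $[0,2n-2]$ (up to the shift by $j$) and the resulting bound is genuinely controlled by $\dim_G\pi$; for a general admissible $\pi$ the complex $\mathcal{J}(\mathcal{S}_G(\pi))$ could spread over a range of width $2(2n-2)$, and the argument would yield nothing.
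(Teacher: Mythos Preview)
Your argument rests on two claims about Kohlhaase's functors that are both false in the paper's conventions. You assert that the unique nonvanishing degree of $\mathcal{S}_G(\pi)$ is $j = \dim G - \dim_G\pi$, and that $\mathcal{S}^p_{D^\times}(\sigma) = 0$ whenever $p < \dim D^\times - \dim_{D^\times}\sigma$. These would hold for the Auslander-type functors $\mathrm{Ext}^i_{\F_p[[H]]}(V^*,\F_p[[H]])$, but the paper's $\mathcal{S}^i_G(V) = \varinjlim_H \mathrm{Ext}^i_{\F_p[[H]]}(\F_p,V^*)$ behaves the opposite way: by \cref{dimbasics}, $\mathcal{S}^i_G(V)$ vanishes for $i > \dim_G V$, and $\dim_G V = \sup\{i : \mathcal{S}^i_G(V)\neq 0\}$. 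So for Cohen--Macaulay $\pi$ the unique nonvanishing degree is $j = \dim_G\pi$, and there is in general no lower vanishing bound for $\mathcal{S}^p_{D^\times}$ (indeed, \cref{dimzero} shows $\mathcal{S}^0_{D^\times}(\sigma)\neq 0$ whenever $\sigma$ has a nonzero finite-dimensional quotient, irrespective of $\dim_{D^\times}\sigma$). Your third step uses precisely this non-existent lower vanishing to force $H^n(\mathcal{S}_{D^\times}\mathcal{J}(\pi))=0$ for small $n$, so the argument breaks there. The two convention errors happen to cancel in your final line, which is why the conclusion looks right, but the route to it is not valid.

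The paper's proof runs the comparison the other way. With the correct $j = \dim_G\pi$, the duality isomorphism reads $\mathcal{S}_{D^\times}(\mathcal{J}(\pi)) \cong \mathcal{J}(\check\pi)[2n-2-\dim_G\pi]$, and since $\mathcal{J}(\check\pi)$ sits in degrees $\ge 0$ this complex is concentrated in degrees $\ge \dim_G\pi - (2n-2)$. Rather than bounding the support of the left side from below via an amplitude estimate on $\mathcal{S}^p$, one exhibits a specific nonzero cohomology group: with $c = \max_i \dim_{D^\times}\mathcal{J}^i(\pi)$ and $i_0$ a degree achieving this maximum, the spectral-sequence term $E_2^{c,-i_0} = \mathcal{S}^c_{D^\times}(\mathcal{J}^{i_0}(\pi))$ is nonzero of dimension $c$, and the same survival argument as in the proof of \cref{diminequality} shows $H^{c-i_0}\neq 0$. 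Then $c - i_0 \ge \dim_G\pi-(2n-2)$ gives $c \ge \dim_G\pi-(2n-2)$.
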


We note that the Cohen-Macaulay condition is often satisfied in practice, and is preserved by parabolic induction.

The duality theorem is even more useful when combined with nontrivial vanishing theorems for $\mathcal{J}$. To illustrate this, we now specialize to the case $n=2$, but with $F/\mathbf{Q}_{p}$
still arbitrary. Set $d=[F:\mathbf{Q}_{p}]$. Let $B\subset G=\mathrm{GL}_{2}(F)$
be the upper-triangular Borel subgroup, with $T=F^{\times}\times F^{\times}\subset B$
the diagonal maximal torus.  For any smooth characters $\chi_1,\chi_2:F^\times \to \F_p ^\times$, we have the usual parabolic induction $\pi(\chi_{1},\chi_{2})=\mathrm{Ind}_{B}^{G}(\omega\chi_{1}\otimes\chi_{2})$,
where $\omega$ is the mod-$p$ cyclotomic character. We assume that $\chi_1 / \chi_2 \neq \omega^{\pm 1}$. Then $\pi(\chi_1,\chi_2)$ and $\pi(\chi_2,\chi_1)$ are irreducible, and are exchanged by the functor $\mathcal{S}_G^d(-) \otimes (\chi_1 \chi_2 \omega \circ \det)$. Moreover, $\mathcal{S}_G^i(-)$ applied to either representation vanishes for all $i\neq d$.

By irreducibility, it is easy to see that $\mathcal{J}^0(\pi(\chi_1,\chi_2))=0$, and likewise with the characters exchanged. Much less obviously, the main theorem of \cite{johansson-ludwig} shows also that $\mathcal{J}^2(\pi(\chi_1,\chi_2))=0$. With these vanishing results in mind, we define \[ \tau(\chi_1,\chi_2) = \mathcal{J}^1(\pi(\chi_1,\chi_2)). \]
Our duality theorem then implies the following dichotomy.

\begin{thm}\label{principalseriesdichotomy}Notation and assumptions as above, exactly one of the following is true.

i. $\tau(\chi_1,\chi_2)= \tau(\chi_2,\chi_1)=0.$

ii. Both $\tau(\chi_1,\chi_2)$ and $\tau(\chi_2,\chi_1)$ are nonzero, and are exchanged by the functor $\mathcal{S}_{D^\times}^d(-) \otimes (\chi_1 \chi_2 \omega \circ \mathrm{Nm})$. In particular, they are both of dimension $d$. Moreover, $\mathcal{S}_{D^\times}^i(-)$ applied to either representation vanishes for all $i\neq d$.
\end{thm}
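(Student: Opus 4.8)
The plan is to feed $\pi(\chi_1,\chi_2)$, viewed as an object of $\D_{\mathrm{adm}}(G)$ concentrated in degree zero, into the duality equivalence of Theorem~\ref{dualitymaintheorem} and then unwind both sides. The inputs I would use are: (a) $\mathcal S_G^i(\pi(\chi_1,\chi_2))$ vanishes for $i\neq d$ while $\mathcal S_G^d(\pi(\chi_1,\chi_2))\otimes(\chi_1\chi_2\omega\circ\det)\cong\pi(\chi_2,\chi_1)$, so that $\mathcal S_G(\pi(\chi_1,\chi_2))\cong\bigl(\pi(\chi_2,\chi_1)\otimes(\chi_1\chi_2\omega\circ\det)^{-1}\bigr)[-d]$ in $\D(G)$; (b) $\mathcal J^0$ and $\mathcal J^2$ vanish on $\pi(\chi_1,\chi_2)$ and on $\pi(\chi_2,\chi_1)$ (the former by irreducibility, the latter by \cite{johansson-ludwig}), so that $\mathcal J(\pi(\chi_1,\chi_2))\cong\tau(\chi_1,\chi_2)[-1]$ and likewise with the characters exchanged, and in particular $\tau(\chi_1,\chi_2),\tau(\chi_2,\chi_1)\in\D_{\mathrm{adm}}(D^\times)$ by the finiteness theorem; (c) the compatibility of $\mathcal J$ with twists by characters of the determinant, namely $\mathcal J(V\otimes(\eta\circ\det))\cong\mathcal J(V)\otimes(\eta\circ\mathrm{Nm})$ as $D^\times$-representations for every smooth character $\eta$ of $F^\times$; and (d) the fact, recalled in \S\ref{smoothreps}, that $\mathcal S_{D^\times}$ restricts to an equivalence of $\D_{\mathrm{adm}}(D^\times)$ with itself, squaring to the identity (in particular it is conservative there).

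Granting these, the computation is short. On $\pi(\chi_1,\chi_2)$ the left-hand side of Theorem~\ref{dualitymaintheorem} is $\mathcal S_{D^\times}(\mathcal J(\pi(\chi_1,\chi_2)))\cong\mathcal S_{D^\times}(\tau(\chi_1,\chi_2))[1]$, since $\mathcal S_{D^\times}$ is a contravariant exact functor; the right-hand side, after inserting (a), then (c), then (b), and finally the shift $[2n-2](n-1)=[2](1)$, becomes $\bigl(\tau(\chi_2,\chi_1)\otimes(\chi_1\chi_2\omega\circ\mathrm{Nm})^{-1}\bigr)[1-d](1)$. Comparing and shifting by $[-1]$ gives the key identity
\[
\mathcal S_{D^\times}(\tau(\chi_1,\chi_2))\;\cong\;\bigl(\tau(\chi_2,\chi_1)\otimes(\chi_1\chi_2\omega\circ\mathrm{Nm})^{-1}\bigr)[-d](1)
\]
in $\D_{\mathrm{adm}}(D^\times)$, whence $\mathcal S_{D^\times}^i(\tau(\chi_1,\chi_2))=0$ for $i\neq d$ and $\mathcal S_{D^\times}^d(\tau(\chi_1,\chi_2))\otimes(\chi_1\chi_2\omega\circ\mathrm{Nm})\cong\tau(\chi_2,\chi_1)$ as $D^\times$-representations, the Tate twist being invisible to the $D^\times$-action.

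The dichotomy then follows formally from this identity and its counterpart obtained by exchanging $\chi_1$ and $\chi_2$ (the whole situation, including the hypothesis $\chi_1/\chi_2\neq\omega^{\pm1}$ and the character $\chi_1\chi_2\omega$, being symmetric under $\chi_1\leftrightarrow\chi_2$). If $\tau(\chi_1,\chi_2)=0$ the displayed isomorphism forces $\tau(\chi_2,\chi_1)=0$, and symmetrically; this is alternative (i). If $\tau(\chi_1,\chi_2)\neq0$, then $\mathcal S_{D^\times}(\tau(\chi_1,\chi_2))\neq0$ by (d), so its unique nonzero cohomology $\mathcal S_{D^\times}^d(\tau(\chi_1,\chi_2))$ is nonzero, hence $\tau(\chi_2,\chi_1)\neq0$; this is alternative (ii), and the remaining assertions there — vanishing of $\mathcal S_{D^\times}^i$ for $i\neq d$ on both representations, the exchange by $\mathcal S_{D^\times}^d(-)\otimes(\chi_1\chi_2\omega\circ\mathrm{Nm})$, and, via the relation between Kohlhaase duality and Gelfand-Kirillov dimension recalled in \S\ref{smoothreps}, the statement that both have dimension $d$ — all read off directly. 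The two alternatives are plainly incompatible, so exactly one of them holds.

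Most of this is bookkeeping of shifts and twists; the one step that genuinely requires work is the twist compatibility (c), which is where I expect the (modest) difficulty to lie. It should follow from the $G\times D^\times$-equivariance built into the fundamental diagram — twisting $\pi$ by $\eta\circ\det$ twists the sheaf $\mathcal F_\pi$ by the pullback of $\eta$ along the period map, and the induced $D^\times$-equivariant structure then picks up $\eta\circ\mathrm{Nm}$ — but verifying the compatibility of the various equivariant structures and pinning down exactly which character appears needs to be carried out with some care.
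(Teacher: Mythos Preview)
Your proof is correct and follows essentially the same route as the paper's own argument: evaluate the duality isomorphism on $\pi(\chi_1,\chi_2)$, simplify each side using the concentration of $\mathcal S_G(\pi(\chi_1,\chi_2))$ in degree $d$ and of $\mathcal J(\pi(\chi_1,\chi_2))$ in degree $1$, and arrive at the displayed identity $\mathcal S_{D^\times}(\tau(\chi_1,\chi_2))\cong\tau(\chi_2,\chi_1)\otimes(\chi_1\chi_2\omega\circ\mathrm{Nm})^{-1}[-d]$. The paper stops there with ``which immediately gives the desired result''; your write-up is in fact more complete, since you spell out how the dichotomy, the vanishing of $\mathcal S_{D^\times}^i$ for $i\ne d$, and the dimension claim (via Proposition~\ref{dimbasics}) follow, and you flag the twist compatibility $\mathcal J(V\otimes(\eta\circ\det))\cong\mathcal J(V)\otimes(\eta\circ\mathrm{Nm})$ that the paper uses silently.
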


We conjecture that in fact scenario i. in this theorem \emph{never} occurs. When $F=\mathbf{Q}_p$ and $p>3$, this follows from work of Pa\v{s}k\={u}nas \cite{paskunas-consequences}, who showed by global methods that $\tau(\chi_1,\chi_2)$ and $\tau(\chi_2,\chi_1)$ cannot \emph{both} vanish. Combining his results with the previous theorem yields the following corollary.

\begin{cor}Maintain the notation and assumptions above, and assume moreover that $F=\mathbf{Q}_p$ and $p>3$. Then $\tau(\chi_1,\chi_2)$ and $\tau(\chi_2,\chi_1)$ are nonzero, of dimension one, and are exchanged by the functor $\mathcal{S}_{D^\times}^1(-) \otimes (\chi_1 \chi_2 \omega \circ \mathrm{Nm})$. Moreover, $\mathcal{S}_{D^\times}^0(-)$ applied to either representation vanishes.
\end{cor}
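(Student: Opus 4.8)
The plan is to combine \cref{principalseriesdichotomy} with the input from Pa\v{s}k\={u}nas, so essentially no new geometry is needed. First I would recall that under the standing assumptions ($n=2$, $\chi_1/\chi_2 \neq \omega^{\pm 1}$), \cref{principalseriesdichotomy} presents a strict dichotomy: either both $\tau(\chi_1,\chi_2)$ and $\tau(\chi_2,\chi_1)$ vanish, or both are nonzero, of dimension $d$, and exchanged by $\mathcal{S}^d_{D^\times}(-) \otimes (\chi_1\chi_2\omega \circ \mathrm{Nm})$. Specializing to $F = \Q_p$ gives $d = 1$. So it remains only to rule out scenario (i), i.e.\ to show that $\tau(\chi_1,\chi_2)$ and $\tau(\chi_2,\chi_1)$ are not both zero; then scenario (ii) holds and gives everything claimed, with dimension $d = 1$ and the vanishing of $\mathcal{S}^0_{D^\times}(-)$ on either representation (which is the $i \neq d = 1$ part of (ii), specialized to $i = 0$).

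The second step is to invoke Pa\v{s}k\={u}nas's result \cite{paskunas-consequences}. The content I would cite is precisely that for $F = \Q_p$ and $p > 3$, and $\pi(\chi_1,\chi_2)$ an irreducible principal series as above, it is not the case that $\mathcal{J}^1(\pi(\chi_1,\chi_2))$ and $\mathcal{J}^1(\pi(\chi_2,\chi_1))$ both vanish. Pa\v{s}k\={u}nas proves this by global methods: realizing the relevant mod-$p$ representations inside completed cohomology of a suitable Shimura curve / quaternionic setting, where the $D^\times$-side of the Jacquet-Langlands functor has a known nonzero contribution. I would state this as a black box, with a sentence recalling the mechanism, since reproving it is outside the scope here. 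Given this, scenario (i) of \cref{principalseriesdichotomy} is excluded.

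Combining the two steps: by \cref{principalseriesdichotomy} exactly one of (i), (ii) holds; by Pa\v{s}k\={u}nas (i) fails; hence (ii) holds. Reading off (ii) with $d = 1$: both $\tau(\chi_1,\chi_2)$ and $\tau(\chi_2,\chi_1)$ are nonzero, each of dimension one over $\F_p$, and they are exchanged by $\mathcal{S}^1_{D^\times}(-) \otimes (\chi_1\chi_2\omega \circ \mathrm{Nm})$; moreover $\mathcal{S}^i_{D^\times}$ applied to either vanishes for $i \neq 1$, in particular for $i = 0$. That is exactly the statement of the corollary.

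I expect there to be essentially no real obstacle in the argument itself, as it is a direct deduction; the only delicate points are bookkeeping ones. The main thing to get right is the precise matching of conventions between Pa\v{s}k\={u}nas's formulation (his normalization of the Jacquet-Langlands construction, his twist conventions, and exactly which principal series pairs his nonvanishing statement covers) and ours, so that his result genuinely contradicts scenario (i) under our hypotheses $F = \Q_p$, $p > 3$, $\chi_1/\chi_2 \neq \omega^{\pm 1}$. A secondary point is confirming that the exclusion $\chi_1/\chi_2 \neq \omega^{\pm 1}$ and the primes $p > 3$ are compatible with (rather than a restriction beyond) the hypotheses under which Pa\v{s}k\={u}nas's nonvanishing is available; but both are already built into the statement, so this is only a matter of citing the correct theorem number in \cite{paskunas-consequences}.
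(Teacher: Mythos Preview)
Your proposal is correct and follows exactly the paper's approach: the corollary is deduced by combining \cref{principalseriesdichotomy} with Pa\v{s}k\={u}nas's nonvanishing result to exclude scenario (i), then reading off scenario (ii) with $d=[F:\Q_p]=1$.

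One small terminological slip: you write ``each of dimension one over $\F_p$,'' but the dimension in question is the Gelfand--Kirillov dimension $\dim_{D^\times}$, not the $\F_p$-vector-space dimension (these representations are infinite-dimensional over $\F_p$). This does not affect the argument.
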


We note that the final vanishing statement here has a more concrete interpretation: it simply means that $\tau(\chi_1,\chi_2)$ does not admit any nonzero $H$-stable quotient $\sigma$ with $\dim_{\F_p} \sigma < \infty$, for any open subgroup $H \subset D^\times$ (and similarly for $\tau(\chi_2,\chi_1)$). Intuitively, this means that $\tau(\chi_1,\chi_2)$ is ``maximally non-semisimple". It would be very interesting to find an explicit construction of one-dimensional admissible representations of $D^\times$ with this property. We also note that a similar but more complicated theorem holds in the setting of \emph{reducible} principal series representations of $\mathrm{GL}_2(\mathbf{Q}_p)$; see \S\ref{nongeneric} for details.

Our last result is an intriguing alternative formula for $\mathcal{J}(\pi)$ in the case where $\pi$ is parabolically induced. To state this result, we momentarily change our notation slightly, to emphasize the dependence of the Jacquet-Langlands functor on the chosen integer $n$. More precisely, we write $\mathcal{J}_n$ for the Jacquet-Langlands functor from $\D(\mathrm{GL}_n(F))$ towards $\D(D_{1/n}^{\times})$. Choose an integer $0<d<n$, and let $P_{n-d,d}\subset \mathrm{GL}_n(F)$ be the usual upper-triangular parabolic with Levi $\mathrm{GL}_{n-d}(F)\times \mathrm{GL}_d(F)$. If $\sigma_1 \boxtimes \sigma_2$ is any smooth representation of the Levi, we write $\mathrm{Ind}_{P_{n-d,d}}^{\mathrm{GL}_n(F)}(\sigma_1 \boxtimes \sigma_2)$ for the usual parabolic induction to a smooth representation of $\mathrm{GL}_n(F)$. In \S \ref{partialkunnethsection} below, we define a certain auxiliary spatial diamond $W_{n,d}$ over $\Spd C$, which comes equipped with a natural $D_{1/n}^\times$-action as well as natural $D_{1/n}^\times$-equivariant maps $h_1: W_{n,d} \to B\underline{\mathrm{GL}_{n-d}(F)}$ and $h_2: W_{n,d} \to B\underline{D_{1/d}^{\times}}$. In the special case $d=1$, $W_{n,1}$ is exactly the quotient $\mathcal{M}_{P}=\mathcal{M}_\infty / \underline{P_{n-1,1}}$ studied in \cite{johansson-ludwig}.

\begin{thm}Notation as above, there is a natural isomorphism
\[\mathcal{J}_n(\mathrm{Ind}_{P_{n-d,d}}^{\mathrm{GL}_n(F)}(\sigma_1 \boxtimes \sigma_2)) \cong R\Gamma(W_{n,d},h_{1, \et}^\ast \sigma_1 \otimes h_{2, \et}^\ast \mathcal{J}_d(\sigma_2)).
\]
\end{thm}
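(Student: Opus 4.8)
The plan is to realize both sides as the cohomology of correspondences in the six functor formalism and to reduce the theorem to a single geometric identification of total spaces. I would work throughout in the étale formalism $\mathcal{D}_{\et}(-,\F_p)$, using the identification $\D(H)=\mathcal{D}_{\et}(B\underline H,\F_p)^{\dagger}$ of \cref{Detclassifying} and the description of $\mathcal{J}_n$ (resp. $\mathcal{J}_d$) as $g_{n,*}\comp f_n^*$ (resp. $g_{d,*}\comp f_d^*$) along the fundamental diagram for $\mathrm{GL}_n$ (resp. $\mathrm{GL}_d$); the only inputs needed from the formalism are proper base change and the projection formula along proper maps, both available with $\F_p$-coefficients. (Alternatively, one could run the whole argument inside the solid formalism of \cite{mann-p-adic-6-functors}, compatibly with $\varphi$-structures, and descend via Riemann--Hilbert.)

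First I would geometrize parabolic induction. Let $a\colon B\underline{P_{n-d,d}}\to B\underline{\mathrm{GL}_n(F)}$ and $b\colon B\underline{P_{n-d,d}}\to B\underline{M}$, with $M=\mathrm{GL}_{n-d}(F)\times\mathrm{GL}_d(F)$, be the maps induced by $P_{n-d,d}\hookrightarrow\mathrm{GL}_n(F)$ and by $P_{n-d,d}\twoheadrightarrow M$. Under \cref{Detclassifying}, $a^*$ and $b^*$ become restriction and inflation of representations, and Frobenius reciprocity identifies $a_*$ with (unnormalized) smooth parabolic induction, so $\mathrm{Ind}_{P_{n-d,d}}^{\mathrm{GL}_n(F)}(\sigma_1\boxtimes\sigma_2)\cong a_* b^*(\sigma_1\boxtimes\sigma_2)$. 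Moreover $a$ is proper: the fibre of $B\underline{P_{n-d,d}}\to B\underline{\mathrm{GL}_n(F)}$ is $\underline{\mathrm{GL}_n(F)/P_{n-d,d}}$, and $\mathrm{GL}_n(F)/P_{n-d,d}$, being a compact $p$-adic manifold, is a profinite set, so the corresponding map to $*$ is proper — in fact $p$-cohomologically smooth of dimension $0$, which is why no shift or twist appears. Applying $\mathcal{J}_n$ and proper base change along the Cartesian square built from $a$ and $f_n$ yields
\[
\mathcal{J}_n\bigl(\mathrm{Ind}_{P_{n-d,d}}^{\mathrm{GL}_n(F)}(\sigma_1\boxtimes\sigma_2)\bigr)\;\cong\;(g_n\comp a')_*\,(b\comp f')^*(\sigma_1\boxtimes\sigma_2),
\]
where, using $X_n=[\mathcal{M}_\infty^{(n)}/(\underline{\mathrm{GL}_n(F)}\times\underline{D_{1/n}^{\times}})]$, the fibre product $Y:=X_n\times_{B\underline{\mathrm{GL}_n(F)}}B\underline{P_{n-d,d}}$ is identified with $[\mathcal{M}_\infty^{(n)}/(\underline{P_{n-d,d}}\times\underline{D_{1/n}^{\times}})]$, the map $b\comp f'\colon Y\to B\underline{M}$ is induced by $P_{n-d,d}\twoheadrightarrow M$, and $g_n\comp a'\colon Y\to B\underline{D_{1/n}^{\times}}$ classifies the torsor $[\mathcal{M}_\infty^{(n)}/\underline{P_{n-d,d}}]\to Y$.

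Next I would unfold the right-hand side in parallel. Expanding $\mathcal{J}_d(\sigma_2)=g_{d,*}f_d^*\sigma_2$, and noting that $g_d\colon X_d\to B\underline{D_{1/d}^{\times}}$ is proper (its geometric fibre is $\mathbf{P}^{d-1}_C$), I would apply proper base change along the Cartesian square built from $h_2$ and $g_d$, together with the projection formula along the proper map $[Z/\underline{D_{1/n}^{\times}}]\to[W_{n,d}/\underline{D_{1/n}^{\times}}]$, where $Z:=W_{n,d}\times_{B\underline{D_{1/d}^{\times}}}X_d$ and $D_{1/n}^{\times}$ acts on $Z$ only through the $W_{n,d}$-factor, to obtain
\[
R\Gamma\bigl(W_{n,d},\,h_{1}^*\sigma_1\otimes h_{2}^*\mathcal{J}_d(\sigma_2)\bigr)\;\cong\;\tilde w_*\,(\tilde h_1,\tilde h_2)^*(\sigma_1\boxtimes\sigma_2),
\]
where $\tilde w\colon[Z/\underline{D_{1/n}^{\times}}]\to B\underline{D_{1/n}^{\times}}$, the map $\tilde h_1\colon[Z/\underline{D_{1/n}^{\times}}]\to B\underline{\mathrm{GL}_{n-d}(F)}$ is $h_1$ composed with the projection $[Z/\underline{D_{1/n}^{\times}}]\to[W_{n,d}/\underline{D_{1/n}^{\times}}]$, and $\tilde h_2\colon[Z/\underline{D_{1/n}^{\times}}]\to B\underline{\mathrm{GL}_d(F)}$ is $f_d$ composed with the projection $[Z/\underline{D_{1/n}^{\times}}]\to X_d$. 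Since $\sigma_1$, $\sigma_2$ and $\mathcal{J}_d(\sigma_2)$ carry no $D_{1/n}^{\times}$-action, all the sheaves appearing descend to the indicated quotient stacks, so these manipulations are legitimate once one works with $[-/\underline{D_{1/n}^{\times}}]$ throughout.

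Now both sides have the shape $(\text{pushforward to }B\underline{D_{1/n}^{\times}})\comp(\text{pullback from }B\underline{M})$ applied to $\sigma_1\boxtimes\sigma_2$ — each is the cohomology of a correspondence $B\underline{M}\leftarrow(\cdot)\to B\underline{D_{1/n}^{\times}}$ — so the theorem reduces to a natural isomorphism of correspondences, namely an identification
\[
\bigl[\mathcal{M}_\infty^{(n)}/(\underline{P_{n-d,d}}\times\underline{D_{1/n}^{\times}})\bigr]\;\cong\;\bigl[(W_{n,d}\times_{B\underline{D_{1/d}^{\times}}}X_d)/\underline{D_{1/n}^{\times}}\bigr]
\]
over $B\underline{D_{1/n}^{\times}}$, over $B\underline{\mathrm{GL}_{n-d}(F)}$ (on the left via $P_{n-d,d}\twoheadrightarrow\mathrm{GL}_{n-d}(F)$, on the right via $h_1$), and over $B\underline{\mathrm{GL}_d(F)}$ (on the left via $P_{n-d,d}\twoheadrightarrow\mathrm{GL}_d(F)$, on the right via $f_d$). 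I expect this to be the main obstacle, and it is precisely the content that has to be built into the construction of $W_{n,d}$ and the maps $h_1,h_2$ in \S\ref{partialkunnethsection}: morally it expresses the structure of the $\mathrm{GL}_n$-Lubin--Tate tower modulo $P_{n-d,d}$ — a level-$\infty$ deformation of the height-$n$ formal $\mathcal{O}_F$-module together with a partial flag of type $(n-d,d)$ — in terms of the sub/quotient decomposition of that data, the height-$d$ quotient piece, with its induced level structure, furnishing a point of the $\mathrm{GL}_d$-Lubin--Tate tower $X_d$. (For $d=1$ the tower $\mathcal{M}_\infty^{(1)}$ contributes nothing, $\mathcal{J}_1$ is essentially the identity, and the identification degenerates to the tautology $W_{n,1}=\mathcal{M}_P$.) Everything else is the formal six-functor manipulation above, so once $W_{n,d}$, $h_1$, $h_2$ are in place with this Cartesian compatibility, the proof is complete.
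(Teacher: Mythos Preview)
Your approach is essentially the paper's. You correctly reduce both sides to pushforward--pullback along correspondences $B\underline{M}\leftarrow(\cdot)\to B\underline{D_{1/n}^\times}$, and the geometric identification you isolate as the ``main obstacle'' is exactly the cartesian square
\[
X_{n,d}:=\mathcal{M}_\infty/\underline{P_{n-d,d}}\;\cong\;W_{n,d}\times_{B\underline{D_{1/d}^\times}}[\mathbf{P}_C^{d-1}/\underline{D_{1/d}^\times}]
\]
that the paper records in its big diagram (citing \cite[Proposition A.3.4]{johansson-ludwig}) and then exploits via the projection formula and proper base change along $\tilde g_d\colon X_{n,d}\to W_{n,d}$. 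One minor caveat: the paper does not attempt the argument directly in $\D_\et(-,\F_p)$ but instead runs it in the solid formalism and descends via Riemann--Hilbert --- precisely your parenthetical alternative --- since that is where the projection formula for $g_{d*}$ is cleanly available with $p$-torsion coefficients.
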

We like to think of this as a ``partial" K\"unneth formula. Although this theorem is logically independent of our other main results, the key point in the proof is still the new machinery provided by \cite{mann-p-adic-6-functors}.

Just as in \cite{johansson-ludwig}, this alternative formula leads to new vanishing results. Here we state the simplest such corollary.

\begin{cor}Let $\pi $ be an irreducible admissible representation of $\mathrm{GL}_3(F)$ which is not supersingular, and whose underlying vector space is infinite-dimensional. Then $\mathcal{J}^{4}(\pi)=0$.
\end{cor}

\subsection{A word on notation} \label{sec:intro.notation} This paper depends heavily on the mathematics developed in \cite{mann-p-adic-6-functors}. However, since DH is less derived than LM, there are some small notational differences between \cite{mann-p-adic-6-functors} and the present paper. In particular, we will write $R\mathrm{Hom}$ for the derived external Hom in any sheaf theory, and $R\Gamma$ for derived global sections. These are notated as $\mathrm{Hom}$ and $\Gamma$ in \cite{mann-p-adic-6-functors}. On the other hand, we follow \cite{mann-p-adic-6-functors} in writing $\IHom$ for the derived internal Hom, $f_{\ast}$ for derived pushforward, etc. We hope this does not cause any confusion.

\subsection{Acknowledgments} The idea that $\mathcal{J}$ should satisfy some form of duality was suggested to DH by Judith Ludwig during a visit to Heidelberg in January 2020. We heartily thank Judith for this key initial suggestion. We also sincerely thank Peter Scholze for some very helpful conversations about this material, and for suggesting that the authors might profitably collaborate. Some of these results were discovered during the Oberwolfach workshop ``Nonarchimedean geometry and applications" in February 2022, and we sincerely thank the organizers for the opportunity to participate in this event. We also thank Gabriel Dospinescu, Eugen Hellmann, Yongquan Hu, Christian Johansson, Vytautas Pa\v{s}k\={u}nas, Claus Sorensen, and Bogdan Zavyalov for some helpful conversations.

\section{Geometric preparations}

In this section we recall the $p$-adic 6-functor formalism constructed in \cite{mann-p-adic-6-functors} and extend it to a context where it can be applied to classifying stacks of $p$-adic Lie groups. In particular we extend the 6-functor formalism to incorporate shriek functors along ``stacky'' maps, similar to the stacky extension of the $\ell$-adic formalism provided in \cite{enhanced-ell-adic-6-functors}.

\subsection{Recollections on the solid six functors}

We recall the $p$-adic 6-functor formalism on small v-stacks defined in \cite{mann-p-adic-6-functors}. Let us first introduce the correct category of geometric objects to work with. In order to make sense of the sheaf $\ri^+_X/\pi$ on a small v-stack, we need to choose an ``untilt'' $X^\sharp$ of $X$ (this is automatically chosen if $X$ comes from an adic space over $\Z_p$) and a pseudouiformizer $\pi$ on $X^\sharp$. To make things more precise, we define an \emph{untilted small v-stack} $X^\sharp$ to be a small v-stack $X$ together with a chosen map $X \to \Spd\Z_p$. For every map $Y \to X$ from a perfectoid space $Y$ to $X$, the untilt $X^\sharp$ induces an untilt $Y^\sharp$ (in the sense of perfectoid spaces) of $Y$. The untilted small v-stack $X^\sharp$ comes equipped with a structure sheaf $\ri_{X^\sharp}$ and an integral structure sheaf $\ri^+_{X^\sharp} \subset \ri_{X^\sharp}$ on $X_\vsite$ such that on every affinoid perfectoid $Y \in X_\vsite$ we have $\ri_{X^\sharp}(Y) = \ri_{Y^\sharp}(Y^\sharp)$ (and similarly for $\ri^+_{X^\sharp}$). A \emph{pseudouniformizer} on $X^\sharp$ is a sheaf of ideals $\pi \subset \ri^+_{X^\sharp}$ such that for every affinoid perfectoid $Y = \Spa(B, B^+) \in X_\vsite$ the restriction of $\pi$ to $Y$ is a principal ideal and generated by a pseudouniformizer in $B^{\sharp+}$. We denote by $\vStackspi$ the category of pairs $(X^\sharp, \pi)$, where $X^\sharp$ is an untilted small v-stack and $\pi$ is a pseudouniformizer on $X^\sharp$. A map $(Y^\sharp, \pi_Y) \to (X^\sharp, \pi_X)$ in $\vStackspi$ is a map of small v-stacks $Y \to X$ over $\Spd\Z_p$ such that $\pi_X|_Y = \pi_Y$.

The first main result of \cite{mann-p-adic-6-functors} is the following:

\begin{thm}[{see \cite[Theorem 3.1.27]{mann-p-adic-6-functors}}] \label{rslt:def-of-DqcohriX}
There is a unique hypercomplete v-sheaf
\begin{align*}
    (\vStackspi)^\opp \to \infcatinf^\tensor, \qquad (X^\sharp, \pi) \mapsto \DqcohriX{X^\sharp}
\end{align*}
such that for every untilted affinoid perfectoid space $X^\sharp = \Spa(A^\sharp, A^{\sharp+})$ with pseudouniformizer $\pi \in A^{\sharp+}$, where $X$ is weakly of perfectly finite type over a totally disconnected space, we have
\begin{align*}
    \DqcohriX{X^\sharp} = \Dqcohri(A^{\sharp+}/\pi).
\end{align*}
\end{thm}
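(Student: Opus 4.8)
The plan is to isolate a single nontrivial descent statement, everything else being formal. \textbf{Uniqueness.} A hypercomplete v-sheaf valued in presentably symmetric monoidal $\infty$-categories with colimit-preserving transition functors is the right Kan extension of its restriction to any basis of the v-topology: its value on $(X^\sharp,\pi)$ is the limit, over a v-hypercover of $X$ by basis objects, of the corresponding values, and this limit is independent of the hypercover by the hypersheaf property. Now strictly totally disconnected affinoid perfectoids, equipped with their induced untilt and pseudouniformizer, form a basis of the v-topology on $\vStackspi$ — every small v-stack admits a v-hypercover by disjoint unions of such, cf.\ \cite{etale-cohomology-of-diamonds} — and each of them is in particular weakly of perfectly finite type over a totally disconnected space, namely over itself. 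Hence the prescribed values pin down the sheaf.

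\textbf{Existence.} One \emph{defines} $\DqcohriX{X^\sharp}$ by this Kan extension: choose a v-hypercover $Y^\bullet\to X$ by strictly totally disconnected affinoid perfectoids and set $\DqcohriX{X^\sharp}=\lim_{\Delta}\Dqcohri(\ri^+_{Y^{\sharp,\bullet}}/\pi)$, where on a strictly totally disconnected $Y^\sharp=\Spa(B^\sharp,B^{\sharp+})$ the category $\Dqcohri(B^{\sharp+}/\pi)$ is the derived $\infty$-category of solid almost $B^{\sharp+}/\pi$-modules — whose definition on such spaces is elementary — with its derived solid almost tensor product. Each $\Dqcohri(B^{\sharp+}/\pi)$ is a presentable stable symmetric monoidal $\infty$-category, depending functorially on $(B^\sharp,\pi)$ via base change of solid almost modules along the induced map $B^{\sharp+}/\pi\to(B')^{\sharp+}/\pi$; the coherence of this functoriality is supplied by the standard theory of module $\infty$-categories over $\mathbf E_\infty$-(analytic) rings. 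Since limits of presentably symmetric monoidal $\infty$-categories are computed on underlying $\infty$-categories, the limit above inherits a symmetric monoidal structure, functorially in $(X^\sharp,\pi)$.

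What remains — the crux — is \textbf{v-hyperdescent on the basis}: for every v-hypercover $Z^\bullet\to Y$ of a strictly totally disconnected affinoid perfectoid $Y^\sharp$, the natural functor $\Dqcohri(\ri^+_{Y^\sharp}/\pi)\to\lim_\Delta\Dqcohri(\ri^+_{Z^{\sharp,\bullet}}/\pi)$ should be an equivalence. Granting this, the Kan-extended assignment becomes independent of the chosen hypercover, is a v-sheaf, and is hypercomplete by construction; and the asserted formula $\DqcohriX{X^\sharp}=\Dqcohri(A^{\sharp+}/\pi)$ for an affinoid $X^\sharp$ weakly of perfectly finite type over a totally disconnected space follows by applying the same descent once more to a strictly-totally-disconnected hypercover of $X$. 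I would prove the descent by showing, for a v-cover $Z\to Y$ of strictly totally disconnected affinoid perfectoids, that the corresponding map of solid almost $\ri^+/\pi$-algebras is \emph{descendable} in Mathew's sense (which immediately upgrades \v{C}ech descent to hyperdescent for the module categories); by a d\'evissage of v-covers in the style of \cite{etale-cohomology-of-diamonds} this reduces to three cases: (i) analytic (rational) covers, where descendability is already standard for solid modules and survives passage to almost modules; (ii) almost finite \'etale covers, where Faltings' almost purity (in Scholze's formulation) makes $\ri^+_Z/\pi$ almost finite projective and faithfully almost-flat over $\ri^+_Y/\pi$; and (iii) the remaining, genuinely v-topological surjections of strictly totally disconnected spaces, where one argues directly with the $\pi$-torsion structure sheaf — in the spirit of arc-descent for $p$-complete rings (Bhatt--Mathew), reducing to sections over products of absolutely integrally closed valuation rings, the almost structure being what tames ramification.

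\textbf{The main obstacle is (iii):} genuine v-descent — strictly finer than the pro-\'etale topology — for $\ri^+/\pi$. Integrally, $\ri^+$ is badly behaved along wildly ramified v-covers; it is precisely the passage to almost modules (with respect to the ideal of $p$-power roots of the pseudouniformizer) that converts the relevant restriction maps into faithfully flat, hence descendable, ones, so that this descent holds at all. Once this is in place, the rest of the theorem is formal nonsense about Kan extensions of hypersheaves into $\infcatinf^\tensor$, together with the well-documented behaviour of solid modules under analytic localization.
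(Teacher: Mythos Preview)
The paper does not prove this statement; it is quoted verbatim from \cite[Theorem~3.1.27]{mann-p-adic-6-functors}. Your outline does match the architecture of the proof there: one establishes v-hyperdescent for $A^{\sharp+}\mapsto\Dqcohri(A^{\sharp+}/\pi)$ on a convenient basis of affinoid perfectoids, and the extension to all of $\vStackspi$ is then the formal right Kan extension you describe. The identification of the crux --- descendability of $\ri^{+a}/\pi$ along v-covers, with the almost structure doing the real work --- is also correct.

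Two places where your sketch diverges from the actual argument and where a referee would push back. First, your three-case d\'evissage is not how the reduction runs: there is no separate ``almost finite \'etale'' step, and v-covers do not decompose that way. Instead one reduces (via the structure theory of v-covers of totally disconnected spaces) directly to surjections of totally disconnected perfectoids, and proves descendability there by an argument closer in spirit to your case~(iii); the hypothesis ``weakly of perfectly finite type over a totally disconnected space'' in the statement is not incidental --- it is exactly what is needed to get a uniform bound on the index of descendability, without which the argument does not close. You should not expect to recover the formula on that larger class merely ``by applying the same descent once more''; the finite-type condition is an input, not an output. Second, your parenthetical that descendability ``immediately upgrades \v{C}ech descent to hyperdescent'' is false as stated: descendability of each cover gives \v{C}ech descent for module categories, but hyperdescent requires either a uniform bound on descendability indices across the site or a separate argument via left-completeness of a $t$-structure, and in \cite{mann-p-adic-6-functors} this is a genuine (if short) step, not an automatic consequence.
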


Here $\infcatinf^\tensor$ denotes the $\infty$-category of symmetric monoidal $\infty$-categories and $\Dqcohri(A^{\sharp+}/\pi)$ denotes the derived $\infty$-category of \emph{solid almost $A^{\sharp+}/\pi$-modules}. The term ``almost'' refers to the fact that we care about everything only up to $\mm_{A^\sharp}$-torsion. The term ``solid'' was introduced by Clausen-Scholze in \cite{condensed-mathematics} and roughly means ``complete topological''. See also \cite[\S2.2, \S2.9]{mann-p-adic-6-functors} for the precise definition of $\Dqcohri(A^{\sharp+}/\pi)$.

In the following we abstract a little further in order to simplify notation: We denote $\vStacksCoeff$ the category of pairs $(X, \Lambda)$, where $X$ is a small v-stack and $\Lambda$ is a system of integral torsion coefficients on $X$ (see \cite[Definition 3.2.10]{mann-p-adic-6-functors}). The main example for $\Lambda$ is $\Lambda = \ri^+_{X^\sharp}/\pi$ for an untilt $X^\sharp$ of $X$ and a pseudouniformizer $\pi$ on $X^\sharp$ -- the reader is invited to always assume this case in the following. To every $(X,\Lambda) \in \vStacksCoeff$ one can then define the $\infty$-category $\Dqcohri(X,\Lambda)$ in the same way as in \cref{rslt:def-of-DqcohriX}.

The second main result of \cite{mann-p-adic-6-functors} is the construction of a 6-functor formalism for the $\infty$-categories $\DqcohriX X$. In order to formulate it, let us recall the definition of an abstract 6-functor formalism from \cite[\S A.5]{mann-p-adic-6-functors}: Suppose we are given an $\infty$-category $\mathcal C$ of geometric objects and a collection of edges $E$ in $\mathcal C$ which contains all isomorphisms and is stable under composition and pullback. To the pair $(\mathcal C, E)$ one can then associate an $\infty$-operad $\Corr(\mathcal C)_{E,all}^\tensor$ which roughly has the following shape. The underlying $\infty$-category $\Corr(\mathcal C)_{E,all}$ has as objects the same objects as $\mathcal C$ and as morphisms the diagrams of the form $Y \from Y' \to X$ such that $[Y' \to X] \in E$, where the composition of morphisms is given by the obvious pullback diagrams. The operadic structure on $\Corr(\mathcal C)_{E,all}$ is roughly given by $X \tensor Y = X \times Y$ (if $\mathcal C$ admits products then this is literally true and $\Corr(\mathcal C)_{E,all}$ is a symmetric monoidal $\infty$-category). With this notation at hand, a \emph{pre-6-functor formalism} on $(\mathcal C, E)$ is defined to be a map of $\infty$-operads $\D\colon \Corr(\mathcal C)_{E,all}^\tensor \to \infcatinf^\times$, where $\infcatinf^\times$ denotes the cartesian structure on $\infcatinf$. Such a functor $\D$ encodes the following data: For every $X \in \mathcal C$ we get a symmetric monoidal $\infty$-category $\D(X)$, for every morphism $f\colon Y \to X$ in $\mathcal C$ we get a symmetric monoidal functor $f^*\colon \D(X) \to \D(Y)$ and for every morphism $[f\colon Y \to X] \in E$ we get a functor $f_!\colon \D(Y) \to \D(X)$ such that the functors $\tensor$, $f^*$ and $f_!$ satisfy the usual compatibilities. We say that $\D$ is a \emph{6-functor formalism} if the functors $\tensor$, $f^*$ and $f_!$ admit right adjoints.

In the setting of the desired $p$-adic 6-functor formalism, we take $\mathcal C = \vStacksCoeff$ and $E = bdcs$, the class of those maps $f\colon Y \to X$ of small v-stacks which are $p$-bounded (this roughly means that the map has finite $p$-cohomological dimension, see \cite[\S3.5]{mann-p-adic-6-functors}), locally compactifiable and representable in locally spatial diamonds. The main result of \cite{mann-p-adic-6-functors} now reads as follows:

\begin{thm}[{see \cite[Theorem 3.6.12]{mann-p-adic-6-functors}}] \label{rslt:cite-6-functor-formalism}
There is a 6-functor formalism
\begin{align*}
    \Dqcohri\colon \Corr(\vStacksCoeff)^\tensor_{bdcs,all} \to \infcatinf^\times, \qquad (X,\Lambda) \mapsto \Dqcohri(X,\Lambda)
\end{align*}
with the following properties:
\begin{thmenum}
    \item The symmetric monoidal $\infty$-categories $\Dqcohri(X,\Lambda)$ and pullback functors $f^*$ coincide with the version in \cref{rslt:def-of-DqcohriX}.
    \item For every bdcs morphism $f\colon Y \to X$ in $\vStacksCoeff$ the functor $f_!\colon \Dqcohri(Y,\Lambda) \to \Dqcohri(X,\Lambda)$ has the following properties: If $f$ is proper then $f_! = f_*$ is right adjoint to $f^*$. If $f$ is étale then $f_!$ is left adjoint to $f^*$.
\end{thmenum}
\end{thm}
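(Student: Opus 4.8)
This is \cite[Theorem 3.6.12]{mann-p-adic-6-functors}; here I only indicate the shape of the argument. The plan is to take the $*$-part of the theory as given -- the symmetric monoidal $\infty$-categories $\Dqcohri(X,\Lambda)$ together with the pullback functors $f^*$, assembled into the hypercomplete v-sheaf of \cref{rslt:def-of-DqcohriX} -- and to promote it to a full $6$-functor formalism on bdcs maps via the abstract machinery for extending $*$-formalisms developed in \cite[\S A.5]{mann-p-adic-6-functors}. That machinery reduces the task to: (a) constructing the lower-shriek functor, with base change and projection formula, on two generating subclasses of bdcs maps, namely étale maps and proper maps; (b) checking that the two constructions agree on their overlap (for instance on finite étale maps); and (c) observing that every bdcs map admits, locally on the source, a factorization as an open immersion followed by a proper map -- this is exactly the ``locally compactifiable'' clause in the definition of bdcs, while the other two clauses, $p$-boundedness and representability in locally spatial diamonds, are what keep the two special cases and their colimit behavior under control.

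For the étale case, the pullback $f^*$ along an étale (or, more generally, any $0$-truncated) map $f\colon Y \to X$ that is representable in locally spatial diamonds admits a left adjoint $f_! = f_\#$: on an affinoid perfectoid base it is the evident ``extension by zero'' construction for solid almost $\ri^+/\pi$-modules on the étale site, and one descends it over all of $\vStacksCoeff$ using v-hyperdescent. One then verifies étale base change and the projection formula $f_\#(M \tensor f^* N) \isom f_\# M \tensor N$; the real content is the \emph{coherent} compatibility of $f_\#$ with $*$-pullback and with $\tensor$, obtained by carefully tracking adjoints at the $\infty$-categorical level.

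For the proper case, one \emph{defines} $f_! := f_*$, the right adjoint of $f^*$ (which exists because the categories are presentable), and the two substantial points are proper base change, $g^* f_* \isom f'_* g'^{*}$ for every Cartesian square, and the projection formula $f_*(M \tensor f^* N) \isom f_* M \tensor N$. Proper base change is the technical core: by v-descent one reduces to the case where $X = \Spa(A^\sharp, A^{\sharp+})$ is totally disconnected and $f$ is qcqs, then to a computation of the almost cohomology of $\ri^+/\pi$ along the geometric fibers; $p$-boundedness ensures $f_*$ has finite cohomological amplitude, commutes with the filtered colimits used in the descent, and -- crucially -- preserves all colimits, which is what will later make $f^!$ exist. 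The geometric input is proper base change for (almost) $\ri^+/\pi$-cohomology of diamonds, ultimately resting on Scholze's proper base change in \cite{etale-cohomology-of-diamonds}.

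With $f_!$ available on étale and on proper maps and agreeing on their intersection, the abstract extension theorem produces, for each bdcs $f = \bar f \comp j$ with $j$ an open immersion and $\bar f$ proper, a well-defined $f_! := \bar f_* \comp j_!$; it upgrades $\Dqcohri$ to a map of $\infty$-operads out of $\Corr(\vStacksCoeff)^\tensor_{bdcs,all}$, and it supplies the right adjoints $f_*$ of $f^*$, $\IHom$ of $\tensor$, and (using colimit-preservation of $f_!$) $f^!$ of $f_!$, so that $\Dqcohri$ is a $6$-functor formalism; properties (i) and (ii) then hold essentially by construction. I expect the two main obstacles to be precisely these: proper base change in the almost--solid $\ri^+/\pi$ setting, and the purely $\infty$-categorical coherence problem of showing that $\bar f_* \comp j_!$ is independent of the chosen compactification and that every required identity lifts from homotopy categories to the level of $\infty$-operads -- this bookkeeping, in the style of Liu--Zheng and Gaitsgory--Rozenblum, is exactly what \cite[\S A.5]{mann-p-adic-6-functors} is built to absorb.
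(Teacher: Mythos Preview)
The paper does not prove this statement at all: it is a direct citation of \cite[Theorem 3.6.12]{mann-p-adic-6-functors}, stated here only to have a local reference for the 6-functor formalism before extending it to $p$-fine maps. Your sketch is therefore not being compared against any argument in the present paper, but it is a faithful outline of how the construction in \cite{mann-p-adic-6-functors} actually goes (\'etale $f_!$ as left adjoint, proper $f_!=f_*$ with proper base change as the hard input, gluing via the abstract extension machinery of \cite[\S A.5]{mann-p-adic-6-functors}), and your identification of proper base change for solid almost $\ri^+/\pi$-cohomology and the $\infty$-categorical coherence bookkeeping as the two pressure points is accurate.
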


\subsection{Six functors for stacky morphisms}

In order to efficiently apply the above 6-functor formalism to representation theory, i.e. sheaves on classifying stacks, it is very useful to generalize the definition of the shriek functors $f_!$ and $f^!$ to a larger class of maps $f$. Namely, we need to allow certain ``stacky'' maps $f$ as well, similar to what is done in \cite{enhanced-ell-adic-6-functors}.

\begin{defn}
We say that a bdcs map $f\colon Y \to X$ of small v-stacks admits \emph{universal $p$-codescent} if is satisfies the following property: Given any $(X', \Lambda) \in \vStacksCoeff$ which admits a map (in $\vStacksCoeff$) to some strictly totally disconnected space and given any map $X' \to X$ with base-change $f'\colon Y' \to X'$ and Čech nerve $Y'_\bullet \to X'$ the natural functor
\begin{align*}
    \Dqcohrishriek(X',\Lambda) \isoto \varprojlim_{n\in\Delta} \Dqcohrishriek(Y'_n,\Lambda)
\end{align*}
is an equivalence. Here $\Dqcohrishriek$ denotes the functor $(Z,\Lambda) \mapsto \Dqcohrishriek(Z,\Lambda)$, $f \mapsto f^!$.
\end{defn}

\begin{defn} \label{def:p-fine-maps}
A map $f\colon Y \to X$ of small v-stacks is called \emph{$p$-fine} if there is a map $g\colon Z \to Y$ such that $f \comp g$ is bdcs and $g$ is bdcs and admits universal $p$-codescent.
\end{defn}

\begin{lem}
\begin{lemenum}
    \item The condition of being $p$-fine is étale local on source and target.
    \item The collection of $p$-fine maps is stable under composition and base-change.
    \item Every bdcs map is $p$-fine. In particular every étale map and every map of rigid varieties over a fixed base field is $p$-fine.
	\item Let $f\colon Y \to X$ and $g\colon Z \to Y$ be maps of small v-stacks. If $f$ and $f \comp g$ are $p$-fine, then so is $g$.
\end{lemenum}
\end{lem}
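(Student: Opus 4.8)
The plan is to reduce all four assertions to two geometric inputs and then let everything fall out of diagram chases with fibre products. The first input is the package of standard stability properties of bdcs maps, all available from \cite{mann-p-adic-6-functors}: isomorphisms are bdcs, bdcs maps are closed under composition, base change and disjoint unions, and ``bdcs'' is v-local on the target. The second input is that the subclass of bdcs maps \emph{admitting universal $p$-codescent} is again closed under composition and base change. Stability under base change here is immediate from the ``universal'' built into the definition. Stability under composition, on the other hand, is a genuine descent statement: for a composite $a\comp b$ with $a$ and $b$ each admitting universal $p$-codescent, one refines the Čech nerve of a base change of $b$ by the Čech nerves of the induced base changes of $a$ and checks, by the usual bicosimplicial-diagonal argument, that the totalization of $\Dqcohrishriek$ is unchanged under passing to the refined cover. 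This is the exact analogue of ``effective descent morphisms are closed under composition'' in topos theory, it is essentially the only input with real content, and I expect it to be the main obstacle; it should be extracted from the abstract descent results of \cite[\S A.5]{mann-p-adic-6-functors}. I will also use freely that étale covers (more generally, $p$-cohomologically smooth covers) admit universal $p$-codescent, that any map admitting universal $p$-codescent is a v-cover (else descent would already fail along a perfectoid point), and that $\id$ admits universal $p$-codescent, the latter because its Čech nerve is constant and $\varprojlim_\Delta$ of a constant cosimplicial $\infty$-category is that $\infty$-category ($\Delta$ being weakly contractible).

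Granting this, part (iii) is immediate: for bdcs $f$ take $g=\id$ as a witness, and recall that étale maps, as well as maps of rigid varieties over a fixed base field, are bdcs. For part (ii), base-change stability is clear: if $f$ is $p$-fine via $g\colon Z\to Y$ and $X'\to X$ is arbitrary, then $Z\times_X X'\to Y\times_X X'$ is a base change of $g$, hence bdcs and of universal $p$-codescent, while $Z\times_X X'\to X'$ is a base change of the bdcs map $f\comp g$. For composition, given $f\colon Y\to X$ via $g\colon Z\to Y$ and $h\colon X\to W$ via $k\colon U\to X$, I would put $Z' = Z\times_X U$ (formed using $f\comp g$ and $k$) and take the witness $g'\colon Z'\to Z\xrightarrow{g}Y$: the map $Z'\to Z$ is a base change of $k$, so $g'$ is bdcs and of universal $p$-codescent (here using composition-closure), and $h\comp f\comp g'$ factors as $Z'\to U\xrightarrow{h\comp k}W$ with $Z'\to U$ a base change of the bdcs map $f\comp g$, hence is bdcs.

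Part (i) runs on the same engine, using disjoint unions: for an étale cover $\{Y_i\to Y\}$ with each $Y_i\to X$ $p$-fine via $g_i\colon Z_i\to Y_i$, the composite $\bigsqcup_i Z_i\to\bigsqcup_i Y_i\to Y$ is bdcs and of universal $p$-codescent (a disjoint union of such maps followed by an étale cover), and its composite to $X$ is $\bigsqcup_i$ of the bdcs maps $Z_i\to X$; the étale-local-on-target statement is symmetric, replacing this with $\bigsqcup_i Z_i\to Y\times_X(\bigsqcup_i X_i)\to Y$. For part (iv), the crux is that the diagonal $\Delta_f\colon Y\to Y\times_X Y$ of any $p$-fine map $f$ is again $p$-fine: choosing a witness $h\colon V\to Y$ for $f$, the base change of $\Delta_f$ along the bdcs v-cover $h\times_X h\colon V\times_X V\to Y\times_X Y$ is $V\times_Y V\to V\xrightarrow{h}Y$, which is bdcs, so $\Delta_f$ is bdcs by v-locality of bdcs on the target, hence $p$-fine by (iii). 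Then, given $f$ and $f\comp g$ both $p$-fine, factor $g$ as $Z\xrightarrow{\Gamma_g}Z\times_X Y\xrightarrow{\mathrm{pr}_Y}Y$: the projection $\mathrm{pr}_Y$ is a base change of $f\comp g$, hence $p$-fine by (ii); the graph $\Gamma_g$ is a base change of $\Delta_f$ (along $g\times_X\id_Y$), hence $p$-fine by (ii) and the diagonal statement; therefore $g=\mathrm{pr}_Y\comp\Gamma_g$ is $p$-fine by (ii).
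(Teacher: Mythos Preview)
Your treatment of (i)--(iii) is correct and matches the paper's: the only nontrivial ingredient is closure of universal $p$-codescent under composition, which the paper extracts from \cite[Lemma 3.1.2.(4)]{liu-zheng-enhanced-operations} rather than \cite[\S A.5]{mann-p-adic-6-functors}, but this is immaterial.

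Your argument for (iv), however, has a genuine gap. The diagonal reduction is circular unless you can establish that $\Delta_f$ is $p$-fine \emph{without} invoking (iv), and your attempt to do this by showing $\Delta_f$ is actually bdcs does not go through. First, the base change you describe is garbled: pulling $\Delta_f$ back along $h\times_X h$ gives $V\times_Y V \to V\times_X V$, not a map to $Y$. More seriously, you then appeal to ``v-locality of bdcs on the target'', but this is not available: representability in locally spatial diamonds is not a v-local condition (a v-stack that becomes a locally spatial diamond after a v-cover need not be one), so bdcs is not known to descend along v-covers. Your claim that a universal $p$-codescent map is automatically a v-cover is also unproved; it is plausible that such a map is surjective on geometric points, but that alone does not give a v-cover.

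The paper avoids the diagonal entirely. It picks a witness $Y'\to Y$ for $f$, base-changes $g$ to $g'\colon Z'\to Y'$, observes $Z'\to X$ is $p$-fine (bdcs $Z'\to Z$ composed with $p$-fine $f\comp g$), picks a witness $Z''\to Z'$ for that, and then uses the bdcs cancellation property \cite[Lemma 3.6.10.(iv)]{mann-p-adic-6-functors}: since $Z''\to X$ and $Y'\to X$ are both bdcs, so is $Z''\to Y'$, hence $Z''\to Y$ is bdcs. Then $Z''\to Z$ (a composite of two universal-$p$-codescent bdcs maps) witnesses that $g$ is $p$-fine. The cancellation for bdcs is the real input here, and it is applied to a situation where both maps to the common target $X$ are genuinely bdcs---something your diagonal approach cannot arrange, since $Y\times_X Y\to Y$ is only $p$-fine.
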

\begin{proof}
It follows from \cite[Lemma 3.1.2.(4)]{liu-zheng-enhanced-operations} that bdcs maps admitting universal $p$-codescent are stable under composition. Part (ii) of the claim follows immediately and part (iii) is obvious (see also \cite[Lemma 3.6.10, Example 3.6.11]{mann-p-adic-6-functors}). Part (i) follows from the observation that étale maps admit universal $p$-codescent.

It remains to prove (iv), so let $f$ and $g$ be given as in the claim. Choose a bdcs map $Y' \to Y$ admitting universal $p$-codescent such that the composed map $Y' \to X$ is bdcs. Denote $g'\colon Z' \to Y'$ the base-change of $g$ along $Y' \to Y$. Since the map $Z' \to Z$ is bdcs, it follows from (ii) that the composed map $Z' \to X$ is $p$-fine. We can therefore choose a bdcs map $Z'' \to Z'$ admitting universal $p$-codescent such that $Z'' \to X$ is bdcs. Note that the map $Z'' \to X$ factors as $Z'' \to Y' \to X$ and since $Y' \to X$ is bdcs it follows from \cite[Lemma 3.6.10.(iv)]{mann-p-adic-6-functors} that $Z'' \to Y'$ is bdcs. In particular $Z'' \to Y$ is bdcs, so the map $Z'' \to Z$ shows that $g$ is $p$-fine.
\end{proof}

It is completely formal that the $p$-adic 6-functor formalism extends to all $p$-fine maps (in fact a similar extension can be performed in any 6-functor formalism):

\begin{prop} \label{rslt:stacky-6-functor-formalism}
The 6-functor formalism in \cref{rslt:cite-6-functor-formalism} extends uniquely to a 6-functor formalism
\begin{align*}
    \Dqcohri\colon \Corr(\vStacksCoeff)_{pfine,all} \to \infcatinf, \qquad (X,\Lambda) \mapsto \Dqcohri(X,\Lambda),
\end{align*}
where $pfine$ denotes the class of $p$-fine maps in $\vStacksCoeff$.
\end{prop}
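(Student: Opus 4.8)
The plan is to invoke the general extension machinery for abstract 6-functor formalisms established in \cite{mann-p-adic-6-functors} (or equivalently the closely related results of \cite{liu-zheng-enhanced-operations}), applied to the class of $p$-fine maps. Recall that to extend a 6-functor formalism along a larger class of ``stacky'' morphisms, it suffices to check two things: first, that $p$-fine maps form a geometric class in the sense that they contain all isomorphisms, are stable under composition, and are stable under base-change; and second, that any $p$-fine map can be resolved by a $p$-codescending bdcs cover in a way compatible with the $!$-functors already constructed. The first point is precisely the content of the preceding lemma, so the real work is in the descent statement.

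More concretely, here are the steps I would carry out. First I would recall the abstract descent criterion from \cite[\S A.5]{mann-p-adic-6-functors}: given a 6-functor formalism $\D$ on $(\mathcal C, E)$, and an enlarged class $\tilde E \supseteq E$ of morphisms each of which admits (locally on the target, in the appropriate sense) a cover by an $E$-morphism that is of universal $\D$-codescent for the $!$-functors, one obtains a canonical extension $\tilde\D$ of $\D$ along $\tilde E$, and this extension is unique subject to the requirement that it restrict to $\D$ on $E$ and satisfy $!$-descent. Second, I would verify that $p$-fine maps are exactly the class $\tilde E$ to which this criterion applies: by definition, for a $p$-fine $f\colon Y \to X$ there is a bdcs $g\colon Z \to Y$ with $g$ admitting universal $p$-codescent and $f\comp g$ bdcs; the universal $p$-codescent condition is precisely the statement that $\Dqcohrishriek$ satisfies descent along the Čech nerve of $g$ after any base-change to a strictly totally disconnected space (and by v-hyperdescent this is enough). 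Third, I would note that the $!$-pushforward $f_!$ for a $p$-fine $f$ is then \emph{defined} by descent: writing $f_\bullet\colon Z_\bullet \to X$ for the composition of the Čech nerve of $g$ with $f$, each $f_n$ is bdcs so $(f_n)_!$ exists, and one sets $f_! := \varinjlim$ (resp.\ $f^! := \varprojlim$) of the simplicial diagram; the universal $p$-codescent of $g$ guarantees this is independent of the chosen cover and agrees with the old definition when $f$ was already bdcs. The stability properties from the lemma ensure this is functorial and compatible with composition.

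The main obstacle — and the step that requires genuine care rather than formal nonsense — is checking that the construction is \emph{well-defined and coherent at the level of $\infty$-operads}, i.e.\ that one genuinely gets a map of $\infty$-operads $\Corr(\vStacksCoeff)_{pfine,all} \to \infcatinf$ and not merely a compatible family of functors on homotopy categories. This is where one must cite the abstract results precisely: the point is that $\Corr(\mathcal C)_{\tilde E, all}$ can be built as a suitable localization/colimit of $\Corr(\mathcal C)_{E, all}$-type diagrams indexed by covers, and the universal $p$-codescent hypothesis is exactly what makes the relevant limit diagram of $\infty$-categories have the right universal property functorially in the correspondence. In practice I expect this to be a direct application of the ``extension of 6-functor formalisms along a class of $\D$-descendable morphisms'' proposition in \cite{mann-p-adic-6-functors}, so the proof in the paper should be short: verify the hypotheses (geometricity of $pfine$ from the lemma; the codescent hypothesis from the definition of $p$-fine together with the fact, already used in the lemma's proof, that bdcs maps of universal $p$-codescent are closed under composition), then quote the abstract result for both existence and uniqueness. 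One should also remark that the extended formalism automatically retains the good properties of the original — proper/étale base change, projection formula, etc. — since these are encoded in the operadic structure that is being transported.
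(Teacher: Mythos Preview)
Your proposal is essentially correct and follows the same route as the paper: both reduce to citing the abstract extension result \cite[Proposition A.5.14]{mann-p-adic-6-functors}, after verifying that $p$-fine maps satisfy the necessary geometric and codescent hypotheses. The paper does add two small technical moves you gloss over: it first restricts to the basis $\mathcal C \subset \vStacksCoeff$ of objects admitting a map to a strictly totally disconnected space (via \cite[Propositions A.5.16, A.3.11.(i)]{mann-p-adic-6-functors}), and it handles the presentability hypothesis of the abstract result by temporarily passing to $\kappa$-condensed objects; but these are routine and your outline is otherwise on target.
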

\begin{proof}
Let $\mathcal C \subset \vStacksCoeff$ be the full subcategory spanned by those $(X,\Lambda)$ which admit a map (in $\vStacksCoeff$) to some strictly totally disconnected space. Then $\mathcal C$ is a basis of $\vStacksCoeff$, hence by \cite[Proposition A.5.16]{mann-p-adic-6-functors} and \cite[Proposition A.3.11.(i)]{mann-p-adic-6-functors} it is enough to construct the desired extension of the 6-functor formalism on $\mathcal C$, i.e. we need to construct a 6-functor formalism $\Dqcohri\colon \Corr(\mathcal C)_{pfine,all} \to \infcatinf$ extending the one from \cref{rslt:cite-6-functor-formalism}. We now apply \cite[Proposition A.5.14]{mann-p-adic-6-functors} with $E = bdcs$, $E' = pfine$ and $S \subset E$ being the subset of those maps which admit universal $p$-codescent. Condition (a) requires $\Dqcohri(X,\Lambda)$ to be presentable, which can be achieved by temporarily restricting to $\kappa$-condensed objects (as in the proof of \cite[Theorem 3.6.12]{mann-p-adic-6-functors}). Condition (b) follows from the definition of universal $p$-codescent, condition (c) follows from the definition of $p$-fine maps and condition (d) follows from \cite[Lemma 3.6.10]{mann-p-adic-6-functors} (see \cite[Remark A.5.15.(ii)]{mann-p-adic-6-functors}.
\end{proof}

\begin{rmk}
More concretely, the shriek functor $f_!\colon \Dqcohri(Y,\Lambda) \to \Dqcohri(X,\Lambda)$ along a $p$-fine map $f\colon Y \to X$ in $\vStacksCoeff$ is computed as follows. Via v-descent we can reduce to the case that $(X,\Lambda)$ admits a map to some strictly totally disconnected space. Now choose a bdcs map $g\colon Z \to Y$ such that $g$ admits universal $p$-codescent and $f \comp g$ is bdcs. Let $g_\bullet\colon Z_\bullet \to Y$ denote the Čech nerve of $g$. It follows from the $p$-codescent property of $g$ that for every $\mathcal M \in \Dqcohri(Y,\Lambda)$ the natural map $\varinjlim_{n\in\Delta} g_{n!} g_n^! \mathcal M \isoto \mathcal M$ is an isomorphism. Consequently we have
\begin{align*}
    f_! \mathcal M = f_! \varinjlim_{n\in\Delta} g_{n!} g_n^! \mathcal M = \varinjlim_{n\in\Delta} (f \comp g_n)_! g_n^! \mathcal M.
\end{align*}
Note that the maps $f \comp g_n\colon Z_n \to X$ are bdcs, hence $(f \comp g_n)_!$ was already defined in \cref{rslt:cite-6-functor-formalism}.
\end{rmk}

In order to make use of \cref{rslt:stacky-6-functor-formalism} we need a good amount of examples of bdcs maps admitting universal $p$-codescent. Arguably the most important class of such maps is formed by the $p$-cohomologically smooth maps (see \cite[Definition 3.8.1]{mann-p-adic-6-functors}). The following is an analog of \cite[Proposition 3.16]{enhanced-ell-adic-6-functors}:
\begin{lem}
Every bdcs and $p$-cohomologically smooth map of small v-stacks admits universal $p$-codescent.
\end{lem}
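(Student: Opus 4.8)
The plan is to verify that a bdcs, $p$-cohomologically smooth map $f\colon Y \to X$ satisfies the universal $p$-codescent condition directly from the definition. So fix $(X',\Lambda)$ admitting a map to a strictly totally disconnected space, a map $X' \to X$, the base-change $f'\colon Y' \to X'$ (which is still bdcs and $p$-cohomologically smooth, since both classes are stable under base-change), and the Čech nerve $Y'_\bullet \to X'$. We must show $\Dqcohrishriek(X',\Lambda) \isoto \varprojlim_{n \in \Delta} \Dqcohrishriek(Y'_n,\Lambda)$ is an equivalence, where the transition functors are the $!$-pullbacks. The standard move here is to use the Beck–Chevalley/monadicity package: since $f'$ is $p$-cohomologically smooth, $f'^{!}$ admits the left adjoint $f'_{!}$ (they differ by an invertible twist), and similarly all the face maps $Y'_{n} \to Y'_{m}$ in the Čech nerve are $p$-cohomologically smooth, so the whole cosimplicial diagram of $!$-pullbacks is left-adjointable in the relevant sense. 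One then invokes the $\infty$-categorical Barr–Beck–Lurie theorem (in the form used in \cite{liu-zheng-enhanced-operations}, or \cite[\S A]{mann-p-adic-6-functors}) to reduce the claimed limit equivalence to the statement that the comonad on $\Dqcohrishriek(Y',\Lambda)$ built from the adjunction $(f'_{!}, f'^{!})$ has underlying $X'$-category equal to $\Dqcohrishriek(X',\Lambda)$, i.e. that $f'^{!}$ is comonadic.

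Concretely, I would proceed as follows. First, observe that because $f'$ is $p$-cohomologically smooth and bdcs, there is a natural projection-formula-type identification $f'_{!} f'^{!}(-) \isom (-) \otimes f'_{!}\Lambda_{Y'}$ up to the invertible dualizing twist, so the counit $f'_{!} f'^{!} \to \id$ is the map induced by a trace/counit $f'_{!}\omega_{f'} \to \Lambda_{X'}$ together with the smoothness twist. Second, to get comonadicity I want $f'^{!}$ to be conservative and to preserve the relevant limits (totalizations of cosimplicial objects split by the comonad). Preservation of limits is automatic since $f'^{!}$ is a right adjoint. For conservativity: this is exactly where $p$-cohomological smoothness is essential — it forces $f'^{!} = f'^{*} \otimes \omega_{f'}$ with $\omega_{f'}$ an invertible object, so $f'^{!}$ is conservative iff $f'^{*}$ is, and $f'^{*}$ is conservative because $f'$, being a cover (it is the structure map of a Čech nerve, hence in particular surjective as a v-map) and bdcs, satisfies v-descent / $*$-descent in the formalism of \cite{mann-p-adic-6-functors}. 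Here I should be careful that ``$p$-cohomologically smooth'' in the sense of \cite[Definition 3.8.1]{mann-p-adic-6-functors} really does deliver an \emph{invertible} dualizing complex together with the reflexivity needed for the projection formula; this is part of the package in that reference, so it can be cited.

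Finally, to upgrade ``comonadic'' to ``the limit over all of $\Delta$ is an equivalence,'' I would appeal to the fact that a Čech nerve is a split augmented simplicial object after pulling back along $f'$ itself, so by smooth (hence universal) $*$-descent the augmented cosimplicial diagram $\Dqcohrishriek(X',\Lambda) \to \Dqcohrishriek(Y'_\bullet,\Lambda)$ becomes split after one further base-change, and a cosimplicial diagram that is a limit diagram after a conservative, limit-preserving base-change is itself a limit diagram. The genuinely delicate point — and the step I expect to be the main obstacle — is matching up the $!$-restriction functors in the statement with the $*$-restriction functors for which descent is already known in \cite{mann-p-adic-6-functors}: one must check that the invertible twists $\omega$ along all the face and degeneracy maps of the Čech nerve are coherently compatible (satisfy the cocycle condition) so that ``twist by $\omega$'' defines an equivalence of cosimplicial objects between the $!$-diagram and the $*$-diagram. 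This coherence is surely encoded in the 6-functor-formalism machinery (it is what makes $f \mapsto f^{!}$ a functor at all), so the proof will consist mainly of citing the right compatibility from \cite{mann-p-adic-6-functors} and \cite{enhanced-ell-adic-6-functors} rather than new work; compare the parallel argument for \cite[Proposition 3.16]{enhanced-ell-adic-6-functors}.
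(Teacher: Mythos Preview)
Your plan is essentially the paper's argument, but you have overcomplicated it in one place and have a small confusion in another.

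First, your worry about ``matching up the $!$-restriction functors with the $*$-restriction functors'' via a coherent system of $\omega$-twists is a red herring. The paper never compares $!$-descent to $*$-descent; it works with $!$-functors throughout. Concretely: the functor $f_\bullet^!$ has a left adjoint $f_{\bullet !}\colon (\mathcal M_\bullet)\mapsto \varinjlim_{n\in\Delta} f_{n!}\mathcal M_n$. Conservativity of $f_\bullet^!$ reduces to conservativity of $f_0^!$, which is immediate from $f_0^! = f_0^* \otimes \omega_{f_0}$ with $\omega_{f_0}$ invertible (this is the only place the $*$-functors appear, and no coherence is needed). Given conservativity, an equivalence follows from full faithfulness of the left adjoint, i.e.\ from checking that $f_0^! f_{\bullet!}\mathcal M_\bullet \to \mathcal M_0$ is an isomorphism. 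Since $f_0^!$ preserves colimits (smoothness again), this unwinds via proper/smooth base-change for the $!$-functors to the statement that the pulled-back Čech nerve $Y'_\bullet \to Y_0$ (i.e.\ the base-change of $Y_\bullet \to X$ along $f_0$) gives an equivalence on $\Dqcohrishriek$; but this augmented simplicial object is split, so the equivalence is formal. No cocycle condition on dualizing objects is ever invoked---the compatibility is already packaged in the 6-functor formalism's base-change isomorphism $f'_{n!} g_n^! \simeq f_0^! f_{n!}$.

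Second, your justification of conservativity of $f'^*$ (``$f'$ is the structure map of a Čech nerve, hence surjective'') is confused: $f'$ is the map \emph{whose} Čech nerve you form, not a face map within it, so this sentence says nothing about surjectivity. The paper simply asserts conservativity of $f_0^*$; you should cite the appropriate descent statement from \cite{mann-p-adic-6-functors} rather than argue as you did.

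Finally, your steps ``prove comonadicity via Barr--Beck'' and then ``upgrade comonadic to the full $\Delta$-limit via the split trick'' are redundant: Lurie's Barr--Beck already delivers the limit description, and conversely the paper's direct argument (conservative right adjoint plus fully faithful left adjoint) bypasses any explicit appeal to monadicity. Either route works; just pick one.
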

\begin{proof}
Let $f\colon Y \to X$ be a bdcs and $p$-cohomologically smooth map in $\vStacksCoeff$ with Čech nerve $f_\bullet\colon Y_\bullet \to X$; we need to show that the natural functor $f_\bullet^!\colon \Dqcohrishriek(X,\Lambda) \isoto \varprojlim_{n\in\Delta} \Dqcohrishriek(Y_n,\Lambda)$. This functor admits a left adjoint
\begin{align*}
    f_{\bullet!}\colon \varprojlim_{n\in\Delta} \Dqcohrishriek(Y_n,\Lambda) \to \Dqcohrishriek(X,\Lambda), \qquad \mathcal M_\bullet \mapsto \varinjlim_{n\in\Delta} f_{n!} \mathcal M_n.
\end{align*}
Note that $f_\bullet^!$ is conservative: This follows from the fact that $f_0^!$ is conservative, which in turn is implied by the fact that $f_0^!$ is just a twist of $f_0^*$ by definition of the $p$-cohomological smoothness of $f_0$. Thus in order to show that $f_\bullet^!$ is an equivalence, it is enough to show that $f_{\bullet!}$ is fully faithful, i.e. for given $\mathcal M_\bullet \in \varprojlim_{n\in\Delta} \Dqcohrishriek(Y_n,\Lambda)$ we need to see that the natural map $f_\bullet^! f_{\bullet!} \mathcal M_\bullet \isoto \mathcal M_\bullet$ is an isomorphism. This boils down to showing that the map $f_0^! f_{\bullet!} \mathcal M_\bullet \to \mathcal M_0$ is an isomorphism. Since $f_0$ is $p$-cohomologically smooth, $f_0^!$ preserves small colimits, so we end up with the claim that the natural map
\begin{align*}
    \varinjlim_{n\in\Delta} f_0^! f_{n!} \mathcal M_n \isoto \mathcal M_0
\end{align*}
is an isomorphism. Let us denote by $f'_\bullet\colon Y'_\bullet \to Y_0$ the base-change of $Y_\bullet \to X$ along $f_0\colon Y_0 \to X$. Denote $g_\bullet\colon Y'_\bullet \to Y_\bullet$ the natural projections. Then for every $n \ge 0$ the natural morphism $f'_{n!} g_n^! \mathcal M_n \isoto f_0^! f_{n!}\mathcal M_n$ is an isomorphism (by $p$-cohomological smoothness of $f_0$ and $g_n$ this reduces to proper base-change). On the other hand $g_\bullet^! \mathcal M_\bullet$ defines an object of $\varprojlim_{n\in\Delta} \Dqcohrishriek(Y'_n,\Lambda)$ so the above claimed isomorphism reduces to showing that the natural functor $\Dqcohrishriek(Y_0,\Lambda) \isoto \varprojlim_{n\in\Delta} \Dqcohrishriek(Y'_n,\Lambda)$ is an equivalence. This means that we can replace $Y_\bullet \to X$ by $Y'_\bullet \to Y_0$. But the latter augmented simplicial object is split, hence the desired equivalence of $\infty$-categories is formal.
\end{proof}

Let $\vStackspip \subset \vStackspi$ be the full subcategory spanned by those pairs $(X^\sharp, \pi)$ such that $\pi | p$. Recall the definition of the $\infty$-category $\DqcohriX{X^\sharp}^\varphi$ from \cite[Definition 3.9.10.(b)]{mann-p-adic-6-functors}: Roughly, this is the $\infty$-category of pairs $(\mathcal M, \varphi_{\mathcal M})$ where $\mathcal M \in \DqcohriX{X^\sharp}$ and $\varphi_{\mathcal M}$ is an isomorphism $\varphi_{\mathcal M}\colon \varphi^* \mathcal M \isoto \mathcal M$; here $\varphi$ denotes the Frobenius $x \mapsto x^p$ on $\ri^+_{X^\sharp}/\pi$. The above constructed 6-functor formalism extends to $\varphi$-modules:

\begin{prop}
The 6-functor formalism from \cref{rslt:stacky-6-functor-formalism} extends to a 6-functor formalism
\begin{align*}
    \Dqcohri(-)^\varphi\colon \Corr(\vStacksCoeff)_{pfine,all} \to \infcatinf, \qquad (X,\Lambda) \mapsto \Dqcohri(X,\Lambda)^\varphi.
\end{align*}
The forgetful functor $\Dqcohri(X,\Lambda)^\varphi \to \Dqcohri(X,\Lambda)$ preserves all small limits and colimits and commutes with the 6 functors $\tensor$, $\IHom$, $f^*$, $f_*$, $f_!$ and $f^!$. 
\end{prop}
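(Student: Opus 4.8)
The plan is to obtain the $\varphi$-equivariant formalism as a limit/fixed-point construction applied to the already-constructed formalism $\Dqcohri(-)$ of \cref{rslt:stacky-6-functor-formalism}, following the template used in \cite{mann-p-adic-6-functors} to build $\Dqcohri(X^\sharp/\pi)^\varphi$ on affinoids. Concretely, on $\vStackspip$ the Frobenius $\varphi\colon x \mapsto x^p$ on $\ri^+_{X^\sharp}/\pi$ is a natural endomorphism of the coefficient system $\Lambda$, hence induces a natural automorphism of the functor $(X,\Lambda)\mapsto\Dqcohri(X,\Lambda)$ viewed as a map of $\infty$-operads $\Corr(\vStacksCoeff)_{pfine,all}\to\infcatinf$; its pullback action gives, for each $X$, an autoequivalence $\varphi^*$ of $\Dqcohri(X,\Lambda)$ compatible with all six functors (the compatibility being automatic since $\varphi$ is a morphism in the source category of the formalism). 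One then defines $\Dqcohri(X,\Lambda)^\varphi$ as the lax equalizer (equivalently, the homotopy fixed points) of $\id$ and $\varphi^*$ on $\Dqcohri(X,\Lambda)$; this is precisely \cite[Definition 3.9.10.(b)]{mann-p-adic-6-functors} spelled out as a category of pairs $(\mathcal M,\varphi_{\mathcal M})$.

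The key steps, in order, are as follows. First, I would package $\varphi$ as a natural transformation of 6-functor formalisms — i.e. an automorphism of $\Dqcohri(-)$ in $\Fun(\Corr(\vStacksCoeff)_{pfine,all},\infcatinf)$ — using that $\varphi$ is a well-defined endomorphism of each $(X,\Lambda)\in\vStackspip$ (more precisely, of the system of coefficients $\Lambda=\ri^+_{X^\sharp}/\pi$) commuting with all structure maps. Second, I would apply the lax-equalizer construction fiberwise; since lax equalizers commute with the relevant (co)limits, and since for each $X$ the functor $\varphi^*$ is symmetric monoidal and commutes with $f^*$, $f_!$ (the latter by the displayed formula $f_!\mathcal M = \varinjlim_{n} (f\comp g_n)_! g_n^!\mathcal M$ together with compatibility of $\varphi^*$ with $g_n^!$ and $(f\comp g_n)_!$ at the bdcs level from \cite[Theorem 3.6.12]{mann-p-adic-6-functors}), the pairs $(\mathcal M,\varphi_{\mathcal M})$ inherit all six operations. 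This gives the lift $\Dqcohri(-)^\varphi\colon\Corr(\vStacksCoeff)_{pfine,all}\to\infcatinf$, and it extends the formalism of \cref{rslt:stacky-6-functor-formalism} because the forgetful functor is, by construction, a map of 6-functor formalisms.

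For the second assertion, the forgetful functor $\Dqcohri(X,\Lambda)^\varphi\to\Dqcohri(X,\Lambda)$ is the canonical projection from the homotopy fixed points; it is conservative and preserves all limits and colimits because the lax equalizer sits in a limit diagram built from $\Dqcohri(X,\Lambda)$ and the (co)continuous functors $\id,\varphi^*$, and limits of diagrams of presentable categories along colimit-preserving functors are computed componentwise. Compatibility with $\tensor$, $f^*$, $f_*$, $f_!$ (hence by adjunction with $\IHom$ and $f^!$) is precisely the statement that the forgetful functor underlies a morphism of 6-functor formalisms, which is how it was produced. I expect the main obstacle to be purely bookkeeping at the $\infty$-categorical level: namely, promoting "$\varphi$ is compatible with everything" to an honest automorphism in the $\infty$-operad $\Fun(\Corr(\vStacksCoeff)_{pfine,all},\infcatinf)$ and checking that the lax-equalizer operation can be performed functorially in $\Corr(\vStacksCoeff)_{pfine,all}$ rather than only object-by-object. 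This is handled exactly as in \cite[\S3.9]{mann-p-adic-6-functors} — where the analogous statement is proved on the subcategory of bdcs maps — so I would simply invoke that construction and observe that it is insensitive to enlarging the class of geometric morphisms from $bdcs$ to $pfine$, since the extension in \cref{rslt:stacky-6-functor-formalism} is a formal Kan-extension procedure with which the fiberwise lax equalizer manifestly commutes.
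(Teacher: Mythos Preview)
Your overall strategy---take the $\varphi$-equivariant formalism on $bdcs$ maps from \cite[Proposition 3.9.13]{mann-p-adic-6-functors} and run the same extension procedure as in \cref{rslt:stacky-6-functor-formalism}---is exactly what the paper does, and your remark that the lax-equalizer construction commutes with that formal Kan-extension step is the right intuition for why the construction goes through.

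There is, however, a genuine gap in your final paragraph. You assert that compatibility of the forgetful functor with $\IHom$ and $f^!$ follows ``by adjunction'' from compatibility with $\tensor$ and $f_!$. This is not automatic. A morphism of 6-functor formalisms in the sense of functors out of $\Corr$ encodes compatibility with $\tensor$, $f^*$, and $f_!$; passing to right adjoints produces only a Beck--Chevalley \emph{transformation} $\mathrm{Forget} \comp f^!_\varphi \to f^! \comp \mathrm{Forget}$, and showing this is an isomorphism requires an argument. (Concretely: taking right adjoints of $\mathrm{Forget} \comp f_{!,\varphi} \cong f_! \comp \mathrm{Forget}$ gives a statement about the \emph{right adjoint} of $\mathrm{Forget}$, not about $\mathrm{Forget}$ itself.) The paper singles out precisely this point: it observes that functoriality of the Kan extension handles $\tensor$, $\IHom$, $f^*$, $f_*$, $f_!$, but then gives a separate argument for $f^!$. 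Namely, one reduces (as in \cite[Lemma 3.9.12.(v)]{mann-p-adic-6-functors}) to the case where the target of $f$ is strictly totally disconnected, chooses a bdcs cover $g\colon Z \surjto Y$ with universal $p$-codescent such that $f\comp g$ is bdcs, and uses that $g^!$ is conservative (by $p$-codescent) to reduce the claim for $f^!$ to the claim for $(f\comp g)^! = g^! f^!$, which is the bdcs case already handled in \cite[Proposition 3.9.13]{mann-p-adic-6-functors}. You should replace the ``by adjunction'' step with this reduction.

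A minor side remark: you write ``lax equalizer (equivalently, the homotopy fixed points)''. These agree only when $\varphi^*$ is an autoequivalence, which is not asserted here; the definition in \cite[Definition 3.9.10.(b)]{mann-p-adic-6-functors} is the lax version, and that is what you should work with.
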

\begin{proof}
By \cite[Proposition 3.9.13]{mann-p-adic-6-functors} the same is true for the 6-functor formalism on bdcs maps. To get this result also on $p$-fine maps, we note that the same extension procedure as in the proof of \cref{rslt:stacky-6-functor-formalism} applies verbatim to the $\varphi$-module case so that we get the desired 6-functor formalism. By \cite[Proposition 3.9.13]{mann-p-adic-6-functors} and functoriality of Kan extensions the forgetful functor preserves small limits and colimits and commutes with all the 6 functors except possibly with $f^!$. To see that it also commutes with $f^!$, note first that by the same argument as in the proof of \cite[Lemma 3.9.12.(v)]{mann-p-adic-6-functors} we can reduce to the case that the target of $f\colon Y \to X$ is a strictly totally disconnected space. Choose also a bdcs cover $g\colon Z \surjto Y$ such that $g$ admits universal $p$-codescent and $f \comp g$ is bdcs. Then by universal $p$-codescent the functor $g^!$ is conservative, hence to check that $f^!$ commutes with the forgetful functor it is enough to check that $g^! f^!$ commutes with the forgetful functor. This follows from the bdcs case.
\end{proof}

In the following we will often omit the explicit mention of untilt and pseudouniformizer and simply write $X \in \vStackspip$. The main reason for introducing $\varphi$-modules is that they have a tight connection to actual étale $\F_p$-sheaves: The $p$-torsion Riemann-Hilbert correspondence (see \cite[Theorem 3.9.23]{mann-p-adic-6-functors}) tells us that for $X \in \vStackspip$ there is a natural fully faithful embedding
\begin{align*}
    \D_\et(X, \F_p)^\oc \injto \DqcohriX X^\varphi, \qquad \mathcal F \mapsto \mathcal F \tensor_{\F_p} \ri^{+a}_X/\pi
\end{align*}
which induces an equivalence of dualizable objects on both sides. Here $\D_\et(X,\F_p)^\oc$ denotes the (derived) $\infty$-category of overconvergent $\F_p$-sheaves. The Riemann-Hilbert functor admits a right adjoint $(-)^\varphi\colon \DqcohriX X^\varphi \to \D_\et(X,\F_p)^\oc$.

\subsection{Cohomologically smooth maps} The notion of $p$-cohomologically smooth maps of small v-stacks generalizes in a straightforward way from bdcs maps to $p$-fine maps:

\begin{defn}
A $p$-fine map $f\colon Y \to X$ of small v-stacks is called \emph{$p$-cohomologically smooth} if for every map $X' \to X$ from a strictly totally disconnected perfectoid space $X' = \Spa(A', A'^+)$ there is a pseudouniformizer $\pi \in A'^+$ such that the map $f'\colon Y' := Y \times_X X' \to X'$ satisfies the following property: The natural morphism $f'^* \tensor f'^!(\ri^{+a}_{X'}/\pi) \isoto f'^!$ is an isomorphism of functors $\DqcohriX{X'} \to \DqcohriX{Y'}$ and $f'^!(\ri^{+a}_{X'}/\pi)$ is invertible.
\end{defn}

The basic results on bdcs and $p$-cohomologically smooth maps extend to the $p$-fine setting, as follows.

\begin{prop}
\begin{propenum}
    \item Let $f\colon Y \to X$ be a $p$-fine and $p$-cohomologically smooth map in $\vStacksCoeff$. Then the natural morphism
    \begin{align*}
        f^* \tensor f^!\Lambda^a \isoto f^!
    \end{align*}
    is an isomorphism of functors $\Dqcohri(X,\Lambda) \to \Dqcohri(Y,\Lambda)$ and $f^!\Lambda^a$ is invertible.
    
    \item Let
    \begin{center}\begin{tikzcd}
        Y' \arrow[r,"g'"] \arrow[d,"f'"] & X' \arrow[d,"f"]\\
        X' \arrow[r,"g"] & X
    \end{tikzcd}\end{center}
    be a cartesian square in $\vStacksCoeff$.
    \begin{enumerate}[(a)]
        \item Assume that $f$ is $p$-fine and that either $f$ or $g$ is $p$-fine and $p$-cohomologically smooth. Then the natural morphism
        \begin{align*}
            g'^* f^! \isoto f'^! g^*
        \end{align*}
        is an isomorphism of functors $\Dqcohri(X,\Lambda) \to \Dqcohri(Y',\Lambda)$.
        
        \item Assume that $g$ is $p$-fine and $p$-cohomologically smooth. Then the natural morphism
        \begin{align*}
            g^* f_* \isoto f'_* g'^*
        \end{align*}
        is an isomorphism of functors $\Dqcohri(Y,\Lambda) \to \Dqcohri(X',\Lambda)$.
    \end{enumerate}
    
    \item Let $f\colon Y \to X$ be a $p$-fine and $p$-cohomologically smooth map in $\vStacksCoeff$. Then for all $\mathcal M, \mathcal N \in \Dqcohri(X,\Lambda)$ the natural morphism
    \begin{align*}
        f^* \IHom(\mathcal M, \mathcal N) \isoto \IHom(f^* \mathcal M, f^* \mathcal N)
    \end{align*}
    is an isomorphism.
\end{propenum}
\end{prop}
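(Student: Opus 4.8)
The plan is to bootstrap all three assertions from the bdcs case of \cite{mann-p-adic-6-functors}, using v-descent together with the Čech description of $f_!$ along $p$-fine maps from the remark following \cref{rslt:stacky-6-functor-formalism}. For (i), the assertion is v-local on $X$ — all functors involved, and the natural transformation between them, commute with v-descent — so I would reduce to the case that $X$ is strictly totally disconnected. There the invertibility of $f^!\Lambda^a$ and the isomorphism $f^*\tensor f^!\Lambda^a\isoto f^!$ are, when $\Lambda = \ri^+_{X^\sharp}/\pi$ for the pseudouniformizer $\pi$ supplied by the definition of $p$-cohomological smoothness (taking $X'=X$ there), exactly the defining property; a general coefficient system, or a different choice of $\pi$, is handled by a further v-descent reducing to that case, as in \cite{mann-p-adic-6-functors}.

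Part (iii) is then formal. Granting (i), the object $\omega_f := f^!\Lambda^a$ is invertible, so $f_\natural := f_!((-)\tensor\omega_f^{-1})$ is left adjoint to $f^*$ (one checks $\Hom(f_\natural\mathcal P,\mathcal N)\isom\Hom(\mathcal P,\omega_f\tensor f^!\mathcal N)\isom\Hom(\mathcal P,f^*\mathcal N)$ using (i)); together with the projection formula for $f_!$ — which holds for every $p$-fine map, being part of the operadic structure of $\Dqcohri\colon\Corr(\vStacksCoeff)_{pfine,all}\to\infcatinf$ — the claimed identity follows from the usual adjunction computation: for $\mathcal P\in\Dqcohri(Y,\Lambda)$, $\Hom(\mathcal P, f^*\IHom(\mathcal M,\mathcal N))\isom\Hom(f_\natural(\mathcal P\tensor f^*\mathcal M),\mathcal N)\isom\Hom(\mathcal P,\IHom(f^*\mathcal M,f^*\mathcal N))$.

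For (ii), the base-change isomorphism $g^*f_!\isoto f'_!g'^*$ is valid for all $p$-fine $f$, again by the operad structure, and the identity $f'^!(\mathcal A\tensor\mathcal L)\cong f'^!\mathcal A\tensor f'^*\mathcal L$ for invertible $\mathcal L$ holds in any $6$-functor formalism (it follows from the projection formula and the $f^!$--$\IHom$ compatibility). Using these and (i), parts (a) and (b) reduce to a single statement: when $f$ is $p$-cohomologically smooth, (i) rewrites both sides of (a) as $(f'^*g^*)(-)\tensor g'^*\omega_f$ and $(f'^*g^*)(-)\tensor\omega_{f'}$, so the base-change map is the identity tensored with the comparison $g'^*\omega_f\to\omega_{f'}$; when instead $g$ is $p$-cohomologically smooth, functoriality of $(-)^!$ and $g^*=g^!(-)\tensor\omega_g^{-1}$ land one on the analogous comparison for $\omega_g$; and (b) follows by passing to the left adjoint $g_\natural$ of $g^*$ and invoking base change for $g_!$ together with the same comparison. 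In all cases the content is the assertion $(\star)$: \emph{for a $p$-fine $p$-cohomologically smooth map $f$, the invertible twist $f^!\Lambda^a$ is compatible with arbitrary base change}, i.e.\ $g'^*(f^!\Lambda^a)\to f'^!(g^*\Lambda^a)$ is an isomorphism.

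The step I expect to be the main obstacle is $(\star)$. For bdcs maps it is established in \cite{mann-p-adic-6-functors}, and in the $\ell$-adic stacky formalism of \cite{enhanced-ell-adic-6-functors} one propagates it from the bdcs case up a cohomologically smooth cover; that route is unavailable here, since the cover $g\colon Z\to Y$ witnessing $p$-fineness of $f$ need not be $p$-cohomologically smooth (already $*\to B\underline{G}$ is not). My plan is instead: reduce $X$, and hence the base of the base-change, to strictly totally disconnected by v-descent; express $f^!\Lambda^a$ and its base-change through the colimit presentation $f_! = \varinjlim_{n\in\Delta}(f\comp g_n)_!\,g_n^!$ over the Čech nerve $g_\bullet\colon Z_\bullet\to Y$, in which every $f\comp g_n$ is bdcs and $g^!$ is conservative (and remains so after base change); and then analyze the comparison term-by-term in terms of the bdcs maps $f\comp g_n$, using proper base change and the base-change compatibilities available in the bdcs formalism along the Čech faces. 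The delicate point — and the heart of the matter — is that the individual comparison maps attached to the non-smooth bdcs maps $f\comp g_n$ need not be isomorphisms, yet after applying the functors $g_{n!}$ and passing to the colimit over $\Delta$ the resulting map is forced to be an isomorphism by the codescent property of $g$; making this precise, with careful bookkeeping of which $(-)^!$--$(-)^*$ base changes along the cover are legitimate, is where the real work lies.
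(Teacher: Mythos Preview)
Your reduction of (i), (iii), and (ii)(b) to the core statement $(\star)$ is correct and mirrors the paper's organization: the paper likewise notes that (i) over a strictly totally disconnected base is literally the definition, that (iii) and (ii)(b) are formal once (ii)(a) is known, and that within (ii)(a) the case ``$g$ smooth'' reduces to the case ``$f$ smooth''. So the architecture of your argument matches the paper.

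The gap is in your attack on $(\star)$. You propose to compute $f^!\Lambda^a$ and its base change through the Čech cover $c_\bullet\colon Z_\bullet \to Y$ witnessing $p$-fineness; via codescent this means realizing $f^!\Lambda^a$ as the compatible system $\bigl((f c_n)^!\Lambda^a\bigr)_n$ in $\varprojlim_n \Dqcohrishriek(Z_n,\Lambda)$. The problem is that to compare this with $g'^* f^!\Lambda^a$ on $Y'$, you must compute $(c'_n)^!\bigl(g'^* f^!\Lambda^a\bigr)$, and for this you need a base-change identity of the form $(c'_n)^! g'^* \cong h_n^* c_n^!$ with $c_n$ bdcs but \emph{not} $p$-cohomologically smooth and $g'$ arbitrary---precisely an instance of the statement you are trying to establish. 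You flag this yourself (``the individual comparison maps \dots\ need not be isomorphisms''), but the hoped-for mechanism by which passing to the colimit over $\Delta$ repairs this is not supplied, and I do not see how codescent alone would do it.

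The paper resolves $(\star)$ by a different device. After reducing to $X$, $X'$ strictly totally disconnected, it proves two auxiliary facts: first, that $g'_*\colon \Dqcohri(Y',\Lambda) \to \Dqcohri(Y,\Lambda)$ is \emph{conservative on bounded objects}---shown by covering $Y'$ (via the $p$-fine structure) by $p$-bounded affinoid perfectoids, over which pushforward along the base-changed $g'$ becomes a forgetful functor of module categories by \cite[Theorem 3.5.21]{mann-p-adic-6-functors}; second, that $f^!\Lambda^a$ and $f'^!\Lambda^a$ are \emph{bounded}, which is deduced from the Riemann--Hilbert correspondence \cite[Theorem 3.9.23]{mann-p-adic-6-functors}. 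These two inputs reduce one to checking that $g'_*\alpha$ is an isomorphism, and at that point the proof of \cite[Proposition 3.8.4.(ii)]{mann-p-adic-6-functors} applies verbatim. The conservativity-plus-boundedness step is the specific idea your proposal is missing.
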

\begin{proof}
We first note that part (i) is formal if $X$ is strictly totally disconnected and follows for general $X$ once we have shown (ii).(a) (see the proof of \cite[Proposition 3.8.4.(i)]{mann-p-adic-6-functors}). Moreover, once we have shown (ii).(a), also (iii) is formal (see the proof of \cite[Proposition 3.8.7]{mann-p-adic-6-functors}) and (ii).(b) is formal (see the proof of \cite[Proposition 3.8.6.(ii)]{mann-p-adic-6-functors}. Therefore, the whole claim boils down to showing (ii).(a). By the proof of \cite[Proposition 3.8.6.(iii)]{mann-p-adic-6-functors} the case that $g$ is $p$-cohomologically smooth follows formally from the case that $f$ is $p$-cohomologically smooth. Thus from now on we can assume that we are given a cartesian square as in (ii) with $f$ being $p$-fine and $p$-cohomologically smooth and we need to show that the morphism $g'^* f^! \isoto f'^! g^*$ is an isomorphism. As in the proof of \cite[Proposition 3.8.4.(ii)]{mann-p-adic-6-functors} we can formally reduce to the case that $X$ and $X'$ are strictly totally disconnected. Then by (i) (which holds unconditionally on (ii).(a) in the strictly totally disconnected case) we are reduced to showing that the natural map $\alpha\colon g'^* f^! \Lambda^a \isoto f'^! \Lambda^a$ is an isomorphism. We claim that $g'_*$ is conservative on bounded objects (in the sense of \cite[Definition 3.3.1.(a)]{mann-p-adic-6-functors}): Pick any cover $Z' \surjto X'$ such that the composition $Z \to X$ is bdcs (this can be done by the definition of $p$-fine maps). Then $Z'$ is a locally spatial diamond, so we can find a pro-étale cover of $Z'$ by totally disconnected spaces $Z_i$. Letting $W_i := Z_i \times_{X'} Y' = Z_i \times_X X'$ and denoting $g_i'\colon W_i \to Z_i$ the base-change of $g'$, we note that each $W_i$ is a $p$-bounded affinoid perfectoid space. Hence by \cite[Theorem 3.5.21]{mann-p-adic-6-functors} each $g'_i$ is a forgetful functor of module categories and in particular conservative. Using also bounded base-change (see \cite[Proposition 3.3.7]{mann-p-adic-6-functors}) we deduce that $g'$ is conservative on bounded objects. Now note that $f^! \Lambda^a$ is bounded: If $\Lambda = \ri^+_X/\pi$ for some $\pi | p$ then this follows from the Riemann-Hilbert correspondence (see \cite[Theorem 3.9.23]{mann-p-adic-6-functors}); for general $\Lambda$ one can make the usual reductions. Similarly $f'^!\Lambda^a$ is bounded. Thus altogether we deduce that it is enough to show that $g'_*\alpha\colon g'_* g'^* f^! \Lambda^a \isoto g'_* f'^! \Lambda^a$ is an isomorphism. The remaining proof of \cite[Proposition 3.8.4.(ii)]{mann-p-adic-6-functors} applies verbatim.
\end{proof}

\begin{lem}
\begin{lemenum}
    \item The condition of being $p$-fine and $p$-cohomologically smooth is analytically local on both source and target.
    
    \item Among $p$-fine maps, the condition of being $p$-cohomologically smooth is v-local on the target and $p$-cohomologically smooth local on the source.
    
    \item $p$-fine and $p$-cohomologically smooth maps are stable under composition and base-change.
\end{lemenum}
\end{lem}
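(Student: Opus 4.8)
The plan is to establish the three parts in the order (iii), (i), (ii): the local-to-global arguments for (i) and (ii) use the compatibilities of the preceding proposition -- the identity $f^!\cong f^*\tensor f^!\Lambda^a$ and the base-change isomorphism $g'^*f^!\isoto f'^!g^*$ -- which already presuppose stability of $p$-fine $p$-cohomologically smooth maps under base change. For (iii), stability of $p$-fine maps under composition and base change was proved above, so only $p$-cohomological smoothness needs discussion. Base change is immediate from the definition: if $f\colon Y\to X$ is $p$-fine and $p$-cohomologically smooth and $X'\to X$ is arbitrary, then every strictly totally disconnected $X''\to X'$ is in particular a strictly totally disconnected space over $X$, and the base change of $f$ along $X''\to X$ coincides with the base change of $f'\colon Y\times_X X'\to X'$ along $X''\to X'$, so the defining condition for $f$ over $X''$ yields the one required for $f'$. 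For composition, let $g\colon Z\to Y$ be $p$-fine and $p$-cohomologically smooth as well, and write $\omega_f:=f^!\Lambda^a$, $\omega_g:=g^!\Lambda^a$, both invertible; part (i) of the preceding proposition applied to $g$ gives $(f\comp g)^!\Lambda^a=g^!\omega_f\cong g^*\omega_f\tensor\omega_g$, again invertible, and, applying the same statement twice, $(f\comp g)^!\mathcal M\cong g^*f^*\mathcal M\tensor g^*\omega_f\tensor\omega_g\cong(f\comp g)^*\mathcal M\tensor(f\comp g)^!\Lambda^a$, compatibly with the canonical comparison map; since this holds over an arbitrary base, $f\comp g$ is $p$-cohomologically smooth.

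\emph{Part (i).} Analytic locality of $p$-fineness follows from its étale locality, as open immersions are étale. For the smoothness condition, the ``only if'' directions follow from (iii): restricting the source along an open immersion composes $f$ with an étale (hence $p$-fine and $p$-cohomologically smooth) map, and restricting the target along an open immersion is a base change. For gluing we test the definition over a strictly totally disconnected $X'\to X$. Given an analytic cover $\{U_i\}$ of $X$, cover $X'$ by rational subsets $V_i$ each mapping into some $U_i$; each $V_i$ is again strictly totally disconnected, and the base change of $f'\colon Y'\to X'$ over $V_i$ is a base change of $f^{-1}(U_i)\to U_i$, so by part (i) of the preceding proposition it satisfies the smoothness identity with invertible dualizing object. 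The $f'^{-1}(V_i)$ form an open cover of $Y'$, along whose open immersions the compatibilities of the preceding proposition compute $f'^!$, and the associated pullback functors are jointly conservative; since invertibility is analytically local, we conclude that $f'^!\Lambda^a$ is invertible and that $f'^*\tensor f'^!\Lambda^a\isoto f'^!$. The case of an analytic cover of the source is handled identically, covering $Y'$ by opens directly.

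\emph{Part (ii).} That $p$-cohomological smoothness is $p$-cohomologically smooth local on the source is proved as the source case of (i), replacing the open cover of $Y'$ by the base change of a $p$-cohomologically smooth cover $\{Y_j\to Y\}$: the $Y_j\to Y$ are $p$-fine and $p$-cohomologically smooth, so the preceding proposition applies to them; the induced pullback functors are jointly conservative since $\Dqcohri$ is a v-sheaf; and the relation $\omega_{f\comp k_j}\cong k_j^*\omega_f\tensor\omega_{k_j}$ among dualizing objects (with the outer two invertible, $k_j\colon Y_j\to Y$) forces $f^!\Lambda^a$ to become invertible after pullback, hence invertible, with the base-change formula following after pullback. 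The remaining assertion, v-locality on the target, is the main obstacle, since a v-cover of the base need be neither $p$-cohomologically smooth nor representable, so $!$-base change along it is not provided by the preceding proposition. Here one reduces to a strictly totally disconnected base $X'$, refines the pulled-back v-cover to a strictly totally disconnected v-cover $g\colon Z\to X'$ which -- after a further refinement using quasicompactness -- factors through a single member of the given cover of $X$, so that the base change $f_Z$ of $f'$ is $p$-cohomologically smooth; one then descends the invertibility of $f'^!\Lambda^a$ and the isomorphism $f'^*\tensor f'^!\Lambda^a\isoto f'^!$ along $g$ exactly as in the proof of the preceding proposition, using that $f'^!$ takes bounded values (for $\Lambda=\ri^+_{X'}/\pi$ by the Riemann-Hilbert correspondence, and in general by the usual reductions), that pushforward along $g$ is conservative on bounded objects, and that after pushing forward the comparison maps in question are identified with ones already shown there to be isomorphisms. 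I expect essentially all of the work to be in this last descent step; the rest is bookkeeping built on the preceding proposition.
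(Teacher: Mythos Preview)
Your proposal is correct and follows exactly the formal pattern the paper has in mind: the paper's own proof is the single sentence ``This is formal, see \cite[Lemma 3.8.5]{mann-p-adic-6-functors}'', meaning that the arguments given there for bdcs maps transfer verbatim once one has the preceding proposition in the $p$-fine setting. Your write-up is precisely that transfer, spelled out; the ordering (iii) $\Rightarrow$ (i) $\Rightarrow$ (ii), the use of the identity $f^!\cong f^*\otimes f^!\Lambda^a$ and of $!$-base-change for the locality statements, and the descent along a strictly totally disconnected v-cover for v-locality on the target (mirroring the proof of \cite[Proposition 3.8.4.(ii)]{mann-p-adic-6-functors}) are the same steps one finds in the reference.
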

\begin{proof}
This is formal, see \cite[Lemma 3.8.5]{mann-p-adic-6-functors}.
\end{proof}

\section{Representation-theoretic preparations}

In this section, we apply the 6-functor formalism to study representations of $p$-adic Lie groups. Since a number of categories will appear in what follows, we begin by introducing them. For the moment, we allow $G$ to be any locally profinite group with locally finite $p$-cohomological dimension.

First of all, we will need the $\infty$-category of solid $G$-representations $\D_\solid(G,\F_p)=\D_\solid(\F_p)^{BG}$, as defined in \cite[Definition 3.4.3]{mann-p-adic-6-functors}. This can be identified with the derived $\infty$-category of the category of continuous $G$-representations on solid $\F_p$-vector spaces (in the obvious sense). We also have the derived category $\D_\solid^{\sm}(G,\F_p)$ of smooth $G$-representations on solid $\F_p$-vector spaces \cite[Definition 3.4.11]{mann-p-adic-6-functors}. There is a natural functor $\D_\solid^{\sm}(G,\F_p) \to \D_\solid(G,\F_p)$, which is not fully faithful. Most classically, we have the usual derived category $\D^{\sm}(G,\F_p)$ of the abelian category of smooth $G$-representations on discrete $\F_p$-vector spaces. There is a natural identification $\D^{\sm}(G,\F_p) = \D_\solid^{\sm}(G,\F_p)_\omega$, identifying $\D^{\sm}(G,\F_p)$ as the full subcategory of $\D_\solid^{\sm}(G,\F_p)$ consisting of those smooth representations whose underlying $\F_p$-vector space is discrete (in the topological, i.e. condensed, sense), which we will use without comment. We also note that the composite functor $\D^{\sm}(G,\F_p) \to \D_{\solid}(G,\F_p)$ is fully faithful.

Now fix an algebraically closed nonarchimedean field $C$ of residue characteristic $p$, and a pseudouniformizer $\pi | p$. Let $B\underline{G} = \Spd(C)/ \underline{G}$ denote the classifying stack of $G$. We then have the $\infty$-category $\DqcohriX{B\underline{G}}^{\varphi}$, which (by \cite[Lemma 3.4.26.(ii)]{mann-p-adic-6-functors}) can be identified with the $\infty$-category of smooth $G$-representations on solid almost $\varphi$-modules over $\mathcal{O}_C^+/\pi$. This category can be directly accessed by the 6-functor formalism, as we will see. On the other hand, Proposition \ref{Detclassifying} gives a natural identification $\D^{\sm}(G,\F_p) = \D_{\et}(B\underline{G},\F_p)$. The Riemann-Hilbert functor then gives a natural fully faithful symmetric monoidal embedding
\[ \D^{\sm}(G,\F_p) \to \DqcohriX{B\underline{G}}^{\varphi}, \]
which factors through the full subcategory $\DqcohriX{B\underline{G}}^{\varphi}_{\omega}$ of discrete objects.

\subsection{Smooth representations}\label{smoothreps}
We continue to fix a locally profinite group $G$ of locally bounded $p$-cohomological dimension. 
In this section, we recall some results on the derived $\infty$-category $\D^{\sm}(G,\F_p)$. However, to place our results in context, we momentarily allow our coefficients to be any field $k$ (still assuming that $G$ has locally bounded $\mathrm{char} k$-cohomological dimension). 
Thus, let $\mathrm{Rep}_{k}^{\mathrm{sm}}(G)$
denote the category of smooth $G$-representations on $k$-vector
spaces, and let $\D^\sm(G,k)$ denote its derived $\infty$-category. When $k$ is clear from context, we just write $\D(G)=\D^\sm(G,k)$. We sometimes write $\mathbf{1}\in\mathrm{Rep}_{k}^{\mathrm{sm}}(G)$
for $k$ with the trivial $G$-action.

The assertions in the following proposition are well-known.
\begin{prop}
1. $\mathrm{Rep}_{k}^{\mathrm{sm}}(G)$ is a Grothendieck abelian
category. In particular, unbounded complexes have $K$-injective resolutions.

2. The derived category $\D^\sm(G,k)$ is left-complete and
has a natural symmetric monoidal structure given by $k$-linear tensor
product of cochain complexes.

3. If $H\subset G$ is an open subgroup, the restriction functor $r=r_{G}^{H}:\D^\sm(G,k)\to \D^\sm(H,k)$
is $t$-exact and symmetric monoidal.
\end{prop}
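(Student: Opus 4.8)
The three assertions are of quite different natures, and I would prove them separately; the only one that needs a genuine input is the left-completeness in part~2. For part~1, the substance is that $\mathrm{Rep}_k^{\sm}(G)$ is a Grothendieck abelian category -- once that is established, the existence of $K$-injective resolutions of arbitrary (unbounded) complexes, and hence the good behaviour of the unbounded derived $\infty$-category $\D^{\sm}(G,k)$, is a standard general fact that I would simply cite. That $\mathrm{Rep}_k^{\sm}(G)$ is abelian with exact filtered colimits is immediate, since all colimits are computed on underlying $k$-vector spaces and a filtered colimit of smooth representations is visibly smooth. For a generator one uses the smooth compact inductions: letting $U$ run over the compact open subgroups of $G$, any vector $v\in V^U$ in a smooth representation $V$ lies in the image of the map $\ind_U^G\mathbf 1\to V$ adjoint to $v$, so $\bigdsum_U\ind_U^G\mathbf 1$ is a generator.

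Part~3 is formal. Restriction along an open inclusion $H\subseteq G$ is the derived functor of the evident forgetful functor $\mathrm{Rep}_k^{\sm}(G)\to\mathrm{Rep}_k^{\sm}(H)$, which is exact and strictly symmetric monoidal for the $k$-linear tensor product. Because $k$ is a field, $\tensor_k$ of cochain complexes of smooth representations is exact in each variable and thus needs no derived correction: the symmetric monoidal structure of part~2 is simply inherited from the cochain level, and an exact symmetric monoidal functor between abelian categories with exact tensor induces a $t$-exact symmetric monoidal functor of derived $\infty$-categories. This gives part~3, and the same remark is what makes the monoidal structure in part~2 well-posed in the first place; its coherence is then formal.

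The real content is left-completeness of $\D^{\sm}(G,k)$ -- the assertion that every object is recovered as the limit of its Postnikov tower, i.e. the canonical map $X\to\varprojlim_n\tau^{\geq -n}X$ is an equivalence -- and this is where the hypothesis that $G$ has locally finite $\mathrm{char}\,k$-cohomological dimension enters. My plan is as follows. Let $\mathcal S$ be the collection of objects $\ind_U^G\mathbf 1$ with $U$ a compact open subgroup of $G$ of finite $\mathrm{char}\,k$-cohomological dimension; such $U$ exist and form a neighbourhood basis of $1$ by hypothesis, and $\mathcal S$ generates $\D^{\sm}(G,k)$ (it already generates the heart, since every smooth representation is the union of its $U$-invariants over such $U$). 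For $P=\ind_U^G\mathbf 1\in\mathcal S$, Frobenius reciprocity identifies $R\Hom_{\D^{\sm}(G,k)}(P,-)$ with $R\Gamma_{\sm}(U,(-)|_U)$, a functor to $\D(k)$ which preserves all limits (being corepresented) and has cohomological amplitude $[0,\cd(U)]$ with $\cd(U)<\infty$. Applying it to $X\to\varprojlim_n\tau^{\geq -n}X$: on the one hand $R\Hom(P,\varprojlim_n\tau^{\geq -n}X)=\varprojlim_n R\Hom(P,\tau^{\geq -n}X)$, and on the other the finite amplitude of $R\Gamma_{\sm}(U,-)$ forces $\{R\Hom(P,\tau^{\geq -n}X)\}_n$ to be pro-isomorphic to the constant tower at $R\Hom(P,X)$, so -- $\D(k)$ being left-complete -- its limit is $R\Hom(P,X)$. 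Hence $R\Hom(P,X\to\varprojlim_n\tau^{\geq -n}X)$ is an equivalence for every $P\in\mathcal S$, and since $\mathcal S$ generates, $X\to\varprojlim_n\tau^{\geq -n}X$ is an equivalence. The one ingredient here that is not purely formal is the finiteness $\cd_{\mathrm{char}\,k}(U)<\infty$ -- Lazard's theorem for compact $p$-adic analytic groups when $\mathrm{char}\,k=p$, which is precisely what the standing hypothesis on $G$ abstracts -- and I expect no real obstacle beyond this and keeping the truncation bookkeeping straight.
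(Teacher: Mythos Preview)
The paper does not actually prove this proposition; it simply states that the assertions are well-known. Your argument is correct and supplies the details the paper omits. In particular, your proof of left-completeness via the generators $\ind_U^G\mathbf{1}$ with $\cd_{\mathrm{char}\,k}(U)<\infty$ and the finite cohomological amplitude of $R\Gamma_{\sm}(U,-)$ is the standard one, and it correctly identifies the standing hypothesis on $G$ as the essential input.
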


Let $\IHom_{G}(A,B)$ be the internal
hom in $\D^\sm(G,k),$ defined as a formal adjoint satisfying $\mathrm{Hom}_{\D(G)}(C\otimes_{k}A,B)=\mathrm{Hom}_{\D(G)}(C,\IHom_G(A,B))$. 
\begin{lem}\label{rhomforget}
If $H\subset G$ is an open subgroup, then there is a natural isomorphism
$r \IHom_G(A,B)\cong \IHom_H(rA,rB)$
for any $A,B\in \D(G)$. Here $r:\D(G)\to \D(H)$ denotes the restriction
functor.
\end{lem}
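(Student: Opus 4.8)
The plan is to prove that restriction commutes with internal Hom by reducing, via adjunction, to a statement about ordinary (external) derived Hom, and then using the fact that $r = r_G^H$ has a well-understood left adjoint, namely compactly-supported induction $\mathrm{c\text{-}ind}_H^G$, which for an open subgroup is exact and agrees with the left adjoint of $r$ on the derived level. First I would recall that by definition $\IHom_H(rA, rB)$ is characterized by the adjunction $\mathrm{Hom}_{\D(H)}(D \tensor_k rA, rB) = \mathrm{Hom}_{\D(H)}(D, \IHom_H(rA,rB))$ for all $D \in \D(H)$, so to identify $r\IHom_G(A,B)$ with $\IHom_H(rA,rB)$ it suffices to produce, functorially in $D \in \D(H)$, a natural isomorphism
\begin{align*}
\mathrm{Hom}_{\D(H)}(D \tensor_k rA, rB) \cong \mathrm{Hom}_{\D(H)}(D, r\IHom_G(A,B)).
\end{align*}

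The key input is that $r$ admits an exact left adjoint $\mathrm{c\text{-}ind}_H^G \colon \D(H) \to \D(G)$ (for $H$ open in $G$, compact induction is exact, and sends the smooth representation $k[G]\tensor_{k[H]} D$-style objects correctly; on derived categories it is just the left-derived functor, which here needs no derivation). Thus $\mathrm{Hom}_{\D(H)}(D, r\IHom_G(A,B)) \cong \mathrm{Hom}_{\D(G)}(\mathrm{c\text{-}ind}_H^G D, \IHom_G(A,B))$, and by the defining adjunction of $\IHom_G$ this equals $\mathrm{Hom}_{\D(G)}(\mathrm{c\text{-}ind}_H^G D \tensor_k A, B)$. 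Now I would invoke the projection formula for compact induction along an open subgroup: $\mathrm{c\text{-}ind}_H^G(D) \tensor_k A \cong \mathrm{c\text{-}ind}_H^G(D \tensor_k rA)$ naturally in $D$ and $A$. Granting this, the last group becomes $\mathrm{Hom}_{\D(G)}(\mathrm{c\text{-}ind}_H^G(D \tensor_k rA), B) \cong \mathrm{Hom}_{\D(H)}(D \tensor_k rA, rB)$ again by adjunction, which is exactly what we wanted; chasing through, these isomorphisms are natural in $D$, so Yoneda gives the claimed identification $r\IHom_G(A,B) \cong \IHom_H(rA,rB)$.

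The main obstacle is establishing the projection formula $\mathrm{c\text{-}ind}_H^G(D) \tensor_k A \cong \mathrm{c\text{-}ind}_H^G(D \tensor_k rA)$ at the derived level with the correct naturality. At the level of abelian categories this is a classical fact (for $H$ open, $\mathrm{c\text{-}ind}_H^G D = k[G]\tensor_{k[H]} D$ with $G$ acting by left translation, and the untwisting map $g \tensor (d \tensor a) \mapsto g\tensor (d \tensor g^{-1}a)$ is the relevant isomorphism). To lift it to $\D(G)$ one checks that both sides, as exact functors in each variable that preserve all colimits, are determined by their values on generators; since $\mathrm{Rep}_k^{\sm}(G)$ is generated under colimits by the $\mathrm{c\text{-}ind}_{H'}^G \mathbf{1}$ for $H'$ open, and $\tensor_k$ is exact, one reduces the derived statement to the underived one. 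Alternatively, and perhaps more cleanly, one could phrase the entire argument inside the solid/geometric framework of the paper: $r$ is $f^*$ for the étale map $f\colon B\underline{H} \to B\underline{G}$, which is $p$-fine and (being étale) $p$-cohomologically smooth, and the desired identity is then precisely the statement that $f^*$ commutes with $\IHom$ — an instance of part (iii) of the preceding proposition. I would likely present the direct representation-theoretic argument as the main proof and remark on the geometric interpretation, since the latter requires the identification $\D^{\sm}(G,\F_p) = \D_\et(B\underline{G},\F_p)$ which is only set up later in the text.
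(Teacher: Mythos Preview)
Your proposal is correct and follows essentially the same argument as the paper: both use the left adjoint $\mathrm{ind}_H^G$ (compact induction) to $r$, the projection formula $\mathrm{ind}_H^G(D)\otimes_k A \cong \mathrm{ind}_H^G(D\otimes_k rA)$, and conclude by Yoneda. The paper simply asserts the projection formula without further justification, so your extra discussion of why it holds at the derived level (and the alternative geometric interpretation via $B\underline{H}\to B\underline{G}$) goes beyond what the paper does but is not needed for the proof.
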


\begin{proof}
The functor $r$ has an left adjoint, given by the (exact) functor
$\mathrm{ind}_{H}^{G}(-)=-\otimes_{k[H]}k[G]$ of compact induction.
We then compute
\begin{align*}
\mathrm{Hom}_{\D(H)}(C,r\IHom_G(A,B)) & \cong\mathrm{Hom}_{\D(G)}(\mathrm{ind}_{H}^{G}C,\IHom_G(A,B))\\
 & \cong\mathrm{Hom}_{\D(G)}(A\otimes\mathrm{ind}_{H}^{G}C,B)\\
 & \cong\mathrm{Hom}_{\D(G)}(\mathrm{ind}_{H}^{G}(rA\otimes C),B)\\
 & \cong\mathrm{Hom}_{\D(H)}(rA\otimes C,rB)\\
 & \cong\mathrm{Hom}_{\D(H)}(C,\IHom_H(rA,rB))
\end{align*}
naturally in $C\in \D(H)$, so the result follows from Yoneda.
\end{proof}
If $A\in \D(G)$ is any object and $H\subset G$ is any open subgroup,
we write \[R\Gamma_{\sm}(H,A)=R\mathrm{Hom}_{\D(H)}(\mathbf{1},rA)=R\mathrm{Hom}_{\D(G)}(\mathrm{ind}_{H}^{G}\mathbf{1},A).\]
To actually compute this, let $A\overset{\sim}{\to}I^{\bullet}$ be
a $K$-injective resolution by injectives in $\mathrm{Rep}_{k}^{\mathrm{sm}}(G)$;
then $R\Gamma_{\sm}(H,A)\cong(I^{\bullet})^{H}$. When $H$ is a $p$-adic Lie group,
$R\Gamma_{\sm}(H,-)$ coincides with continuous group cohomology, by the main result in \cite{fust}.

\begin{lem}\label{colimcoh}
For any $A\in \D(G)$, we have $A\cong\mathrm{colim}_{H\subset G\,\mathrm{open}}R\Gamma_\sm(H,A)$. 
\end{lem}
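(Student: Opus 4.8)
The plan is to reduce the claim to the elementary fact that a smooth representation is the filtered union of its open-subgroup fixed points, applied degreewise to an injective resolution.

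First I would choose a $K$-injective resolution $A \overset{\sim}{\to} I^\bullet$ by injective objects of $\mathrm{Rep}_k^{\mathrm{sm}}(G)$; this exists because $\mathrm{Rep}_k^{\mathrm{sm}}(G)$ is a Grothendieck abelian category. The point of taking injectives \emph{in the $G$-category} is that the restriction functor $r = r_G^H$ admits the exact left adjoint $\mathrm{ind}_H^G$, so $r$ carries injective objects to injective objects and $K$-injective complexes to $K$-injective complexes; hence $rA \overset{\sim}{\to} rI^\bullet$ is again a $K$-injective resolution by injectives of $\mathrm{Rep}_k^{\mathrm{sm}}(H)$, and therefore $R\Gamma_{\sm}(H,A) \simeq (I^\bullet)^H$, naturally in $H$ (all of these identifications are induced by the single map $A \to I^\bullet$). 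Letting $H$ range over the open subgroups of $G$ ordered by reverse inclusion, we obtain a \emph{filtered} diagram --- any two open subgroups contain their intersection --- in which the transition map $R\Gamma_{\sm}(H,A) \to R\Gamma_{\sm}(H',A)$ attached to an inclusion $H' \subseteq H$ is simply the inclusion $(I^\bullet)^H \hookrightarrow (I^\bullet)^{H'}$.

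It then remains to compute the colimit of the diagram $H \mapsto (I^\bullet)^H$. Since filtered colimits of $k$-modules are exact, this colimit may be computed degreewise, and in each cohomological degree $n$ we have $\mathrm{colim}_{H\subset G\,\mathrm{open}} (I^n)^H = \bigcup_{H} (I^n)^H = I^n$, precisely because $I^n$ is a smooth $G$-representation, i.e.\ every vector is fixed by some open subgroup. Exactness of filtered colimits also guarantees that this termwise colimit computes the homotopy colimit, so $\mathrm{colim}_{H} R\Gamma_{\sm}(H,A) \simeq \mathrm{colim}_{H}(I^\bullet)^H \simeq I^\bullet \simeq A$, as desired.

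There is no serious obstacle in this argument; the two points that deserve attention are (i) that the transition maps are the restriction maps and that the indexing poset of open subgroups under reverse inclusion is genuinely filtered, so that the colimit is a filtered --- hence exact --- one, which is exactly what legitimizes the degreewise computation, and (ii) that, strictly speaking, the colimit is formed after passing to underlying complexes, since the terms $(I^\bullet)^H$ carry no natural $G$-action, so the asserted identification lives in $\D(k)$. Note also that the naive idea of extracting the colimit $\mathrm{colim}_H \mathrm{ind}_H^G \mathbf{1} \simeq \mathbf{1}$ out of the formula $R\Gamma_{\sm}(H,A) = R\mathrm{Hom}_{\D(G)}(\mathrm{ind}_H^G \mathbf{1}, A)$ does \emph{not} work, since $R\mathrm{Hom}(-,A)$ is contravariant and converts this colimit into a limit; this is precisely why one runs the argument on an injective resolution instead.
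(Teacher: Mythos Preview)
Your proof is correct and follows essentially the same route as the paper's: pick a $K$-injective resolution by injectives, use that restriction preserves such resolutions, identify $R\Gamma_\sm(H,A)$ with $(I^\bullet)^H$, and compute the filtered colimit degreewise using smoothness. One small correction to your aside (ii): although each individual $(I^\bullet)^H$ carries no $G$-action, the filtered colimit over all open $H$ does, since $G$ acts on the diagram by conjugating subgroups and compatibly on the terms; so the identification $A \cong \mathrm{colim}_H R\Gamma_\sm(H,A)$ in fact holds in $\D(G)$, which is how the paper uses it in the subsequent lemma.
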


\begin{proof}
If $A\overset{\sim}{\to}I^{\bullet}$ is a $K$-injective resolution
by injectives, then $R\Gamma_\sm(H,A)\cong(I^{\bullet})^{H}$ for any
$H$; here we use that restriction along $\mathrm{Rep}_{k}^{\mathrm{sm}}(G)\to\mathrm{Rep}_{k}^{\mathrm{sm}}(H)$
preserves $K$-injective complexes of injectives. But then
\begin{align*}
\mathrm{colim}_{H\subset G\,\mathrm{open}}R\Gamma_\sm(H,A) & \cong\mathrm{colim}_{H\subset G\,\mathrm{open}}(I^{\bullet})^{H}\cong I^{\bullet}\cong A,
\end{align*}
using the fact that each $I^{n}$ is a smooth representation.
\end{proof}
\begin{lem}\label{intexthom}
For any $A,B\in \D(G)$, we have $\IHom_G(A,B)\cong\mathrm{colim}_{H}R\mathrm{Hom}_{\D(H)}(A,B).$
\end{lem}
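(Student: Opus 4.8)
The plan is to combine the previous two lemmas. By \cref{colimcoh} applied to $A$, we have $A \cong \varinjlim_{H \subset G \text{ open}} R\Gamma_\sm(H, A)$, where the colimit runs over open subgroups. Since $\IHom_G(-, B)$ sends colimits in the first variable to limits, this would only directly give $\IHom_G(A,B) \cong \varprojlim_H \IHom_G(R\Gamma_\sm(H,A), B)$, which is not quite the desired shape. So instead I would apply \cref{colimcoh} not to $A$ itself but recall that $R\Gamma_\sm(H,A) = R\mathrm{Hom}_{\D(G)}(\mathrm{ind}_H^G \mathbf{1}, A)$; better yet, I would first rewrite $A$ using the \emph{identity} resolution coming from compact inductions. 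Concretely, the key observation is that the proof of \cref{colimcoh} exhibits, for a $K$-injective resolution $A \isoto I^\bullet$ by injectives, the identity $I^\bullet \cong \varinjlim_H (I^\bullet)^H$; dualizing the role of the variables, one has a corresponding presentation on the source side.

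The cleanest route: first reduce to computing $\mathrm{Hom}_{\D(G)}(C, \IHom_G(A,B))$ for arbitrary $C \in \D(G)$, by Yoneda. By adjunction this is $\mathrm{Hom}_{\D(G)}(C \otimes A, B)$. Now apply \cref{colimcoh} to the object $C \otimes A$ (or, equally well, write $C \cong \varinjlim_H R\Gamma_\sm(H,C) = \varinjlim_H \mathrm{ind}_H^G(rC)\text{-flavored resolution}$) — more precisely, since each term $R\Gamma_\sm(H, -)$ appearing in \cref{colimcoh} is of the form $\mathrm{ind}_H^G \mathbf 1 \otimes (-)$ after using the projection formula $\mathrm{ind}_H^G\mathbf 1 \otimes M \cong \mathrm{ind}_H^G(rM)$, we get
\begin{align*}
\mathrm{Hom}_{\D(G)}(C \otimes A, B) &\cong \mathrm{Hom}_{\D(G)}\bigl(\varinjlim_H \mathrm{ind}_H^G(r(C\otimes A)), B\bigr) \\
&\cong \varprojlim_H \mathrm{Hom}_{\D(G)}(\mathrm{ind}_H^G(rC \otimes rA), B) \\
&\cong \varprojlim_H \mathrm{Hom}_{\D(H)}(rC \otimes rA, rB) \\
&\cong \varprojlim_H \mathrm{Hom}_{\D(H)}(rC, \IHom_H(rA, rB)) \\
&\cong \varprojlim_H \mathrm{Hom}_{\D(G)}(C, \mathrm{ind}_H^G \IHom_H(rA,rB)) \text{ or directly } \mathrm{Hom}_{\D(G)}(C, R(\text{res})_* \cdots),
\end{align*}
where the third isomorphism is the $(\mathrm{ind}_H^G, r)$-adjunction and the fourth is the internal-hom adjunction in $\D(H)$. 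One then recognizes the right-hand side, via \cref{rhomforget} and the compatibility of $R\mathrm{Hom}$ with the forgetful/restriction functors, as $\mathrm{Hom}_{\D(G)}(C, \varinjlim_H R\mathrm{Hom}_{\D(H)}(A,B))$, using \cref{colimcoh} once more in the form that expresses any object of $\D(G)$ as the colimit of its $R\Gamma_\sm(H,-)$'s. Concluding by Yoneda gives $\IHom_G(A,B) \cong \varinjlim_H R\mathrm{Hom}_{\D(H)}(A,B)$.

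An alternative and perhaps more transparent argument: directly compute $\IHom_G(A,B) \cong \IHom_G(\varinjlim_H \mathrm{ind}_H^G \mathbf 1 \otimes A, B)$ is wrong (colimit is not over $A$), so one instead uses that $\mathbf 1 \cong \varinjlim_H \mathrm{ind}_H^G \mathbf 1$ in $\D(G)$ (this is \cref{colimcoh} with $A = \mathbf 1$), tensor with $A$ using that $\tensor$ commutes with colimits and the projection formula, to get $A \cong \varinjlim_H \mathrm{ind}_H^G(rA)$, and then
\begin{align*}
\IHom_G(A, B) &\cong \IHom_G\bigl(\varinjlim_H \mathrm{ind}_H^G(rA), B\bigr) \cong \varprojlim_H \IHom_G(\mathrm{ind}_H^G(rA), B).
\end{align*}
Now $\IHom_G(\mathrm{ind}_H^G(rA), B)$ should be identified, via the adjunction, with the object representing $C \mapsto \mathrm{Hom}_{\D(G)}(C \otimes \mathrm{ind}_H^G(rA), B) \cong \mathrm{Hom}_{\D(H)}(rC \otimes rA, rB) \cong \mathrm{Hom}_{\D(G)}(C, r_* \IHom_H(rA, rB))$ — i.e. $\IHom_G(\mathrm{ind}_H^G(rA), B) \cong r_*\IHom_H(rA, rB)$ where $r_*$ is the right adjoint of restriction (coinduction). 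The limit $\varprojlim_H r_* \IHom_H(rA,rB)$ over the filtered poset of open subgroups then has underlying object the colimit $\varinjlim_H R\mathrm{Hom}_{\D(H)}(rA, rB)$ after applying $R\Gamma_\sm$ and using \cref{colimcoh}. The main obstacle I anticipate is the bookkeeping around which functor is a limit and which is a colimit: the poset of open subgroups is \emph{cofiltered} for inclusion but \emph{filtered} for reverse inclusion, and the identification of $\varprojlim_H \IHom_G(\mathrm{ind}_H^G(rA), B)$ — a limit over coinductions — with the stated \emph{colimit} $\varinjlim_H R\mathrm{Hom}_{\D(H)}(A,B)$ requires carefully tracking that global sections/underlying-space of the cofiltered limit of coinduced objects is the filtered colimit of the $R\mathrm{Hom}_{\D(H)}$'s, which is exactly the content of \cref{colimcoh} applied one more time. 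Modulo this bookkeeping and standard compatibilities of $R\mathrm{Hom}$ with restriction (\cref{rhomforget}), everything is formal.
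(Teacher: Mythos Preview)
Your proposal has a genuine error, and you have also missed the paper's much simpler approach.

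The error: you repeatedly invoke \cref{colimcoh} as though it said $A \cong \varinjlim_H \mathrm{ind}_H^G(rA)$, or equivalently $\mathbf{1} \cong \varinjlim_H \mathrm{ind}_H^G \mathbf{1}$. It does not. \cref{colimcoh} says $A \cong \varinjlim_H R\Gamma_\sm(H,A)$, and $R\Gamma_\sm(H,A) = R\mathrm{Hom}_{\D(G)}(\mathrm{ind}_H^G \mathbf{1}, A)$ is a \emph{Hom out of} $\mathrm{ind}_H^G \mathbf{1}$, not a tensor with it. Your line ``each term $R\Gamma_\sm(H,-)$ appearing in \cref{colimcoh} is of the form $\mathrm{ind}_H^G \mathbf{1} \otimes (-)$ after using the projection formula'' is simply false, and this mistake propagates through both your Yoneda argument and your ``alternative'' argument. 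In particular the display $\mathrm{Hom}_{\D(G)}(C\otimes A,B) \cong \mathrm{Hom}_{\D(G)}(\varinjlim_H \mathrm{ind}_H^G(r(C\otimes A)),B)$ is not justified, and the subsequent limit/colimit bookkeeping you worry about is not merely bookkeeping---you are trying to turn a limit into a colimit without a mechanism to do so.

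The paper's argument avoids all of this by applying \cref{colimcoh} to the \emph{target} object $\IHom_G(A,B)$ rather than trying to resolve $A$. Concretely:
\begin{align*}
\varinjlim_H R\mathrm{Hom}_{\D(H)}(A,B)
&\cong \varinjlim_H R\Gamma_\sm(H, \IHom_H(A,B)) \\
&\cong \varinjlim_H R\Gamma_\sm(H, \IHom_G(A,B)) \\
&\cong \IHom_G(A,B),
\end{align*}
where the first isomorphism is the tensor--hom adjunction in $\D(H)$, the second is \cref{rhomforget}, and the third is \cref{colimcoh} applied to $\IHom_G(A,B)$. No Yoneda, no compact inductions, no limit/colimit interchange.
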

Here (and in what follows) we drop the restriction functors $r$ from the notation.

\begin{proof}
There are natural isomorphisms 
\begin{align*}
\mathrm{colim}_{H}R\mathrm{Hom}_{\D(H)}(A,B) & \cong\mathrm{colim}_{H}R\Gamma_\sm(H,\IHom_H(A,B))\\
 & \cong\mathrm{colim}_{H}R\Gamma_\sm(H,\IHom_G(A,B))\\
 & \cong \IHom_G(A,B),
\end{align*}
where the second line follows from Lemma \ref{rhomforget}, and the third line follows
from Lemma \ref{colimcoh}.
\end{proof}
\begin{prop}\label{ellnotpduality}
When $G$ is locally pro-$p$ and $\mathrm{char}k\neq p$, $\IHom_G(-,\mathbf{1})$
is the usual exact smooth duality functor, i.e. it sends any $\pi\in\mathrm{Rep}_{k}^{\mathrm{sm}}(G)$
to the smooth vectors in $\mathrm{Hom}_{k}(\pi,k)$.
\end{prop}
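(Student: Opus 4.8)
The plan is to combine \cref{intexthom} with the classical fact that, when $\mathrm{char}\,k\neq p$, the smooth representations of a pro-$p$ group over $k$ form a semisimple abelian category. Since $G$ is locally pro-$p$ it contains a compact open pro-$p$ subgroup $H_0$, and for any open $U\subseteq G$ the intersection $U\cap H_0$ is an open (hence closed) subgroup of $H_0$, so it is again pro-$p$; thus the pro-$p$ open subgroups of $G$ are cofinal among all open subgroups. Therefore \cref{intexthom} computes $\IHom_G(\pi,\mathbf 1)$ as the filtered colimit $\varinjlim_H R\mathrm{Hom}_{\D(H)}(\pi,k)$ taken only over pro-$p$ open subgroups $H\subseteq G$, and it then suffices to evaluate each term and to identify the transition maps.

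Fix a pro-$p$ open subgroup $H$. I would first recall the semisimplicity of $\mathrm{Rep}_k^\sm(H)$: any smooth $H$-representation $V$ is the filtered union of its $H$-stable subspaces $V^{N}$ as $N$ ranges over the open normal subgroups of $H$; each $V^{N}$ is a module over the finite group algebra $k[H/N]$, which is semisimple by Maschke's theorem since $H/N$ is a finite $p$-group whose order is invertible in $k$; and a module that is a filtered union of semisimple submodules is semisimple. In particular every object of $\mathrm{Rep}_k^\sm(H)$ is projective, so $R\mathrm{Hom}_{\D(H)}(\pi,k)$ is concentrated in degree $0$, where it is $\mathrm{Hom}_H(\pi,k)$. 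An $H$-equivariant $k$-linear map from $\pi$ to the trivial representation $k$ is exactly an $H$-invariant functional, so $R\mathrm{Hom}_{\D(H)}(\pi,k)=\mathrm{Hom}_k(\pi,k)^{H}$, naturally in $\pi$ and functorially in $H$.

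It remains to assemble the colimit. Unwinding the proof of \cref{intexthom} (which passes through $R\Gamma_\sm(H,-)$ and \cref{colimcoh}), the transition map attached to an inclusion $H'\subseteq H$ is restriction of equivariance along the functor $\D(H)\to\D(H')$, which on the degree-$0$ parts is simply the inclusion $\mathrm{Hom}_k(\pi,k)^{H}\hookrightarrow\mathrm{Hom}_k(\pi,k)^{H'}$. Since filtered colimits are exact in the Grothendieck category $\mathrm{Rep}_k^\sm(G)$, the colimit remains concentrated in degree $0$ and
\[
\IHom_G(\pi,\mathbf 1)\;\simeq\;\varinjlim_{H}\mathrm{Hom}_k(\pi,k)^{H}\;=\;\bigcup_{H\subseteq G\ \text{open}}\mathrm{Hom}_k(\pi,k)^{H},
\]
which is precisely the space of smooth vectors in $\mathrm{Hom}_k(\pi,k)$, i.e. the smooth dual $\pi^\vee$. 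All the identifications are natural in $\pi$, so $\IHom_G(-,\mathbf 1)$ agrees with smooth duality on $\mathrm{Rep}_k^\sm(G)$, and exactness follows formally: applying the triangulated functor $\IHom_G(-,\mathbf 1)$ to the triangle of a short exact sequence $0\to\pi'\to\pi\to\pi''\to 0$ yields a triangle $(\pi'')^\vee\to\pi^\vee\to(\pi')^\vee\to (\pi'')^\vee[1]$ whose three terms all lie in degree $0$, so its long exact cohomology sequence collapses to $0\to(\pi'')^\vee\to\pi^\vee\to(\pi')^\vee\to 0$. The one step requiring genuine care is this bookkeeping of the transition maps in the colimit formula; everything else is either formal or is standard Maschke theory, so I do not anticipate a serious obstacle.
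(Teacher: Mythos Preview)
Your proof is correct and follows essentially the same approach as the paper: both invoke \cref{intexthom} and then argue that $R\mathrm{Hom}_{\D(H)}(\pi,\mathbf 1)$ is concentrated in degree $0$ for pro-$p$ open compact $H$. The only cosmetic difference is that the paper phrases the key vanishing as ``$\mathbf 1$ is injective in $\mathrm{Rep}_k^{\sm}(H)$, by averaging a $k$-linear retraction against a $k$-valued Haar measure,'' whereas you establish the stronger (but equally easy) statement that $\mathrm{Rep}_k^{\sm}(H)$ is semisimple via Maschke on finite quotients---these are two packagings of the same averaging argument.
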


\begin{proof}
Easy, using the previous lemma together with the fact that $\mathbf{1}$
is an injective object of $\mathrm{Rep}_{k}^{\mathrm{sm}}(H)$ for
any pro-$p$ open compact $H\subset G$. (Use that any injective map
$V\to W$ in $\mathrm{Rep}_{k}^{\mathrm{sm}}(H)$ admits an $H$-equivariant
retraction, by averaging any $k$-linear retraction against a $k$-valued
Haar measure.)
\end{proof}
When $G$ is a $p$-adic Lie group and $k = \F_p$, the naive smooth duality functor on $\mathrm{Rep}_{\F_p}^{\mathrm{sm}}(G)$
is rather useless, because the functor of passage to smooth vectors
is no longer exact. To remedy this, Kohlhaase has introduced a family
of derived smooth duality functors $\mathcal{S}_{G}^{i}(-)$ on $\mathrm{Rep}_{\F_p}^{\mathrm{sm}}(G)$,
which are essentially the right-derived functors of the functor $V\mapsto(V^{\ast})^{\mathrm{sm}}$.
More precisely, Kohlhaase defines
\[
\mathcal{S}_{G}^{i}(V)=\mathrm{colim}_{H\subset G\,\mathrm{open}\,\mathrm{compact}}\mathrm{Ext}_{\F_p[[H]]}^{i}(k,V^{\ast}).
\]
Here $V^{\ast}=\mathrm{Hom}_{\F_p}(V,\F_p)$ denotes the Pontryagin dual
of $V$ with its natural pseudocompact $\F_p$-module structure; this
is naturally a pseudocompact $\F_p[[H]]$-module for any open compact
$H\subset G$. We prefer to package the $\mathcal{S}^{i}$'s into
a single functor on the derived category. In other words, we consider
the endofunctor of $\D^\sm(G,\F_p)$ defined by
\[
V\mapsto\mathcal{S}_{G}(V)=\mathrm{colim}_{H\subset G\,\mathrm{open}\,\mathrm{compact}}R\mathrm{Hom}_{\F_p[[H]]}(\F_p,V^{\ast}),
\]
where $V^{\ast}\in \D(\F_p[[H]]-\mathrm{mod})$ is computed termwise.
Here $R\mathrm{Hom}_{\F_p[[H]]}$ denotes $R\mathrm{Hom}$ in the derived
category of $\F_p[[H]]$-modules. Of course, $H^{i}(\mathcal{S}(V))=\mathcal{S}^{i}(V)$ when $V$ is concentrated in degree zero. 

The following proposition was noted independently by Schneider-Sorensen \cite{schneider-sorensen-dual-reps}.

\begin{prop}
As endofunctors of $\D^\sm(G,\F_p)$, the functor $\IHom_G(-,\mathbf{1})$
coincides with Kohlhaase's derived smooth duality functor $\mathcal{S}_{G}(-)$.
\end{prop}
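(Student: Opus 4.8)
The plan is to reduce both functors to a colimit over a cofinal family of small open subgroups and compare them term by term via Pontryagin duality. By \cref{intexthom} we have a natural identification $\IHom_G(A,\mathbf{1}) \cong \mathrm{colim}_{H} R\Hom_{\D(H)}(A,\mathbf{1})$, where the colimit runs over open subgroups $H \subseteq G$ with transition maps induced by restriction and with the smooth $G$-action coming from conjugation; on the other side, $\mathcal{S}_G(A) = \mathrm{colim}_{H} R\Hom_{\F_p[[H]]}(\F_p, A^*)$ by definition, where now $H$ runs over open compact subgroups. Since $G$ is a $p$-adic Lie group, the uniform pro-$p$ open subgroups form a neighborhood basis of $1$ and are therefore cofinal in both indexing systems, so it suffices to produce, for each such $H$, a natural isomorphism $R\Hom_{\D(H)}(A,\mathbf{1}) \cong R\Hom_{\F_p[[H]]}(\F_p, A^*)$ compatible with the restriction maps as $H$ shrinks and with the $G$-conjugation action.

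For fixed $H$ the first step is Pontryagin duality: the functor $V \mapsto V^* = \Hom_{\F_p}(V,\F_p)$ is an exact anti-equivalence between $\mathrm{Rep}^{\sm}_{\F_p}(H)$ and the category of pseudocompact $\F_p[[H]]$-modules, with quasi-inverse the continuous dual, and it sends $\mathbf{1}$ to $\F_p$. Passing termwise to derived $\infty$-categories it yields an equivalence $\D^{\sm}(H,\F_p)^{\opp} \isoto \D(\mathrm{PC}(\F_p[[H]]))$, $A \mapsto A^*$, under which $R\Hom_{\D(H)}(A,\mathbf{1})$ is carried to $R\Hom_{\mathrm{PC}(\F_p[[H]])}(\F_p, A^*)$. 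The second step identifies the latter with the usual $R\Hom_{\F_p[[H]]}(\F_p, A^*)$ in the derived category of $\F_p[[H]]$-modules: for $H$ uniform pro-$p$, Lazard's theorem guarantees that $\F_p$ admits a finite resolution by finite free $\F_p[[H]]$-modules, and such modules are projective both as pseudocompact modules and as abstract modules, so this bounded resolution computes $R\Hom$ in both settings (no convergence issue arises for the unbounded complex $A^*$). Combining the two steps gives the desired natural isomorphism, and its compatibility with the restriction maps and the conjugation action is immediate from the corresponding compatibilities of Pontryagin duality. Taking the colimit over uniform pro-$p$ open $H$ then yields $\IHom_G(A,\mathbf{1}) \cong \mathcal{S}_G(A)$ as endofunctors of $\D^{\sm}(G,\F_p)$.

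The step I expect to require the most care is the bookkeeping in the last paragraph: one must check that the objectwise identification $R\Hom_{\D(H)}(A,\mathbf{1}) \cong R\Hom_{\F_p[[H]]}(\F_p,A^*)$ is genuinely natural in $H$ for the restriction transition maps used on both sides, and $G$-equivariant for conjugation, so that the two colimit presentations agree as functors landing in $\D^{\sm}(G,\F_p)$ and not merely as objects of $\D(\F_p)$. The remaining inputs --- exactness of Pontryagin duality, cofinality of the uniform pro-$p$ open subgroups, and the regularity of their Iwasawa algebras --- are classical, so the argument should be essentially formal once these compatibilities are pinned down.
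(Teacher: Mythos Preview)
Your proposal is correct and follows essentially the same strategy as the paper: both reduce to a colimit over small open subgroups via \cref{intexthom}, invoke Lazard's finite free resolution of $\F_p$ over the Iwasawa algebra of a uniform pro-$p$ subgroup, and use Pontryagin duality to match the terms. The paper streamlines matters slightly by fixing a single uniformly powerful $G_0 \subset G$ and a single resolution $P_\bullet$ over $\F_p[[G_0]]$, then dualizing once to an explicit injective resolution $I^\bullet$ of $\mathbf{1}$ valid for all $H \subset G_0$; this makes the naturality in $H$ and the $G$-action automatic, which addresses exactly the bookkeeping concern you flagged at the end.
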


\begin{proof}
As noted above, $\mathcal{S}_{G}(V)$ is defined by the formula
\[
\mathcal{S}_{G}(V)=\mathrm{colim}_{H\subset G\,\mathrm{open}\,\mathrm{compact}}R\mathrm{Hom}_{\F_p[[H]]}(\F_p,V^{\ast}).
\]
To actually compute this, choose a uniformly powerful open pro-$p$ subgroup
$G_{0}\subset G$. By work of Lazard \cite{lazard-monster}, we can choose a finite length
resolution $P_{\bullet}\to \F_p\to0$ of $\F_p$ by finite free $\F_p[[G_{0}]]$-modules.
Then we can express $\mathcal{S}_{G}(-)$ more concretely as
\[
\mathcal{S}_{G}(V)=\mathrm{colim}_{H\subset G_{0}\,\mathrm{open}}\mathrm{Hom}_{\F_p[[H]]}(P_{\bullet},V^{\ast}).
\]

Set $I^{n}=\mathrm{Hom}_{\F_p}^{\mathrm{cts}}(P_{n},\F_p)$, so $0\to \F_p \to I^{\bullet}$
is a finite resolution of $\mathbf{1}$ by injective objects in $\mathrm{Rep}_{\F_p}^{\mathrm{sm}}(G_{0})$.
Since Pontryagin duality gives an exact anti-equivalence between smooth
$H$-representations over $\F_p$ and pseudocompact $\F_p[[H]]$-modules, we have
natural identifications
\[
\mathrm{Hom}_{\F_p[[H]]}(P_{\bullet},V^{\ast})\cong\mathrm{Hom}_{\mathrm{Rep}_{\F_p}^{\mathrm{sm}}(H)}(V,I^{\bullet})\cong R\mathrm{Hom}_{\D(H)}(V,\mathbf{1}).
\]
Putting things together, we get 
\begin{align*}
\mathcal{S}_{G}(V) & \cong\mathrm{colim}_{H\subset G_{0}\,\mathrm{open}}\mathrm{Hom}_{\mathrm{Rep}_{\F_p}^{\mathrm{sm}}(H)}(V,I^{\bullet})\\
 & \cong\mathrm{colim}_{H\subset G_{0}\,\mathrm{open}}R\mathrm{Hom}_{\D(H)}(V,\mathbf{1})\\
 & \cong \IHom_G(V,\mathbf{1}),
\end{align*}
where the last line follows by Lemma \ref{intexthom}.
\end{proof}
In the remainder of this subsection, we continue to assume that $G$ is a $p$-adic Lie group.
Recall that the \emph{dimension }of an admissible smooth representation
$V\in\mathrm{Rep}_{\F_p}^{\mathrm{sm}}(G)$ is the quantity
\[
\mathrm{dim}_{G}V=\mathrm{dim}\,G\cdot\lim_{n\to\infty}\frac{\log\dim_{\F_p}V^{K_{n}}}{\log\,[K_{n}:K_{0}]}=\lim_{n\to\infty}\frac{\log\dim_{\F_p}V^{K_{n}}}{n\log p}.
\]
Here $K_{0}\subset G$ is a fixed choice of a uniformly powerful open
pro-$p$ subgroup, and $K_{n}=K^{p^{n}}\subset K_{0}$ is the closed
normal subgroup generated by the $p^{n}$th powers of elements of
$K$. (One automatically has that $[K_{n}:K_{0}]=p^{n\cdot\dim G}$,
which explains the second equality above.) By \cite[Proposition 2.18]{emerton-paskunas}, this limit exists independently
of the choice of $K_{0}$, and defines an
integer in the interval $[0,\dim G]$. Moreover, $\mathrm{dim}_{G}V=0$
iff $\mathrm{dim}_{\F_p}V<\infty$. Additionally, if $0\to V'\to V\to V''\to0$
is an exact sequence of admissible smooth representations, then $\mathrm{dim}_{G}V=\max(\mathrm{dim}_{G}V',\dim_{G}V'')$.
In particular, any subquotient $W$ of a given $V$ satisfies $\mathrm{dim}_{G}W\leq\mathrm{dim}_{G}V$.

A key point for us is that $\dim_{G}$ is closely related to the behavior
of the functors $\mathcal{S}_{G}^{i}$.
\begin{prop}\label{dimbasics}
\begin{propenum}
    \item If $V$ is an admissible smooth $G$-representation, then each $\mathcal{S}_{G}^{i}(V)$ is admissible smooth, with $\mathrm{dim}_{G}\mathcal{S}_{G}^{i}(V)\leq i$ for all $i$. Moreover, if $d=\mathrm{dim}_{G}V$ then $\mathrm{dim}_{G}\mathcal{S}_{G}^{d}(V)=d$ and $\mathcal{S}_{G}^{i}(V)=0$ for all $i>d$.

    \item For any $A\in \D^\sm(G,\F_p)$ whose cohomologies $H^i(A)$ are all admissible, the complex $\mathcal{S}_G(A)$ has admissible cohomologies, and the biduality map $A\to(\mathcal{S}_{G}\circ\mathcal{S}_{G})(A)$ is an isomorphism.

    \item If $V$ is an admissible smooth representation with $\dim_{G}V=0$, so $\dim_{\F_p}V<\infty$, then $\mathcal{S}(V)\cong\mathcal{S}^{0}(V)$ is the naive linear dual of $V$.
\end{propenum}
\end{prop}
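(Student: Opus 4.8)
The plan is to reduce to the case of a uniform pro‑$p$ group, reinterpret Kohlhaase's duality as a local‑cohomology functor over an Iwasawa algebra, and then feed in the standard homological algebra of such algebras. For the reduction, fix a uniformly powerful open pro‑$p$ subgroup $G_{0}\subseteq G$ of dimension $D:=\dim G$. By \cref{rhomforget} (applied to $\mathbf 1$) the functor $\mathcal{S}_{G}=\IHom_{G}(-,\mathbf 1)$ commutes with restriction to $G_{0}$; admissibility of a smooth representation is unaffected by restriction to an open subgroup; and $\dim_{G}$ agrees with $\dim_{G_{0}}$ after restriction. Since restriction $\D^{\sm}(G,\F_{p})\to\D^{\sm}(G_{0},\F_{p})$ is exact and conservative, all three assertions follow from their analogues over $G_{0}$, so we may assume $G=G_{0}$ is uniform.

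Write $\Lambda=\F_{p}[[G_{0}]]$ and let $\mathfrak m\subseteq\Lambda$ be the augmentation ideal. By \cref{kohlhaaseconceptual}, Kohlhaase's defining formula and \cref{intexthom}, we have $\mathcal{S}_{G_{0}}(V)=\varinjlim_{H}R\mathrm{Hom}_{\F_{p}[[H]]}(\F_{p},V^{\ast})$, the colimit over open subgroups $H\subseteq G_{0}$ with transition maps the restriction maps in continuous cohomology. Under the finite‑free induction isomorphisms $R\mathrm{Hom}_{\F_{p}[[H]]}(\F_{p},V^{\ast})\cong R\mathrm{Hom}_{\Lambda}(\Lambda/\mathfrak m_{H}\Lambda,V^{\ast})$ ($\mathfrak m_{H}$ the augmentation ideal of $\F_{p}[[H]]$, so $\Lambda/\mathfrak m_{H}\Lambda$ has finite length), which also identify the transition maps with precomposition along the quotients $\Lambda/\mathfrak m_{H'}\Lambda\twoheadrightarrow\Lambda/\mathfrak m_{H}\Lambda$ ($H'\subseteq H$), this colimit is governed by the pro‑system $\{\mathfrak m_{H}\Lambda\}_{H}$, which is cofinal with $\{\mathfrak m^{j}\}_{j}$; hence $\mathcal{S}_{G_{0}}(V)\cong R\Gamma_{\mathfrak m}(V^{\ast})$, where $R\Gamma_{\mathfrak m}(-)=\varinjlim_{j}R\mathrm{Hom}_{\Lambda}(\Lambda/\mathfrak m^{j},-)$ is the derived $\mathfrak m$‑power‑torsion (equivalently, derived smooth‑vectors) functor on pseudocompact $\Lambda$‑modules. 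Now, by Lazard and the standard structure theory of Iwasawa algebras, $\Lambda$ is a Noetherian complete local ring, Auslander‑regular (hence Gorenstein) of global and injective dimension $D$, with $\mathrm{Ext}^{\bullet}_{\Lambda}(\F_{p},\Lambda)\cong\F_{p}[-D]$ and trivial orientation since $G_{0}$ is uniform, so its normalized dualizing complex is $\Lambda[D]$ and Grothendieck local duality reads $R\Gamma_{\mathfrak m}(N)^{\ast}\cong R\mathrm{Hom}_{\Lambda}(N,\Lambda)[D]$ for $N\in\D^{b}_{\mathrm{fg}}(\Lambda)$, with $(-)^{\ast}$ Pontryagin (Matlis) duality. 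Applied to $N=V^{\ast}$ (finitely generated, as $V$ is admissible), this gives the clean identity $\mathcal{S}^{i}_{G_{0}}(V)^{\ast}\cong\mathrm{Ext}^{D-i}_{\Lambda}(V^{\ast},\Lambda)$.

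From this identity the rest is essentially formal. For (i): the right‑hand side is finitely generated over the Noetherian ring $\Lambda$, and Pontryagin duals of finitely generated $\Lambda$‑modules are precisely the admissible smooth $G_{0}$‑representations, so each $\mathcal{S}^{i}_{G_{0}}(V)$ is admissible smooth. Using the (standard) identification $\dim_{G_{0}}W=D-j_{\Lambda}(W^{\ast})$, where $j_{\Lambda}(M)=\min\{q:\mathrm{Ext}^{q}_{\Lambda}(M,\Lambda)\neq0\}$ is the grade, the Auslander inequality $j_{\Lambda}(\mathrm{Ext}^{q}_{\Lambda}(M,\Lambda))\geq q$ gives $\dim_{G_{0}}\mathcal{S}^{i}_{G_{0}}(V)\leq i$; and writing $\delta=\dim_{G_{0}}V=D-j$ with $j=j_{\Lambda}(V^{\ast})$, the bottom module $\mathrm{Ext}^{j}_{\Lambda}(V^{\ast},\Lambda)$ is nonzero of grade exactly $j$ (the bottom $\mathrm{Ext}$ over an Auslander--Gorenstein ring has grade equal to its degree), so $\dim_{G_{0}}\mathcal{S}^{\delta}_{G_{0}}(V)=\delta$, while $\mathcal{S}^{i}_{G_{0}}(V)=0$ for $i>\delta$ since then $D-i<j$. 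Part (iii) follows from (i): if $\dim_{G_{0}}V=0$ then $\mathcal{S}_{G_{0}}(V)$ is concentrated in degree $0$, where it is the internal hom of $V$ into $\mathbf 1$ in the abelian category of smooth representations, i.e. the naive linear dual of the finite‑dimensional $V$. For (ii), on a single admissible $V$ one iterates: $\mathcal{S}_{G_{0}}(V)^{\ast}\cong R\mathrm{Hom}_{\Lambda}(V^{\ast},\Lambda)[D]$, and a second application together with the reflexivity isomorphism $R\mathrm{Hom}_{\Lambda}(R\mathrm{Hom}_{\Lambda}(V^{\ast},\Lambda),\Lambda)\cong V^{\ast}$ (valid as $\Lambda$ has finite injective dimension and $V^{\ast}$ is finitely generated) yields $\mathcal{S}_{G_{0}}\mathcal{S}_{G_{0}}(V)\cong V$; that the canonical biduality map realizes this isomorphism is a naturality check reducing to the same statement for $R\mathrm{Hom}_{\Lambda}(-,\Lambda)$. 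For a general $A$ with admissible cohomology one runs a dévissage: over $G_{0}$ uniform, $\mathcal{S}_{G_{0}}$ has bounded cohomological amplitude on admissible inputs (by (i), $\mathcal{S}_{G_{0}}$ of a representation in degree $0$ lives in degrees $[0,D]$), so each $H^{k}(\mathcal{S}_{G_{0}}(A))$ is a finite iterated extension of the admissible modules $\mathcal{S}^{j}_{G_{0}}(H^{n}(A))$, hence admissible, and -- using that $\D^{\sm}(G_{0},\F_{p})$ is left‑complete -- the termwise biduality isomorphisms assemble to biduality for $A$. Finally one transfers (i)--(iii) back to $G$ as in the reduction step.

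The main obstacle is the middle paragraph: identifying the colimit‑over‑shrinking‑subgroups defining $\mathcal{S}_{G_{0}}$ with the local cohomology $R\Gamma_{\mathfrak m}$, and then invoking Grothendieck local duality for the noncommutative Iwasawa algebra $\Lambda$. This is exactly the step carrying the real content of Kohlhaase's theorems; everything else is formal manipulation together with the already‑recalled properties of $\dim_{G_{0}}$ and Lazard's finite free resolution.
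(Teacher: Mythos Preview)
Your proposal is correct and in fact supplies the content that the paper merely cites: the paper's proof of this proposition is just a pointer to \cite{kohlhaase} and \cite{schneider-sorensen-dual-reps}, and what you have written is essentially a sketch of the argument found there, reformulated through the lens of local cohomology and local duality over the Iwasawa algebra $\Lambda=\F_p[[G_0]]$. The identification $\mathcal S_{G_0}(V)\cong R\Gamma_{\mathfrak m}(V^*)$ followed by local duality to obtain $(\mathcal S^i_{G_0}(V))^*\cong \mathrm{Ext}^{D-i}_\Lambda(V^*,\Lambda)$, and then feeding in Auslander regularity plus the formula $\dim_{G_0}W=D-j_\Lambda(W^*)$, is exactly the skeleton of Kohlhaase's proof; your last paragraph correctly identifies where the real work lies.

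A few places where a full write-up would need more care: Grothendieck local duality is standard for commutative Gorenstein local rings, but $\Lambda$ is noncommutative, so you should either cite a noncommutative version (e.g.\ via the Auslander--Gorenstein framework, as in Venjakob or Ardakov--Brown) or bypass it by arguing more directly as Kohlhaase does. Your claim that the orientation is trivial for uniform $G_0$ deserves a reference; in any case it is irrelevant for the dimension and biduality statements, since a character twist affects neither. Finally, the d\'evissage in (ii) for unbounded $A$ is fine precisely because $\mathcal S_{G_0}$ has cohomological amplitude $[0,D]$ on admissible objects, so for each fixed $k$ only finitely many $H^n(A)$ contribute to $H^k(\mathcal S_{G_0}(A))$; left-completeness then handles the reassembly.
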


\begin{proof}
The first part is essentially a summary of results from \cite{kohlhaase}. The second part is Pontryagin-dual to the biduality discussed
in the proof of \cite[Theorem 3.5]{kohlhaase}; see also \cite[Proposition 3.4]{schneider-sorensen-dual-reps}. The third part is immediate from the first part.
\end{proof}

Note that part (i) here implies that $\dim_{G}V=\sup\left\{ i\mid\mathcal{S}_{G}^{i}(V)\neq0\right\} $. We also observe that $\mathcal{S}^{0}(-)$ can be completely understood.

\begin{prop}\label{dimzero}
Let $V$ be any admissible smooth $G$-representation. Then $\mathcal{S}_{G}^{0}(V)\neq0$
if and only if $V$ admits a nonzero $G$-stable quotient $V\twoheadrightarrow W$
with $\mathrm{dim}_{\F_p}W<\infty$.
\end{prop}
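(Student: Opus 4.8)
The plan is to reduce everything to the structural properties of Kohlhaase's duality functor collected in \cref{dimbasics}, together with the identification $\mathcal{S}_G(-) \cong \IHom_G(-,\mathbf{1})$ of \cref{kohlhaaseconceptual}. The key starting observations are: $\mathcal{S}_G^0(V) = H^0(\mathcal{S}_G(V))$; by \cref{dimbasics} the complex $\mathcal{S}_G(V)$ lies in $\D^{\geq 0}(G)$ and has admissible cohomology; and therefore $\mathcal{S}_G^0(V)$ is admissible with $\dim_G \mathcal{S}_G^0(V) \leq 0$, hence \emph{finite-dimensional}. It is precisely this finite-dimensionality that lets the description of $\mathcal{S}_G$ on finite-dimensional representations (the third part of \cref{dimbasics}) enter the argument.

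For the ``only if'' direction, suppose $\mathcal{S}_G^0(V) \neq 0$. Put $A = \mathcal{S}_G(V) \in \D^{\geq 0}(G)$ and consider the canonical truncation map $t\colon \mathcal{S}_G^0(V)[0] = \tau^{\leq 0}A \to A$. Since $t$ induces the identity on $H^0$ and $H^0(A) \neq 0$ by hypothesis, $t$ is nonzero. Next I would invoke the self-adjunction $\Hom_{\D(G)}(X,\mathcal{S}_G(Y)) \cong \Hom_{\D(G)}(Y,\mathcal{S}_G(X))$, which follows formally from $\mathcal{S}_G(-) \cong \IHom_G(-,\mathbf{1})$ and the symmetry of $\otimes$. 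Because $\mathcal{S}_G^0(V)$ is finite-dimensional, the third part of \cref{dimbasics} identifies $\mathcal{S}_G(\mathcal{S}_G^0(V)[0])$ with the linear dual $(\mathcal{S}_G^0 V)^{*}$, placed in degree $0$. Transporting $t$ across the self-adjunction thus produces a nonzero morphism $V \to (\mathcal{S}_G^0 V)^{*}$ in $\D(G)$, and since both objects are concentrated in degree $0$ this is an honest nonzero map of smooth representations. Its image is then a nonzero $G$-stable quotient of $V$ contained in the finite-dimensional representation $(\mathcal{S}_G^0 V)^{*}$; this is the desired $W$.

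The ``if'' direction is straightforward. Given a surjection $V \twoheadrightarrow W$ with $W$ nonzero and finite-dimensional (hence smooth, as a quotient of $V$, with $\dim_G W = 0$), apply $\mathcal{S}_G$ to the short exact sequence $0 \to \ker \to V \to W \to 0$. Since $\ker \subseteq V$ is again admissible, \cref{dimbasics} places $\mathcal{S}_G(\ker)$ in $\D^{\geq 0}$, so the long exact sequence in cohomology yields an injection $\mathcal{S}_G^0(W) \hookrightarrow \mathcal{S}_G^0(V)$; and $\mathcal{S}_G^0(W) = W^{*} \neq 0$ by the third part of \cref{dimbasics}. Hence $\mathcal{S}_G^0(V) \neq 0$.

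I expect the only genuine subtlety to lie in the ``only if'' direction — specifically, the idea of realizing the sought-after finite-dimensional quotient directly as the image of (the dual of) the truncation map $\tau^{\leq 0}\mathcal{S}_G(V) \to \mathcal{S}_G(V)$. What makes this go through, and what is genuinely special to the $p$-adic Lie group setting, is the dimension bound $\dim_G \mathcal{S}_G^0(V) \leq 0$ forcing $\mathcal{S}_G^0(V)$ to be finite-dimensional; once that is in hand, the rest is formal manipulation with truncations and the duality adjunction.
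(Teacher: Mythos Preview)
Your proof is correct and follows essentially the same approach as the paper: both directions hinge on the truncation map $\tau^{\le 0}\mathcal{S}_G(V)\to\mathcal{S}_G(V)$ together with the fact that $\mathcal{S}_G^0(V)$ is finite-dimensional. The only cosmetic difference is that in the ``only if'' direction you transport this map across the self-adjunction $\Hom(X,\mathcal{S}_G(Y))\cong\Hom(Y,\mathcal{S}_G(X))$, whereas the paper applies $\mathcal{S}_G$ and invokes the biduality isomorphism $V\cong\mathcal{S}_G(\mathcal{S}_G(V))$; these two manoeuvres are equivalent, and your long exact sequence argument for the ``if'' direction is a clean variant of the paper's more direct observation that $\mathcal{S}^0(W)\to\mathcal{S}^0(V)$ is nonzero.
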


\begin{proof}
``If'': Given such a $V\to W$, we get a nonzero map $\mathcal{S}(W)\to\mathcal{S}(V)$.
But the previous proposition shows that $\mathcal{S}(W)=\tau^{\leq0}\mathcal{S}(W)=\mathcal{S}^{0}(W)$,
so we get a nonzero map $\mathcal{S}^{0}(W)\to\mathcal{S}(V)$ which
necessarily factors over a nonzero map $\mathcal{S}^{0}(W)\to\mathcal{S}^{0}(V)$.

``Only if'': Suppose $\mathcal{S}^{0}(V)\neq0$. By the previous
proposition, this is zero-dimensional as a $G$-representation, so
$\mathcal{S}(\mathcal{S}^{0}(V))\cong\mathcal{S}^{0}(\mathcal{S}^{0}(V))$.
In particular, applying $\mathcal{S}$ to the map $\mathcal{S}^{0}(V)=\tau^{\leq0}\mathcal{S}(V)\to\mathcal{S}(V)$
and using biduality, we get a nonzero map $V\cong\mathcal{S}(\mathcal{S}(V))\to\mathcal{S}(\mathcal{S}^{0}(V))\cong\mathcal{S}^{0}(\mathcal{S}^{0}(V))$,
and $\mathcal{S}^{0}(\mathcal{S}^{0}(V))$ is finite-dimensional over
$\F_p$.
\end{proof}

We conclude this section by giving a geometric interpretation of $\D^\sm(G,\F_p)$. Let $C$ be any algebraically closed nonarchimedean field of residue characteristic $p$.

\begin{prop}\label{Detclassifying} Notation as above, there are natural symmetric monoidal equivalences
\[ \D_{\et}([\Spd(C)/G],\F_p) \cong \D([\Spd(C)/G]_{\et},\F_p) \cong \D^\sm(G,\F_p).\]
In particular, the first two categories are left-complete and canonically independent of $C$.
\end{prop}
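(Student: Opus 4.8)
The plan is to establish the two displayed equivalences separately, in each case by comparing both sides with $\D^\sm(G,\F_p)$ via a descent argument, and then to check the two comparisons are compatible with the canonical functor $\D([\Spd C/G]_{\et},\F_p)\to\D_\et([\Spd C/G],\F_p)$. For the $\D_\et$-equivalence it is convenient to first reduce to the case that $G$ is profinite: an open profinite subgroup $K\subset G$ gives an étale v-cover $B\underline K\to B\underline G$, and running comonadic descent along it (with the resulting comonad on $\D^\sm(K)$ computed by Mackey's formula, using étale base change in $\D_\et$) reduces the statement for $G$ to the statement for open subgroups of $K$. The topos-theoretic comparison below requires no such reduction.

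For $\D([\Spd C/G]_{\et},\F_p)\cong\D^\sm(G,\F_p)$: this is purely topos-theoretic. Since étale morphisms satisfy v-descent, an object of the small étale site of $B\underline G$ is the same as one étale over $\Spd C$ together with a descent datum along the groupoid $\Spd C\times\underline G^\bullet$. As $C$ is algebraically closed, $\Spd C$ is a geometric point, so étale maps to it are just (disjoint unions of copies of $\Spd C$ indexed by) sets; a descent datum on such along $\underline G$ is a $G$-action which is continuous, hence --- the index set being discrete --- smooth. Thus $[\Spd C/G]_{\et}$ is the site of smooth $G$-sets, its $\F_p$-linear sheaves are exactly smooth $G$-representations, and passing to derived $\infty$-categories gives $\D([\Spd C/G]_{\et},\F_p)\cong\D(\mathrm{Rep}^{\mathrm{sm}}_{\F_p}(G))=\D^\sm(G,\F_p)$.

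For $\D_\et([\Spd C/G],\F_p)\cong\D^\sm(G,\F_p)$ (now $G$ profinite), I use v-descent for $\D_\et$ along the v-cover $\mathrm{pr}\colon\Spd C\to B\underline G$. Then $\mathrm{pr}^*$ is conservative, $\D_\et(\Spd C,\F_p)=\D(\F_p)$, and $\mathrm{pr}$ admits $\mathrm{pr}_*$, so Barr--Beck--Lurie identifies $\D_\et(B\underline G,\F_p)$ with comodules over the symmetric monoidal comonad $T=\mathrm{pr}^*\mathrm{pr}_*$ on $\D(\F_p)$. To compute $T$: since $G$ acts trivially on the point $\Spd C$, the two projections $\Spd C\times\underline G\rightrightarrows\Spd C$ coincide, and base change along the proper (for $G$ profinite) cover $\mathrm{pr}$ gives $T(M)\simeq\mathrm{pr}_{\Spd C,*}\,\mathrm{pr}_{\Spd C}^*M$, i.e.\ the étale cohomology of the constant sheaf $\underline M$ along the fibre $\underline G$; as the underlying space of $\underline G$ is the zero-dimensional profinite set $G$, this is concentrated in degree $0$ and equals $\mathcal C^{\mathrm{sm}}(G,M)$, the smooth ($=$ locally constant) $M$-valued functions on $G$, with comonad structure dual to multiplication on $G$. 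On the representation side, $\mathrm{res}\colon\D^\sm(G,\F_p)\to\D(\F_p)$ is conservative and symmetric monoidal and has both adjoints (compact induction on the left, smooth coinduction $\Ind_1^G\colon W\mapsto\mathcal C^{\mathrm{sm}}(G,W)$ on the right), hence is comonadic, realizing $\D^\sm(G,\F_p)$ as comodules over $\mathrm{res}\circ\Ind_1^G=\mathcal C^{\mathrm{sm}}(G,-)$. The two comonads agree, compatibly with their symmetric monoidal structures, so $\D_\et(B\underline G,\F_p)\cong\D^\sm(G,\F_p)$.

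Finally, the comparison functor $\D([\Spd C/G]_{\et},\F_p)\to\D_\et([\Spd C/G],\F_p)$ commutes with pullback to $\Spd C$ --- the forgetful functor to $\D(\F_p)$ in both descriptions --- and with the comonad $\mathcal C^{\mathrm{sm}}(G,-)$, so it is the equivalence just constructed; this yields all three identifications, and tracking the monoidal structures throughout is routine. Left-completeness and independence of $C$ for the first two categories are then inherited from $\D^\sm(G,\F_p)$. I expect the main difficulty to lie not in this algebra but in controlling Scholze's $\D_\et$ --- a priori a hypercompleted v-sheaf of $\infty$-categories --- in this unbounded setting: one must know that $\mathrm{pr}\colon\Spd C\to B\underline G$ is genuinely of effective $\D_\et$-descent with conservative pullback, and that the pushforward above is the honest (degreewise, the fibre being zero-dimensional) topological pushforward rather than some hypercompletion. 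This is exactly where the standing hypothesis that $G$ have locally bounded $p$-cohomological dimension is used: it is what makes $\D^\sm(G,\F_p)$ left-complete and the descent along $\mathrm{pr}$ well-behaved, in the same spirit as the descendability input to \cref{rslt:p-codescent-for-classifying-stack}.
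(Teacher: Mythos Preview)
Your approach is essentially a detailed unpacking of what the paper cites: the paper's proof simply invokes the argument of \cite[Theorem V.1.1]{FS}, which shows $\D_\et(X,\Lambda)\simeq\D(X_\et,\Lambda)$ whenever $X_\et$ has locally uniformly bounded cohomological dimension, and observes that this holds for $X=B\underline G$ by the standing assumption on $G$ (Lazard--Serre in the Lie case). Your v-descent and comonadicity argument is the same mechanism, spelled out directly.

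There is, however, a genuine slip in your justification of comonadicity on the representation side. For profinite $G$, the forgetful functor $\mathrm{res}\colon\D^\sm(G,\F_p)\to\D(\F_p)$ does \emph{not} have compact induction as a left adjoint: the trivial subgroup $\{1\}\subset G$ is not open, so compact induction from it is not even defined, and in fact $\mathrm{res}$ fails to preserve infinite products (the product in $\mathrm{Rep}^{\sm}_{\F_p}(G)$ of a family $(V_n)$ is the subspace of smooth vectors in $\prod V_n$, which is generally proper), hence admits no left adjoint at all. So ``has both adjoints, hence comonadic'' fails as written. Comonadicity of $\mathrm{res}$ does hold, but for the reason you correctly flag at the end: under $\cd_p G<\infty$ the category $\D^\sm(G,\F_p)$ is left-complete, and this forces the cobar resolution $V\to\mathcal C^{\sm}(G^{\bullet+1},V)$ --- an exact complex in $\mathrm{Rep}^{\sm}_{\F_p}(G)$ --- to totalize back to $V$, which is what Barr--Beck needs. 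This is also precisely where the cohomological-dimension hypothesis enters in the Fargues--Scholze argument; your instinct about its role is right, but it must replace the incorrect left-adjoint claim rather than merely supplement it.
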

\begin{proof}This follows from the proof of Theorem V.1.1 in \cite{FS}, noting that that $[\Spd(C)/G]_{\et}$ locally has uniformly bounded $p$-cohomological dimension by Lazard-Serre.
\end{proof}

\subsection{Sheaves on classifying stacks}

We now study the stacky 6-functor formalism introduced above in the context of the classifying stack $*/G$ associated to a locally profinite group $G$. Recall that if $G$ has locally finite $p$-cohomological dimension then for every totally disconnected space $X = \Spa(A, A^+)$ with pseudouniformizer $\pi$ the $\infty$-category $\DqcohriX{X/G}$ can be identified with the $\infty$-category of smooth $G$-representations on solid almost $A^+/\pi$-modules (see \cite[Lemma 3.4.26.(ii)]{mann-p-adic-6-functors}). In the following we will show that if $G$ has locally finite $p$-cohomological dimension then $*/G \to *$ is $p$-fine, and we will deduce a precise criterion for when the map $*/G \to *$ is $p$-cohomologically smooth. In particular, this will hold in all practical situations, see Theorem \ref{padicLiePoincare} below.

To start things off, we first translate the finite $p$-cohomological dimension condition into a vanishing of Ext-groups.  Also recall the definition of the $p$-cohomological dimension $\cd_p G$ from \cite[Definition 3.4.20]{mann-p-adic-6-functors} (see \cite[Proposition 3.4.22]{mann-p-adic-6-functors} to relate this to more classical versions).

\begin{prop} \label{rslt:Ext-i-vanishing-for-discrete-G-rep}
Let $G$ be a profinite group and let $M, N \in \D_\solid(\F_p)^{BG}$ be static continuous $G$-representations on solid $\F_p$-vector spaces. If $M$ is discrete then
\begin{align*}
    \Ext^i_G(M, N) = 0 \qquad \text{for $i > \cd_p G$}.
\end{align*}
\end{prop}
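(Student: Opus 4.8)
The plan is to reduce the statement to a standard computation of continuous cohomology of profinite groups. First I would recall that, for a profinite group $G$, the category $\D_\solid(\F_p)^{BG}$ is the derived $\infty$-category of continuous $G$-representations on solid $\F_p$-vector spaces, and that $\Ext^i_G(M,N) = \pi_{-i} R\Hom_G(M,N)$ where $R\Hom_G(M,N) = R\Gamma(G, \IHom_{\F_p}(M,N))$ is computed by the derived functor of $G$-invariants. The key reduction is that when $M$ is discrete (i.e. a genuine smooth $\F_p$-representation), we may write $M = \varinjlim_j M_j$ as a filtered colimit of its finite subrepresentations; since each $M_j$ is finite over $\F_p$, one can further reduce — by dévissage along the finite filtration with trivial subquotients — to the case $M = \F_p$ with trivial action, at the cost of knowing that $\Ext^i_G(\F_p, -)$ vanishes for $i > \cd_p G$ on all of $\D_\solid(\F_p)^{BG}$. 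Here I would invoke the definition of $\cd_p G$ from \cite[Definition 3.4.20]{mann-p-adic-6-functors}, which is precisely cooked up so that $R\Gamma(G,-) = R\Hom_G(\F_p,-)$ has cohomological amplitude $[0, \cd_p G]$ on static objects; combined with \cite[Proposition 3.4.22]{mann-p-adic-6-functors} relating it to the classical $p$-cohomological dimension of Serre, this gives the vanishing for $M = \F_p$ immediately.

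The slightly delicate point is the passage from $M = \F_p$ to a general discrete $M$. I would handle it in two steps. First, for $M$ finite-dimensional over $\F_p$: since $G$ is profinite and the $G$-action on $M$ is continuous with $M$ discrete, the action factors through a finite quotient, so $M$ admits a finite filtration with subquotients isomorphic to $\F_p$ (trivial action), and the long exact sequences in $\Ext^\bullet_G(-,N)$ propagate the vanishing in degrees $> \cd_p G$. Second, for general discrete $M = \varinjlim_j M_j$ with $M_j$ finite: I would use that $\IHom_{\F_p}(\varinjlim_j M_j, N) = \varprojlim_j \IHom_{\F_p}(M_j,N)$ and hence that $R\Hom_G(M,N) = \varprojlim_j R\Hom_G(M_j,N)$, a limit of a tower (or cofiltered diagram) of complexes each concentrated in amplitude $[0,\cd_p G]$ with the standard $\mathrm{lim}^1$-type contribution shifting by at most one. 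The cleaner route — and the one I expect to be the main obstacle to get exactly right — is to avoid the $\mathrm{lim}^1$ subtlety entirely by working at the level of the anima/spectrum $R\Hom_G(M,N)$ and noting that $N$ is static: then $\IHom_{\F_p}(M, N)$ is again static (the internal hom of a discrete object into a static solid $\F_p$-module is static), and one applies the amplitude bound for $R\Gamma(G,-)$ directly to this single static object $\IHom_{\F_p}(M,N)$, with no colimit decomposition of $M$ needed at all. Concretely, $R\Hom_G(M,N) \cong R\Gamma(G, \IHom_{\F_p}(M,N))$, the argument is a static solid $G$-representation, and the conclusion is immediate from the definition of $\cd_p G$.

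So the main obstacle is really bookkeeping: making sure that $\IHom_{\F_p}(M,N)$ is static (which uses discreteness of $M$ in an essential way — it is here that the hypothesis "$M$ is discrete" enters, guaranteeing internal $\Ext^{>0}_{\F_p}$ vanish) and that the amplitude bound for $R\Gamma(G,-)$ from the cited definition applies to static solid coefficients without finiteness hypotheses. Once both are in hand, the proof is a one-line application of the defining property of $\cd_p G$. I would state it that way, citing \cite[Definition 3.4.20, Proposition 3.4.22]{mann-p-adic-6-functors} for the input on $\cd_p$.
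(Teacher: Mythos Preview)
Your ``cleaner route'' is exactly the paper's argument: one shows that the internal hom in $\D_\solid(\F_p)^{BG}$ is computed on underlying solid $\F_p$-vector spaces (the paper does this via the projection formula for the left adjoint $q_\natural = \F_{p\solid}[G]\tensor_{\F_{p\solid}}-$ of the forgetful functor), deduces that $\IHom(M,N)$ is static because a discrete $M$ satisfies $q^*M \cong \bigoplus_I \F_p$, and then reads off $\Ext^i_G(M,N)=H^i(G,\IHom(M,N))$ together with the defining amplitude bound for $\cd_p G$. Your d\'evissage/limit detour is unnecessary, as you yourself note; the only point you leave implicit that the paper spells out is the justification of $R\Hom_G(M,N)\cong R\Gamma(G,\IHom_{\F_p}(M,N))$, which needs the projection-formula argument for $q_\natural$ rather than being taken for granted.
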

\begin{proof}
The $\infty$-category $\D_\solid(\F_p)^{BG}$ comes naturally equipped with a symmetric monoidal structure such that the forgetful functor $q^*\colon \D_\solid(\F_p)^{BG} \to \D_\solid(\F_p)$ is symmetric monoidal (e.g. one can construct $\D_\solid(\F_p)^{BG}$ as certain sheaves on the classifying stack $*/G$ in a suitable geometric setup). The forgetful functor $q^*$ has a left adjoint $q_\natural\colon \D_\solid(\F_p) \to \D_\solid(\F_p)^{BG}$ which is given by $q_\natural(M) = \F_{p\solid}[G] \tensor_{\F_{p\solid}} M$. It is straightforward to check that $q_\natural$ satisfies the projection formula, i.e. that for all $M \in \D_\solid(\F_p)^{BG}$ and $N \in \D_\solid(\F_p)$ the natural morphism $q_\natural(N \tensor q^* M) \isoto q_\natural N \tensor M$ is an isomorphism. The symmetric monoidal structure on $\D_\solid(\F_p)^{BG}$ is closed, i.e. admits internal hom's $\IHom$ (this is clear if we restrict to $\kappa$-condensed objects for some cardinal $\kappa$ and then follows for unrestricted condensed objects by the following computation). It follows formally from the projection formula for $q_\natural$ that for all $M, N \in \D_\solid(\F_p)^{BG}$ the natural morphism
\begin{align*}
    q^* \IHom(M, N) \isoto \IHom(q^* M, q^* N)
\end{align*}
is an isomorphism -- in other words, the internal hom in $\D_\solid(\F_p)^{BG}$ is computed on the underlying solid $\F_p$-vector spaces. In particular, if $M$ and $N$ are static and $M$ is discrete (and thus $q^* M \isom \bigdsum_I \F_p$ for some set $I$) then $\IHom(M, N)$ is static. Therefore, from
\begin{align*}
    \Ext^i_G(M, N) = \pi_0 R\Hom(M, N[i]) = \pi_0 \Hom(\F_p, \IHom(M, N[i])) = H^i(G, \IHom(M, N))
\end{align*}
we immediately deduce the claim.
\end{proof}

We will now show that $*/G \to *$ is $p$-fine. This reduces to showing that the canonical cover $* \to */G$ admits universal $p$-codescent, which will be established by adapting Mathew's ideas on descendable algebras \cite{akhil-galois-group-of-stable-homotopy}.

\begin{lem} \label{rslt:p-codescent-for-classifying-stack}
Let $G$ be a profinite group with $\cd_p G < \infty$. Then the map $* \to */G$ of small v-stacks is bdcs and admits universal $p$-codescent.
\end{lem}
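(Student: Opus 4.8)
The plan is to verify the two assertions separately, the first being essentially routine and the second carrying the real content.

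For the bdcs claim: the map $\pi\colon * \to */G$ is a $\underline{G}$-torsor, so v-locally on the target it has the form $\underline{G}\times(-) \to (-)$. In particular it is representable in spatial diamonds, separated (its diagonal is the unit section $* \to \underline{G}$, a closed immersion), proper (its fibres are the profinite set $G$), and locally compactifiable; and it is $p$-bounded, since over any strictly totally disconnected $X'$ its base change is a $G$-torsor along which pushforward has cohomological dimension $\le \cd_p G<\infty$. So $\pi$ is bdcs, cf.\ \cite[\S3.5]{mann-p-adic-6-functors}.

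For universal $p$-codescent: unwinding the definition, one is given a map $X'\to */G$ with $X'$ admitting a map to a strictly totally disconnected space; the base change $\pi'\colon P := X'\times_{*/G}* \to X'$ is then a $G$-torsor, and writing $P_\bullet\to X'$ for its Čech nerve one must show that the functor
\[
    \Dqcohrishriek(X',\Lambda) \to \varprojlim_{n\in\Delta} \Dqcohrishriek(P_n,\Lambda)
\]
is an equivalence. First I would reduce, by v-descent and the usual reductions, to the case that $X'=\Spa(A,A^+)$ is strictly totally disconnected. Then $P$ is a $G$-torsor over a strictly totally disconnected space, so $P = \varprojlim_U P_U$ over the open normal subgroups $U\trianglelefteq G$, with each $\pi'_U\colon P_U\to X'$ a \emph{finite étale} $(G/U)$-torsor; and the same holds levelwise, $P_n = \varprojlim_U (P_U)_n$, with finite étale transition maps.

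The key is then to combine three things. (1) Each $\pi'_U$ is bdcs and $p$-cohomologically smooth, hence admits universal $p$-codescent by the previous lemma, so that $\Dqcohrishriek(X',\Lambda)\isoto\varprojlim_{n\in\Delta}\Dqcohrishriek((P_U)_n,\Lambda)$ for every $U$. (2) Continuity of the formalism along cofiltered limits of spatial diamonds with qcqs transition maps identifies $\Dqcohri(P_n,\Lambda)$ with $\varinjlim_U\Dqcohri((P_U)_n,\Lambda)$. (3) The algebra $A := \pi'_*\mathbf{1}_P \cong \varinjlim_U (\pi'_U)_*\mathbf{1}_{P_U}$ in $\mathrm{CAlg}(\Dqcohri(X',\Lambda))$ --- which is the algebra $\mathcal{C}^{\mathrm{sm}}(G,\Lambda)$ of smooth functions on $G$ --- is \emph{descendable} in Mathew's sense: its cobar complex computes continuous $G$-cohomology, whose amplitude is bounded by $\cd_p G<\infty$, and descendability then forces the totalizations in sight to stabilize after a bounded number of steps, uniformly and compatibly with base change. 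It is this last point that makes it legitimate to commute $\varprojlim_{n\in\Delta}$ with $\varinjlim_U$ (and to handle the discrepancy between the finite-level $!$-pullbacks and the $!$-pullbacks along the pro-finite-étale face maps of $P_\bullet$). Putting (1)--(3) together yields $\Dqcohrishriek(X',\Lambda) \isoto \varprojlim_{n\in\Delta}\Dqcohrishriek(P_n,\Lambda)$.

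I expect the main obstacle to be point (3): formulating and proving descendability of $\mathcal{C}^{\mathrm{sm}}(G,\Lambda)$ inside the solid almost $\ri^+/\pi$-formalism directly from $\cd_p G<\infty$, à la Mathew, and then exploiting it to carry out the limit--colimit interchange compatibly with base change and with the $!$-variant of the formalism. The bdcs verification and the finite-étale codescent are comparatively routine.
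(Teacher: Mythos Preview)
You have correctly identified the heart of the matter: descendability of $\cts(G,\Lambda)$, deduced from $\cd_p G<\infty$, is what makes the lemma work. However, your proposed deployment of this fact via finite-level approximation has real gaps.

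Your point (2)---continuity of $\Dqcohri$ along the tower $P_n=\varprojlim_U (P_U)_n$---is not a general feature of the solid $\ri^+/\pi$-formalism, and you offer no argument for it. More seriously, even granting (2), you need it for $\Dqcohrishriek$: the face maps of $P_\bullet$ are pro-finite-\'etale $G$-torsors, and for such maps $f^!\neq f^*$ in general (here $f$ is proper, so $f^!$ is right adjoint to the forgetful functor, not base change). You acknowledge this discrepancy but give no mechanism for resolving it; the bounded totalization coming from descendability does not by itself justify commuting $\varprojlim_{n\in\Delta}$ past $\varinjlim_U$ in the $!$-diagram.

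The paper's route avoids all of this by using descendability \emph{directly} through Lurie's comonadic descent criterion. One checks the Beck--Chevalley/left-adjointability condition from proper base change, and then the two remaining conditions---conservativity of $q^!$ and preservation of $q^!$-split geometric realizations---follow from a single claim: the identity functor lies in the thick $\otimes$-subcategory of endofunctors generated by $q_*q^!$. Since $q_*q^!=\IHom(q_*\Lambda^a,-)$, this is exactly descendability of the algebra $q_*\Lambda^a$. A sequence of reductions (to $X=Z/G$ with $Z$ totally disconnected, then to $\pi\mid p$, then via Riemann--Hilbert to $\F_p$-coefficients) brings this down to descendability of $\cts(G,\F_p)$ in $\D^\sm(G,\F_p)_\omega$, which is proved from $\cd_p G<\infty$ by the standard $\Tot_d$-splitting argument. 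No continuity or limit-interchange is needed.
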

\begin{proof}
It is clear that the map $* \to */G$ is bdcs (e.g. the pullback along itself is $G \to *$, which is pro-étale). Now pick any $(X,\Lambda) \in \vStacksCoeff$ admitting a map to a strictly totally disconnected space, pick any map $X \to */G$ and denote $q\colon Y \to X$ the base-change of $* \to */G$. We need to show that $q$ admits $p$-codescent, i.e. if $q_\bullet\colon Y_\bullet \to X$ denotes the Čech nerve of $q$ then the natural functor
\begin{align*}
    \Dqcohrishriek(X, \Lambda) \isoto \varprojlim_{n\in\Delta} \Dqcohrishriek(Y_n, \Lambda)
\end{align*}
is an equivalence. To see this we employ Lurie's Beck-Chevalley condition, see \cite[Corollary 4.7.5.3]{lurie-higher-algebra}. Part (2) of this condition is easily satisfied: Since all the maps in the diagram $Y_\bullet \to X$ are proper, the required left adjointability for the upper shriek functors reduces to proper base-change (by passing to left adjoints). Thus in order to apply Lurie's result it remains to verify the following:
\begin{enumerate}[(a)]
    \item The functor $q^!\colon \Dqcohri(X,\Lambda) \to \Dqcohri(Y,\Lambda)$ is conservative.
    \item The functor $q^!$ preserves geometric realizations of $q^!$-split simplicial objects in $\Dqcohri(X,\Lambda)$.
\end{enumerate}
To prove (a) and (b) we apply ideas by Mathew \cite{akhil-galois-group-of-stable-homotopy} (which were also used extensively in the setup of the $p$-adic 6-functor formalism, cf. \cite[\S2.6]{mann-p-adic-6-functors}). More concretely, we will prove the following claim:
\begin{itemize}
    \item[($\ast$)] Let $\langle q_*q^! \rangle \subset \Fun(\Dqcohri(X,\Lambda), \Dqcohri(X,\Lambda))$ be the full subcategory generated by the endofunctor $q_*q^!$ under compositions, retracts and finite (co)limits. Then $\langle q_*q^! \rangle$ contains the identity functor.
\end{itemize}
Let us first see how ($\ast$) implies (a) and (b), so assume that ($\ast$) holds for the moment. To prove (a), let $\mathcal M \to \mathcal N$ be a given map in $\Dqcohri(X,\Lambda)$ such that $q^! \mathcal M \isoto q^! \mathcal N$ is an isomorphism. Then also $q_* q^! \mathcal M \isoto q_* q^! \mathcal N$ is an isomorphism and one deduces easily that $F(\mathcal M) \isoto F(\mathcal N)$ is an isomorphism for all $F \in \langle q_*q^! \rangle$; taking $F = \id$ shows that $\mathcal M \isoto \mathcal N$ is an isomorphism, as desired. To prove (b), let $\mathcal M_\bullet$ be a $q^!$-split simplicial object in $\Dqcohri(X,\Lambda)$. Then $q_*q^! \mathcal M_\bullet$ is a constant $\Ind$-object (i.e. lies in the essential image of $\Dqcohri(X,\Lambda) \injto \Ind(\Dqcohri(X,\Lambda))$) and it follows immediately that the same is true for $F(\mathcal M_\bullet)$ for all $F \in \langle q_*q^! \rangle$. Taking $F = \id$ shows that $\mathcal M_\bullet$ is a constant $\Ind$-object, so that its colimit commutes with any exact functor; in particular it commutes with $q^!$.

It now remains to verify ($\ast$). We compute (using \cite[Corollary 3.6.17.(i)]{mann-p-adic-6-functors})
\begin{align*}
    q_*q^! = q_* \IHom(\Lambda^a, q^!) = \IHom(q_! \Lambda^a, -) = \IHom(q_* \Lambda^a, -).
\end{align*}
Therefore ($\ast$) can be reduced to showing the following: Let $\langle q_*\Lambda^a \rangle \subset \Dqcohri(X,\Lambda)$ be the full subcategory generated by $q_*\Lambda^a$ under tensor product, retracts and finite (co)limits; then $\langle q_*\Lambda^a \rangle$ contains the tensor unit $\Lambda^a$. By \cite[Proposition 3.20]{akhil-galois-group-of-stable-homotopy} this is equivalent to the following claim:
\begin{itemize}
    \item[($\ast'$)] The algebra object $q_*\Lambda^a \in \Dqcohri(X,\Lambda)$ admits descent.
\end{itemize}
It remains to prove that ($\ast'$) is true. We first perform some reductions. By assumption on $(X,\Lambda)$ there is a map $(X,\Lambda) \to (Z,\Lambda)$ with $Z = \Spa(A, A^+)$ being strictly totally disconnected. We thus get an induced map $X \to Z \times */G = Z/G$ such that $q$ is the pullback of the projection $q_Z\colon Z \to Z/G$. The induced pullback functor $\Dqcohri(Z/G,\Lambda) \to \Dqcohri(X,\Lambda)$ is exact, symmetric monoidal and sends $q_{Z*}\Lambda^a$ to $q_*\Lambda^a$ (by proper base-change). It is therefore enough to prove that ($\ast$') holds for $q_Z$ in place of $q$, so from now on we assume $X = Z/G$ and hence $Y = Z$.

Given a morphism $\Lambda' \to \Lambda$ of integral torsion coefficients on $Z$, the induced base-change functor $\Dqcohri(Z,\Lambda') \to \Dqcohri(Z,\Lambda)$ preserves tensor products, retracts and finite (co)limits and sends $q_*\Lambda'^a$ to $q_*\Lambda^a$ (by \cite[Proposition 3.6.16]{mann-p-adic-6-functors}), hence if ($\ast'$) holds for $\Lambda'$ then it also holds for $\Lambda$. By definition of integral torsion coefficients we can thus reduce to the case that $\Lambda = \ri^{+a}_{Z^\sharp}/\pi$ for some untilt $Z^\sharp$ of $Z$ and some pseudouniformizer $\pi$ on $Z^\sharp$. Pick some pseudouniformizer $\varpi$ on $Z^\sharp$ such that $\varpi | p$ and some integer $n > 0$ such that $\pi | \varpi^n$. By \cite[Proposition 3.35]{akhil-galois-group-of-stable-homotopy} the map of discrete rings $A^{\sharp+}/\pi \to A^{\sharp+}/\varpi$ admits descent, which after pullback along $Z/G \to Z$ implies that the map $\ri_{Z^\sharp/G}^{+a}/\pi \to \ri_{Z^\sharp/G}^{+a}/\varpi$ in $\DqcohriX{Z^\sharp/G}$ admits descent. Therefore, if we can show that ($\ast'$) holds for $\Lambda = \ri^+_{Z^\sharp/G}/\varpi$ then by \cite[Proposition 3.24]{akhil-galois-group-of-stable-homotopy} and \cite[Lemma 11.10]{bhatt-scholze-witt} it also follows for $\Lambda = \ri^+_{Z^\sharp/G}/\pi$. Thus from now on we can assume that $\pi = \varpi$, i.e. we now have $\pi | p$.

Now recall the Riemann-Hilbert functor $RH\colon \D_\et(Z/G,\F_p)^\oc \to \DqcohriX{Z^\sharp/G}$ from \cite[Definition 3.9.21.(a)]{mann-p-adic-6-functors} (where we ignore the $\varphi$-module structure). Then $RH$ is symmetric monoidal and exact and satisfies $RH(q_*\F_p) = q_*(\ri_{Z^\sharp}^{+a}/\pi)$. To verify the latter, note that this identity can be checked after applying $q^*$, where we get (using base-change)
\begin{align*}
    &q^* RH(q_* \F_p) = RH(q^*q_* \F_p) = RH(\cts(G,\F_p)) = \cts(G, \F_p) \tensor_{\F_p} A^{\sharp+,a}/\pi = \cts(G, A^{\sharp+,a}/\pi) \\&\qquad= q^* q_*(A^{\sharp+,a}/\pi)
\end{align*}
(here we implicitly use the identity $\DqcohriX{Z^\sharp} = \Dqcohri(A^{\sharp+}/\pi)$). All in all this means that it is enough to show that $q_*\F_p \in \D_\et(Z/G, \F_p)^\oc$ is descendable. It follows easily from \cite[Lemma 3.9.18.(iii)]{mann-p-adic-6-functors} that $\D_\et(Z/G, \F_p)^\oc = \D^\sm(G, \F_p(Z))_\omega$ is the derived $\infty$-category of discrete smooth $G$-representations on $\F_p(Z)$-modules (use the same argument as in the proof of \cite[Lemma 3.4.26.(iii)]{mann-p-adic-6-functors}). Our goal is to show that the $G$-representation $\cts(G, \F_p(Z))$ admits descent in this $\infty$-category. By considering base-change along $\F_p \to \F_p(Z)$ we reduce the claim to the following:
\begin{itemize}
    \item[($\ast''$)] The algebra object $\cts(G,\F_p) \in \D^\sm(G, \F_p)_\omega$ admits descent.
\end{itemize}
Note that $\D^\sm(G,\F_p)_\omega$ is equivalent to the full subcategory of $\D_\solid(\F_p)^{BG}$ spanned by the discrete $G$-representations (e.g. both $\infty$-categories can be constructed using the same descent diagram), hence by \cref{rslt:Ext-i-vanishing-for-discrete-G-rep} it follows that for all static $M, N \in \D^\sm(G,\F_p)_\omega$ we have $\pi_0\Hom(M, N[i]) = 0$ for $i > d := \cd_p G$. Now consider the triangle
\begin{align*}
    \F_p \to \Tot_d(\cts(G,\F_p)^{\tensor \bullet+1}) \to X
\end{align*}
in $\D^\sm(G,\F_p)_\omega$, where $\Tot_d$ denotes the $d$-truncated totalization of a cosimplicial object (i.e. the limit over $\Delta_{\le d}$). We know that $\Tot(\cts(G,\F_p)^{\tensor \bullet+1}) = \F_p$ (by descent) and by the dual version of \cite[Proposition 1.2.4.5]{lurie-higher-algebra} we have $\pi_i \Tot_d = \pi_i \Tot$ for $i > -d$. It follows that $X$ is concentrated in homological degrees $\le -d$, so that by the above vanishing of hom's the connecting map $X \to \F_p[1]$ must be zero. It follows that $\Tot_d(\cts(G,\F_p)^{\tensor \bullet+1}) \isom \F_p \dsum X$, which immediately implies that $\cts(G,\F_p)$ admits descent.
\end{proof}

\begin{cor} \label{rslt:finite-cd-p-implies-classifying-stack-is-pfine}
Let $G$ be a profinite group with $\cd_p G < \infty$. Then:
\begin{corenum}
    \item The map $*/G \to *$ is $p$-fine.
    \item For every $(X,\Lambda) \in \vStacksCoeff$ with associated map $f\colon X/G \to X$ there is a natural isomorphism $f_! = f_*$ of functors $\Dqcohri(X/G,\Lambda) \to \Dqcohri(X,\Lambda)$.
\end{corenum}
\end{cor}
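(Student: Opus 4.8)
The plan is as follows. Part~(i) is formal: by \cref{rslt:p-codescent-for-classifying-stack} the canonical atlas $g\colon * \to */G$ is bdcs and admits universal $p$-codescent, and its composite with $*/G \to *$ is $\id_*$, hence bdcs; so \cref{def:p-fine-maps} shows $*/G \to *$ is $p$-fine. For general $(X,\Lambda)$ the map $f\colon X/G \to X$ is the base change of $*/G \to *$ along $X \to *$, so it is $p$-fine by stability of $p$-fine maps under base change, and likewise the atlas $q\colon X \to X/G$ (the base change of $* \to */G$) is bdcs and admits universal $p$-codescent. The crucial extra point is that $q$ is moreover \emph{proper}: base-changing $q$ along itself gives the projection $X \times_{X/G} X \cong X \times \underline G \to X$, which is the base change of $\underline G \to *$, and $\underline G \to *$ is proper because $\underline G$ is a profinite set; since properness is v-local on the target, $q$ is proper. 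Writing $q_\bullet\colon Z_\bullet \to X/G$ for the Čech nerve of $q$, so that $Z_n \cong X \times \underline G^n$, the same argument shows that each $q_n$ is proper, and since $f\comp q = \id_X$ the composite $f\comp q_n\colon Z_n \to X$ is the projection $X \times \underline G^n \to X$, hence also proper.

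For part~(ii) the key point is that the stacky pushforward $f_*\colon \Dqcohri(X/G,\Lambda) \to \Dqcohri(X,\Lambda)$ \emph{preserves all small colimits}: it is exact, being a right adjoint between stable $\infty$-categories, and it preserves filtered colimits because $f$ has bounded $p$-cohomological dimension ($\le \cd_p G < \infty$); an exact functor of stable $\infty$-categories preserving filtered colimits preserves all colimits. Granting this, reduce as usual (by v-descent, as in the remark after \cref{rslt:stacky-6-functor-formalism}) to the case that $X$ admits a map to a strictly totally disconnected space, so that universal $p$-codescent of $q$ yields the bar resolution $\varinjlim_{n\in\Delta} q_{n!} q_n^! \mathcal M \isoto \mathcal M$. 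Then for $\mathcal M \in \Dqcohri(X/G,\Lambda)$ we compute
\begin{align*}
f_* \mathcal M
&= f_*\Bigl(\varinjlim_{n\in\Delta} q_{n!} q_n^! \mathcal M\Bigr)
= \varinjlim_{n\in\Delta} f_* q_{n!} q_n^! \mathcal M
= \varinjlim_{n\in\Delta} f_* q_{n*} q_n^! \mathcal M \\
&= \varinjlim_{n\in\Delta} (f\comp q_n)_* q_n^! \mathcal M
= \varinjlim_{n\in\Delta} (f\comp q_n)_! q_n^! \mathcal M
= f_! \mathcal M,
\end{align*}
where the second equality uses that $f_*$ commutes with colimits, the third that $q_n$ is proper (so $q_{n!} = q_{n*}$), the fourth the functoriality of $*$-pushforward along the composite $Z_n \xrightarrow{q_n} X/G \xrightarrow{f} X$, the fifth that $f\comp q_n$ is proper, and the last the Čech description of $f_!$ from the remark after \cref{rslt:stacky-6-functor-formalism}. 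For a general base one reassembles the resulting natural isomorphism $f_* \cong f_!$ by v-descent.

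The step I expect to be the main obstacle is verifying that $f_*$ preserves all colimits — this is where the hypothesis $\cd_p G < \infty$ is genuinely used, beyond its role in \cref{rslt:p-codescent-for-classifying-stack}. The remaining steps are essentially bookkeeping, although a little care is needed with the v-descent reductions (first down to a base admitting a map to a strictly totally disconnected space, in order to apply the explicit Čech description of $f_!$, and then back up to a general base $X$) and with checking that the $*$-pushforwards in question commute with the pullback functors used in those reductions, which can be arranged by taking the relevant covers $p$-cohomologically smooth.
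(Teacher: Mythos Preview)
Your proposal is correct and follows essentially the same route as the paper: part~(i) via the factorization $* \to */G \to *$ and \cref{rslt:p-codescent-for-classifying-stack}, and part~(ii) by reducing to a base admitting a map to a strictly totally disconnected space, using the bar resolution $\varinjlim_n q_{n!} q_n^! \isoto \id$, the properness of $q_n$ and $f\comp q_n$, and the colimit-preservation of $f_*$ (from $\cd_p G < \infty$). Your extra care about properness and the descent step is fine; the paper handles the general base by performing the same computation on sections of the relevant cocartesian fibration.
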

\begin{proof}
Part (i) follows from \cref{rslt:p-codescent-for-classifying-stack} by using the factorization $\id = [* \to */G \to *]$. To prove (ii) let $(X,\Lambda)$ be given and assume first that $(X,\Lambda)$ admits a map to some strictly totally disconnected space. Let $q\colon X \to X/G$ denote the projection and let $q_\bullet\colon X_\bullet \to X/G$ be the Čech nerve of $q$. By \cref{rslt:p-codescent-for-classifying-stack} we have $\Dqcohrishriek(X/G,\Lambda) = \varprojlim_{n\in\Delta} \Dqcohrishriek(X_n,\Lambda)$, which means that for all $\mathcal M \in \Dqcohri(X/G,\Lambda)$ the natural map $\varinjlim_{n\in\Delta} q_{n!} q_n^! \mathcal M \isoto \mathcal M$ is an isomorphism. Since both $f_*$ and $f_!$ preserve small colimits (for $f_*$ this follows from $\cd_p G < \infty$) and all $q_n$ and $f \comp q_n$ are proper we deduce
\begin{align*}
    f_* = f_* \varinjlim_n q_{n!} q_n^! = \varinjlim_n (f \comp q_n)_* q_n^! = \varinjlim_n (f \comp q_n)_! q_n^! = f_!.
\end{align*}
The case of general $(X,\Lambda)$ can be deduced via descent (more precisely, do the above computation on sections of the cocartesian fibrations over $\Delta$).
\end{proof}

Having established that $*/G \to *$ is $p$-fine in many cases, we now determine criteria for when this map is $p$-cohomologically smooth. When $G$ has finite $p$-cohomological dimension, the precise answer is captured by the following definition.

\begin{defn} \label{def:profinite-Poincare-group}
Let $G$ be a profinite group with $\cd_p G < \infty$. We say that $G$ is \emph{$p$-Poincaré of dimension $d$} if it satisfies the following conditions:
\begin{enumerate}[(i)]
    \item $H^i(G, M)$ is finite for all $i \ge 0$ and all $G$-representations $M$ on finite $\F_p$-vector spaces.
    \item The solid $\F_p$-vector space $R\Gamma(G, \F_{p\solid}[G])$ is invertible and concentrated in cohomological degree $d$ (i.e. it is isomorphic to $\F_p[-d]$). 
\end{enumerate}
\end{defn}

Note that condition (i) is equivalent to the finiteness of $H^i(H,\F_p)$ for all $i\geq 0$ and all open subgroups $H\subseteq G$. Note also that if $G$ is $p$-Poincaré of dimension $d$, then so is every open subgroup of $G$.

\begin{prop} \label{rslt:p-Poincare-equiv-p-cohom-smoothness}
Let $G$ be a profinite group with $\cd_p G < \infty$.
\begin{propenum}
    \item $G$ satisfies condition (i) of \cref{def:profinite-Poincare-group} if and only if for all strictly totally disconnected spaces $(X,\Lambda) \in \vStacksCoeff$ with associated map $f\colon X/G \to X$ the morphism $f^* \tensor f^! \Lambda^a \isoto f^!$ is an equivalence of functors $\Dqcohri(X,\Lambda) \to \Dqcohri(X/G,\Lambda)$.
    \item $G$ is a $p$-Poincaré group (of dimension $d$) if and only if the map $*/G \to *$ is $p$-cohomologically smooth (of pure dimension $d/2$).
\end{propenum}
\end{prop}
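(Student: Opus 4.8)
The plan is to prove (i) first and then deduce (ii) from it. Throughout, fix a strictly totally disconnected $(X,\Lambda)\in\vStacksCoeff$, write $f\colon X/G\to X$ for the structure map, $q\colon X\to X/G$ for the canonical cover, and $f_H\colon X/H\to X$, $q_H\colon X/H\to X/G$ for $H\subseteq G$ open. Recall from \cref{rslt:finite-cd-p-implies-classifying-stack-is-pfine} that $f$ is $p$-fine with $f_!=f_*$ and that $q$ admits universal $p$-codescent, so $q^!$ is conservative.

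For (i), the key point is that condition~(i) of \cref{def:profinite-Poincare-group} is equivalent to $f_!=f_*$ preserving dualizable (equivalently, compact) objects. Indeed, $\Dqcohri(X/G,\Lambda)$ is compactly generated by the objects $\Ind_H^G\Lambda^a=q_{H!}\Lambda^a$ tensored with compact generators of $\Dqcohri(X,\Lambda)$, and the projection formula together with Shapiro's lemma gives $f_!(\Ind_H^G\mathcal M)\cong\mathcal M\tensor_{\F_p}R\Gamma(H,\F_p)$; since $\cd_p H\le\cd_p G<\infty$ bounds the cohomological amplitude of $R\Gamma(H,\F_p)$ (\cref{rslt:Ext-i-vanishing-for-discrete-G-rep}), this object is dualizable for all dualizable $\mathcal M$ exactly when $R\Gamma(H,\F_p)$ is a perfect $\F_p$-complex, i.e. when each $H^i(H,\F_p)$ is finite, i.e. — running over all open $H$ — exactly when condition~(i) holds. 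Now $f_!=f_*$ preserves compacts if and only if its right adjoint $f^!$ preserves small colimits; and taking right adjoints in the projection formula for $f_!$ yields the natural identity $f^!\IHom(\mathcal M,\mathcal N)\isoto\IHom(f^*\mathcal M,f^!\mathcal N)$, which for dualizable $\mathcal M$ (where $\IHom(\mathcal M,-)=\mathcal M^\vee\tensor-$) says precisely that $f^*\mathcal M\tensor f^!\Lambda^a\isoto f^!\mathcal M$. Hence the canonical transformation $f^*\tensor f^!\Lambda^a\to f^!$ is an isomorphism of functors exactly when $f^!$ preserves colimits, which happens exactly under condition~(i). The only delicate implication here is promoting the dualizable case to all objects once $f^!$ is colimit-preserving — see the last paragraph.

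For (ii), by (i) we may assume condition~(i). The linchpin is the formula
\[
 f^!\Lambda^a\;\cong\;\IHom_{\F_p}\!\bigl(R\Gamma(G,\F_{p\solid}[G]),\F_p\bigr)\tensor_{\F_p}\Lambda^a.
\]
To establish it, observe that $f_!\dashv f^!$ together with dualizability of $f_!(\Ind_H^G\Lambda^a)\cong R\Gamma(H,\F_p)\tensor_{\F_p}\Lambda^a$ gives, for each open $H$,
\[
 R\Gamma(H,f^!\Lambda^a)=R\Hom_{X/G}(\Ind_H^G\Lambda^a,f^!\Lambda^a)\cong\IHom_{\F_p}\!\bigl(R\Gamma(H,\F_p),\F_p\bigr)\tensor_{\F_p}\Lambda^a,
\]
and one checks that along the transition maps induced by $\Ind_{H_1}^G\Lambda^a\to\Ind_{H_2}^G\Lambda^a$ (for $H_1\subseteq H_2$) the colimit system $\{R\Gamma(H,f^!\Lambda^a)\}_H$, whose colimit is $f^!\Lambda^a$ by (the analogue of) \cref{colimcoh}, is identified with the system of $\F_p$-linear duals of the corestriction system $\{R\Gamma(H,\F_p)\}_H$. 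Since $\F_{p\solid}[G]=\varprojlim_H\F_p[G/H]$, Shapiro's lemma identifies the latter limit along corestrictions with $R\Gamma(G,\F_{p\solid}[G])$, and dualizing gives the displayed formula (the same argument applies with $\F_p$ replaced by $\ri_X^+/\pi$). Granting this: if $G$ is $p$-Poincaré of dimension $d$, condition~(ii) says $R\Gamma(G,\F_{p\solid}[G])\cong\F_p[-d]$, so $f^!\Lambda^a\cong\Lambda^a[d]$ is invertible and concentrated in cohomological degree $-d$, and combined with (i) this exhibits $f$ as $p$-cohomologically smooth of pure dimension $d/2$. Conversely, if $f$ is $p$-cohomologically smooth of pure dimension $d/2$ it is in particular pre-smooth, so condition~(i) holds by (i), and $f^!\Lambda^a$ is invertible and concentrated in degree $-d$; by faithfully flat descent along $\F_p\to\ri_C^+/\pi$ the displayed formula then forces $R\Gamma(G,\F_{p\solid}[G])$ to be an invertible solid $\F_p$-complex concentrated in degree $d$, i.e. $\cong\F_p[-d]$, which is condition~(ii). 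Thus $G$ is $p$-Poincaré of dimension $d$.

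The step I expect to be the main obstacle is the implication ``condition~(i) $\Rightarrow$ pre-smoothness'' in (i): knowing only that $f^!$ preserves colimits and that $f^*\mathcal M\tensor f^!\Lambda^a\isoto f^!\mathcal M$ for dualizable $\mathcal M$, one must conclude the isomorphism for all $\mathcal M$, even though the dualizable objects (indeed the unit) need not generate $\Dqcohri(X,\Lambda)$ under colimits. I plan to handle this by testing the transformation against the conservative functor $q^!$ and using the universal $p$-codescent of $q$: the formula $f_!\mathcal M=\varinjlim_n(f\comp q_n)_!q_n^!\mathcal M$ along the \v{C}ech nerve $q_\bullet$ of $q$ expresses everything in terms of the bdcs maps $f\comp q_n\colon X\times\underline G^{\,n}\to X$ (projections, with $0$-dimensional flat relative dualizing object), where the needed projection formulas are available from \cite{mann-p-adic-6-functors}; alternatively one can try to import a suitable abstract statement about $\Dqcohrishriek$ under $p$-codescent directly. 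A secondary, routine point is to justify in the solid almost setting that dualizability of $\mathcal M\tensor_{\F_p}R\Gamma(H,\F_p)$ over $\ri_X^+/\pi$ descends to perfectness of $R\Gamma(H,\F_p)$ over $\F_p$; this follows from faithful flatness of $\F_p\to\ri_X^+/\pi$.
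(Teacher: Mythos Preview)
For (ii) your argument is essentially the paper's: both compute $q^*f^!\Lambda^a$ as $\IHom_{\F_p}(R\Gamma(G,\F_{p\solid}[G]),\F_p)\tensor_{\F_p}\Lambda^a$ via Shapiro and a colimit over open $H$. The one difference is that for the descent step (deducing invertibility over $\F_p$ from invertibility over $\ri_C^a/p$) the paper invokes the Riemann--Hilbert correspondence rather than ``faithful flatness''; the latter is not quite well-posed in the solid almost setting, so you should use Riemann--Hilbert here too.

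For (i), however, your route through ``$f^!$ preserves colimits'' genuinely diverges from the paper's, and the gap you flag---promoting the isomorphism from dualizable $\mathcal M$ to all $\mathcal M$---is real and your proposed \v{C}ech-nerve fix does not obviously work: the key obstruction is that $q^!\ne q^*$ in general (indeed $q^!f^!\Lambda^a=(fq)^!\Lambda^a=\Lambda^a$ while $q^*f^!\Lambda^a$ is the object computed in (ii), and these differ whenever $\cd_p G>0$), so $q^!$ has no reason to commute with $\tensor$. The paper avoids this entirely by a different reduction. Rather than analysing $f^!$ directly, it uses that the family $\{f_{H*}=R\Gamma_\sm(H,-)\}_H$ (for $H\subseteq G$ open compact) is conservative on $\Dqcohri(X/G,\Lambda)$, and applies $f_{H*}$ to the natural transformation. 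Using $f_{H!}=f_{H*}$ and the projection formula one computes
\[
f_{H*}\bigl(f_H^*(-)\tensor f_H^!\Lambda^a\bigr)=(-)\tensor(f_{H*}\Lambda^a)^\vee
\qquad\text{and}\qquad
f_{H*}f_H^!=\IHom(f_{H*}\Lambda^a,-),
\]
and these agree as endofunctors of $\Dqcohri(X,\Lambda)$ for \emph{all} inputs precisely when $f_{H*}\Lambda^a=R\Gamma_\sm(H,\Lambda^a)$ is dualizable. This converts (i) into a single dualizability check with no extension step; dualizability of $R\Gamma_\sm(H,\Lambda^a)$ is then related to condition (i) of Definition~\ref{def:profinite-Poincare-group} via base change along $\F_p\to\Lambda(X)^a$ (with Riemann--Hilbert used for the converse direction, just as in (ii)).
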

\begin{proof}
Fix a strictly totally disconnected space $(X,\Lambda) \in \vStacksCoeff$ with associated map $f\colon X/G \to X$. Then $\Dqcohri(X,\Lambda) = \Dqcohri(\Lambda(X))$ is the $\infty$-category of solid almost $\Lambda(X)$-modules, while $\Dqcohri(X/G,\Lambda)$ is the $\infty$-category of smooth $G$-representations on such modules (see \cite[Lemma 3.4.26]{mann-p-adic-6-functors}). Moreover, $f_*$ computes smooth $G$-cohomology and $f^*$ associates to every $M \in \Dqcohri(\Lambda(X))$ the trivial $G$-representation on $M$.

We first prove (i). By \cite[Lemma 3.4.19]{mann-p-adic-6-functors} the collection of functors $R\Gamma_\sm(H, -)\colon \Dqcohri(X/G,\Lambda) \to \Dqcohri(X,\Lambda)$ for open compact subgroups $H$ is conservative. Therefore the natural map $f^* \tensor f^! \Lambda^a \to f^!$ is an equivalence if and only if for all compact open subgroups $H \subset G$ the map $R\Gamma_\sm(H, f^* \tensor f^! \Lambda^a) \to R\Gamma_\sm(H, f^!)$ is an equivalence of endofunctors of $\Dqcohri(X,\Lambda)$. Let us denote $f_H\colon */H \to *$ the projection, which factors over the finite étale projection $*/H \to */G$. Then we see that $f^* \tensor f^! \Lambda^a \to f^!$ is an equivalence if and only if for all compact open subgroups $H \subset G$ the map
\begin{align*}
    f_{H*} (f_H^* \tensor f_H^! \Lambda^a) \to f_{H*} f_H^!
\end{align*}
is an equivalence. By \cref{rslt:finite-cd-p-implies-classifying-stack-is-pfine} we have $f_{H*} = f_{H!}$, so in particular $f_{H*}$ satisfies the projection formula. We compute
\begin{align*}
    f_{H*} (f_H^* \tensor f_H^! \Lambda^a) &= - \tensor f_{H*} f_H^! \Lambda^a = - \tensor \IHom(f_{H*}\Lambda^a, \Lambda^a),\\
    f_{H*} f_H^! &= f_{H*} \IHom(\Lambda^a, f_H^!) = \IHom(f_{H!}\Lambda^a, -) = \IHom(f_{H*}\Lambda^a, -).
\end{align*}
It follows that $f^* \tensor f^! \Lambda^a \to f^!$ is an equivalence if and only if $R\Gamma_\sm(H, \Lambda(X)^a) = f_{H*} \Lambda^a$ is dualizable in $\Dqcohri(X,\Lambda)$ for all open compact $H \subset G$. Now assume that $G$ satisfies condition (i) of \cref{def:profinite-Poincare-group}. Then $R\Gamma_\sm(H, \F_p)$ is dualizable in $\D_\solid(\F_p)$ for all $H$. If $p = 0$ in $\Lambda(X)$ then the same follows for $R\Gamma_\sm(H, \Lambda(X)^a) = R\Gamma_\sm(H, \F_p) \tensor_{\F_p} \Lambda(X)^a$. For general $\Lambda$ we can argue as in the proof of \cite[Lemma 3.8.4.(i)]{mann-p-adic-6-functors}. Conversely, assume that $f^* \tensor f^! \Lambda^a \isoto f^!$ is an isomorphism for all $(X,\Lambda)$. Pick $X = \Spa C$ for some algebraically closed field over $\Q_p$ and $\Lambda = \ri_C/p$. Then the Riemann-Hilbert correspondence from \cite[Theorem 3.9.23]{mann-p-adic-6-functors} implies that the base-change functor $RH\colon \D^\sm(G,\F_p) \injto \D^\sm(G,\ri^a_C/p)^\varphi$ is fully faithful and induces an equivalence of dualizable objects. On the other hand, this base-change functor sends $R\Gamma_\sm(H,\F_p)$ to $R\Gamma_\sm(H,\ri^a_C/p)$, hence it follows from the above that $R\Gamma_\sm(H,\F_p)$ is dualizable for all open compact subgroups $H \subset G$. But every $G$-representation $M$ on a finite static $\F_p$-vector space is a finite colimit of the generating representations $\F_p[G/H]$, hence also $R\Gamma_\sm(G,M)$ is dualizable. In particular all $H^i(G,M)$ are finite (here we implicitly use $R\Gamma(G,M) = R\Gamma_\sm(G,M)$ since $M$ is discrete).

We now prove (ii), so from now on we can assume that $G$ satisfies the equivalent conditions of (i). We then need to see that $G$ is $p$-Poincaré if and only if for every strictly totally disconnected $(X,\Lambda) \in \vStacksCoeff$ with $\Lambda = \ri^+_{X^\flat}/\pi$ for some pseudouniformizer $\pi$ on $X^\flat$ the object $f^!\Lambda^a \in \Dqcohri(X/G,\Lambda)$ is invertible. Given $(X,\Lambda)$, let us denote $q\colon X \to X/G$ the projection. Since invertibility satisfies descent, $f^! \Lambda^a$ is invertible if and only if $q^* f^! \Lambda^a$ is invertible. For every compact open subgroup $H \subset G$ we denote $f_H\colon */H \to *$ and $q_H\colon */H \to */G$ the natural maps and we compute (using \cite[Proposition 3.3.9.(iii)]{mann-p-adic-6-functors} and \cref{rslt:finite-cd-p-implies-classifying-stack-is-pfine})
\begin{align*}
    q^* f^! \Lambda^a = f_* q_* q^* f^! \Lambda^a = \varinjlim_H f_* q_{H*} q_H^* f^! \Lambda^a = \varinjlim_H f_{H*} f_H^! \Lambda^a = \varinjlim_H \IHom(f_{H*} \Lambda^a, \Lambda^a).
\end{align*}
By the proof of (i) we know that $f_{H*} \Lambda^a = R\Gamma(H, \Lambda(X)^a)$ is dualizable in $\Dqcohri(X,\Lambda)$ and similarly $R\Gamma(H,\F_p)$ is dualizable in $\D_\solid(\F_p)$. It follows that
\begin{align*}
    \IHom(f_{H*} \Lambda^a, \Lambda^a) = \IHom(\Gamma(H,\F_p), \F_p) \tensor_{\F_p} \Lambda(X)^a.
\end{align*}
Therefore we obtain
\begin{align*}
    q^* f^! \Lambda^a = \varinjlim_H \IHom(f_{H*} \Lambda^a, \Lambda^a) = (\varinjlim_H \IHom(\Gamma(H,\F_p), \F_p)) \tensor_{\F_p} \Lambda(X)^a
\end{align*}
We have $R\Gamma(H,\F_p) = \bigdsum_i H^i(H, \F_p)[-i]$ and since there are no higher $\Ext$-groups between discrete $\F_p$-vector spaces we even have a decomposition of diagrams $(R\Gamma(H,\F_p)_H = \bigdsum_i (H^i(H,\F_p))_H$. On the other hand, we know that if $(M_i)_i$ is any cofiltered system of finite discrete $\F_p$-vector spaces then $\varprojlim_i M_i$ is concentrated in degree $0$ and $\IHom(\varprojlim_i M_i, \F_p) = \varinjlim_i \IHom(M_i, \F_p)$. Altogether this implies
\begin{align*}
    \varinjlim_H \IHom(R\Gamma(H,\F_p), \F_p) &= \IHom(\varprojlim_H R\Gamma(H,\F_p), \F_p) = \IHom(\varprojlim_H \Gamma(G, \F_p[G/H]), \F_p) = \\&= \IHom(R\Gamma(G, \F_{p\solid}[G]), \F_p).
\end{align*}
We deduce:
\begin{align*}
    q^* f^! \Lambda^a = \IHom(R\Gamma(G, \F_{p\solid}[G]), \F_p) \tensor_{\F_p} \Lambda(X)^a.
\end{align*}
In particular, if $G$ is $p$-Poincaré then $q^* f^! \Lambda^a$ is invertible. Conversely, assume that $*/G \to *$ is $p$-cohomologically smooth. Choosing $X = \Spa C$ for some algebraically closed field $C$, the Riemann-Hilbert correspondence (see \cite[Theorem 3.9.23]{mann-p-adic-6-functors}) tells us that the invertibility of $q^* f^! \Lambda^a$ implies that $\IHom(R\Gamma(G, \F_{p\solid}[G]), \F_p)$ is invertible. But then also $R\Gamma(G,\F_{p\solid}[G])$ is invertible and thus $G$ is $p$-Poincaré.
\end{proof}

So far we have only dealt with profinite groups of bounded $p$-cohomological dimension. However, the above results generalize in a straightforward way to \emph{locally} profinite groups satisfying a much weaker cohomological dimension condition. The precise definition is as follows.

\begin{defn} \label{def:locally-profinite-ppoicare}
Let $G$ be a locally profinite group.
\begin{defenum}
    \item We say that $G$ has \emph{locally finite $p$-cohomological dimension} if there is some open subgroup $H \subseteq G$ such that $\cd_p H < \infty$.
    
    \item We say that $G$ is \emph{virtually $p$-Poincaré (of dimension d)} if there is an open compact subgroup $H \subset G$ such that $\cd_p H < \infty$ and $H$ is $p$-Poincaré of dimension $d$ in the sense of Definition \ref{def:profinite-Poincare-group}.
\end{defenum}
\end{defn}

In the following, whenever we talk about a ``virtual $p$-Poincaré group $G$'' we implicitly assume that $G$ has locally finite $p$-cohomological dimension.

\begin{thm} \label{rslt:locally-p-Poincare-equiv-p-cohomo-smoothness}
Let $G$ be a locally profinite group which has locally finite $p$-cohomological dimension. Then the map $*/G \to *$ is $p$-fine, and it is $p$-cohomologically smooth (of pure dimension $d/2$) if and only if $G$ is virtually $p$-Poincaré (of dimension $d$).
\end{thm}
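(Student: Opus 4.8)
The plan is to bootstrap from the compact case established in \cref{rslt:p-codescent-for-classifying-stack}, \cref{rslt:finite-cd-p-implies-classifying-stack-is-pfine} and \cref{rslt:p-Poincare-equiv-p-cohom-smoothness}, by passing through an open compact subgroup. Note first that $G$ does admit an open compact subgroup $H$ with $\cd_p H < \infty$: it is locally profinite, hence has an open compact subgroup, and $\cd_p$ does not increase on open subgroups, so the hypothesis of locally finite $p$-cohomological dimension supplies one. For any such $H$, the inclusion $H \hookrightarrow G$ induces a map $\rho\colon */H \to */G$ whose base-change along the canonical cover $* \to */G$ is the structure map $\underline{G/H} \to *$; since $H$ is open, $G/H$ is discrete, so this is a disjoint union of copies of the base, hence étale, and therefore $\rho$ itself is étale (being étale is v-local on the target). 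In particular $\rho$ is a bdcs surjection, it is $p$-cohomologically smooth of dimension $0$, and it admits universal $p$-codescent.

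For $p$-fineness, fix such an $H$ and factor the identity of $*$ as $* \xto{q} */G \to *$, where $q$ is in turn the composite $* \to */H \xto{\rho} */G$. The map $* \to */H$ is bdcs and admits universal $p$-codescent by \cref{rslt:p-codescent-for-classifying-stack} (this is where $\cd_p H < \infty$ is used), and $\rho$ has the same two properties by the previous paragraph; since bdcs maps admitting universal $p$-codescent are stable under composition — as recalled in the proof of the $p$-fine lemma, via \cite[Lemma 3.1.2.(4)]{liu-zheng-enhanced-operations} — the map $q$ is bdcs and admits universal $p$-codescent. As $[*/G \to *] \comp q = \id_*$ is bdcs, \cref{def:p-fine-maps} then shows that $*/G \to *$ is $p$-fine.

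Now that $*/G \to *$ is known to be $p$-fine, I would establish the smoothness criterion by arguing in both directions through the factorization $*/H \xto{\rho} */G \to *$. Suppose first that $G$ is virtually $p$-Poincaré of dimension $d$; take $H$ open compact with $\cd_p H < \infty$ and $H$ moreover $p$-Poincaré of dimension $d$. Then \cref{rslt:p-Poincare-equiv-p-cohom-smoothness}(ii) shows that $*/H \to *$ is $p$-cohomologically smooth of pure dimension $d/2$. Since $\rho$ is a $p$-cohomologically smooth surjection between $p$-fine maps, and $p$-cohomological smoothness is local on the source among $p$-fine maps, it follows that $*/G \to *$ is $p$-cohomologically smooth; comparing dualizing objects along the (surjective, hence conservative on pullbacks) map $\rho$ identifies its pure dimension as $d/2$. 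Conversely, if $*/G \to *$ is $p$-cohomologically smooth of pure dimension $d/2$, pick any open compact $H$ with $\cd_p H < \infty$; composing with $\rho$ (which is $p$-cohomologically smooth of dimension $0$) exhibits $*/H \to *$ as $p$-cohomologically smooth of pure dimension $d/2$, so \cref{rslt:p-Poincare-equiv-p-cohom-smoothness}(ii) shows $H$ is $p$-Poincaré of dimension $d$; as $\cd_p H < \infty$ as well, $G$ is virtually $p$-Poincaré of dimension $d$.

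Since all of the genuine content is already packaged in \cref{rslt:p-codescent-for-classifying-stack} and \cref{rslt:p-Poincare-equiv-p-cohom-smoothness}, I expect the main obstacle to be purely organizational: one must verify carefully that $\rho\colon */H \to */G$ is actually étale (not merely representable in locally spatial diamonds) even when $G/H$ is infinite, check that the stability and locality properties of $p$-cohomologically smooth maps recorded earlier remain valid in the extended $p$-fine setting, and keep track of how relative dimensions add under composition.
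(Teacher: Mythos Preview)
Your proposal is correct and follows essentially the same approach as the paper: reduce to the compact case via the étale cover $\rho\colon */H \to */G$ for an open compact $H$ with $\cd_p H < \infty$, and then invoke \cref{rslt:finite-cd-p-implies-classifying-stack-is-pfine} and \cref{rslt:p-Poincare-equiv-p-cohom-smoothness}. The paper's proof is a single sentence appealing to étale-locality of both $p$-fineness and $p$-cohomological smoothness on the source; you have simply unwound those locality statements explicitly (and for $p$-fineness you directly exhibit the witness cover $* \to */G$ rather than citing the locality lemma, which is a harmless variation).
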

\begin{proof}
This follows immediately from \cref{rslt:p-Poincare-equiv-p-cohom-smoothness} by observing that being $p$-fine and being $p$-cohomologically smooth can both be checked étale locally on the source (note that if $H \subseteq G$ is an open subgroup then the map $*/H \surjto */G$ is an étale cover).
\end{proof}

As a formal consequence of \cref{rslt:locally-p-Poincare-equiv-p-cohomo-smoothness} we obtain the following $p$-adic analog of \cite[Proposition 24.2]{etale-cohomology-of-diamonds}:

\begin{cor} \label{rslt:p-adic-smoothness-criterion-for-quotient}
Let $G$ be a locally profinite group of locally finite $p$-cohomological dimension acting on a small v-stack $Y$ and let $f\colon Y \to X$ be a $G$-equivariant separated bdcs map of small v-stacks, where $G$ acts trivially on $X$.
\begin{corenum}
    \item The map $f'\colon Y/G \to X$ is $p$-fine.
    \item If $f$ is $p$-cohomologically smooth and $G$ is virtually $p$-Poincaré, then $f'$ is $p$-cohomologically smooth.
\end{corenum}
\end{cor}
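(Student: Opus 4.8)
The proof will be a formal deduction from \cref{rslt:locally-p-Poincare-equiv-p-cohomo-smoothness}, which already contains all the substantive content (Lazard's theory and descendability), together with the stability properties of $p$-fine and $p$-cohomologically smooth maps. The plan is to exhibit $f'\colon Y/G\to X$ as a composite of two maps handled directly by that theorem, using the standard cartesian square attached to a $G$-equivariant map of quotient stacks. Concretely: since $G$ acts trivially on $X$, the quotient stack $X/G$ is canonically $X\times(*/G)$ and the quotient map $q_X\colon X\to X/G$ is the base change of $*\to */G$ along $X\to *$. The $G$-equivariant map $f$ induces $\bar f\colon Y/G\to X/G$, and the square with top row $Y\xrightarrow{f}X$, bottom row $Y/G\xrightarrow{\bar f}X/G$ and vertical maps the quotient maps $q\colon Y\to Y/G$ and $q_X$ is cartesian: a $T$-point of $Y/G\times_{X/G}X$ is a $T$-point of $Y/G$, a $T$-point of $X$, and a matching of their images in $X/G=X\times(*/G)$; here the $X$-coordinate is forced, and the $*/G$-matching amounts to a trivialization of the $G$-torsor classified by the given point of $Y/G$, i.e. precisely the extra datum distinguishing a $T$-point of $Y$ from one of $Y/G$. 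Taking $X=*$ shows in addition that $q\colon Y\to Y/G$ is the base change of $*\to */G$ along the structure map $Y/G\to */G$, and since $f$ is $G$-invariant it factors as $f=f'\comp q$.

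For part (i), I would first observe that $*\to */G$ is bdcs and admits universal $p$-codescent for any locally profinite $G$ of locally finite $p$-cohomological dimension: pick an open compact $H\subseteq G$ with $\cd_p H<\infty$; then $*\to */H$ has this property by \cref{rslt:p-codescent-for-classifying-stack}, the map $*/H\to */G$ is surjective étale (its base change along $*\to */G$ is $\underline{G/H}\to *$) and hence bdcs and admits universal $p$-codescent, so both properties pass to the composite $*\to */H\to */G$ (using that bdcs maps admitting universal $p$-codescent are stable under composition). Consequently $q\colon Y\to Y/G$, being a base change of $*\to */G$, is bdcs and admits universal $p$-codescent; since $f'\comp q=f$ is bdcs by hypothesis, $f'$ is $p$-fine straight from the definition.

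For part (ii), I would factor $f'$ as $Y/G\xrightarrow{\bar f}X/G=X\times(*/G)\xrightarrow{\mathrm{pr}}X$. The projection $\mathrm{pr}$ is the base change of $*/G\to *$, which by \cref{rslt:locally-p-Poincare-equiv-p-cohomo-smoothness} is $p$-fine and, as $G$ is virtually $p$-Poincaré, $p$-cohomologically smooth; hence so is $\mathrm{pr}$. The map $\bar f$ is $p$-fine, because $\bar f\comp q=q_X\comp f$ is bdcs (a composite of bdcs maps) while $q$ is bdcs and admits universal $p$-codescent; moreover, by the cartesian square above, the base change of $\bar f$ along the v-cover $q_X\colon X\to X/G$ is $f$, which is $p$-cohomologically smooth by hypothesis, so since $p$-cohomological smoothness of $p$-fine maps is v-local on the target, $\bar f$ is $p$-cohomologically smooth. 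Therefore $f'=\mathrm{pr}\comp\bar f$ is a composite of $p$-fine $p$-cohomologically smooth maps, hence itself $p$-fine and $p$-cohomologically smooth.

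Since \cref{rslt:locally-p-Poincare-equiv-p-cohomo-smoothness} does all the real work, there is no serious obstacle here; the only points to treat with a little care are the two structural facts about quotient stacks used throughout — that the displayed square is cartesian, and that $q_X$ (equivalently $*\to */G$) is a v-cover, in fact a $\underline{G}$-torsor — both of which are routine consequences of the description of maps into a classifying stack.
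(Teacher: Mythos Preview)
Your proof is correct, and for part (ii) it coincides with the paper's argument: factor $f'$ as $Y/G\xrightarrow{\bar f}X/G\to X$, use v-locality of $p$-cohomological smoothness on the target together with the cartesian square to see that $\bar f$ is smooth, and invoke \cref{rslt:locally-p-Poincare-equiv-p-cohomo-smoothness} for the projection $X/G\to X$.

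The difference is in part (i). The paper also uses the factorization $Y/G\to X/G\to X$ there: it shows that $\bar f$ is actually \emph{bdcs} (not merely $p$-fine) by checking that separatedness, compactifiability, $p$-boundedness, and representability in locally spatial diamonds all descend along the pro-étale cover $X\to X/G$, citing several results from \cite{etale-cohomology-of-diamonds} and \cite{mann-p-adic-6-functors}. You instead bypass this entirely by exhibiting $q\colon Y\to Y/G$ as a bdcs map with universal $p$-codescent (base change of $*\to */G$, for which you correctly extend \cref{rslt:p-codescent-for-classifying-stack} from profinite to locally profinite $G$ via the étale cover $*/H\to */G$) and noting $f'\comp q=f$ is bdcs, so $f'$ is $p$-fine by definition. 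Your route is more self-contained and avoids the external descent citations; the paper's route yields the stronger intermediate fact that $\bar f$ is bdcs, which it then reuses in (ii) without having to separately argue that $\bar f$ is $p$-fine.
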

\begin{proof}
Factor $f'$ as $Y/G \to X/G \to X$. We claim that the first map, i.e. $f/G\colon Y/G \to X/G$, is bdcs: Base-changing this map along the pro-étale cover $X \surjto X/G$ gives $f$, hence the map $Y/G \to X/G$ is separated by \cite[Proposition 10.11.(ii)]{etale-cohomology-of-diamonds}, compactifiable by \cite[Proposition 22.3.(iii)]{etale-cohomology-of-diamonds} (note that $f$ is automatically compactifiable by \cite[Proposition 22.3.(v)]{etale-cohomology-of-diamonds}), $p$-bounded by \cite[Lemma 3.5.10.(ii)]{mann-p-adic-6-functors} and representable in locally spatial diamonds by \cite[Proposition 13.4.(iv)]{etale-cohomology-of-diamonds}. Since $X/G \to X$ is $p$-fine by \cref{rslt:finite-cd-p-implies-classifying-stack-is-pfine} we deduce that $f'$ is $p$-fine. This proves (i).

To prove (ii), assume in addition that $f$ is $p$-cohomologically smooth and $G$ is virtually $p$-Poincaré. Then it follows from \cite[Lemma 3.8.5.(ii)]{mann-p-adic-6-functors} that $f/G$ is $p$-cohomologically smooth, and it follows from \cref{rslt:locally-p-Poincare-equiv-p-cohomo-smoothness} that the map $X/G \to X$ is $p$-cohomologically smooth. Together this implies that $f'$ is $p$-cohomologically smooth, as desired.
\end{proof}

 We end this subsection by providing examples of virtual $p$-Poincaré groups. The key result for applications is the following theorem. 
 \begin{thm}\label{padicLiePoincare}Any $p$-adic Lie group is virtually $p$-Poincaré.
 \end{thm}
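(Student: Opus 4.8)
The plan is to exhibit an explicit open compact subgroup witnessing the virtual $p$-Poincaré property. Fix a $p$-adic Lie group $G$ and set $d = \dim G$. By Lazard's theory of $p$-adic analytic groups \cite{lazard-monster}, $G$ contains an open compact subgroup $G_0$ which is uniformly powerful pro-$p$ of dimension $d$, and the plan is to take $G_0$ as the required subgroup; one then has to verify that $G_0$ is $p$-Poincaré of dimension $d$ in the sense of \cref{def:profinite-Poincare-group}, after which the theorem follows immediately from \cref{def:locally-profinite-ppoicare}.

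The first, easy, part concerns the hypotheses not involving $\F_{p\solid}[G_0]$. By Lazard the trivial module $\F_p$ admits a resolution $P_\bullet \to \F_p$ by finitely generated free $\F_p[[G_0]]$-modules of length exactly $d$; this gives $\cd_p G_0 = d < \infty$, and for any $G_0$-representation $M$ on a finite $\F_p$-vector space it shows that $H^i(G_0, M) = H^i(\Hom_{\F_p[[G_0]]}(P_\bullet, M))$ is finite (and likewise for every open subgroup of $G_0$, which is again compact $p$-adic analytic). This establishes condition (i) of \cref{def:profinite-Poincare-group}, cf.\ also the remark following that definition.

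The substantive part is condition (ii): that $R\Gamma(G_0, \F_{p\solid}[G_0])$ is invertible in $\D_\solid(\F_p)$ and concentrated in cohomological degree $d$. Here I would first identify $\D_\solid(\F_p)^{BG_0}$ with the derived $\infty$-category of solid $\F_p[[G_0]]$-modules, under which $\F_{p\solid}[G_0] = \varprojlim_{U \trianglelefteq G_0 \text{ open}} \F_p[G_0/U]$ becomes the Iwasawa algebra $\F_p[[G_0]]$ with its left-regular action and $R\Gamma(G_0,-)$ becomes $R\Hom_{\F_p[[G_0]]}(\F_p,-)$; alternatively one can check this at finite levels, using $R\Gamma(G_0, \F_p[G_0/U]) = R\Gamma(U,\F_p)$ by Shapiro's lemma and the fact that $R\Gamma(G_0,-)$ commutes with the relevant cofiltered limit since $\cd_p G_0 < \infty$. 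Then $R\Gamma(G_0, \F_{p\solid}[G_0]) \cong R\Hom_{\F_p[[G_0]]}(\F_p, \F_p[[G_0]]) \cong \Hom_{\F_p[[G_0]]}(P_\bullet, \F_p[[G_0]])$, whose cohomology is $\Ext^\bullet_{\F_p[[G_0]]}(\F_p, \F_p[[G_0]])$. Now I would invoke Lazard's theorem that $G_0$ is a Poincaré group of dimension $d$ at $p$, equivalently that $\F_p[[G_0]]$ is Gorenstein of dimension $d$ with $\F_p$ of grade $d$: this yields $\Ext^i_{\F_p[[G_0]]}(\F_p, \F_p[[G_0]]) = 0$ for $i \neq d$ and $\Ext^d_{\F_p[[G_0]]}(\F_p, \F_p[[G_0]]) \cong \F_p$ one-dimensional over $\F_p$. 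Hence $R\Gamma(G_0, \F_{p\solid}[G_0]) \cong \F_p[-d]$, which is invertible and concentrated in degree $d$, as required. Note that, unlike the classical formulation of Poincaré duality, we need not track the orientation character, since condition (ii) only records the underlying solid $\F_p$-vector space.

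The only non-formal ingredients are thus Lazard's classical results (existence of the uniform $G_0$, the finite free resolution of $\F_p$ over $\F_p[[G_0]]$, and the Gorenstein/Poincaré-duality property of $\F_p[[G_0]]$), and the step that needs the most care is the identification of the sheaf-theoretic $R\Gamma(G_0, \F_{p\solid}[G_0])$ with the classical $\Ext^\bullet_{\F_p[[G_0]]}(\F_p, \F_p[[G_0]])$, so that Lazard's duality statement can be applied cleanly; everything else is bookkeeping. Combining this with \cref{rslt:locally-p-Poincare-equiv-p-cohomo-smoothness} then shows in particular that $*/G \to *$ is $p$-cohomologically smooth of pure dimension $d/2$.
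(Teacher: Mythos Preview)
Your proposal is correct and follows essentially the same strategy as the paper: reduce to a compact open subgroup via \cref{def:locally-profinite-ppoicare}, dispatch condition (i) of \cref{def:profinite-Poincare-group} by classical finiteness, and deduce condition (ii) from Lazard's Poincar\'e duality statement $R\Gamma_{\mathrm{cts}}(G_0,\F_p[[G_0]]) \simeq \F_p[-d]$. The one technical difference is in the step you rightly flag as needing the most care: the paper handles the passage from classical continuous cohomology to the solid $R\Gamma(G_0,\F_{p\solid}[G_0])$ by proving a general comparison lemma (\cref{lem:internal-group-coh-comparison}) identifying $R\Gamma(G,\underline{V})$ with the condensation $\underline{R\Gamma_{\mathrm{cts}}(G,V)}$ of the inhomogeneous cochain complex, whereas you sketch this via the module-category identification $\D_\solid(\F_p)^{BG_0} \simeq \D_\solid(\F_p[[G_0]])$ or a Shapiro-plus-limit argument. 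Both routes work; the paper's lemma has the advantage of being reusable and of making transparent that the resulting object in $\D_\solid(\F_p)$ carries no hidden topology.
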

 
 By the local nature of Definition \ref{def:locally-profinite-ppoicare} and standard structural results on $p$-adic Lie groups, this is an immediate consequence of the following theorem, which is what we shall actually prove.
 
 \begin{thm}\label{thm:uniformly-powerful-ppoincare}Any torsion-free compact $p$-adic analytic group is $p$-Poincaré.
 \end{thm}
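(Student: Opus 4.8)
The plan is to verify conditions (i) and (ii) of \cref{def:profinite-Poincare-group} directly, leveraging Lazard's structure theory for torsion-free compact $p$-adic analytic groups (equivalently, uniformly powerful pro-$p$ groups up to passing to an open subgroup — but the hypothesis already hands us torsion-freeness, which forces $G$ to be a Poincaré duality group at $p$ in Lazard's sense). First I would recall that such a $G$ has finite $p$-cohomological dimension $d = \dim G$: by Lazard, the completed group algebra $\F_p[[G]]$ is a (noncommutative) regular local ring of global dimension $d$, so in particular $\cd_p G = d < \infty$ and the trivial module $\F_p$ admits a finite resolution $P_\bullet \to \F_p$ by finite free $\F_p[[G]]$-modules of length $d$; this is exactly the input already used in the proof of Proposition~\ref{kohlhaaseconceptual} above. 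This resolution immediately yields condition (i): for any $G$-representation $M$ on a finite $\F_p$-vector space, $H^i(G,M) = H^i(\Hom_{\F_p[[G]]}(P_\bullet, M^\ast)^\ast)$ is a subquotient of the finite-dimensional space $\Hom_{\F_p[[G]]}(P_i, M^\ast)^\ast$, hence finite, and vanishes for $i > d$.

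The substance is condition (ii): that $R\Gamma(G, \F_{p\solid}[G])$ is invertible and concentrated in degree $d$. I would rewrite $R\Gamma(G,\F_{p\solid}[G]) = \varprojlim_{H \subset G \text{ open}} R\Gamma(G, \F_p[G/H]) = \varprojlim_H R\Gamma(H,\F_p)$ using Shapiro's lemma, exactly as in the computation in the proof of \cref{rslt:p-Poincare-equiv-p-cohom-smoothness}(ii). Since every open subgroup $H$ of a torsion-free compact $p$-adic analytic group is again torsion-free compact $p$-adic analytic of the same dimension, Lazard's Poincaré duality theorem applies to each $H$: $H^i(H,\F_p)$ is finite-dimensional, $H^i(H,\F_p) = 0$ for $i > d$, and there is a perfect cup-product pairing $H^i(H,\F_p) \times H^{d-i}(H,\F_p) \to H^d(H,\F_p) \cong \F_p$ — in other words $R\Gamma(H,\F_p)$ is a dualizable object of $\D_\solid(\F_p)$ with dual $R\Gamma(H,\F_p)^\vee \cong R\Gamma(H,\F_p)[-d]$ after choosing an orientation. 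The key classical input I would cite here is \cite{lazard-monster} (Théorème V.2.5.8, the Poincaré duality statement), together with the fact that Lazard's ``dualizing character'' is trivial mod $p$ for torsion-free $G$ — or alternatively that the orientation module $H^d(G,\F_p)$ is $1$-dimensional over $\F_p$ with the conjugation action on it being trivial modulo $p$ (which holds because the relevant character $G \to \Z_p^\times$ lands in $1 + p\Z_p$, or more simply because any continuous homomorphism from a pro-$p$ group to $\F_p^\times$ is trivial).

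To assemble this into the statement about the inverse limit, I would argue as in the already-given proof of \cref{rslt:p-Poincare-equiv-p-cohom-smoothness}(ii): decompose $R\Gamma(H,\F_p) = \bigoplus_i H^i(H,\F_p)[-i]$ (no nontrivial extensions between discrete $\F_p$-vector spaces in $\D_\solid(\F_p)$), so the pro-system $(R\Gamma(H,\F_p))_H$ splits as $\bigoplus_i (H^i(H,\F_p))_H$. Taking the limit over $H$ of the degree-$i$ piece gives $\varprojlim_H H^i(H,\F_p)$, a cofiltered limit of finite discrete $\F_p$-vector spaces, concentrated in degree $0$. Applying Poincaré duality for each $H$ identifies this with $\varinjlim_H H^{d-i}(H,\F_p)^\vee = (\varprojlim_H H^{d-i}(H,\F_p))^\vee$, and for $i < d$ the colimit $\varinjlim_H H^{d-i}(H,\F_p) = H^{d-i}(G, \F_{p\solid}[G])_{\text{discrete part}}$ is a cofiltered-limit-dual that one checks vanishes: concretely $\varinjlim_H H^{j}(H,\F_p) = 0$ for $j > 0$ because $H^j(H,\F_p)$ is computed by the $H$-invariants of an injective resolution of $\F_p$ and the transition maps (corestriction, or rather the maps induced by $\F_p[G/H] \to \F_p[G/H']$) become zero in the colimit — equivalently $R\Gamma(G, \F_{p\solid}[G])$ is the $G$-cohomology of an induced-from-the-trivial-subgroup module, which is acyclic in positive degrees and equals the coinvariants-dual in degree... — here I need to be careful about which degree survives. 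The cleanest route: $\F_{p\solid}[G]$ is the completed group algebra, $R\Gamma(G, \F_{p\solid}[G]) = R\Hom_{\F_p[[G]]}(\F_p, \F_p[[G]])[?]$, and Lazard's theorem says precisely that $\F_p[[G]]$ is a Poincaré duality algebra: $\Ext^i_{\F_p[[G]]}(\F_p, \F_p[[G]]) = 0$ for $i \neq d$ and $\cong \F_p$ (with trivial $G$-action, by torsion-freeness) for $i = d$. So $R\Gamma(G,\F_{p\solid}[G]) \cong \F_p[-d]$, which is exactly condition (ii). \textbf{The main obstacle} I anticipate is the bookkeeping needed to match the ``solid/condensed'' formulation of $R\Gamma(G,\F_{p\solid}[G])$ — an inverse limit over open subgroups — with Lazard's $\Ext$-computation over $\F_p[[G]]$, and in particular pinning down that the orientation character acts trivially mod $p$ (so that one really gets $\F_p[-d]$ on the nose, with trivial action, rather than a nontrivial invertible object); this is where I would be most careful to cite the precise form of Lazard's duality and the triviality of mod-$p$ characters of pro-$p$ groups.
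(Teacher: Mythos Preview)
Your approach is the same as the paper's: condition (i) is classical (the paper simply cites \cite[\S5]{symonds-weigel-cohomology}, which amounts to your finite-free-resolution argument), and for condition (ii) one invokes Lazard's computation $R\Gamma_{\mathrm{cts}}(G,\F_p[[G]])\simeq\F_p[-\dim G]$. You have also correctly isolated the one nontrivial step: transporting this classical continuous-cohomology statement into the solid world, i.e.\ identifying $R\Gamma(G,\F_{p\solid}[G])$ (as defined via $\IHom_{\F_{p\solid}[G]}(\F_p,-)$) with the classical $\Ext$-computation over $\F_p[[G]]$.

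The paper resolves this obstacle not by the inverse-limit manipulation you attempt (which, as you yourself notice, gets tangled when tracking the transition maps and identifying which degree survives), but by proving a general comparison lemma: for any continuous $G$-representation $V$ on a Hausdorff locally compact $\F_p$-module, there is a natural isomorphism $R\Gamma(G,\underline{V})\simeq\underline{R\Gamma_{\mathrm{cts}}(G,V)}$ in $\D_\solid(\F_p)$, where the right-hand side is obtained by condensing the standard complex of inhomogeneous continuous cochains (each $\cts(G^n,V)$ given the compact-open topology). The proof of this lemma reduces to the identification $\underline{\Hom}(\underline{S},\underline{V})=\underline{\cts(S,V)}$ for locally profinite $S$, which is an exercise in the adjunction between product and compact-open mapping space. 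Applying this with $V=\F_p[[G]]$ (so $\underline{V}=\F_{p\solid}[G]$) and invoking the classical result gives condition (ii) immediately.

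Two minor remarks. First, your concern about the orientation character is unnecessary here: condition (ii) of \cref{def:profinite-Poincare-group} only asks that $R\Gamma(G,\F_{p\solid}[G])$ be invertible in $\D_\solid(\F_p)$, with no $G$-action in sight, so triviality of the dualizing character is irrelevant. Second, your first route via $\varprojlim_H R\Gamma(H,\F_p)$ is not wrong --- indeed this identification appears in the proof of \cref{rslt:p-Poincare-equiv-p-cohom-smoothness} --- but computing the limit directly via degreewise Poincar\'e duality and corestriction maps is more delicate than the clean comparison-lemma argument the paper gives.
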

 
The proof relies on deep results of Lazard \cite{lazard-monster}, which are in some sense the only ``non-formal" input we need. To apply Lazard's results, we need some general comparison between condensed group cohomology and classical continuous group cohomology, which takes into account a natural topological structure on the latter. To state the relevant result, fix a locally profinite group $G$. Let $V$ be a continuous representation of $G$ on a Hausdorff locally compact  $\F_p$-module. Then $\underline{V}$ is a solid $\F_p$-vector space with continuous $G$-action, and in particular is an $\F_{p\solid}[G]$-module. Thus we may form the condensed group cohomology \[R\Gamma(G,\underline{V})=\IHom_{\F_{p\solid}[G]}(\F_{p},\underline{V}) \in \mathcal{D}_{\solid}(\F_{p}) \]
as considered above.

On the other hand, the classical continuous group cohomology $R\Gamma_{\mathrm{cts}}(G,V)$ is by definition computed by the usual complex \[0 \to V \to \mathcal{C}(G,V) \to \mathcal{C}(G^2,V) \to \cdots  \]
of inhomogeneous cochains. Giving each $\mathcal{C}(G^n,V)$ the compact-open topology, this is naturally a complex of Hausdorff locally compact $\F_p$-modules. In particular, passing termwise to the condensation, we get a complex \[0 \to \underline{V} \to \underline{\mathcal{C}(G,V)} \to \underline{\mathcal{C}(G^2,V)} \to \cdots  \]
of solid $\F_p$-vector spaces. Let $\underline{R\Gamma_{\mathrm{cts}}(G,V)}$ denote the image of this complex in $\mathcal{D}_{\solid}(\F_{p})$.

\begin{lem}\label{lem:internal-group-coh-comparison}There is a natural isomorphism
\[ R\Gamma(G,\underline{V}) \simeq \underline{R\Gamma_{\mathrm{cts}}(G,V)} \]
in $\mathcal{D}_{\solid}(\F_{p})$.
\end{lem}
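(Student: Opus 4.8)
The plan is to put both sides into the shape of a totalization of the (condensed) inhomogeneous continuous cochain complex of $G$ with coefficients in $\underline V$ — which is what the right-hand side already is by definition — and then identify the two resulting totalizations term by term.

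\textbf{Step 1 (bar resolution).} Let $P_\bullet$ be the simplicial $\F_{p\solid}[G]$-module $[n]\mapsto\F_{p\solid}[\underline G^{\,n+1}]$, with $G$ acting diagonally and with the standard faces and degeneracies, augmented to the trivial representation $\F_p$. Applying $\F_{p\solid}[-]$ to the split augmented simplicial condensed set $\underline G^{\,\bullet+1}\to *$ (split by the extra degeneracy $(g_0,\dots,g_n)\mapsto(e,g_0,\dots,g_n)$) shows that $\varinjlim_{\Delta^\opp}P_\bullet\to\F_p$ becomes an equivalence after the forgetful functor $q^*\colon\D_\solid(\F_p)^{BG}\to\D_\solid(\F_p)$; since $q^*$ is conservative, $P_\bullet$ is a resolution of $\F_p$ in $\D_\solid(\F_p)^{BG}$. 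Via the homeomorphism $(g_0,\dots,g_n)\mapsto(g_0,g_0^{-1}g_1,\dots,g_0^{-1}g_n)$ one identifies $P_n\simeq q_\natural\bigl(\F_{p\solid}[\underline G^{\,n}]\bigr)$ as $G$-modules, where $q_\natural$ is the left adjoint to $q^*$ from the proof of \cref{rslt:Ext-i-vanishing-for-discrete-G-rep}.

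\textbf{Step 2 (reduction to a mapping object).} Since $\IHom$ carries colimits in the first variable to limits,
\[
R\Gamma(G,\underline V)=\IHom_{\F_{p\solid}[G]}(\F_p,\underline V)\simeq\varprojlim_{\Delta}\IHom_{\F_{p\solid}[G]}(P_\bullet,\underline V).
\]
Combining the projection formula for $q_\natural$ (established in the proof of \cref{rslt:Ext-i-vanishing-for-discrete-G-rep}) with the adjunction $q_\natural\dashv q^*$ gives $\IHom_{\F_{p\solid}[G]}(q_\natural N,\underline V)\simeq\IHom_{\F_{p\solid}}(N,q^*\underline V)$ for every solid $N$, so that $\IHom_{\F_{p\solid}[G]}(P_n,\underline V)\simeq\IHom_{\F_{p\solid}}(\F_{p\solid}[\underline G^{\,n}],\underline V)$. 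As $\F_{p\solid}[\underline G^{\,n}]$ corepresents $M\mapsto\underline{\mathrm{Map}}(\underline G^{\,n},M)$ on solid $\F_p$-modules, this term is the mapping object $\underline{\mathrm{Map}}(\underline G^{\,n},\underline V)$ formed in condensed sets, and it is itself solid, being an internal hom of solid modules — so the whole computation stays inside $\D_\solid(\F_p)$.

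\textbf{Step 3 (the point-set input, and the main obstacle).} It remains to identify $\underline{\mathrm{Map}}(\underline G^{\,n},\underline V)$ with $\underline{\mathcal{C}(G^n,V)}$, the condensation of the space of continuous maps with the compact--open topology. Writing $G^n=\bigsqcup_j K_j$ as a disjoint union of compact open (hence profinite) pieces, and using that every compact subset of $G^n$ meets only finitely many $K_j$, both sides split as products indexed by $j$, reducing the claim to a profinite set $S=K_j$. For such $S$, the $T$-points of $\underline{\mathrm{Map}}(\underline S,\underline V)$ (for $T$ profinite) are the continuous maps $T\times S\to V$, and by the classical exponential law — valid because $S$ is locally compact Hausdorff and $V$ is a locally compact Hausdorff $\F_p$-module — these are naturally the continuous maps $T\to\mathcal{C}(S,V)$, i.e. the $T$-points of $\underline{\mathcal{C}(S,V)}$. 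This is exactly the statement that $\underline{(-)}$ turns the compact--open mapping space into the condensed internal hom, and I expect \emph{this} step — the only place where the local compactness of both $G^n$ and $V$ is genuinely used — to be the main nonformal point. The rest of Step 4 is bookkeeping: one checks that under the isomorphisms of Steps 2 and 3 the cofaces of $[n]\mapsto\IHom_{\F_{p\solid}[G]}(P_n,\underline V)$ induced by the faces of $P_\bullet$ become, after the standard reindexing between homogeneous and inhomogeneous cochains, the differentials of the inhomogeneous continuous cochain complex; this is the same computation as for ordinary group cohomology and is preserved by the functor $\underline{(-)}$. Passing to the totalization $\varprojlim_{\Delta}$ then yields a natural isomorphism $R\Gamma(G,\underline V)\simeq\underline{R\Gamma_{\mathrm{cts}}(G,V)}$ in $\D_\solid(\F_p)$, naturality in $V$ being immediate from the functoriality of the whole construction.
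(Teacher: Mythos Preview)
Your proof is correct and follows essentially the same approach as the paper: compute $R\Gamma(G,\underline V)$ via the bar resolution as the totalization of $[n]\mapsto\IHom(\underline G^{\,n},\underline V)$ in condensed sets, and then identify each term with $\underline{\mathcal C(G^n,V)}$ via the exponential law for locally compact Hausdorff sources. The only differences are cosmetic: the paper outsources Step~1--2 to \cite[Proposition~3.4.6]{mann-p-adic-6-functors}, and in Step~3 proves the identification $\underline{\Hom}(\underline X,\underline Y)\cong\underline{\mathcal C(X,Y)}$ directly for any locally compact Hausdorff $X$ and compactly generated T1 $Y$ (as a separate lemma, via the same Yoneda/exponential-law computation you give), whereas you first reduce to profinite $X$ by decomposing $G^n$ into compact open pieces.
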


\begin{proof}[Proof of Theorem \ref{thm:uniformly-powerful-ppoincare}] Let $G$ be as in the statement of the theorem. Then condition (i) in Definition \ref{def:profinite-Poincare-group} is classical, see e.g. \cite[\S 5]{symonds-weigel-cohomology}. For condition (ii), recall that $R\Gamma_{\mathrm{cts}}(G,\F_p[[G]]) \simeq \F_p[-\dim G]$ by \cite[Ch. V]{lazard-monster} and \cite[\S 4-5]{symonds-weigel-cohomology}. Since $\underline{\F_p[[G]]} =\F_{p \solid}[G]$, the conclusion now follows from Lemma \ref{lem:internal-group-coh-comparison}.
\end{proof}

\begin{proof}[Proof of Lemma \ref{lem:internal-group-coh-comparison}] Arguing as in the proof of \cite[Proposition 3.4.6]{mann-p-adic-6-functors}, one sees that $R\Gamma(G,\underline{V})$ is computed as the cohomology of the complex
\[0 \to \underline{V} \to \underline{\Hom}(\underline{G},\underline{V}) \to \underline{\Hom}(\underline{G}^2,\underline{V}) \to \cdots\]
where the internal homs are taken in condensed sets. By the definition of $\underline{R\Gamma_{\mathrm{cts}}(G,V)}$, it now suffices to show that $\underline{\Hom}(\underline{S},\underline{V})$ identifies with $\underline{\cts(S,V)}$ functorially in locally profinite $S$. This is a special case of the next lemma.
\end{proof}

\begin{lem}Let $X$ be any locally compact Hausdorff space, and let $Y$ be any compactly generated T1 topological space. Equip the space of continuous maps $\cts(X,Y)$ with the compact-open topology. Then $\cts(X,Y)$ is compactly generated T1, and $\underline{\cts(X,Y)} = \underline{\Hom}(\underline{X},\underline{Y})$, where $\underline{\Hom}$ denotes the internal hom in condensed sets. 
\end{lem}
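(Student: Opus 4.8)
The plan is to reduce the statement to two classical facts about the compact-open topology — the exponential law for maps out of a locally compact Hausdorff space, and the identification of continuous maps out of a compactly generated space with maps of the associated condensed sets — and to assemble these into the required isomorphism of condensed sets. I would first dispatch the two elementary assertions about $\cts(X,Y)_{\mathrm{c.o.}}$. For the $T_1$ property: if $f \neq g$, choose $x \in X$ with $f(x) \neq g(x)$; since $Y$ is $T_1$ the subbasic compact-open set $W(\{x\}, Y \setminus \{g(x)\}) = \{h \mid h(x) \neq g(x)\}$ is a neighbourhood of $f$ not containing $g$, so singletons are closed. For compact generation I would invoke the standard fact that $\cts(X,Y)_{\mathrm{c.o.}}$ is a compactly generated space whenever $X$ is locally compact Hausdorff and $Y$ is compactly generated; together with the previous sentence this gives that $\cts(X,Y)_{\mathrm{c.o.}}$ is compactly generated and $T_1$.

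The heart of the matter is the identification $\underline{\cts(X,Y)} = \underline{\Hom}(\underline X,\underline Y)$. I would construct the canonical comparison map and check it is an isomorphism by evaluating on extremally disconnected test objects. The topological evaluation $\cts(X,Y)_{\mathrm{c.o.}} \times X \to Y$ is continuous because $X$ is locally compact Hausdorff; since condensation preserves products, $\underline{\cts(X,Y)} \times \underline X = \underline{\cts(X,Y)_{\mathrm{c.o.}} \times X}$, so condensing this evaluation and invoking the defining adjunction for $\underline{\Hom}$ produces a map $\underline{\cts(X,Y)} \to \underline{\Hom}(\underline X,\underline Y)$. Now fix an extremally disconnected $S$, hence in particular compact Hausdorff; it suffices to see the comparison is a bijection on $S$-points. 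The left side is $\cts(S,\cts(X,Y)_{\mathrm{c.o.}})$, which by the exponential law — here is exactly where the hypothesis that $X$ is locally compact Hausdorff is used — is naturally $\cts(S \times X, Y) = \cts(X \times S, Y)$. The right side is $\Hom_{\mathrm{Cond}}(\underline X \times \underline S,\underline Y) = \Hom_{\mathrm{Cond}}(\underline{X \times S},\underline Y)$; here $X \times S$ is again locally compact Hausdorff, so in particular compactly generated, and for any compactly generated space $Z'$ and any space $W$ the canonical map $\cts(Z',W) \to \Hom_{\mathrm{Cond}}(\underline{Z'},\underline W)$ is bijective. Indeed, $Z'$ is the colimit in $\mathrm{Top}$ of the compact Hausdorff spaces $K$ mapping continuously to it, so a map of condensed sets $\underline{Z'} \to \underline W$ restricts compatibly along each such $K \to Z'$ to a continuous map $K \to W$ and hence glues to a unique continuous map $Z' \to W$; this is inverse to the obvious map because any continuous map $S \to Z'$ from a compact Hausdorff $S$ is itself one of the $K$'s. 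Thus both sides evaluate to $\cts(X \times S, Y)$, and unwinding the adjunctions shows the comparison map realizes precisely this chain of natural bijections, so it is an isomorphism. Note that this argument uses nothing about $Y$, so the identification in fact holds for an arbitrary target space $Y$.

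The only genuine obstacle is bookkeeping: one must verify that the levelwise bijections furnished by the exponential law and by the ``maps out of a compactly generated space'' fact literally compose to the canonical comparison map, so that one obtains an isomorphism of condensed sets rather than merely an abstract natural isomorphism of their values; and one should keep in mind that it suffices to test on extremally disconnected — equivalently compact Hausdorff — objects. The only nontrivial external inputs are the classical exponential law and the compact generation of $\cts(X,Y)_{\mathrm{c.o.}}$; all the condensed-set manipulations are formal.
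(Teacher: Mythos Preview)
Your proof is correct and follows essentially the same route as the paper's: both evaluate on compact Hausdorff (or profinite, or extremally disconnected) test objects, invoke the exponential law for $X$ locally compact Hausdorff, and use that condensation is fully faithful on maps out of compactly generated spaces. The paper packages this as a pure Yoneda chain and appeals to full faithfulness on compactly generated $T_1$ spaces for both source and target, whereas you construct the comparison map explicitly from evaluation and (correctly) observe that only the source need be compactly generated, so the condensed-set identification in fact holds for arbitrary $Y$.
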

\begin{proof}The claim regarding the topology on $\cts(X,Y)$ is easy. For the remaining claim, let $T$ be any profinite space. We then compute that
\begin{align*}
    \Hom_{\mathrm{Cond}}(\underline{T}, \underline{\Hom}(\underline{X},\underline{Y})) &= \Hom_{\mathrm{Cond}}(\underline{T} \times \underline{X},\underline{Y}) \\
      &= \Hom_{\mathrm{Top}}(T \times X, Y) \\
      &= \Hom_{\mathrm{Top}}(T, \cts(X,Y)) \\
      &= \Hom_{\mathrm{Cond}}(\underline{T}, \underline{\cts(X,Y)})
\end{align*}
naturally in $T,X,Y$. The first equality is immediate from the universal property of $\underline{\Hom}$. The second equality follows from the fact that on compactly generated T1 spaces, $S \mapsto \underline{S}$ is fully faithful and commutes with finite products. The fourth equality follows similarly, using the first claim in the lemma. The third equality follows from the assumption that $X$ is locally compact Hausdorff, which is exactly the condition needed to ensure that $\cts(X,-)$ with its compact-open topology is right-adjoint to $- \times X$. Since $T$ is arbitrary, the result now follows from Yoneda. \end{proof}

\subsection{Admissible representations}

Given a locally profinite group $G$, an important subclass of all smooth $G$-representations are the admissible ones. Classically, a smooth $G$-representation on an $\F_p$-vector space $V$ is called admissible if for a basis of compact open subgroups $H \subseteq G$ the space $V^H$ of $H$-invariants is finite. In the condensed derived world the correct analogue of this definition seems to be the following:

\begin{defn}
Let $G$ be a locally profinite group which has locally finite $p$-cohomological dimension. A smooth $G$-representation $V \in \D^\sm_\solid(G,\F_p)$ is called \emph{admissible} if for a basis of compact open subgroups $H \subseteq G$ the smooth group cohomology $R\Gamma_\sm(H,V) \in \D_\solid(\F_p)$ is perfect.
\end{defn}

\begin{rmks} \label{admissibleremarks}
\begin{rmksenum}
    \item An admissible $G$-representation $V \in \D^\sm_\solid(G,\F_p)$ is automatically discrete because all $R\Gamma_\sm(H,V)$ are discrete by assumption and hence so is $V = \varinjlim_H R\Gamma_\sm(H,V)$ (cf. \cite[Lemma 3.4.19]{mann-p-adic-6-functors}). Thus $V$ admits a representation as a complex of classical smooth $G$-representations.
    
    \item Suppose that $G$ is a $p$-adic Lie group. Then by \cite[Corollary 4.12]{schneider-sorensen-dual-reps} a smooth representation $V \in \D^\sm_\solid(G,\F_p)$ is admissible if and only if it is bounded and discrete and all $\pi_n V$ are admissible smooth representations in the classical sense.
\end{rmksenum}
\end{rmks}

Now suppose we are additionally given an algebraically closed perfectoid field $C$ with pseudouniformizer $\pi | p$ and we consider the classifying stack $B\underline{G}=\Spd(C)/\underline{G}$. Then $\DqcohriX {B\underline{G}}^\varphi$ is the $\infty$-category of smooth $G$-representations on solid almost $\varphi$-modules over $\ri^a_C/\pi$ and by the $p$-torsion Riemann-Hilbert correspondence this $\infty$-category contains the $\infty$-category $\D^\sm(G,\F_p)$ of smooth $G$-representations as a full subcategory. We make the following definition:

\begin{defn} \label{def:admissible-rep-on-classifying-stack}
Let $G$ be a locally profinite group which has locally finite $p$-cohomological dimension. Let $C$ be an algebraically closed perfectoid field with pseudouniformizer $\pi \in \ri_C$ such that $\pi | p$. A smooth $G$-representation $M \in \D^{\sm,a}_\solid(G, \ri_C/\pi)^\varphi$ is called \emph{admissible} if it comes from an admissible $\F_p$-representation via the embedding
\begin{align*}
    - \tensor_{\F_p} \ri^a_C/\pi\colon \D^\sm(G,\F_p) \injto \D^{\sm,a}_\solid(G,\ri_C/\pi)^\varphi    
\end{align*}
obtained from the Riemann-Hilbert correspondence on the classifying stack $B\underline{G}$.
\end{defn}

One might also consider defining $M$ to be admissible if the smooth group cohomology groups $R\Gamma_\sm(H, M)$ are perfect for a basis of open subgroups $H \subseteq G$. In fact, this easily turns out to be equivalent to \cref{def:admissible-rep-on-classifying-stack}, as we will see below. If $G$ is $p$-Poincaré then there is also a characterization of admissible representations in terms of their behavior under duality. To show this, we need the following characterization of perfect (equivalently, dualizable) objects in terms of their duals:

\begin{lem} \label{rslt:discrete-dual-plus-reflexive-implies-dualizable-over-perfd-field}
Let $K$ be a perfectoid field with pseudouniformizer $\pi \in \ri_K$. Then a $\varphi$-module $M \in \Dqcohri(\ri_K/\pi)^\varphi$ is dualizable if and only if $M$ is bounded, both $M$ and $M^\vee$ are discrete and the natural map $M \to M^{\vee\vee}$ is an isomorphism.
\end{lem}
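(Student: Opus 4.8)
The plan is to prove the two implications separately, using the Riemann--Hilbert correspondence for the easy direction and a reduction to (almost) commutative algebra for the hard one.

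For the \emph{only if} direction, reflexivity of a dualizable object is formal in any closed symmetric monoidal $\infty$-category, so $M \cong M^{\vee\vee}$ automatically. For boundedness and discreteness I would invoke the $p$-torsion Riemann--Hilbert correspondence \cite[Theorem 3.9.23]{mann-p-adic-6-functors}, which restricts to an equivalence between the dualizable objects of $\Dqcohri(\ri_K/\pi)^\varphi$ and those of $\D_\et(\Spa K, \F_p)^\oc$. Under this equivalence $M$ has the shape $\mathcal F \tensor_{\F_p} \ri^a_K/\pi$ for a dualizable $\mathcal F \in \D_\et(\Spa K, \F_p)^\oc$; since the latter category is the derived $\infty$-category of smooth $\F_p$-representations of the absolute Galois group of $K$, its dualizable objects are bounded with finite cohomology. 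Hence $M$ is bounded and discrete, and the same holds for $M^\vee \cong \mathcal F^\vee \tensor_{\F_p} \ri^a_K/\pi$.

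For the \emph{if} direction I would first strip away the Frobenius: the forgetful functor $\Dqcohri(\ri_K/\pi)^\varphi \to \Dqcohri(\ri_K/\pi)$ is conservative and, as recorded above, commutes with $\tensor$ and $\IHom$; therefore $M$ is dualizable if and only if its underlying solid almost $\ri_K/\pi$-module is, and each of the four hypotheses passes to the underlying module. So it suffices to treat $\Dqcohri(R)$ with $R := \ri_K/\pi$ a discrete ring. Being bounded and discrete, $M$ is then represented by a bounded complex of discrete almost $R$-modules. The key point is that the hypothesis ``$M^\vee$ discrete'' forces the cohomology modules $H^i(M)$ to be almost finitely presented: for a discrete $R$-module $N$ which is not almost finitely generated, $\IHom(N, R^a)$ is an infinite cofiltered limit of its finite duals and so fails to be discrete, and a standard $t$-structure argument shows this non-discreteness survives in $M^\vee$. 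Finally, feeding in the biduality isomorphism $M \cong M^{\vee\vee}$ together with the structure theory of almost modules over the valuation ring $\ri_K$ --- an almost finitely presented reflexive $\ri^a_K/\pi$-module is almost projective --- shows each $H^i(M)$ is almost finitely generated projective, so the bounded complex $M$ is perfect, i.e.\ dualizable; transporting back along the forgetful functor gives the claim.

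The main obstacle is this ``$\varphi$-less'' statement, and within it two points: first, the precise translation of ``$M^\vee$ discrete'' into almost finiteness of the cohomology of $M$, which requires controlling solid internal homs out of non-finitely-generated discrete $R$-modules; and second, the almost-commutative-algebra input that an almost finitely presented reflexive $\ri^a_K/\pi$-module is almost projective, which genuinely uses the almost structure (over $R$ itself a finitely generated reflexive module need not be projective, as $R$ is highly non-Noetherian --- this is exactly the role played by the reflexivity hypothesis). I would \emph{not} expect to reduce to the case of $K$ algebraically closed by faithfully flat descent along $\Spa \widehat{\overline K} \to \Spa K$: the absolute Galois group of a general perfectoid field can have infinite $p$-cohomological dimension, so $\ri^a_{\widehat{\overline K}}/\pi$ need not be descendable over $\ri^a_K/\pi$, and in any case descent of discreteness along a v-cover is not available.
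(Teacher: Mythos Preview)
Your ``only if'' direction is fine and matches the paper in spirit.

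The ``if'' direction has a genuine gap: your first move is to discard the Frobenius and prove the analogous statement in $\Dqcohri(\ri_K/\pi)$, but that $\varphi$-less statement is simply false. Take $M = (\ri_K/\pi^{1/2})^a$ as a static $(\ri_K/\pi)^a$-module. From the periodic resolution $\cdots \xto{\pi^{1/2}} (\ri_K/\pi)^a \xto{\pi^{1/2}} (\ri_K/\pi)^a \to M \to 0$ one computes that $M^\vee$ is concentrated in degree $0$ and isomorphic to $\pi^{1/2}(\ri_K/\pi)^a \cong M$, and a direct check shows the biduality map $M \to M^{\vee\vee}$ is an isomorphism. So $M$ is bounded, discrete, has discrete dual, and is reflexive. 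Yet $M$ is not dualizable: since $\pi^{1/2}$ acts by zero on $M$, the same resolution shows $M \tensor^L M^\vee$ has nonvanishing cohomology in every nonpositive degree while $\IHom(M,M)$ has nonvanishing cohomology in every nonnegative degree, so the canonical map between them is not an isomorphism. In particular, your claimed input ``an almost finitely presented reflexive $(\ri_K/\pi)^a$-module is almost projective'' is false; $M$ above is finitely presented and reflexive but not almost projective.

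The $\varphi$-structure is what rules out such torsion pieces, and this is exactly how the paper argues. From discreteness of $M^\vee$ one extracts (via a topological argument on submodules of $\prod_I (\ri_K/\pi)$) that $M_0 := \pi_0 M$ is almost finitely generated, and then the structure theorem for such modules gives $M_0 \approx (\ri_K^a/\pi)^r \oplus \bigoplus_i \ri_K^a/I_{\gamma_i}$. Now the isomorphism $\pi_0\varphi^* M_0 \cong M_0$ forces every $\gamma_i = 0$, because $\pi_0\varphi^*(\ri_K^a/I_\gamma) \cong \ri_K^a/I_{p\gamma}$ (or $\ri_K^a/\pi$ once $p\gamma \ge 1$), so a nonzero $\gamma$ cannot be fixed. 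Thus $M_0 \approx (\ri_K^a/\pi)^r$ is dualizable, and one finishes by induction on the amplitude of $M$. Note that the counterexample $(\ri_K/\pi^{1/2})^a$ indeed admits no $\varphi$-structure, since $\varphi^*$ of it is $(\ri_K/\pi)^a$.
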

\begin{proof}
First assume that $M$ is dualizable. Then by \cite[Proposition 3.7.5]{mann-p-adic-6-functors} both $M$ and $M^\vee$ are discrete. It follows from \cite[Lemmas 3.9.2, 3.9.3]{mann-p-adic-6-functors} that $M$ is bounded. The isomorphism $M \isoto M^{\vee\vee}$ is clear. This proves the ``only if'' part of the claim.

We now prove the ``if'' part, so assume that $M$ is bounded, $M$ and $M^\vee$ are discrete and $M \isoto M^{\vee\vee}$ is an isomorphism. We want to show that $M$ is dualizable, for which we can assume that $M$ is concentrated in homological degree $\ge 0$. Now consider $M_0 := \pi_0 M$, which is a static discrete $\ri^a_K$-module. From the fact that $M^\vee$ is discrete it follows that $\pi_0 M_0^\vee = \pi_0 M^\vee$ is discrete. After choosing a surjection $\bigdsum_I \ri^a_K/\pi \surjto M_0$ we get an injection $\pi_0 M_0^\vee \injto \prod_I \ri^a_K/\pi$. Now apply $\pi_0 (-)_*$ to get a submodule
\begin{align*}
    M_0' := \pi_0 (\pi_0 M_0^\vee)_* \subseteq \prod_I A,
\end{align*}
where we abbreviate $A := \pi_0 (\ri^a_K/\pi)_*$. Note that $M_0'$ is equipped with the subspace topology since it is a kernel of a map $\prod_I A \to \prod_J A$. Moreover, the fact that $\pi_0 M_0^\vee$ is discrete implies that for every $\varepsilon \in \mm_K$ the submodule $T_\varepsilon \subset M_0'$ of $\varepsilon$-torsion in $M_0'$ is open. In particular there is an open subset $U_\varepsilon \subset \prod_I A$ containing $0$ such that $U_\varepsilon \isect M_0' \subseteq T_\varepsilon$. In fact we can choose $U_\varepsilon$ to be an element of the standard basis for the product topology, i.e. $U_\varepsilon = U_\varepsilon' \times \prod_{I \setminus I_\varepsilon} A$ for some finite subset $I_\varepsilon \subset I$ and some open subset $U_\varepsilon' \subset \prod_{I_\varepsilon} A$. This means that if $(a_i)_i, (b_i)_i \in \prod_I A$ lie in $M_0'$ and $a_i = b_i$ for $i \in I_\varepsilon$ then $\varepsilon (a_i)_i = \varepsilon (b_i)_i$. Now let $M_{0,\varepsilon}' \subset \prod_{I_\varepsilon} A$ be the image of $M_0'$ under the projection $\prod_I A \surjto \prod_{I_\varepsilon} A$ (equipped with the subspace topology). Then there is a natural projection map $f'_\varepsilon\colon M_0' \to M_{0,\varepsilon}'$. We can also construct a map $g'_\varepsilon\colon M_{0,\varepsilon}' \to M_0'$ as follows: Given $a \in M_{0,\varepsilon}'$ we pick any $a' \in M_0'$ such that $f'_\varepsilon(a') = a$. Then we set $g'_\varepsilon(a) := \varepsilon a'$, which by the above observation is independent of the choice of $a'$. Note that $f'_\varepsilon g'_\varepsilon = \varepsilon$ and $g'_\varepsilon f'_\varepsilon = \varepsilon$, hence we have $M'_0 \approx_\varepsilon M'_{0,\varepsilon}$. Going back to the almost world we now see that
\begin{align*}
    \pi_0 M_0^\vee \approx_\varepsilon M'^a_{0,\varepsilon}, \qquad \text{with $M'^a_{0,\varepsilon} \subseteq \prod_{i=1}^{n_\varepsilon} \ri^a_K/\pi$ a discrete submodule},
\end{align*}
where we took the liberty of naming $I_\varepsilon = \{ 1, \dots, n_\varepsilon \}$. By \cite[Lemma 3.7.15]{mann-p-adic-6-functors} the discrete dualization functor $N \mapsto (N^\vee)_\omega$ is $t$-exact on discrete $N$. By applying this functor to $M^\vee$ it follows from the isomorphism $M \isoto M^{\vee\vee}$ that also $M_0 \isoto ((\pi_0 M_0^\vee)^\vee)_\omega$ is an isomorphism. Letting $M_{0,\varepsilon} := ((M'^a_{0,\varepsilon})^\vee)_\omega$ we deduce that $M_0 \approx_{\varepsilon} M_{0,\varepsilon}$ and there is a surjection $\bigdsum_{i=1}^{n_\varepsilon} \ri^a_K/\pi \surjto M_{0,\varepsilon}$. As this works for all $\varepsilon$, we deduce that $M_0$ is almost finitely generated as an $\ri^a_K/\pi$-module. Then it is also almost finitely generated as an $\ri^a_K$-module (where we equip $\ri^a_K$ with the discrete topology in this proof), hence by \cite[Theorem 2.5]{rigid-p-adic-hodge} we have
\begin{align*}
    M_0 \approx (\ri^a_K/\pi)^r \dsum \ri^a_K/I_{\gamma_1} \dsum \ri^a_K/I_{\gamma_2} \dsum \dots
\end{align*}
for a series of real numbers $1 > \gamma_1 \ge \gamma_2 \ge \dots \ge 0$ converging to $0$. Here for a real number $\gamma$ we denote by $I_\gamma \subset \ri_K$ the ideal generated by $\pi^{\gamma'}$ for all $\gamma' > \gamma$ (see \cite[Example 2.4.(i)]{rigid-p-adic-hodge}). The $\varphi$-module structure $\varphi^* M \isoto M$ induces an isomorphism $\pi_0 \varphi^* M_0 \isoto M_0$. On the other hand for $\gamma < 1$ we have
\begin{align*}
    \pi_0 \varphi^*(\ri^a_K/I_\gamma) = \begin{cases} \ri^a_K/\pi, \qquad &\text{if $\gamma \ge 1/p$},\\ \ri^a_K/I_{p\gamma}, \qquad &\text{else}, \end{cases}
\end{align*}
as $\ri^a_K/\pi$-modules. This forces all $\gamma_i = 0$, i.e. we have $M_0 \approx (\ri^a_K/\pi)^r$. It follows that $M_0$ is dualizable: We need to check that the natural map $\alpha\colon M_0 \tensor M_0^\vee \to \IHom(M_0, M_0)$ is an isomorphism but since the same statement holds for $(\ri^a_K/\pi)^r$ in place of $M_0$ one deduces easily that both kernel and cokernel of $\pi_i \alpha$ are killed by $\varepsilon$ for all $i \in \Z$ and $\varepsilon \in \mm_K$ (cf. the proof of \cite[Lemma 3.7.17]{mann-p-adic-6-functors}). In particular $M_0$ is flat (which can also be seen directly), so that $\pi_0 \varphi^* M_0 = \varphi^* M$ and therefore the $\varphi$-module structure on $M_0$ restricts to a $\varphi$-module structure on $M_0$. Since the conditions on $M$ are preserved under finite limits, we reduce the claimed dualizability of $M$ to that of $\tau_{\ge1} M$, which satisfies the same assumptions as $M$. Since $M$ is bounded, we can conclude by induction.
\end{proof}

As promised, here come the three equivalent ways of characterizing admissible representations on a classifying stack. The third characterization is very useful for showing that admissible representations are stable under various of the 6 operations, and plays a key role in our analysis of the Jacquet-Langlands functor.

\begin{prop}\label{prop:admissiblecriterion}
Let $C$ be an algebraically closed perfectoid field with pseudouniformizer $\pi \in \ri_C$ such that $\pi | p$ and let $G$ be a locally profinite $p$-Poincaré group. Then for $M \in \D^{\sm,a}_\solid(G, \ri_C/ \pi)^\varphi$ the following are equivalent:
\begin{propenum}
    \item $M$ is bounded and admissible.
    \item $M$ is bounded and for a basis of compact open subgroups $H \subseteq G$, $\Gamma_\sm(H,M)$ is dualizable in $\Dqcohri(\ri_C/\pi)$.
    \item Both $M$ and $M^\vee$ are bounded and discrete and the natural morphism $M \isoto M^{\vee\vee}$ is an isomorphism.
\end{propenum}
If this is the case then $M^\vee$ is bounded and admissible and this dual can unambiguously be computed in any of the following $\infty$-categories: $\D^\sm(G,\F_p)$, $\D^\sm_\solid(G,\F_p)$, $\D^{\sm,a}_\solid(G,\ri_C/\pi)^\varphi_\omega$, $\D^{\sm,a}_\solid(G,\ri_C/\pi)^\varphi$.
\end{prop}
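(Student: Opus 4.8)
The plan is to reduce the whole statement to a single Poincaré‑duality computation on classifying stacks. Let $d$ be the dimension of $G$ as a $p$‑Poincaré group; after shrinking the basis we may assume every member $H$ of the chosen basis of compact open subgroups is itself $p$‑Poincaré of dimension $d$. By \cref{rslt:p-Poincare-equiv-p-cohom-smoothness} the structure map $f_H\colon B\underline{H}\to\Spa C$ is then $p$‑cohomologically smooth with dualizing object $f_H^!\mathbf 1\cong\mathbf 1[d]$ (trivial $\varphi$‑structure), while $f_{H!}=f_{H*}$ by \cref{rslt:finite-cd-p-implies-classifying-stack-is-pfine}. Feeding this into the six‑functor identity $\IHom(f_{H!}N,\mathbf 1)\cong f_{H*}\IHom(N,f_H^!\mathbf 1)$ and using that the étale restriction $B\underline{H}\to B\underline{G}$ commutes with $\IHom$, I get for every $M\in\D^{\sm,a}_\solid(G,\ri_C/\pi)^\varphi$ and every such $H$ a natural isomorphism
\[
R\Gamma_\sm(H,M^\vee)\cong R\Gamma_\sm(H,M)^\vee[-d],
\]
where $(-)^\vee$ on the right is the dual in $\Dqcohri(\ri_C/\pi)^\varphi$; since the forgetful functor to $\Dqcohri(\ri_C/\pi)$ is monoidal and detects dualizability, this agrees with the notion of dual in condition (ii). With this formula established I would prove $\mathrm{(i)}\Rightarrow\mathrm{(ii)}$, $\mathrm{(ii)}\Leftrightarrow\mathrm{(iii)}$, and $\mathrm{(ii)}\Rightarrow\mathrm{(i)}$.

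For $\mathrm{(i)}\Rightarrow\mathrm{(ii)}$, write $M=V\tensor_{\F_p}\ri_C^a/\pi$ for a bounded admissible $V\in\D^\sm(G,\F_p)$; compatibility of Riemann--Hilbert with $f_{H*}$ together with the projection formula gives $R\Gamma_\sm(H,M)\cong R\Gamma_\sm(H,V)\tensor_{\F_p}\ri_C^a/\pi$, which is dualizable because $R\Gamma_\sm(H,V)$ is a perfect $\F_p$‑complex. For $\mathrm{(ii)}\Rightarrow\mathrm{(iii)}$: dualizable objects of $\Dqcohri(\ri_C/\pi)$ are discrete (by \cite[Proposition 3.7.5]{mann-p-adic-6-functors}), so $M=\varinjlim_H R\Gamma_\sm(H,M)$ is discrete and, by hypothesis, bounded; the duality formula then shows $R\Gamma_\sm(H,M^\vee)$ is dualizable and uniformly bounded in $H$, whence $M^\vee=\varinjlim_H R\Gamma_\sm(H,M^\vee)$ is bounded and discrete; applying the formula twice and using reflexivity of dualizable objects yields $R\Gamma_\sm(H,M)\isoto R\Gamma_\sm(H,M^{\vee\vee})$ for all $H$, hence $M\isoto M^{\vee\vee}$. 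Conversely, from $\mathrm{(iii)}$ each $R\Gamma_\sm(H,M)$ is bounded (as $\cd_p H=d<\infty$), discrete, with discrete dual (via the formula and discreteness of $M^\vee$), and reflexive (via the formula and $M\isoto M^{\vee\vee}$), so \cref{rslt:discrete-dual-plus-reflexive-implies-dualizable-over-perfd-field} gives dualizability, i.e. $\mathrm{(ii)}$.

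For $\mathrm{(ii)}\Rightarrow\mathrm{(i)}$: by the above $M$ is bounded and discrete, so $N:=M^\varphi$ is a bounded object of $\D_\et(B\underline{G},\F_p)^\oc=\D^\sm(G,\F_p)$. Since $(-)^\varphi$ commutes with $f_{H*}$ (from the six‑functor compatibilities of the forgetful functor and of $RH$), $R\Gamma_\sm(H,N)\cong R\Gamma_\sm(H,M)^\varphi$; and because $R\Gamma_\sm(H,M)$ is dualizable it is the Riemann--Hilbert image of this perfect $\F_p$‑complex, so $R\Gamma_\sm(H,N)$ is perfect and $R\Gamma_\sm(H,N)\tensor_{\F_p}\ri_C^a/\pi\isoto R\Gamma_\sm(H,M)$. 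Hence $N$ is admissible and the counit $RH(N)\to M$ becomes an isomorphism after each $R\Gamma_\sm(H,-)$, so it is an isomorphism; thus $M$ is bounded admissible. Finally, condition $\mathrm{(iii)}$ is stable under $(-)^\vee$ (the triangle identities upgrade $M\isoto M^{\vee\vee}$ to $M^\vee\isoto M^{\vee\vee\vee}$, and $M^\vee$, $M^{\vee\vee}\cong M$ are bounded and discrete), so $M^\vee$ is again bounded admissible; and the four listed computations of the dual agree because each is detected by the conservative family $R\Gamma_\sm(H,-)$, under which — by the duality formula — all of them produce $R\Gamma_\sm(H,M)^\vee[-d]$, a perfect $\F_p$‑complex, transported along the mutually compatible fully faithful embeddings relating $\D^\sm(G,\F_p)$, $\D^\sm_\solid(G,\F_p)$, $\D^{\sm,a}_\solid(G,\ri_C/\pi)^\varphi_\omega$ and $\D^{\sm,a}_\solid(G,\ri_C/\pi)^\varphi$; on $\D^\sm(G,\F_p)$ one may alternatively quote \cref{dimbasics} directly.

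The main obstacle is $\mathrm{(ii)}\Rightarrow\mathrm{(i)}$: recognizing a bounded discrete $\varphi$‑module with dualizable $H$‑cohomologies as coming from a representation, and seeing that the representation $M^\varphi$ it produces is automatically admissible. This rests on three compatibilities that must be assembled with care: that $RH$ commutes with $f_{H*}$, that $(-)^\varphi$ commutes with $f_{H*}$ (both available because $f_{H!}=f_{H*}$ for classifying stacks), and that a dualizable $\varphi$‑module over $\ri_C^a/\pi$ is recovered from its $\varphi$‑invariants. Setting up the Poincaré‑duality formula itself — identifying $f_H^!\mathbf 1\cong\mathbf 1[d]$ and checking that étale restriction commutes with $\IHom$ — is the other delicate point, though essentially bookkeeping given the machinery already in place.
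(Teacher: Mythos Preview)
Your approach is essentially the paper's: the key identity $R\Gamma_\sm(H,M^\vee)\cong R\Gamma_\sm(H,M)^\vee[-d]$ reduces all three conditions to statements about the objects $R\Gamma_\sm(H,M)$, and then \cref{rslt:discrete-dual-plus-reflexive-implies-dualizable-over-perfd-field} together with the Riemann--Hilbert correspondence does the rest. The logical structure of the implications matches the paper closely.

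One small but genuine gap: the claim $f_H^!\mathbf 1\cong\mathbf 1[d]$ does not follow merely from $H$ being $p$-Poincar\'e of dimension $d$. What \cref{rslt:p-Poincare-equiv-p-cohom-smoothness} gives you is that $f_H^!\mathbf 1$ is \emph{invertible} with underlying object $\mathbf 1[d]$; as an $H$-representation it can still carry a nontrivial character. The paper therefore writes the identity in the form $f_{H*}M^\vee=(f_{H*}(M\otimes\omega_G))^\vee$ with $\omega_G=f^!\mathbf 1$, and then notes that for a cofinal family of $H$ the action on $\omega_G$ is trivial (the dualizing character lands in the finite group $\F_p^\times$, so its kernel is open). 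After this further shrinking your formula holds and the rest of your argument proceeds unchanged. A second, smaller point: your claim that $M^\vee$ is bounded (``uniformly bounded in $H$'') needs a word of justification; the paper supplies it via the $t$-exactness of the discrete dualization $N\mapsto(N^\vee)_\omega$ on $\Dqcohri(\ri_C/\pi)$, which forces the duals $R\Gamma_\sm(H,M)^\vee$ into a range depending only on the amplitude of $M$.
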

\begin{proof}
Note first that (i) implies (ii) because $R\Gamma_\sm(H,V) \tensor_{\F_p} \ri^a_C/\pi = R\Gamma_\sm(H, V \tensor_{\F_p} \ri^a_C/\pi)$ for all $V \in \D^\sm(G,\F_p)$ (e.g. by the explicit computation of smooth group cohomology in \cite[Corollary 3.4.17]{mann-p-adic-6-functors}). Conversely, assume that (ii) is satisfied. Then $M$ is discrete because $M = \varinjlim_H R\Gamma_\sm(H,M)$, where on the right-hand side $H$ ranges through all compact open subgroups of $G$ (see \cite[Lemma 3.4.19]{mann-p-adic-6-functors}). We claim that $M$ lies in $\D^\sm(G,\F_p)$, i.e. that the natural morphism $M^\varphi \tensor_{\F_p} \ri^a_C \isoto M$ is an isomorphism. By the colimit property we just used the collection of functors $R\Gamma_\sm(H,-)$, where $H$ ranges through a basis of compact open subgroups of $G$, is conservative, hence it is enough to show that $R\Gamma_\sm(H,M^\varphi \tensor_{\F_p} \ri^a_C) = R\Gamma_\sm(H,M)$ for all $H$ in the basis. Noting that that $R\Gamma_\sm(H,-)$ commutes with $(-)^\varphi$ (which is formal) and with $- \tensor_{\F_p} \ri^a_C/\pi$ (as used above),  the claim boils down to showing that for $M_H := R\Gamma_\sm(H,M)$ we have $M_H^\varphi \tensor_{\F_p} \ri^a_C/\pi \isoto M_H$. But by assumption $M_H$ is dualizable, hence the desired equivalence follows from the Riemann-Hilbert correspondence (see \cite[Theorem 3.9.23]{mann-p-adic-6-functors}). From the Riemann-Hilbert correspondence we also deduce that $R\Gamma_\sm(H, M^\varphi)$ is dualizable, hence $M^\varphi$ is admissible, as desired.

We have now finished the proof that (i) and (ii) are equivalent. To prove that they are also equivalent to (iii) we make the following computation. Let $X = \Spa C$, $\Lambda = \ri_C/\pi$, $\omega_G := f^! \Lambda^a$ and for every compact open subgroup $H \subset G$ let $f_H\colon X/H \to X$ be the natural map. Then from \cref{rslt:finite-cd-p-implies-classifying-stack-is-pfine,rslt:p-Poincare-equiv-p-cohom-smoothness} we get:
\begin{align*}
    f_{H*} M^\vee &= f_{H*} \IHom(M, \Lambda^a) = f_{H*} \IHom(M \tensor \omega_G, \omega_G) = \IHom(f_{H*}(M \tensor \omega_G), \Lambda^a)\\
    &= (f_{H*}(M \tensor \omega_G))^\vee.
\end{align*}
Note also that for small enough $H$ the action of $H$ on $\omega_G$ is trivial, so that $- \tensor \omega_G$ is just a shift in the $\infty$-category of $H$-representations. Consequently for small enough $H$ the functor $R\Gamma_\sm(H,-)$ preserves duals up to shift. Since the functors $R\Gamma_\sm(H,-)$ are also conservative (for $H$ varying in a basis of compact open subgroups of $G$) it follows that (iii) is equivalent to the claim that $M$ and $M^\vee$ are bounded and for a basis $H$ of compact open subgroups of $G$ we have that both $M_H := R\Gamma_\sm(H,M)$ and $M_H^\vee$ are discrete and the natural morphism $M_H \isoto M_H^{\vee\vee}$ is an isomorphism. By \cref{rslt:discrete-dual-plus-reflexive-implies-dualizable-over-perfd-field} this implies (ii). Moreover, if (ii) is satisfied then all the conditions in (iii) follow immediately except possibly that $M^\vee$ is also bounded. To see that $M^\vee$ is indeed bounded, note first that $M^\vee = \varinjlim_H R\Gamma_\sm(H, M^\vee)$ (see \cite[Lemma 3.4.19]{mann-p-adic-6-functors}) and by the above computation the right-hand side is (up to a constant shift) isomorphic to $\varinjlim_H M_H^\vee$. On the other hand, the discrete dualization functor $N \mapsto (N^\vee)_\omega$ is $t$-exact on discrete objects in $\Dqcohri(\ri_C/\pi)$ (see \cite[Lemma 3.7.15]{mann-p-adic-6-functors}). In particular, if $M_H$ is concentrated in homological degrees $[a,b]$ then $M_H^\vee$ is concentrated in homological degrees $[-b,-a]$. The fact that $M$ is bounded implies that we can choose $a$ and $b$ independently of $H$, which then implies that $M^\vee$ is also bounded.

It remains to prove the last part of the claim, so assume that $M$ is admissible and bounded. It follows immediately from (iii) that also $M^\vee$ (computed in $\D^{\sm,a}_\solid(G,\ri_C/\pi)$) is admissible (and bounded), so in particular it is discrete and comes from an $\F_p$-representation via the Riemann-Hilbert correspondence. It is thus formal that this dual can be computed in any of the claimed $\infty$-categories.
\end{proof}

\begin{rmk} \label{rmk:do-not-need-biduality-map-for-admissibility}
It follows from the proof that in \cref{rslt:discrete-dual-plus-reflexive-implies-dualizable-over-perfd-field} it is enough to require that there is \emph{some} isomorphism $M \isom M^{\vee\vee}$ instead of insisting that the natural map $M \to M^{\vee\vee}$ is an isomorphism. It follows that in condition (iii) of \cref{prop:admissiblecriterion} it is enough to require the existence of \emph{some} isomorphism $M \isom M^{\vee\vee}$.
\end{rmk}

\section{Duality and applications}

\subsection{The main result}

Fix a finite extension $F/\Q_p$, let $G := \GL_n(F)$, let $D$ be the central division algebra over $F$ of invariant $1/n$ and let $C$ be the completed algebraic closure of $F$. In the following we will work with small v-stacks over $\Spa C$, so in particular the final object $*$ will denote $\Spa C$ and $B\underline H = [\Spa C/\underline H]$ for any locally profinite group $H$. Recall the fundamental diagram from the introduction:
\begin{center}\begin{tikzcd}
    & X \arrow[dl,"f",swap] \arrow[dr,"g"]\\
    B\underline G && B\underline{D^\times}
\end{tikzcd}\end{center}
Here $X$ is obtained from the infinite level Lubin-Tate tower by modding out the commuting actions of $G$ and $D^\times$, so that in particular we have $X \isom [\mathbf P^{n-1}_C/\underline{D^\times}] \isom [\Omega_C^{n-1}/\underline{G}]$, where $\Omega_C^{n-1} = \mathbf P^{n-1}_C \setminus \mathbf P^{n-1}_C(\Q_p)$ denotes Drinfeld space. It follows that $f$ and $g$ are smooth and that $g$ is proper. Recall also that for every locally profinite group $H$ which has locally finite $p$-cohomological dimension (e.g. $H = G$ or $H = D^\times$) we have
\begin{align*}
    \D_\et(B\underline{H}, \F_p) &= \D^\sm(H, \F_p),\\
    \Dqcohri(\ri^+_{B\underline H}/p)^\varphi &= \Dcatrepsldasm H{\ri_C/p}^\varphi,
\end{align*}
and the upper $\infty$-category embedds fully faithfully into the lower one via the Riemann-Hilbert functor. As explained in the introduction, we can now define the Jacquet-Langlands functor as follows.

\begin{defn}
In the above setup, we define the \emph{$p$-adic Jacquet-Langlands functors}
\begin{align*}
    \mathcal J &\colon \D^\sm(G, \F_p) \to \D^\sm(D^\times, \F_p), && A \mapsto g_{\et,*} f_\et^* A,\\
    \mathcal J_\solid &\colon \Dcatrepsldasm G{\ri_C/p}^\varphi \to \Dcatrepsldasm{D^\times}{\ri_C/p}^\varphi, && A \mapsto g_* f^* A,
\end{align*}
where we implicitly use the identifications of sheaves on the classifying stack with smooth representations discussed above.
\end{defn}

The functor $\mathcal J_\solid$ is defined in the setting of a full 6-functor formalism and should thus be seen as the ``actual'' Jacquet-Langlands functor, whereas $\mathcal J$ is a more naively defined shadow of $\mathcal J_\solid$. In fact, it is completely formal that for all $A \in \D^\sm(G, \F_p)$ we have
\begin{align}
    \mathcal J(A) = (\mathcal J_\solid(A \tensor \ri^{+a}_{B\underline G}/p))^\varphi. \label{eq:naive-J-is-phi-inv-of-solid-J}
\end{align}
In other words, $\mathcal J(A)$ remembers the $\varphi$-invariants of the Jacquet-Langlands correspondent $\mathcal J_\solid(A \tensor \ri^+_{B\underline G}/p)$ of $A$. It turns out that if $A$ is admissible then in fact these $\varphi$-invariants capture the whole Jacquet-Langlands correspondent. In fact, the Jacquet-Langlands correspondent of an admissible representation is itself admissible and hence lies in the image of the Riemann-Hilbert functor, as is shown in the following result.

\begin{thm}\label{mainthm}
In the above setup, let $A \in \D^\sm(G,\F_p)$ be admissible. Then:
\begin{thmenum}
    \item $\mathcal J(A) \in \D^\sm(D^\times, \F_p)$ is admissible.
    
    \item The natural primitive comparison map
    \begin{align*}
        \mathcal J(A) \tensor \ri^{+a}_{B\underline{D^\times}}/p \isoto \mathcal J_\solid(A \tensor \ri^{+a}_{B\underline G}/p)
    \end{align*}
    induced by \cref{eq:naive-J-is-phi-inv-of-solid-J} is an isomorphism.
    
    \item There is a natural isomorphism
    \begin{align*}
        (\mathcal{S}_{D^\times} \circ \mathcal{J})(A) = (\mathcal{J} \circ \mathcal{S}_G)(A)[2n-2](n-1)
    \end{align*}
    in $\D^\sm(D^\times,\F_p)$.
\end{thmenum}
\end{thm}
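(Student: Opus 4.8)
The plan is to reduce \cref{mainthm} to a purely formal ``solid'' duality for $\mathcal{J}_\solid$ together with the admissibility criterion \cref{prop:admissiblecriterion} (which applies since $G=\GL_n(F)$ and $D^\times$ are $p$-adic Lie groups, hence virtually $p$-Poincaré by \cref{padicLiePoincare}); parts (i) and (ii) will emerge as a byproduct of this reduction. \emph{First} I would establish, for every $M \in \Dqcohri(\ri^+_{B\underline G}/p)^\varphi$, a natural isomorphism
\begin{align*}
    \IHom_{B\underline{D^\times}}(\mathcal{J}_\solid(M), \mathbf{1}) \isoto \mathcal{J}_\solid\bigl(\IHom_{B\underline G}(M, \mathbf{1})\bigr)[2n-2](n-1),
\end{align*}
i.e. the identity $\mathcal{S}_{D^\times,\solid}\comp\mathcal{J}_\solid \cong (\mathcal{J}_\solid\comp\mathcal{S}_{G,\solid})[2n-2](n-1)$ advertised in the introduction. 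This is immediate from the six-functor formalism: $g$ is proper, so $g_! = g_*$ and the internalized $(g_!,g^!)$-adjunction gives $\IHom(g_! f^* M, \mathbf{1}) \cong g_*\IHom(f^* M, g^!\mathbf{1})$; $g$ is moreover smooth of pure relative dimension $n-1$, so its relative dualizing object is $g^!\mathbf{1} \cong \mathbf{1}_X[2n-2](n-1)$, and the twist (being pulled back from $B\underline{D^\times}$) comes out past $g_*$ and past $\IHom$, leaving $g_*\IHom(f^* M, \mathbf{1}_X)$; finally $f$ is smooth, so $f^*$ commutes with internal hom, whence $\IHom(f^* M, \mathbf{1}_X) = \IHom(f^* M, f^*\mathbf{1}_{B\underline G}) \cong f^*\IHom_{B\underline G}(M,\mathbf{1})$, and applying $g_*$ concludes. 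The same computation works verbatim in the $\varphi$-module formalism. I would also record the (formal) fact that $\mathcal{J}_\solid = g_* f^*$ preserves discrete objects: $f^*$ does, and $g_* = g_!$ does because $g$ is proper --- commuting with filtered colimits, the claim reduces by proper base change to the finiteness of proper pushforward with constructible coefficients from \cite{mann-p-adic-6-functors}.

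\emph{Next}, let $A$ be admissible and set $M := \mathcal{J}_\solid(A\tensor\ri^{+a}_{B\underline G}/p)$; I claim $M$ satisfies condition (iii) of \cref{prop:admissiblecriterion}. It is bounded ($f^*$ preserves boundedness and $g_* = g_!$ has finite cohomological amplitude, $g$ being bdcs) and discrete (previous paragraph, since $A\tensor\ri^{+a}_{B\underline G}/p$ is discrete). By the solid duality, $M^\vee \cong \mathcal{J}_\solid\bigl((A\tensor\ri^{+a}_{B\underline G}/p)^\vee\bigr)[2n-2](n-1)$; since $A$ admissible makes $A\tensor\ri^{+a}_{B\underline G}/p$ admissible in the sense of \cref{def:admissible-rep-on-classifying-stack}, the last assertion of \cref{prop:admissiblecriterion} together with \cref{kohlhaaseconceptual} identifies $(A\tensor\ri^{+a}_{B\underline G}/p)^\vee$ with $\mathcal{S}_G(A)\tensor\ri^{+a}_{B\underline G}/p$, and $\mathcal{S}_G(A)$ is again admissible by \cref{dimbasics}. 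Hence $M^\vee \cong \mathcal{J}_\solid(\mathcal{S}_G(A)\tensor\ri^{+a}_{B\underline G}/p)[2n-2](n-1)$ is again bounded and discrete. Applying the solid duality a second time and using the biduality $A \isoto \mathcal{S}_G\mathcal{S}_G(A)$ of \cref{dimbasics} gives $M^{\vee\vee} \cong M$, and by \cref{rmk:do-not-need-biduality-map-for-admissibility} the mere existence of such an isomorphism is enough. So \cref{prop:admissiblecriterion} applies: $M$ is bounded and admissible, i.e. $M \cong B\tensor\ri^{+a}_{B\underline{D^\times}}/p$ for an admissible $B \in \D^\sm(D^\times,\F_p)$. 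Since the Riemann--Hilbert embedding is fully faithful with right adjoint $(-)^\varphi$, we get $B = M^\varphi = \mathcal{J}(A)$ --- this is (i) --- and the identity $M = \mathcal{J}(A)\tensor\ri^{+a}_{B\underline{D^\times}}/p$ is exactly the primitive comparison isomorphism (ii).

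\emph{Finally}, for (iii): the previous paragraph proves (i) and (ii) for an arbitrary admissible input, so I may apply them both to $A$ and to $\mathcal{S}_G(A)$ (admissible by \cref{dimbasics}). As the Riemann--Hilbert embedding is fully faithful, it suffices to produce the isomorphism after applying $-\tensor\ri^{+a}_{B\underline{D^\times}}/p$. By (ii) and the compatibility of duals in \cref{prop:admissiblecriterion} (with \cref{kohlhaaseconceptual}), $(\mathcal{S}_{D^\times}\mathcal{J}(A))\tensor\ri^{+a}_{B\underline{D^\times}}/p = M^\vee$, which the previous paragraph identified with $\mathcal{J}_\solid(\mathcal{S}_G(A)\tensor\ri^{+a}_{B\underline G}/p)[2n-2](n-1)$; applying (ii) to $\mathcal{S}_G(A)$ rewrites this as $\bigl(\mathcal{J}(\mathcal{S}_G(A))\tensor\ri^{+a}_{B\underline{D^\times}}/p\bigr)[2n-2](n-1)$, which is $(\mathcal{J}\mathcal{S}_G(A)[2n-2](n-1))\tensor\ri^{+a}_{B\underline{D^\times}}/p$. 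Fully faithfulness then yields $\mathcal{S}_{D^\times}\mathcal{J}(A) \cong \mathcal{J}\mathcal{S}_G(A)[2n-2](n-1)$, and naturality is inherited from the maps used.

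The main obstacle is the bootstrap linking the last two paragraphs: the solid duality is what lets one check that $M$ is reflexive, but one must already know that $M$ is admissible --- via \cref{prop:admissiblecriterion}, the one genuinely substantial (and previously established) input --- before the solid duality can be read back as a duality statement for $\mathcal{J}$ itself. Everything else is pure manipulation in the six-functor formalism, the sole exception being the preservation of discreteness, where properness of $g$ does real geometric work.
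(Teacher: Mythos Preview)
Your argument is correct and follows essentially the same path as the paper: establish the solid duality $\mathcal{J}_\solid(-)^\vee \cong \mathcal{J}_\solid((-)^\vee)[2n-2](n-1)$ via the six-functor manipulations you describe, observe that $\mathcal{J}_\solid$ preserves bounded discrete objects, and then feed $M = \mathcal{J}_\solid(A\tensor\ri^{+a}/p)$ into criterion (iii) of \cref{prop:admissiblecriterion} (together with \cref{rmk:do-not-need-biduality-map-for-admissibility}) to get (i) and (ii), after which (iii) follows from the solid duality and the last clause of \cref{prop:admissiblecriterion}. The only cosmetic difference is that you detour through the identification $(A\tensor\ri^{+a}/p)^\vee \cong \mathcal{S}_G(A)\tensor\ri^{+a}/p$ and the admissibility of $\mathcal{S}_G(A)$ to see that $M^\vee$ is bounded and discrete, whereas the paper reads this off directly from \cref{prop:admissiblecriterion} applied to $A' = A\tensor\ri^{+a}/p$; and your justification for preservation of discreteness (reducing to constructible coefficients) is not quite the paper's (which invokes properness and $p$-boundedness of $g$), but the conclusion is the same and treated as formal in both.
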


Recall that Scholze already proved (i) and (ii). We will prove (i)--(iii) simultaneously, independently of Scholze's results. In fact, we will see that all (i)--(iii) follow formally from the following unconditional duality for $\mathcal J_\solid$:

\begin{lem}\label{solidduality}
In the above setup, there is a natural isomorphism
\begin{align*}
    (-)^\vee \circ \mathcal{J}_\solid = (\mathcal{J}_\solid \circ (-)^\vee)[2n-2](n-1)
\end{align*}
of functors $\Dcatrepsldasm G{\ri_C/p}^\varphi \to \Dcatrepsldasm{D^\times}{\ri_C/p}^\varphi$.
\end{lem}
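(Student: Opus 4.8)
The plan is to deduce the lemma formally from the six-functor formalism for $p$-fine maps and its $\varphi$-variant, using only the following facts about the fundamental diagram, all recorded above: $f\colon X \to B\underline G$ is $p$-fine and $p$-cohomologically smooth of pure dimension $n-1$, while $g\colon X \to B\underline{D^\times}$ is $p$-fine, $p$-cohomologically smooth of pure dimension $n-1$, and \emph{proper}. Throughout I write $\Lambda^a = \ri^{+a}_{(-)}/p$ for the monoidal unit and $(-)^\vee = \IHom(-, \Lambda^a)$ for the duality in the relevant category, and I work in $\Dcatrepsldasm{D^\times}{\ri_C/p}^\varphi$; since all six operations, $\IHom$, and the $p$-cohomological smoothness twists are compatible with the forgetful functor to the non-$\varphi$ categories, it suffices to produce the isomorphism there and observe that it is $\varphi$-equivariant.

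The computation runs as follows. Fix $A \in \Dcatrepsldasm G{\ri_C/p}^\varphi$. By definition $\mathcal J_\solid(A)^\vee = \IHom(g_* f^* A, \Lambda^a)$, and since $g$ is proper we may rewrite $g_* = g_!$, so $\mathcal J_\solid(A)^\vee = \IHom(g_! f^* A, \Lambda^a)$. Now invoke the general identity $\IHom(g_!\mathcal M, \mathcal N) \isoto g_*\IHom(\mathcal M, g^!\mathcal N)$, which holds in any six-functor formalism as the right-adjoint form of the projection formula $g_!(\mathcal M \tensor g^*\mathcal N) \isoto g_!\mathcal M \tensor \mathcal N$, and hence in the extended formalism of \cref{rslt:stacky-6-functor-formalism} and its $\varphi$-version; this gives $\mathcal J_\solid(A)^\vee \isoto g_*\IHom(f^* A, g^!\Lambda^a)$. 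Because $g$ is $p$-fine and $p$-cohomologically smooth of pure dimension $n-1$, the object $g^!\Lambda^a$ is invertible and canonically isomorphic to $\Lambda^a[2n-2](n-1)$; pulling this invertible twist out of the second argument of $\IHom$ yields
\[
\mathcal J_\solid(A)^\vee \isoto g_*\IHom(f^* A, \Lambda^a)[2n-2](n-1).
\]
Finally, since $f$ is $p$-fine and $p$-cohomologically smooth, $f^*$ commutes with internal hom, so $\IHom(f^* A, \Lambda^a) = \IHom(f^* A, f^*\Lambda^a) \isofrom f^*\IHom(A, \Lambda^a) = f^*(A^\vee)$. Combining the last two displays gives $\mathcal J_\solid(A)^\vee \isoto g_* f^*(A^\vee)[2n-2](n-1) = \mathcal J_\solid(A^\vee)[2n-2](n-1)$, which is exactly the claimed equivalence $(-)^\vee \comp \mathcal J_\solid \isoto (\mathcal J_\solid \comp (-)^\vee)[2n-2](n-1)$.

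Each isomorphism above is built only from unit/counit maps of adjunctions and the canonical comparison maps of the formalism, hence is natural in $A$, which upgrades the pointwise statement to an equivalence of functors. The only place requiring genuine care is the bookkeeping of the dualizing twist: one must know that $g^!\Lambda^a$ is $\Lambda^a[2n-2](n-1)$ \emph{as a $\varphi$-module}, which is precisely the content of ``$g$ is $p$-cohomologically smooth of pure relative dimension $n-1$'' in the present ($\varphi$-equipped) formalism, together with the compatibility of $g^!$ with the forgetful functor established above. Beyond that the argument is purely formal, so there is no serious obstacle once the stacky $\varphi$-module six-functor formalism is in hand.
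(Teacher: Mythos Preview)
Your proof is correct and follows essentially the same route as the paper's: both use $g_* = g_!$ from properness, the adjunction $\IHom(g_!(-),-) \cong g_*\IHom(-,g^!(-))$, the identification $g^!\Lambda^a \cong \Lambda^a[2n-2](n-1)$ from smoothness of $g$, and the fact that $f^*$ commutes with $\IHom$ by smoothness of $f$. Your added remarks on naturality and $\varphi$-compatibility are fine but the paper treats these as implicit in the formalism.
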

\begin{proof}
For every $A \in \Dcatrepsldasm G{\ri_C/p}^\varphi$ we get the following chain of natural equivalences (using that $g$ is proper and smooth and that $f$ is smooth):
\begin{align*}
    \IHom(g_* f^* A, \ri^{+a}_{B\underline{D^\times}}/p) &= \IHom(g_! f^* A, \ri^{+a}_{B\underline{D^\times}}/p)\\
    &= g_* \IHom(f^* A, g^!(\ri^{+a}_{B\underline{D^\times}}/p))\\
    &= g_* \IHom(f^* A, \ri^{+a}_X/p)[2n-2](n-1)\\
    &= g_* f^* \IHom(A, \ri^{+a}_{B\underline G}/p)[2n-2](n-1).
\end{align*}
In the last step we used that by the smoothness of $f$, $f^*$ commutes with internal hom (see \cite[Proposition 3.8.7]{mann-p-adic-6-functors}.
\end{proof}

\begin{proof}[Proof of Theorem \ref{mainthm}]
Fix an admissible $A \in \D^\sm(G,\F_p)$ as in the claim and let us denote $A' := A \tensor \mathcal{O}_{B\underline{G}}^{+a}/p$ the corresponding object in $\Dcatrepsldasm G{\ri_C/p}^\varphi$ given by the Riemann-Hilbert embedding. Then $A'$ is admissible (by definition) and hence by \cref{prop:admissiblecriterion} both $A'$ and $A'^\vee$ are bounded and discrete and the natural map $A' \isoto A'^{\vee\vee}$ is an isomorphism. Clearly $\mathcal J_\solid$ preserved bounded and discrete objects (this is true for $f^*$ like for any pullback and it follows for $g_*$ from the fact that $g$ is proper and $p$-bounded). Thus it follows from \cref{solidduality} that both $\mathcal J_\solid(A')$ and $\mathcal J_\solid(A')^\vee$ are bounded and discrete. Moreover, \cref{solidduality} also implies that there is an isomorphism $\mathcal J_\solid(A') \isom \mathcal J_\solid(A')^{\vee\vee}$, so altogether it follows from \cref{prop:admissiblecriterion} (and \cref{rmk:do-not-need-biduality-map-for-admissibility}) that $\mathcal J_\solid(A')$ is admissible. This immediately implies (i). It also implies (ii) because we now know that $\mathcal J_\solid(A')$ lies in the image of the Riemann-Hilbert functor. Now (iii) follows from \cref{solidduality} and the last part of \cref{prop:admissiblecriterion}.
\end{proof}

\begin{rmk}In general, both sides of the duality isomorphism are somewhat intractible. However, in certain favorable situations it reduces to a more concrete statement. We record one useful instance of this in the following proposition. Recall from Proposition \ref{dimbasics} that for any admissible smooth $\pi \in \mathrm{Rep}_{\mathbf{F}_{p}}^{\mathrm{sm}}(G)$, the derived dual $\mathcal{S}_{G}^{\dim_G \pi}(\pi)$ is always nonzero. Following \cite{kohlhaase}, we say $\pi$ is Cohen-Macaulay if $\mathcal{S}_{G}^{i}(\pi)=0$ for all $i\neq \dim_G \pi$.
\end{rmk}
\begin{prop}\label{dualitysimple}
Let $\pi \in \mathrm{Rep}_{\mathbf{F}_{p}}^{\mathrm{sm}}(G)$ be an admissible representation which is Cohen-Macaulay, and such that there is an integer $c$ with $\mathcal{J}^i(\pi)=0$ for all $i\neq c$. Then there are isomorphisms
\[\mathcal{S}_{D^\times}^i(\mathcal{J}^c(\pi)) \simeq \mathcal{J}^{i+2n-2-c-\dim_G \pi}(\check{\pi})
\]
for all $i$, where $\check{\pi}=\mathcal{S}_{G}^{\dim_G \pi}(\pi)$ is the unique nonzero derived dual of $\pi$.
\end{prop}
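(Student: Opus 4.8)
The plan is to deduce this as a formal consequence of \cref{mainthm}(iii), by unwinding the two concentration hypotheses on $\pi$. Write $d = \dim_G \pi$. First I would record what the hypotheses give. Since $\pi$ is Cohen-Macaulay, $\mathcal{S}_G^i(\pi) = 0$ for $i \neq d$, while $\mathcal{S}_G^d(\pi) = \check{\pi} \neq 0$ by \cref{dimbasics}; hence $\mathcal{S}_G(\pi) \simeq \check{\pi}[-d]$ in $\D^\sm(G,\F_p)$, with $\check{\pi}$ an admissible representation concentrated in cohomological degree $0$. Similarly, the assumption that $\mathcal{J}^i(\pi) = 0$ for $i \neq c$ gives $\mathcal{J}(\pi) \simeq \mathcal{J}^c(\pi)[-c]$ in $\D^\sm(D^\times,\F_p)$, and $\mathcal{J}^c(\pi)$ is admissible by \cref{mainthm}, so that applying $\mathcal{S}_{D^\times}$ to it really does compute the Kohlhaase duals $\mathcal{S}_{D^\times}^i(\mathcal{J}^c(\pi))$.

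Next I would apply \cref{mainthm}(iii) to the bounded admissible object $\pi \in \D^\sm(G,\F_p)$, obtaining $(\mathcal{S}_{D^\times} \circ \mathcal{J})(\pi) \simeq (\mathcal{J} \circ \mathcal{S}_G)(\pi)[2n-2](n-1)$, and substitute the two identifications above. On the left-hand side, $\mathcal{S}_{D^\times} = \IHom_{D^\times}(-,\mathbf{1})$ is contravariant in its argument and so carries the shift $[-c]$ to a shift $[c]$, giving $\mathcal{S}_{D^\times}(\mathcal{J}^c(\pi))[c]$. On the right-hand side, $\mathcal{J} = g_{\et,*} f_\et^*$ is a stable (triangulated) functor and hence commutes with shifts, giving $\mathcal{J}(\check{\pi})[2n-2-d](n-1)$. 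Comparing these and shifting by $[-c]$ yields $\mathcal{S}_{D^\times}(\mathcal{J}^c(\pi)) \simeq \mathcal{J}(\check{\pi})[2n-2-d-c](n-1)$ in $\D^\sm(D^\times,\F_p)$; taking $i$-th cohomology of both sides gives exactly $\mathcal{S}_{D^\times}^i(\mathcal{J}^c(\pi)) \simeq \mathcal{J}^{i+2n-2-c-d}(\check{\pi})$, the Tate twist $(n-1)$ affecting only the residual $W_F$-action and not the underlying $D^\times$-representation.

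I expect no genuine obstacle beyond \cref{mainthm} itself: the content is entirely formal once that theorem is available. The points that need care are purely bookkeeping ones — verifying that $\check{\pi}$ really sits in degree $0$ (which uses both Cohen-Macaulayness and the nonvanishing $\mathcal{S}_G^d(\pi) \neq 0$ from \cref{dimbasics}), correctly propagating the two shifts through the contravariant functor $\mathcal{S}_{D^\times}$ versus the covariant $\mathcal{J}$, and invoking \cref{mainthm} to know that $\mathcal{J}^c(\pi)$ is admissible so that its iterated derived duals are the expected ones.
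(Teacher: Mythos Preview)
Your proposal is correct and follows essentially the same route as the paper's own proof: both evaluate the duality isomorphism of \cref{mainthm}(iii) on $\pi$, use the Cohen--Macaulay hypothesis to write $\mathcal{S}_G(\pi)\simeq\check{\pi}[-d]$ and the concentration hypothesis to write $\mathcal{J}(\pi)\simeq\mathcal{J}^c(\pi)[-c]$, propagate the shifts through the contravariant $\mathcal{S}_{D^\times}$ and the covariant $\mathcal{J}$, and then pass to cohomology. Your additional remarks on admissibility of $\mathcal{J}^c(\pi)$ and on the Tate twist affecting only the $W_F$-action are accurate and make explicit what the paper leaves implicit.
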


\begin{proof}Evaluate both sides of the duality isomorphism $\mathcal{S}_{D^\times} \circ \mathcal{J}) \cong (\mathcal{J} \circ \mathcal{S}_G)[2n-2](n-1)$ on $\pi$. Since $\mathcal{J}(\pi)=\mathcal{J}^{c}(\pi)[-c]$ by assumption, the left-hand side evaluates to $\mathcal{S}_{D^\times}(\mathcal{J}^{c}(\pi)[-c]) \cong \mathcal{S}_{D^\times}(\mathcal{J}^{c}(\pi))[c]$. Next, since $\mathcal{S}_G(\pi)=\check{\pi}[-\dim_G \pi]$ by assumption, the right-hand side simplifies to $\mathcal{J}(\check{\pi}[-\dim_G \pi])[2n-2](n-1) \cong \mathcal{J}(\check{\pi})[2n-2-\dim_G \pi](n-1)$. Equating both sides and rearranging the shifts, we get $\mathcal{S}_{D^\times}(\mathcal{J}^{c}(\pi)) = \mathcal{J}(\check{\pi})[2n-2-\dim_G \pi-c](n-1)$. Passing to cohomology on both sides (and ignoring the Tate twists), we get the result.
\end{proof}

\begin{rmk}\label{dualitygeneralcoefficients}We explain how to show a variant of our duality isomorphism in the setting of $\mathbf{Z}/p^n$ or $\mathbf{Z}_p$-coefficients. We explain the first case; the second follows from a limit argument. First some preliminary observations. Let $H$ be any $p$-adic Lie group. One easily checks that Proposition \ref{Detclassifying} extends to the setting of $\mathbf{Z}/p^n$-coefficients, yielding a symmetric monoidal equivalence $\D^{\mathrm{sm}}(H,\mathbf{Z}/p^n)\cong \D_{\et}(B\underline{H},\mathbf{Z}/p^n)$. On these categories we again have a duality functor $\mathcal{S}_H(-)$, given by internal hom towards the monoidal unit. We say $A \in \D^{\mathrm{sm}}(H,\mathbf{Z}/p^n)$ is admissible if $R\Gamma_{\sm}(H,A)$ is a perfect complex of $\mathbf{Z}/p^n$-modules for a basis of open subgroups $H$. Admissible complexes are automatically bounded, and are preserved by duality and $-\otimes_{\mathbf{Z}/p^n}^{\mathbf{L}} \F_p$, and the latter functor is conservative on admissible complexes. Conversely, if $A$ is given and $A\otimes_{\mathbf{Z}/p^n}^{\mathbf{L}} \F_p \in \D^{\mathrm{sm}}(H,\F_p)$ is admissible, then $A$ is necessarily admissible.

With this in mind, we \emph{define} $\mathcal{J}: \D^{\mathrm{sm}}(G,\mathbf{Z}/p^n) \to \D^{\mathrm{sm}}(D^\times,\mathbf{Z}/p^n)$ as $\mathcal{J} = g_{\et \ast}f^{\ast}_{\et}$. This is compatible with the $\F_p$ version of $\mathcal{J}$ via $-\otimes_{\mathbf{Z}/p^n}^{\mathbf{L}} \F_p$. Using this observation and the final statement in the previous paragraph together with the finiteness properties of $\mathcal{J}$ with $\F_p$-coefficients, it is easy to see that $\mathcal{J}$ preserves admissibility in this setting. Now, the key observation is that there is a natural map $\mathcal{J}(A) \otimes \mathcal{J}(\mathcal{S}_G(A)) \to \mathbf{Z}/p^n[2-2n](1-n)$, defined as the composition
\begin{align*}
    \mathcal{J}(A) \otimes \mathcal{J}(\mathcal{S}_G(A)) & \to  \mathcal{J}(A \otimes \mathcal{S}_G(A)) \\
      &\to \mathcal{J}(\mathbf{Z}/p^n) \\
      &\to \tau^{\geq 2n-2} \mathcal{J}(\mathbf{Z}/p^n) \\
      &\cong \mathbf{Z}/p^n[2-2n](1-n).
\end{align*}
Here the first arrow is obtained from the lax monoidal structure of $\mathcal{J}$, the second arrow is induced by the evaluation map $A \otimes \mathcal{S}_G(A) \to \mathbf{Z}/p^n$, and the final isomorphism is induced by the usual computation of the \'etale cohomology of $\mathbf{P}^{n-1}$. By adjunction (and rearranging the shifts and twists), this map induces a map $\mathcal{J}(\mathcal{S}_G(A))[2n-2](n-1) \to \mathcal{S}_{D^\times}(\mathcal{J}(A))$. When $A$ is admissible, the source and target of this map are also admissible, so to see it is an isomorphism, it suffices to show this after applying $-\otimes_{\mathbf{Z}/p^n}^{\mathbf{L}} \F_p$. But all operations here are compatible with reduction mod $p$, so we now conclude by invoking the duality isomorphism with $\F_p$-coefficients.
\end{rmk}

\subsection{Applications to Gelfand-Kirillov dimension}

In this section we prove Theorems \ref{diminequality} and \ref{dimlowerbound}, along with some related results. We will freely use the basic properties of dimension, in particular the results of Propositions \ref{dimbasics}-\ref{dimzero} and the surrounding discussion.

\begin{proof}[Proof of Theorem \ref{diminequality}]
Let $d=\dim_{G}(\pi)$, so $\mathcal{S}_{G}(\pi)\in \D^{\mathrm{sm}}(G)^{[0,d]}$.
Since $\mathcal{J}(-)$ carries $\D^{\mathrm{sm}}(G)^{[a,b]}$ into
$\D^{\mathrm{sm}}(D^\times)^{[a,b+2n-2]}$, we deduce that 
\[
(\mathcal{J}\circ\mathcal{S}_{G})(\pi)[2n-2]\in \D^{\mathrm{sm}}(D^\times)^{ \leq d}.
\]
By the duality isomorphism $(\mathcal{S}_{D^\times}\circ\mathcal{J})(\pi)\cong(\mathcal{J}\circ\mathcal{S}_{G})(\pi)[2n-2]$,
we get that $M=(\mathcal{S}_{D^\times}\circ\mathcal{J})(\pi)\in \D^{\mathrm{sm}}(D^\times)^{ \leq d}$.
Now look at the fourth quadrant spectral sequence
\[
E_{2}^{i,j}=\mathcal{S}_{D^\times}^{i}(\mathcal{J}^{-j}(\pi))\Rightarrow H^{i+j}(M).
\]
Let $c=\max_{i}\dim_{D^\times}\mathcal{J}^{i}(\pi)$, and let $i_{0}$ be
the least $i$ realizing this maximum. Then $E_{2}^{c,-i_{0}}$ is
$c$-dimensional as an $D^\times$-representation, and as we turn the pages
of the spectral sequence it only interacts with representations of
dimension $<c$ via the incoming differentials. Therefore a nonzero
subquotient survives to the $E_{\infty}$-page, giving that $H^{c-i_{0}}(M)\neq0$.
But $M\in \D^{\mathrm{sm}}(D^\times)^{ \leq d}$, so this implies that $c-i_{0}\leq d$,
and therefore $c\leq d+i_{0}\leq d+N_{\pi}$, as desired.
\end{proof}

We conjecture the following strengthening of Theorem \ref{diminequality}.

\begin{conjecture}\label{diminequalityoptimal} For any $\pi\in\mathrm{Rep}_{\F_p}^{\mathrm{adm}}(G)$, we have $\dim_{D^\times} \mathcal{J}^i(\pi) \leq \dim_{G} \pi$ for all $i\geq 0$.
\end{conjecture}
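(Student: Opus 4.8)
\begin{rmk}
Since \cref{diminequality} already gives $\dim_{D^\times}\mathcal J^i(\pi)\le\dim_G\pi+N_\pi$, the content of \cref{diminequalityoptimal} is the removal of the defect $N_\pi$. In the proof of \cref{diminequality} this defect enters as follows: writing $M:=(\mathcal S_{D^\times}\circ\mathcal J)(\pi)$ and $c:=\max_i\dim_{D^\times}\mathcal J^i(\pi)$, realized in least cohomological degree $j_0$, the spectral sequence $E_2^{i,j}=\mathcal S_{D^\times}^i(\mathcal J^{-j}(\pi))\Rightarrow H^{i+j}(M)$ has no differentials into or out of the dimension-$c$ summand $E_2^{c,-j_0}$ — the targets are capped by $\dim\mathcal S_H^k(-)\le k$ and the sources vanish by maximality of $c$ — so a dimension-$c$ subquotient survives to $H^{c-j_0}(M)$; as $M\in\D^{\le\dim_G\pi}$ this yields only $c\le\dim_G\pi+j_0\le\dim_G\pi+N_\pi$. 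So the plan would be to gain control of $\dim_{D^\times}H^k(M)$ for \emph{all} $k$, not just the top one.

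The natural way to do this is to prove, by induction on $d=\dim_G\pi$, the stronger statement that $\mathcal J$ never raises dimension, i.e.\ $\dim_{D^\times}\mathcal J^i(W)\le\dim_G W$ for every admissible $W$. The base case $d=0$ is essentially Scholze's finiteness theorem: then $\mathcal F_W$ has finite-rank stalks and $\mathcal J^i(W)=H^i(\mathbf P^{n-1}_C,\mathcal F_W)$ is finite-dimensional. For the step one chains the spectral sequence above with the hypercohomology spectral sequence $E_2^{a,b}=\mathcal J^a(\mathcal S_G^b(\pi))\Rightarrow H^{a+b}((\mathcal J\circ\mathcal S_G)(\pi))$ and the duality isomorphism $M\cong(\mathcal J\circ\mathcal S_G)(\pi)[2n-2](n-1)$ of \cref{dualitymaintheorem}, to obtain $\max_i\dim_{D^\times}\mathcal J^i(\pi)\le\max_k\dim_{D^\times}H^k(M)\le\max_{a,b}\dim_{D^\times}\mathcal J^a(\mathcal S_G^b(\pi))$, using along the way that the dimension of a subquotient is $\le$ the dimension of the ambient representation and that the dimension of an extension is the max of the two pieces. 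Now $\mathcal S_G^b(\pi)=0$ for $b>d$ and $\dim_G\mathcal S_G^b(\pi)\le b\le d-1$ for $b<d$ by \cref{dimbasics}, so by the inductive hypothesis all the $b<d$ terms contribute dimension $\le d-1$; \cref{diminequalityoptimal} for $\pi$ then follows once one bounds the single remaining term $\dim_{D^\times}\mathcal J^a(\check\pi)\le d$, where $\check\pi:=\mathcal S_G^d(\pi)$.

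This last bound is the real obstacle: $\dim_G\check\pi=d$ by \cref{dimbasics}, so the induction on dimension gives nothing for $\check\pi$ and \cref{diminequality} only yields $\dim_G\check\pi+N_{\check\pi}$; moreover the top dual of $\check\pi$ again has dimension $d$ (equal to $\pi$ itself when $\pi$ is Cohen--Macaulay), so one cannot bootstrap further inside dimension $d$. Breaking this seems to require genuinely new input, and I see two candidate routes. One is geometric: attach to $\pi$ a ``support'' or characteristic cycle on $\mathbf P^{n-1}_C$ of dimension $\dim_G\pi$ and show that the proper smooth pushforward $g_*$ cannot enlarge it — a $p$-adic analogue of the estimate $\dim H^\bullet\le\dim\mathrm{supp}$ for constructible sheaves; the difficulty is the lack of a workable singular-support formalism for the solid almost $\ri^+/p$-sheaves $\mathcal F_\pi$, which are far from constructible. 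The other is representation-theoretic: for $\pi$ \emph{not} supersingular, use the partial K\"unneth theorem of the introduction to replace $\mathcal J_n(\pi)$ by $\mathcal J_d$ of a representation of a smaller $\mathrm{GL}$-factor tensored with the cohomology of the auxiliary diamond $W_{n,d}$, and induct on $n$, bounding the $W_{n,d}$-contribution by its dimension; the residual case, which I expect to be the hardest, is $\pi$ supersingular, where no such reduction is available and where a new geometric ingredient — say from the geometry of $\mathrm{Bun}_G$, or a finer grip on Kohlhaase duality within supersingular blocks — appears to be unavoidable.
\end{rmk}
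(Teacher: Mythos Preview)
The statement is a \emph{Conjecture} in the paper, not a theorem: the paper offers no proof, only the partial result (the proposition immediately following) that \cref{diminequalityoptimal} holds for $\pi$ whenever $\mathcal J^i(\pi)$ and $\mathcal J^i(\mathcal S_G^j(\pi))$ vanish for all $i>n-1$. So there is no ``paper's own proof'' to compare against.

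Your write-up is consistent with this status: you do not claim a proof but correctly isolate where the spectral-sequence argument from \cref{diminequality} loses the defect $N_\pi$, and you explain why an induction on $\dim_G\pi$ stalls at the top dual $\check\pi=\mathcal S_G^d(\pi)$, which has the same dimension as $\pi$. That diagnosis is accurate and matches the paper's own view of the problem as open. Your second proposed route (reduce via the partial K\"unneth formula to smaller $\mathrm{GL}$-factors, leaving the supersingular case as the hard residual) is in the same spirit as the paper's partial result: the vanishing hypothesis $\mathcal J^i(\pi)=\mathcal J^i(\mathcal S_G^j(\pi))=0$ for $i>n-1$ is exactly what the Johansson--Ludwig vanishing theorem supplies for parabolically induced $\pi$, and the paper notes this explicitly. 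One small caution on your base case $d=0$: the claim that $\mathcal J^i(W)$ is finite-dimensional when $\dim_{\F_p}W<\infty$ is not quite ``Scholze's finiteness theorem'' (which only gives admissibility), though it does follow by reducing to the case of characters, where $\mathcal F_W$ is a rank-one local system and the cohomology is computed explicitly.
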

In many cases, this can be proved. In particular, we have the following result, which can be proved by an easy adaptation of the proof of Theorem \ref{diminequality}.

\begin{prop}Suppose $\pi\in\mathrm{Rep}_{\F_p}^{\mathrm{adm}}(G)$ is such that for all $i>n-1$ and all $j\geq 0$, $\mathcal{J}^i(\pi)=0$ and $\mathcal{J}^i(\mathcal{S}_{G}^j(\pi))=0$. Then Conjecture \ref{diminequalityoptimal} is true for $\pi$.
\end{prop}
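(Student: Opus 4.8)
The plan is to transcribe the proof of Theorem~\ref{diminequality}, improving only the single numerical estimate responsible for the $N_\pi$ term. First I would set $d=\dim_G\pi$, so that $\mathcal{S}_G(\pi)\in\D^{\mathrm{sm}}(G)^{[0,d]}$ by Proposition~\ref{dimbasics}(i), and present $\mathcal{S}_G(\pi)$, via its cohomology filtration, as an iterated extension of the objects $\mathcal{S}_G^j(\pi)[-j]$ with $0\le j\le d$. Applying the triangulated functor $\mathcal{J}$ and using the hypothesis $\mathcal{J}^i(\mathcal{S}_G^j(\pi))=0$ for $i>n-1$, each $\mathcal{J}(\mathcal{S}_G^j(\pi))$ lies in $\D^{\mathrm{sm}}(D^\times)^{\le n-1}$, so $\mathcal{J}(\mathcal{S}_G^j(\pi))[-j]\in\D^{\mathrm{sm}}(D^\times)^{\le d+n-1}$ for every $j\le d$, and therefore $(\mathcal{J}\circ\mathcal{S}_G)(\pi)\in\D^{\mathrm{sm}}(D^\times)^{\le d+n-1}$. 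By the duality isomorphism of Theorem~\ref{mainthm}(iii) we have $M:=(\mathcal{S}_{D^\times}\circ\mathcal{J})(\pi)\cong(\mathcal{J}\circ\mathcal{S}_G)(\pi)[2n-2](n-1)$, hence $M\in\D^{\mathrm{sm}}(D^\times)^{\le d-n+1}$. This is the only point where the two vanishing hypotheses are used: the shift by $2n-2$ coming from duality now beats the upper amplitude $d+n-1$ instead of the generic $d+2n-2$.

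From here the argument is identical to the proof of Theorem~\ref{diminequality}. We may assume $\mathcal{J}(\pi)\ne0$; set $c=\max_i\dim_{D^\times}\mathcal{J}^i(\pi)$, and let $i_0$ be the least index achieving this maximum, which satisfies $i_0\le n-1$ because $\mathcal{J}^i(\pi)=0$ for $i>n-1$. Consider the fourth-quadrant spectral sequence $E_2^{i,j}=\mathcal{S}_{D^\times}^i(\mathcal{J}^{-j}(\pi))\Rightarrow H^{i+j}(M)$. The entry $E_2^{c,-i_0}$ is $c$-dimensional over $D^\times$ by Proposition~\ref{dimbasics}(i); its outgoing differentials vanish, since their targets $\mathcal{S}_{D^\times}^{c+r}(\mathcal{J}^{i_0+r-1}(\pi))$ are zero for $r\ge2$ by the maximality of $c$ and Proposition~\ref{dimbasics}(i); and the image of each incoming differential is a quotient of $\mathcal{S}_{D^\times}^{c-r}(\mathcal{J}^{i_0-r+1}(\pi))$, hence of dimension $<c$, again by Proposition~\ref{dimbasics}(i). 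So a $c$-dimensional subquotient survives to the $E_\infty$-page, forcing $H^{c-i_0}(M)\ne0$. Since $M\in\D^{\mathrm{sm}}(D^\times)^{\le d-n+1}$, this yields $c-i_0\le d-n+1$, and hence $c\le d-n+1+i_0\le d$. Thus $\dim_{D^\times}\mathcal{J}^i(\pi)\le c\le\dim_G\pi$ for all $i$, which is Conjecture~\ref{diminequalityoptimal} for $\pi$.

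The one genuinely new ingredient is the amplitude computation of the first paragraph — converting the two numerical hypotheses into the bound $M\in\D^{\mathrm{sm}}(D^\times)^{\le d-n+1}$ through the cohomology filtration of $\mathcal{S}_G(\pi)$ and the already-established duality isomorphism — but this is routine and I anticipate no obstacle. The most delicate part is the spectral-sequence bookkeeping in the second paragraph, yet it is a verbatim repetition of the one already carried out for Theorem~\ref{diminequality}, so there is nothing further to overcome.
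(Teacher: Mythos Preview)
Your proposal is correct and is precisely the ``easy adaptation of the proof of Theorem~\ref{diminequality}'' that the paper alludes to without writing out. The only change from the original argument is exactly the one you identify: using the hypothesis $\mathcal{J}^i(\mathcal{S}_G^j(\pi))=0$ for $i>n-1$ (via the cohomology filtration of $\mathcal{S}_G(\pi)$) to sharpen the amplitude bound on $M$ from $\D^{\sm}(D^\times)^{\le d}$ to $\D^{\sm}(D^\times)^{\le d-n+1}$, and then using the hypothesis $\mathcal{J}^i(\pi)=0$ for $i>n-1$ to bound $i_0\le n-1$.
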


The hypotheses of this proposition are satisfied for any representation of the form $\pi=\mathrm{Ind}_{P}^G \sigma$, where $P$ is any parabolic subgroup contained in the standard maximal parabolic $P_{n-1,1}$ and $\sigma$ is any admissible smooth representation of the Levi quotient $P\twoheadrightarrow M$. To see this, note that the main theorem of \cite{johansson-ludwig} shows that $\mathcal{J}^i \circ \mathrm{Ind}_{P}^G = 0$ for all $i>n-1$. It's then enough to observe that for all $j$ we have $\mathcal{S}_G^{j}( \mathrm{Ind}_{P}^G \sigma) = \mathrm{Ind}_{P}^G \delta_P \mathcal{S}_{M}^{j-d}(\sigma)$ for a certain integer $d$ and smooth character $\delta_P$ (use \cite[Theorem 4.7 and Corollary 5.3]{kohlhaase}), so the main theorem of \cite{johansson-ludwig} again applies.

We also observe that Conjecture \ref{diminequalityoptimal} implies that $\mathcal{J}^{2n-2}$ is very small, and can be explicitly computed.
\begin{prop}\label{Jtopdegree}Suppose that Conjecture \ref{diminequalityoptimal} is true. Then for any $\pi\in\mathrm{Rep}_{\F_p}^{\mathrm{adm}}(G)$, we have $\dim_{\F_p} \mathcal{J}^{2n-2}(\pi) < \infty$, and in fact there is an isomorphism \[ \mathcal{J}^{2n-2}(\pi) = \mathcal{S}_{D^\times}^{0}(\mathcal{J}^0(\mathcal{S}_{G}^0(\pi))).\]
\end{prop}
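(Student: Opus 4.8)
The plan is to extract the claimed formula from Conjecture \ref{diminequalityoptimal} by a boundary-degree argument, analogous to (but simpler than) the proof of Theorem \ref{diminequality}. Apply the duality isomorphism of Theorem \ref{mainthm}(iii) (or equivalently \cref{dualitymaintheorem}) to $\pi$, and consider again the fourth-quadrant spectral sequence
\[ E_2^{i,j} = \mathcal{S}_{D^\times}^i(\mathcal{J}^{-j}(\pi)) \Rightarrow H^{i+j}\big((\mathcal{S}_{D^\times}\circ\mathcal{J})(\pi)\big) = H^{i+j}\big((\mathcal{J}\circ\mathcal{S}_G)(\pi)[2n-2](n-1)\big). \]
Under Conjecture \ref{diminequalityoptimal} we have $\dim_{D^\times}\mathcal{J}^k(\pi)\le\dim_G\pi$ for all $k$, hence by \cref{dimbasics}(i) the dual $\mathcal{S}_{D^\times}^i(\mathcal{J}^k(\pi))$ vanishes for $i>\dim_G\pi$. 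Combined with the vanishing $\mathcal{J}^k(\pi)=0$ for $k<0$ and $k>2n-2$, the $E_2$-page is supported in the strip $0\le -j\le 2n-2$, $0\le i\le \dim_G\pi$. So the abutment in total degree $(2n-2)+\dim_G\pi$ receives contributions only from $E_2^{\dim_G\pi,\,-(2n-2)} = \mathcal{S}_{D^\times}^{\dim_G\pi}(\mathcal{J}^{2n-2}(\pi))$; this is a corner term, so no differential enters or leaves it, and it survives to $E_\infty$. Therefore
\[ \mathcal{S}_{D^\times}^{\dim_G\pi}\!\big(\mathcal{J}^{2n-2}(\pi)\big) = H^{2n-2+\dim_G\pi}\big((\mathcal{S}_{D^\times}\circ\mathcal{J})(\pi)\big) = H^{\dim_G\pi}\big((\mathcal{J}\circ\mathcal{S}_G)(\pi)\big)(n-1). \]

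Next I would unwind the right-hand side. Since $\mathcal{S}_G(\pi)\in\D^{\mathrm{sm}}(G)^{[0,\dim_G\pi]}$ and $\mathcal{J}$ shifts cohomological amplitude by at most $[0,2n-2]$, the complex $(\mathcal{J}\circ\mathcal{S}_G)(\pi)$ lives in degrees $[0,\dim_G\pi+2n-2]$; for its cohomology in the precise degree $\dim_G\pi$ (which is $\le 2n-2$ is not needed — what matters is that only the bottom piece of $\mathcal{S}_G(\pi)$ can contribute in the highest relevant way). More carefully, run the hypercohomology spectral sequence for $\mathcal{J}$ applied to the complex $\mathcal{S}_G(\pi)$: $E_2^{a,b}=\mathcal{J}^a(\mathcal{S}_G^b(\pi))\Rightarrow H^{a+b}((\mathcal{J}\circ\mathcal{S}_G)(\pi))$. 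We want total degree $\dim_G\pi$; the term with $b=\dim_G\pi$, $a=0$ is $\mathcal{J}^0(\mathcal{S}_G^{\dim_G\pi}(\pi))$, and again this sits in a corner (no $b>\dim_G\pi$ since $\mathcal{S}_G^b(\pi)=0$ there, no $a<0$), so it survives, and any other contribution in total degree $\dim_G\pi$ has $b<\dim_G\pi$. Now invoke Conjecture \ref{diminequalityoptimal} once more, applied to $\mathcal{S}_G^b(\pi)$ which is admissible with $\dim_G\mathcal{S}_G^b(\pi)\le b$ by \cref{dimbasics}(i): for $b<\dim_G\pi$ we get $\dim_{D^\times}\mathcal{J}^a(\mathcal{S}_G^b(\pi))\le b<\dim_G\pi$, so applying $\mathcal{S}_{D^\times}^{\dim_G\pi}(-)$ to such terms gives zero. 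Thus after applying $\mathcal{S}_{D^\times}^{\dim_G\pi}$ the only surviving contribution is from $\mathcal{J}^0(\mathcal{S}_G^{\dim_G\pi}(\pi)) = \mathcal{J}^0(\mathcal{S}_G^0(\mathcal{S}_G^{\dim_G\pi}(\pi)))$ — wait, this needs care: I actually want to first identify $H^{\dim_G\pi}((\mathcal{J}\circ\mathcal{S}_G)(\pi))$ with $\mathcal{J}^0(\mathcal{S}_G^{\dim_G\pi}(\pi))$ up to pieces killed by $\mathcal{S}_{D^\times}^{\dim_G\pi}$, and then observe $\mathcal{S}_{D^\times}^{\dim_G\pi}\circ\mathcal{S}_{D^\times}^{\dim_G\pi}$ on a dimension-$\le\dim_G\pi$ object recovers its "top" piece.

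Finally, I would clean up using the structure of $\mathcal{S}^0$. Combining the two displays: $\mathcal{S}_{D^\times}^{\dim_G\pi}(\mathcal{J}^{2n-2}(\pi))$ is, up to twist, the $\mathcal{S}_{D^\times}^{\dim_G\pi}$-dual of $\mathcal{J}^0(\mathcal{S}_G^{\dim_G\pi}(\pi))$ modulo lower-dimensional debris. By \cref{dimbasics}(i) the object $\mathcal{J}^{2n-2}(\pi)$ has $\dim_{D^\times}\le\dim_G\pi$; I claim in fact $\dim_{D^\times}\mathcal{J}^{2n-2}(\pi)=0$, i.e.\ it is finite-dimensional. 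This is because $\mathcal{S}_G^{\dim_G\pi}(\pi)$ has dimension exactly $\dim_G\pi$ and $\mathcal{S}_G^0$ of it is the "finite part"; applying $\mathcal{J}^0$ and then the argument symmetric to \cref{dimzero} forces the output to be finite-dimensional. Once $\dim_{\F_p}\mathcal{J}^{2n-2}(\pi)<\infty$, \cref{dimbasics}(iii) gives $\mathcal{S}_{D^\times}(\mathcal{J}^{2n-2}(\pi))=\mathcal{S}_{D^\times}^0(\mathcal{J}^{2n-2}(\pi))$ is the naive linear dual, and biduality \cref{dimbasics}(ii) lets me recover $\mathcal{J}^{2n-2}(\pi)$ by dualizing back: $\mathcal{J}^{2n-2}(\pi)=\mathcal{S}_{D^\times}^0(\mathcal{S}_{D^\times}^0(\mathcal{J}^{2n-2}(\pi)))$. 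Chasing the identification $\mathcal{S}_{D^\times}^0(\mathcal{J}^{2n-2}(\pi))\cong \mathcal{J}^0(\mathcal{S}_G^0(\pi))$ — which should fall out of the two spectral sequences above once one checks the relevant corner terms really are $\mathcal{S}^0$-type (finite-dimensional), so that higher $\mathcal{S}_{D^\times}^i$ and $\mathcal{J}^i$ of them vanish and the spectral sequences degenerate in the relevant corner — yields $\mathcal{J}^{2n-2}(\pi)=\mathcal{S}_{D^\times}^0(\mathcal{J}^0(\mathcal{S}_G^0(\pi)))$.

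\textbf{Main obstacle.} The delicate point is bookkeeping the two spectral sequences simultaneously to show that, after applying $\mathcal{S}_{D^\times}^{\dim_G\pi}$ (or equivalently $\mathcal{S}_{D^\times}^0$ on the finite-dimensional corner), \emph{all} terms other than the single corner $\mathcal{J}^0(\mathcal{S}_G^0(\pi))$-contribution die — this requires knowing that the offending $E_2$-terms are genuinely lower-dimensional, which is exactly where Conjecture \ref{diminequalityoptimal} is used a second (and essential) time, now for the representations $\mathcal{S}_G^b(\pi)$ rather than $\pi$ itself. A secondary subtlety is verifying that $\mathcal{J}^0(\mathcal{S}_G^0(\pi))$ is itself finite-dimensional (so that $\mathcal{S}_{D^\times}^0$ of it behaves as the naive dual and the abutment isomorphism is clean); this should follow from the explicit description of $\mathcal{J}^0$ — it is computed by the global sections over $B\underline{D^\times}$ of a sheaf supported "at level zero", so $\mathcal{J}^0(\sigma)$ for finite-dimensional $\sigma$ is again finite-dimensional — together with the fact that $\mathcal{S}_G^0(\pi)$ is finite-dimensional by \cref{dimzero} and \cref{dimbasics}(iii).
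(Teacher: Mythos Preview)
You have the right ingredients—the two spectral sequences coming from the duality isomorphism, and the idea of isolating a corner term—but you are looking at the wrong corner, and this is what makes the argument unravel. In the spectral sequence $E_2^{i,j}=\mathcal{S}_{D^\times}^i(\mathcal{J}^{-j}(\pi))\Rightarrow H^{i+j}((\mathcal{S}_{D^\times}\circ\mathcal{J})(\pi))$ the abutment degree is $i+j$, not $i-j$. The term $E_2^{\dim_G\pi,-(2n-2)}$ therefore contributes to degree $\dim_G\pi-(2n-2)$, where it is \emph{not} alone: every $(i,j)$ on the antidiagonal $i+j=\dim_G\pi-(2n-2)$ inside the support rectangle also lands there, so your displayed identity $\mathcal{S}_{D^\times}^{\dim_G\pi}(\mathcal{J}^{2n-2}(\pi))=H^{\dim_G\pi}((\mathcal{J}\circ\mathcal{S}_G)(\pi))$ is false in general. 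The corner that \emph{does} work is the lowest one: total degree $-(2n-2)$ receives a unique contribution $E_2^{0,-(2n-2)}=\mathcal{S}_{D^\times}^0(\mathcal{J}^{2n-2}(\pi))$, and in the second spectral sequence $\mathcal{J}^a(\mathcal{S}_G^b(\pi))\Rightarrow H^{a+b}((\mathcal{J}\circ\mathcal{S}_G)(\pi))$ degree $0$ receives only $\mathcal{J}^0(\mathcal{S}_G^0(\pi))$. Matching via the shift by $2n-2$ gives, \emph{unconditionally},
\[
\mathcal{S}_{D^\times}^0(\mathcal{J}^{2n-2}(\pi))\;\cong\;\mathcal{J}^0(\mathcal{S}_G^0(\pi)).
\]
Your final biduality step is then exactly right, once one knows $\dim_{D^\times}\mathcal{J}^{2n-2}(\pi)=0$.

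For that finite-dimensionality, your sketch does not go through either: the route via $\mathcal{S}_{D^\times}^{\dim_G\pi}$ rests on the same indexing error, and the sentence about ``$\mathcal{S}_G^0$ of it is the finite part'' is not an argument. The paper argues by contradiction. Suppose $c:=\dim_{D^\times}\mathcal{J}^{2n-2}(\pi)>0$, and set $M=(\mathcal{J}\circ\mathcal{S}_G)(\pi)$. In the first spectral sequence, $E_2^{c,-(2n-2)}=\mathcal{S}_{D^\times}^c(\mathcal{J}^{2n-2}(\pi))$ has dimension exactly $c$, and (by the same reasoning as in the proof of Theorem \ref{diminequality}) a $c$-dimensional quotient survives to $E_\infty$, so $\dim_{D^\times}H^c(M)\ge c$. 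But the second spectral sequence filters $H^c(M)$ by subquotients of $\mathcal{J}^i(\mathcal{S}_G^{c-i}(\pi))$; for $i>0$ Conjecture \ref{diminequalityoptimal} applied to $\mathcal{S}_G^{c-i}(\pi)$ (which has $\dim_G\le c-i$) gives $\dim_{D^\times}\le c-i<c$, and for $i=0$ the explicit computation of $\mathcal{J}^0$ gives $\dim_{D^\times}\mathcal{J}^0(\mathcal{S}_G^c(\pi))=0<c$. Hence $\dim_{D^\times}H^c(M)<c$, a contradiction. This is the only place the conjecture enters.
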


The final isomorphism here implies (assuming Conjecture \ref{diminequalityoptimal}) that $\mathcal{J}^{2n-2}(\pi)$ is nonzero \emph{exactly} when $\pi$ admits a $G$-stable quotient $\pi \twoheadrightarrow \tau$ such that $\dim_{\F_p} \tau < \infty$ and $\tau^{\mathrm{SL}_n (F)} \neq 0$. We also note that the following proof shows that $\mathcal{J}^{2n-2}(\pi) = \mathcal{S}_{D^\times}^{0}(\mathcal{J}^0(\mathcal{S}_{G}^0(\pi)))$ whenever $\dim_{D^\times} \mathcal{J}^{2n-2}(\pi) =0$, independently of Conjecture \ref{diminequalityoptimal}.

\begin{proof}Set $M=(\mathcal{J} \circ \mathcal{S}_G)(\pi)$. By the duality theorem, we get an $E_2$ spectral sequence \[ E_2^{i,j}=\mathcal{S}^{i}_{D^\times}(\mathcal{J}^{-j}(\pi)) \Rightarrow H^{i+j+2n-2}(M).\] Suppose that $\dim_{\F_p} \mathcal{J}^{2n-2}(\pi) = \infty$, i.e. that $c:= \dim_{D^\times} \mathcal{J}^{2n-2}(\pi) > 0$. Then $E_{2}^{c,-(2n-2)}$ is nonzero and $c$-dimensional, and a c-dimensional quotient of it survives to the $E_\infty$-page, showing that $\dim_{D^\times} H^c(M) \geq c$. On the other hand, we have a second spectral sequence $\mathcal{J}^i(\mathcal{S}_{G}^j(\pi))\Rightarrow H^{i+j}(M)$, showing that $H^c(M)$ has a finite filtration whose gradeds are subquotients of $\mathcal{J}^{i}(\mathcal{S}_{G}^{c-i}(\pi))$ for varying $i$. Since $\dim_G \mathcal{S}_{G}^{c-i}(\pi) \leq c-i$, Conjecture \ref{diminequalityoptimal} then implies that $\dim_{D^\times} \mathcal{J}^{i}(\mathcal{S}_{G}^{c-i}(\pi)) \leq c-i < c$ for $i>0$. But when $i=0$, $\dim_{D^\times} \mathcal{J}^{0}(\mathcal{S}_{G}^{c}(\pi))=0$ by the explicit calculation of $\mathcal{J}^0$. Therefore $H^c(M)$ has a finite filtration whose gradeds have dimension $<c$, so $\dim_{D^\times} H^c(M) < c$. This is a contradiction.

For the final statement, note that the two spectral sequences show (unconditionally) that \[\mathcal{S}_{D^\times}^0(\mathcal{J}^{2n-2}(\pi)) = \mathcal{J}^0(\mathcal{S}_G^0 (\pi))\] for all $\pi\in\mathrm{Rep}_{\F_p}^{\mathrm{adm}}(G)$. If $\mathcal{J}^{2n-2}(\pi)$ is zero-dimensional, then $\mathcal{S}_{D^\times}^0(\mathcal{J}^{2n-2}(\pi)) = \mathcal{S}_{D^\times}(\mathcal{J}^{2n-2}(\pi))$ is the full derived dual of $\mathcal{J}^{2n-2}(\pi)$, so the result follows from biduality.
\end{proof}

\begin{proof}[Proof of Theorem \ref{dimlowerbound}]
Let $\check{\pi}$ be the unique nonzero derived dual of $\pi$. The duality isomorphism then simplifies to an isomorphism \[ \mathcal{S}_{D^\times}(\mathcal{J}(\pi))\cong \mathcal{J}(\check{\pi})[2n-2-\dim_G \pi]. \]
Since $\mathcal{J}(\check{\pi})$ is concentrated in degrees $\geq 0$, the right-hand side, and then also the left-hand side, is concentrated in degrees $\geq \dim_G \pi -(2n-2)$. 

If $\mathcal{J}(\pi)$ is zero, there is nothing to prove. Otherwise, let $c=\max_{i}\dim_{D^\times}\mathcal{J}^{i}(\pi)$, and let $i_{0}$ be any $i$ realizing this maximum. Arguing as in the proof of Theorem \ref{diminequality}, we see that the left-hand side has nonzero cohomology in degree $c-i_0$. Therefore $c\geq \dim_G \pi -(2n-2) + i_0 \geq \dim_G \pi -(2n-2)$, giving the desired result.
\end{proof}

\subsection{The case of $\mathrm{GL}_2$}

Now we specialize to the case $n=2$, with $F/\mathbf{Q}_{p}$
still arbitrary. Set $d=[F:\mathbf{Q}_{p}]$. Let $B\subset G=\mathrm{GL}_{2}(F)$
be the upper-triangular Borel subgroup, with $T=F^{\times}\times F^{\times}\subset B$
the diagonal maximal torus. Let $\omega:F^{\times}\to\mathbf{F}_{p}^{\times}$
be the character defined as the composition 
\[
F^{\times}\overset{\mathrm{Nm}_{F/\mathbf{Q}_{p}}}{\longrightarrow}\mathbf{Q}_{p}^{\times}\overset{x\mapsto x|x|}{\longrightarrow}\mathbf{Z}_{p}^{\times}\overset{\mathrm{red}}{\to}\mathbf{F}_{p}^{\times}.
\]
If $\chi_{1}\otimes\chi_{2}:T\to\mathbf{F}_{p}^{\times}$
is any smooth character, we can consider the smooth induction $\mathrm{Ind}_{B}^{G}(\chi_{1}\otimes\chi_{2})$.
This is irreducible iff $\chi_{1}\neq\chi_{2}$. Moreover, $\mathcal{S}_{G}^{i}(\mathrm{Ind}_{B}^{G}(\chi_{1}\otimes\chi_{2}))=0$
for all $i\neq d$, and 
\[
\mathcal{S}_{G}^{d}(\mathrm{Ind}_{B}^{G}(\chi_{1}\otimes\chi_{2}))\cong\mathrm{Ind}_{B}^{G}(\omega\chi_{1}^{-1}\otimes\omega^{-1}\chi_{2}^{-1}).
\]
In particular, if we set $\pi(\chi_{1},\chi_{2})=\mathrm{Ind}_{B}^{G}(\omega\chi_{1}\otimes\chi_{2})$,
then 
\[
\mathcal{S}_{G}^{d}(\pi(\chi_{1},\chi_{2}))\otimes(\chi_{1}\chi_{2}\omega\circ\det)\cong\pi(\chi_{2},\chi_{1}),
\]
and moreover $\pi(\chi_{1},\chi_{2})$ and its dual $\mathcal{S}_{G}^{d}(\pi(\chi_{1},\chi_{2}))$
are both irreducible iff $\chi_{1}\chi_{2}^{-1}\neq\omega^{\pm1}$.

Assume then that $\chi_1,\chi_2$ is a pair of characters such that $\chi_1/\chi_2 \neq \omega^{\pm 1}$. As noted in the introduction, we then have $\mathcal{J}^i(\pi(\chi_1,\chi_2))=0$ for all $i\neq 1$, and likewise with $\chi_1$ and $\chi_2$ swapped. With this vanishing in mind, we set $\tau(\chi_1,\chi_2)=\mathcal{J}^1(\pi(\chi_1,\chi_2))$.

\begin{thm}Notation and assumptions as above, exactly one of the following is true.

i. $\tau(\chi_1,\chi_2)= \tau(\chi_2,\chi_1)=0.$

ii. Both $\tau(\chi_1,\chi_2)$ and $\tau(\chi_2,\chi_1)$ are nonzero, and are exchanged by the functor $\mathcal{S}_{D^\times}^d(-) \otimes (\chi_1 \chi_2 \omega \circ \mathrm{Nm})$. Moreover, $\mathcal{S}_{D^\times}^i(-)$ applied to either representation vanishes for all $i\neq d$.
\end{thm}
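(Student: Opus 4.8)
The plan is to deduce this dichotomy from the duality theorem, Theorem~\ref{mainthm}~(iii), applied to $\pi = \pi(\chi_1,\chi_2)$, exploiting that both $\pi(\chi_1,\chi_2)$ and $\pi(\chi_2,\chi_1)$ are Cohen-Macaulay with a single nonzero derived dual in degree $d$. First I would record that $\mathcal{S}_G(\pi(\chi_1,\chi_2)) \cong \pi(\chi_2,\chi_1)[-d] \otimes (\chi_1\chi_2\omega\circ\det)^{-1}$, which is the statement recalled just above the theorem (rewriting $\mathcal{S}_G^d(\pi(\chi_1,\chi_2)) \otimes (\chi_1\chi_2\omega\circ\det) \cong \pi(\chi_2,\chi_1)$ and using that $\mathcal{S}_G^i$ vanishes for $i \neq d$). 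Since $\mathcal{J}$ commutes with twisting by a character of $F^\times$ composed with $\det$ (up to replacing it by the corresponding character composed with $\mathrm{Nm}$ on the $D^\times$ side — this is a formal equivariance property of the fundamental diagram, as $f$ and $g$ are $G \times D^\times$-equivariant), the duality isomorphism applied to $\pi(\chi_1,\chi_2)$ reads
\begin{align*}
\mathcal{S}_{D^\times}(\mathcal{J}(\pi(\chi_1,\chi_2))) \cong \mathcal{J}(\pi(\chi_2,\chi_1))[2n-2-d](n-1) \otimes (\chi_1\chi_2\omega\circ\mathrm{Nm})^{-1}.
\end{align*}

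Next I would feed in the vanishing inputs: by irreducibility $\mathcal{J}^0 = 0$, and by \cite{johansson-ludwig} $\mathcal{J}^2 = 0$, so $\mathcal{J}(\pi(\chi_1,\chi_2)) = \tau(\chi_1,\chi_2)[-1]$ and likewise $\mathcal{J}(\pi(\chi_2,\chi_1)) = \tau(\chi_2,\chi_1)[-1]$ (with $n=2$, so $2n-2 = 2$). Substituting, and using $\mathcal{S}_{D^\times}(V[-1]) \cong \mathcal{S}_{D^\times}(V)[1]$, the displayed isomorphism becomes
\begin{align*}
\mathcal{S}_{D^\times}(\tau(\chi_1,\chi_2))[1] \cong \tau(\chi_2,\chi_1)[2-d-1](1) \otimes (\chi_1\chi_2\omega\circ\mathrm{Nm})^{-1},
\end{align*}
i.e. $\mathcal{S}_{D^\times}(\tau(\chi_1,\chi_2)) \cong \tau(\chi_2,\chi_1)[-d](1) \otimes (\chi_1\chi_2\omega\circ\mathrm{Nm})^{-1}$ after shifting. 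Now I would run the argument on both ends. If $\tau(\chi_1,\chi_2) = 0$, then the left side vanishes, hence $\tau(\chi_2,\chi_1) = 0$; by symmetry the converse also holds, giving case i. Otherwise both are nonzero. To extract the precise form of case ii, observe that $\tau(\chi_1,\chi_2) = \mathcal{J}^1(\pi(\chi_1,\chi_2))$ is admissible (Theorem~\ref{mainthm}~(i)) and concentrated in a single cohomological degree, so $\mathcal{S}_{D^\times}(\tau(\chi_1,\chi_2)) = \bigdsum_i \mathcal{S}_{D^\times}^i(\tau(\chi_1,\chi_2))[-i]$; comparing with $\tau(\chi_2,\chi_1)[-d](1)\otimes(\ldots)$ forces $\mathcal{S}_{D^\times}^i(\tau(\chi_1,\chi_2)) = 0$ for $i \neq d$ and $\mathcal{S}_{D^\times}^d(\tau(\chi_1,\chi_2)) \otimes (\chi_1\chi_2\omega\circ\mathrm{Nm}) \cong \tau(\chi_2,\chi_1)$ (Tate twist being trivial on the underlying $D^\times$-representation), which is exactly the asserted exchange. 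Finally, since $\pi(\chi_1,\chi_2)$ is irreducible with $\dim_G = \dim G = 4d$... — actually I only need that $\dim_{D^\times}\tau(\chi_1,\chi_2) = d$, which follows from $\mathcal{S}_{D^\times}^i(\tau(\chi_1,\chi_2))$ being nonzero exactly for $i = d$ together with Proposition~\ref{dimbasics}~(i) (the top nonvanishing degree of $\mathcal{S}_H^i$ equals the dimension).

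The step I expect to be the main obstacle is the bookkeeping of the character twists and the verification that $\mathcal{J}$ intertwines $-\otimes(\eta\circ\det)$ on the $G$-side with $-\otimes(\eta\circ\mathrm{Nm})$ on the $D^\times$-side — this requires knowing that the fundamental diagram is equivariant for the relevant $F^\times$-action (via the center of $G$ and the reduced norm to $D^\times$) and that this action is compatible with the period maps; it is surely true and essentially in Scholze's original setup, but it is the one genuinely geometric ingredient beyond the formal manipulation of the duality isomorphism. A secondary subtlety is confirming that $\mathcal{S}_G^i(\pi(\chi_1,\chi_2))$ really vanishes away from $i = d$ and that the Cohen-Macaulay property holds — but this is exactly what is recalled in the paragraph preceding the theorem statement (citing \cite{kohlhaase}), so I would simply invoke it. Everything else is the routine extraction of cohomology from an isomorphism of complexes each concentrated in one degree, as in the proof of Proposition~\ref{dualitysimple}, which in fact this statement is essentially a special case of.
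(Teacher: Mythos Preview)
Your proposal is correct and follows essentially the same argument as the paper's proof: evaluate the duality isomorphism on $\pi(\chi_1,\chi_2)$, use $\mathcal{S}_G(\pi(\chi_1,\chi_2))\cong\pi(\chi_2,\chi_1)\otimes(\chi_1\chi_2\omega\circ\det)^{-1}[-d]$ and $\mathcal{J}(\pi(\chi_i,\chi_j))=\tau(\chi_i,\chi_j)[-1]$, and arrive at $\mathcal{S}_{D^\times}(\tau(\chi_1,\chi_2))\cong\tau(\chi_2,\chi_1)\otimes(\chi_1\chi_2\omega\circ\mathrm{Nm})^{-1}[-d]$, from which the dichotomy is read off. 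The paper compresses your final paragraph into the phrase ``which immediately gives the desired result'', but the content is identical; the character-twist compatibility you flag as the main obstacle is also used tacitly in the paper without further comment.
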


\begin{proof}

Evaluate the duality isomorphism $(\mathcal{S}_{D^\times}\circ\mathcal{J})\cong(\mathcal{J}\circ\mathcal{S}_{G})[2]$
 on the representation $\pi(\chi_1,\chi_2)$. Since $\mathcal{J}(\pi(\chi_1,\chi_2)) = \tau(\chi_1,\chi_2)[-1]$, the left-hand side simplifies to $\mathcal{S}_{D^\times}(\tau(\chi_1,\chi_2))[1]$ by the results recalled above. On the other hand, $\mathcal{S}_{G}(\pi(\chi_{1},\chi_{2})) \cong\pi(\chi_{2},\chi_{1}) \otimes (\chi_{1}\chi_{2}\omega\circ\det)^{-1}[-d]$, so the right-hand side simplifies to $\mathcal{J}(\pi(\chi_{2},\chi_{1}))\otimes(\chi_{1}\chi_{2}\omega\circ\mathrm{Nm})^{-1}[2-d] \cong \tau(\chi_{2},\chi_{1})\otimes(\chi_{1}\chi_{2}\omega\circ\mathrm{Nm})^{-1}[1-d]$. Thus, canceling the shifts, we get an isomorphism 
\[\mathcal{S}_{D^\times}(\tau(\chi_1,\chi_2)) \cong \tau(\chi_{2},\chi_{1})\otimes(\chi_{1}\chi_{2}\omega\circ\mathrm{Nm})^{-1}[-d],
\]which immediately gives the desired result.
\end{proof}

\subsubsection{A non-generic case}\label{nongeneric}
In this section we sketch some consequences of our duality theorem in the setting of \emph{reducible} principal series representations of $\mathrm{GL}_2(\mathbf{Q}_p)$. Maintain the previous notation, but with $F=\mathbf{Q}_p$. The principal series representation $\pi(\omega^{-1},1)$ is reducible of length two, and sits in a short exact sequence \[0 \to \mathbf{1} \to \pi(\omega^{-1},1) \to \mathrm{St} \to 0\;\;\;\;\; (\dagger)\]
where $\mathrm{St}$ is the mod $p$ Steinberg representation of $\mathrm{GL}_2(\mathbf{Q}_p)$. To ease notation, set $\kappa =\mathcal{S}^1_{G}(\mathrm{St})$. Note that $\mathcal{S}_{G}(\mathbf{1})=\mathbf{1}[0]$ and $\mathcal{S}_{G}(\pi(\omega^{-1},1))=\pi(1,\omega^{-1})[-1]$, where $\pi(1,\omega^{-1})$ is an irreducible principal series.  The associated long exact sequence for $\mathcal{S}^{i}_{G}(-)$ is now easy to analyze: the embedding $\mathcal{S}^0_{G}(\mathrm{St})\hookrightarrow \mathcal{S}^0_{G}(\pi(\omega^{-1},1))=0$ implies that the long exact sequence degenerates to a short exact sequence
\[ 0 \to \mathbf{1} \to \kappa \to \pi(1,\omega^{-1}) \to 0\;\;\;\;(\ddagger)\]
and that $\mathcal{S}^i_{G}(\mathrm{St})$ for all $i\neq 1$. Biduality then implies that the previous sequence is non-split, since otherwise the nonvanishing of $\mathcal{S}^0_{G}(-)$ applied to the first term would contradict the vanishing of $\mathcal{S}^0_{G}(\kappa)$. Note that $\kappa$ is the unique non-split extension of $\pi(1,\omega^{-1})$ by $\mathbf{1}$.

Using the main result in \cite{ludwig-quotient} together with the explicit computation of $\mathcal{J}^0$, the following vanishing results are clear:
\begin{itemize}

\item $\mathcal{J}(\mathbf{1})=R\Gamma(\mathbf{P}^1_{C},\F_p)=\mathbf{1} \oplus \mathbf{1}[-2]$

    \item $\mathcal{J}^i(\pi(1,\omega^{-1}))=0$ for all $i\neq 1$.

\item $\mathcal{J}^i(\pi(\omega^{-1},1))=0$ for all $i\neq 0,1$, and $\mathcal{J}^0(\pi(\omega^{-1},1))=\mathbf{1}$.

\item $\mathcal{J}^i(\mathrm{St})=0$ for all $i \neq 1$.

\end{itemize}
The long exact sequence in $\mathcal{J}^i$'s associated with the sequence $(\dagger)$ then degenerates to a short exact sequence
\[0 \to \mathcal{J}^1(\pi(\omega^{-1},1)) \to \mathcal{J}^{1}(\mathrm{St}) \to \mathbf{1} \to 0\;\;\;\;\;(\dagger').\]
Next, observe that $\mathcal{J}^2$ is right-exact, so applying it to the exact sequence $(\ddagger)$ gives a surjection $\mathbf{1}=\mathcal{J}^2(\mathbf{1})\twoheadrightarrow \mathcal{J}^2(\kappa)$. This shows that $\dim_{D^\times} \mathcal{J}^2(\kappa) =0$. Using Proposition \ref{Jtopdegree} and the remark immediately afterwards, we then deduce from the vanishing $\mathcal{S}^0_{G}(\kappa)=0$ that in fact $\mathcal{J}^2(\kappa)=0$. Note that Hu-Wang proved the vanishing $\mathcal{J}^2(\kappa)=0$ by global methods \cite[Corollary 8.19]{hu-wang}, while our argument is purely local. With this vanishing in hand, we see that the long exact sequence in $\mathcal{J}^i$'s associated with the sequence $(\ddagger)$ degenerates to a short exact sequence
\[ 0 \to \mathcal{J}^{1}(\kappa) \to \mathcal{J}^1(\pi(1,\omega^{-1})) \to \mathbf{1} \to 0\;\;\;\;\;(\ddagger'). \]

Next, we observe that the representations $\mathrm{St}$ and $\pi(1,\omega^{-1})$ both satisfy the conditions of Proposition \ref{dualitysimple},  with $c=1$ and $\dim_G(-) = 1$ in both cases. Applying that proposition, we get duality isomorphisms \[\mathcal{S}^{i}_{D^\times}(\mathcal{J}^{1}(\mathrm{St}))\cong \mathcal{J}^{i}(\kappa)\]
and
\[\mathcal{S}^{i}_{D^\times}(\mathcal{J}^{1}(\pi(1,\omega^{-1})))\cong \mathcal{J}^{i}(\pi(\omega^{-1},1))
\]
for all $i\geq 0$. When $i=0$ these yield no new information. The vanishing of the right-hand sides for $i \geq 2$ implies that $\mathcal{J}^{1}(\mathrm{St})$ and $\mathcal{J}^{1}(\pi(1,\omega^{-1}))$ are of dimension $\leq 1$; the exact sequences $(\dagger')$ and $(\ddagger')$ then imply that $\mathcal{J}^{1}(\kappa)$ and $\mathcal{J}^{1}(\pi(\omega^{-1},1))$ are also of dimension $\leq 1$. Finally, in the case $i=1$, the left-hand side of each isomorphism is either identically zero, or Cohen-Macaulay of dimension one. This yields a dichotomy: either
\begin{enumerate}
    \item $\mathcal{J}^1(\mathrm{St}) = \mathcal{J}^1(\pi(1,\omega^{-1}))=\mathbf{1}$ and $\mathcal{J}^{1}(\pi(\omega^{-1},1))=\mathcal{J}^{1}(\kappa)=0$, or
    \item all four representations $\mathcal{J}^1(\mathrm{St})$, $\mathcal{J}^1(\pi(1,\omega^{-1}))$, $\mathcal{J}^{1}(\pi(\omega^{-1},1))$, and $\mathcal{J}^{1}(\kappa)$ are one-dimensional, and the last two are Cohen-Macaulay.
\end{enumerate}
Just as in the situation of Theorem \ref{principalseriesdichotomy}, we don't currently see how to rule out (1) by a purely local argument. However, using \cite[Proposition 8.18.(i)]{hu-wang} and their exact sequence (8.13), it is easy to see that (1) does not occur. We leave the details to the interested reader.

\section{A partial K\"unneth formula}\label{partialkunnethsection}
In this section only, we change notation slightly, to emphasize the dependence of the Jacquet-Langlands functor on the integer $n$. More precisely, we write $\mathcal{J}_n$ for the Jacquet-Langlands functor from $\D(\mathrm{GL}_n(F))$ towards $\D(D_{1/n}^{\times})$.

For any integer $0<d<n$, let $P_{n-d,d}\subset \mathrm{GL}_n(F)$ be the usual upper-triangular parabolic with Levi $\mathrm{GL}_{n-d}(F)\times \mathrm{GL}_d(F)$. Let $\sigma_1 \boxtimes \sigma_2$ be a smooth representation of the Levi, and let $\mathrm{Ind}_{P_{n-d,d}}^{\mathrm{GL}_n(F)}(\sigma_1 \boxtimes \sigma_2)$ be the usual parabolic induction to a smooth representation of $\mathrm{GL}_n(F)$. The main result in this section is an alternative formula for $\mathcal{J}_n(\mathrm{Ind}_{P_{n-d,d}}^{\mathrm{GL}_n(F)}(\sigma_1 \boxtimes \sigma_2))$, in terms of the cohomology of an auxiliary space with coefficients in a certain sheaf built from $\sigma_1$ and $\mathcal{J}_d(\sigma_2)$.  This builds on the analysis begun by the first author in \cite[Appendix A]{johansson-ludwig}. When $d=1$, this formula is implicit in \cite{johansson-ludwig}, as explained below. 

To state the main result, we need to introduce the key auxiliary space. In what follows we will freely use some standard notation related to relative Fargues-Fontaine curves. In particular, for any perfectoid space $S/\Spa C$, we have the adic relative Fargues-Fontaine curve $\mathcal{X}_{S^\flat}$, which comes with a canonical closed immersion $\iota: S \hookrightarrow \mathcal{X}_{S^\flat}$ realizing $S$ as a closed Cartier divisor. We write $\mathcal{O}(1/n)$ for the usual stable vector bundle of rank $n$ and degree $1$, which is defined functorially over any $\mathcal{X}_{S^\flat}$. 

\begin{defn}Let $W_{n,d}$ denote the moduli space over $\Spd C$ parametrizing quotients of $\mathcal{O}(1/n)$ which are isomorphic to $\mathcal{O}(1/d)$ after pullback to any geometric point.
\end{defn}

Note that $W_{n,d}$ carries a natural $\underline{D_{1/n}^\times}$-action by bundle automorphisms.\footnote{Recall that the automorphism sheaf $S \mapsto \mathrm{Aut}(\mathcal{O}(1/n)^{\oplus m} / \mathcal{X}_{S^\flat})$ is exactly the sheaf $\underline{\mathrm{GL}_m(D_{1/n}^{\times})}$.} It is easy to see that $W_{n,d}$ is a spatial diamond, and we will see below that it is proper over $\Spd C$. It is also easy to see that $W_{n,d}$ can be defined equivalently as the moduli space of \emph{sub}bundles of $\mathcal{O}(1/n)$ which are isomorphic to $\mathcal{O}^{n-d}$ after pullback to any geometric point.  Trivializing the subbundle in this alternative description  gives a canonical $\mathrm{GL}_{n-d}(F)$-torsor over $W_{n,d}$, corresponding to a canonical map $h_1: W_{n,d} \to B \underline{\mathrm{GL}_{n-d}(F)}$. On the other hand, trivializing the quotient bundle in the original moduli problem gives a canonical $D_{1/d}^\times$-torsor over $W_{n,d}$, corresponding to a canonical map $h_2: W_{n,d} \to B \underline{D_{1/d}^\times}$. 

With these preparations in hand, the main result in this section can be stated as follows.

\begin{thm}\label{partialkunneth} For any $\sigma_1 \in \D(\mathrm{GL}_{n-d}(F))$ and $\sigma_2 \in \D(\mathrm{GL}_{d}(F))$, there is a natural isomorphism
\[\mathcal{J}_n(\mathrm{Ind}_{P_{n-d,d}}^{\mathrm{GL}_n(F)}(\sigma_1 \boxtimes \sigma_2)) \cong R\Gamma(W_{n,d},h_{1,\et}^\ast \sigma_1 \otimes h_{2,\et}^\ast \mathcal{J}_d(\sigma_2)).
\]
\end{thm}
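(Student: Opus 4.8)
The plan is to realize both sides of the claimed isomorphism as a composition of functors coming from a single geometric diagram, and to use the stacky 6-functor formalism together with base-change and projection-formula identities to move the operations around. Recall that $\mathcal{J}_n = g_{n,*} f_n^*$ for the fundamental diagram attached to $\mathrm{GL}_n(F)$ and $D_{1/n}^\times$, so the point is to unwind $f_n^* \mathrm{Ind}_{P_{n-d,d}}^{\mathrm{GL}_n(F)}(\sigma_1 \boxtimes \sigma_2)$ geometrically. The key observation is that parabolic induction, under the identification $\D^\sm(\mathrm{GL}_n(F),\F_p) = \D_\et(B\underline{\mathrm{GL}_n(F)},\F_p)$, corresponds to $*$-pushforward along the proper map $B\underline{P_{n-d,d}} \to B\underline{\mathrm{GL}_n(F)}$ (equivalently, along the flag-variety bundle $[\mathrm{GL}_n(F)/P_{n-d,d}] \to *$ over $B\underline{\mathrm{GL}_n(F)}$), followed by pullback and further pushforward through $B\underline{\mathrm{GL}_{n-d}(F)\times\mathrm{GL}_d(F)} \to B\underline{P_{n-d,d}}$. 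So the first step is to set up the enlarged diagram relating the infinite-level Lubin–Tate tower, the stack $X$, the auxiliary space $W_{n,d}$, and the classifying stacks $B\underline{\mathrm{GL}_{n-d}(F)}$, $B\underline{D_{1/d}^\times}$, $B\underline{\mathrm{GL}_n(F)}$, $B\underline{D_{1/n}^\times}$, and to identify $W_{n,d}$ (via its description as a space of subbundles of $\mathcal{O}(1/n)$ of the form $\mathcal{O}^{n-d}$, equivalently a flag in the Lubin–Tate/Drinfeld picture) as the correct fiber product. Concretely, I expect $W_{n,d}$ to arise as a quotient of an intermediate Lubin–Tate-type space by the parabolic, sitting over both $B\underline{\mathrm{GL}_{n-d}(F)}$ (via $h_1$, trivializing the subbundle) and over the $D_{1/n}^\times$-side; the maps $h_1, h_2$ are exactly the torsor-trivialization maps described before the theorem statement.

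Once the diagram is in place, the second step is purely formal manipulation. One writes $\mathcal{J}_n(\mathrm{Ind}(\sigma_1\boxtimes\sigma_2))$, uses proper base-change (Proposition \ref{rslt:p-adic-smoothness-criterion-for-quotient} and the base-change results for $p$-fine $p$-cohomologically smooth maps) to push $f_n^*$ past the pushforward defining $\mathrm{Ind}$, thereby replacing the fiber $B\underline{P_{n-d,d}}$-bundle over $B\underline{\mathrm{GL}_n(F)}$ by its base-change to $X$, which by the first step is (a torsor over) $W_{n,d}$. This turns the outer $g_{n,*}$ composed with the base-changed pushforward into $R\Gamma(W_{n,d},-)$ (using that $W_{n,d}$ is proper over $\Spd C$, so all the relevant pushforwards to $*$ compose correctly), and turns $f_n^*$ applied to the exterior tensor product $\sigma_1\boxtimes\sigma_2$ into $h_1^* \sigma_1 \otimes (\text{something built from }\sigma_2)$. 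The final step is to recognize that the "something built from $\sigma_2$" is precisely $h_2^* \mathcal{J}_d(\sigma_2)$: this is where the recursion enters — the $\mathrm{GL}_d(F)$-factor of the Levi, together with the $D_{1/d}^\times$-torsor structure encoded by $h_2$, reproduces the $d$-dimensional fundamental diagram fiberwise over $W_{n,d}$, so that pulling back $\sigma_2$ and pushing forward along the relative $d$-dimensional Lubin–Tate geometry yields exactly $h_2^* \mathcal{J}_d(\sigma_2)$ by smooth base-change.

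The main obstacle, I expect, is the first step: carefully constructing the enlarged geometric diagram and proving that $W_{n,d}$ (with its two maps $h_1, h_2$) is the correct base-change of the flag bundle along $f_n$, i.e. that $X \times_{B\underline{\mathrm{GL}_n(F)}} B\underline{P_{n-d,d}} \cong W_{n,d}/(\text{residual group})$ compatibly with the maps to $B\underline{\mathrm{GL}_{n-d}(F)}$ and $B\underline{D_{1/d}^\times}$. This amounts to a concrete analysis of the Lubin–Tate tower modulo a parabolic in terms of modifications of vector bundles on the Fargues–Fontaine curve, and of matching the two moduli descriptions of $W_{n,d}$ (sub bundles $\cong \mathcal{O}^{n-d}$ versus quotients $\cong \mathcal{O}(1/d)$) with the two sides of the diagram; this is precisely the input that in the case $d=1$ is contained in \cite{johansson-ludwig}, and the general case should follow the same pattern. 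Once that geometric identification is established, and once one checks that all maps in sight are $p$-fine and that the relevant ones ($g_n$, the map $W_{n,d}\to\Spd C$, and the structure maps of the flag bundles) are proper so that $!=*$ and base-change applies, the rest is a formal composition of adjunctions and base-change isomorphisms in the 6-functor formalism of Section 2.
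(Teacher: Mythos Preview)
Your proposal is correct and follows essentially the same approach as the paper: the paper sets up exactly the enlarged diagram you anticipate (with the intermediate space $X_{n,d} = \mathcal{M}_\infty/\underline{P_{n-d,d}}$ playing the role of the fiber product over $B\underline{\GL_n(F)}$, and $W_{n,d}$ appearing one step further down via the cartesian square with the $d$-dimensional fundamental diagram), then applies proper base-change and the projection formula in the solid 6-functor formalism, passing back to the \'etale statement via Riemann--Hilbert. The geometric input you flag as the main obstacle is supplied by \cite[Propositions A.3.3--A.3.4]{johansson-ludwig}, which give the moduli description of $X_{n,d}$ and the key cartesian square relating $X_{n,d}$, $W_{n,d}$, $[\mathbf{P}^{d-1}_C/\underline{D_{1/d}^\times}]$, and $B\underline{D_{1/d}^\times}$.
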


When $d=1$, this specializes to the isomorphism $\mathcal{J}_{n}(\mathrm{Ind}_{P_{n-1,1}}^{\mathrm{GL}_n(F)}(\sigma_1 \boxtimes \sigma_2)) \cong R\Gamma(\mathcal{M}_{P(F)}, \mathcal{F}_{\sigma_1 \boxtimes \sigma_2})$ implicit in the proof of \cite[Theorem 5.3.1]{johansson-ludwig} (where we freely use some notation from loc. cit.). Indeed, $\mathcal{M}_{P(F)} = X_{n,1} = W_{n,1}$ by the discussion below, and a direct calculation shows that $\mathcal{F}_{\sigma_1 \boxtimes \sigma_2} \cong h_1^\ast \sigma_1 \otimes h_2^\ast \sigma_2$, noting that $\mathcal{J}_1(\sigma_2)=\sigma_2$ has no effect.

The proof relies on several auxiliary spaces, which we find helpful to collect into a large diagram
\[
\xymatrix{X\ar[r]\ar[d] & X_{-}\ar[d]\ar[r] & [\mathcal{M}_{\infty,d}/\underline{D_{1/d}^{\times}}]\ar[r]\ar[d] & \ast\ar[d]\\
X_{+}\ar[r]\ar[d] & X_{n,d}\ar[d]^{\tilde{g}_d}\ar[r]^{\tilde{h}_{2}} & [\mathbf{P}_{C}^{d-1}/\underline{D_{1/d}^{\times}}]\ar[r]^{f_d}\ar[d]^{g_d} & B\underline{\mathrm{GL}_{d}(F)}\\
\mathcal{I}\mathrm{nj}(\mathcal{O}^{n-d},\mathcal{O}(1/n))\ar[d]\ar[r] & W_{n,d}\ar[d]^{h_{1}}\ar[r]^{h_{2}} & B\underline{D_{1/d}^{\times}}\\
\ast\ar[r] & B\underline{\mathrm{GL}_{n-d}(F)}
}
\]
of small v-stacks, where every square is cartesian. Here $\ast = \Spd C$ for brevity, and $W_{n,d}$ and $h_1,h_2$ are as discussed above. Let us explain the parts of this diagram we have not yet discussed.\footnote{We will not need every object in this diagram, but the reader might find it helpful to know that they exist.} Let $U\subset P_{n-d,d}$ be the unipotent radical, so $U\cdot \mathrm{GL}_{n-d}(F)$ and $U\cdot \mathrm{GL}_{d}(F)$ are both closed normal subgroups of $P_{n-d,d}$ with intersection $U$. Then by definition, $X=\mathcal{M}_{\infty}/\underline{U}$, $X_+ = \mathcal{M}_{\infty}/\underline{U\cdot \mathrm{GL}_{d}(F)}$, $X_- = \mathcal{M}_{\infty}/\underline{U\cdot \mathrm{GL}_{n-d}(F)}$, and $X_{n,d} = \mathcal{M}_\infty / \underline{P_{n-d,d}}$, whence the upper left square is cartesian. By \cite[Proposition A.3.3]{johansson-ludwig}, the space $X_{n,d}$ admits an explicit alternative description: it is the moduli space sending any perfectoid space $S/C$ to (isomorphism classes of) diagrams $\mathcal{O}(1/n)\twoheadrightarrow \mathcal{E} \hookleftarrow \mathcal{F}$ of vector bundles over $\mathcal{X}_{S^\flat}$ such that $\mathcal{E} \simeq \mathcal{O}(1/d)$ and $\mathcal{F} \simeq \mathcal{O}^d$ at all geometric points, and such that $\mathrm{coker}(\mathcal{F} \to \mathcal{E}) \simeq \iota_{\ast}W$ for some rank one projective $\mathcal{O}_S$-module $W$, where $\iota:S \to \mathcal{X}_{S^\flat}$ is the usual divisor. The map $\tilde{h}_2$ corresponds to remembering only $\mathcal{E} \hookleftarrow \mathcal{F}$, while $\tilde{g}_d$ corresponds to remembering only $\mathcal{O}(1/n)\twoheadrightarrow \mathcal{E}$. The cartesian square spanned by $g_d$ and $h_2$ is then a consequence of \cite[Proposition A.3.4]{johansson-ludwig}.  Note that $g_d$ is proper and $p$-bounded, since it comes from a proper map of rigid analytic spaces pro-\'etale-locally on the target. Note also that $X_{n,d} \to W_{n,d}$ is proper and surjective, while $X_{n,d} \to \ast$ is proper by \cite[Proposition A.3.2]{johansson-ludwig}, so $W_{n,d} \to \ast$ is proper as claimed in the discussion preceding Theorem \ref{partialkunneth}.

\begin{lem}For any $M_1 \in \D^{\sm,a}_\solid(\mathrm{GL}_{n-d}(F),\ri_C/p)^{\varphi}$ and $M_2 \in \D^{\sm,a}_\solid(\mathrm{GL}_{d}(F),\ri_C/p)^{\varphi}$,  there is a natural isomorphism
\[\mathcal{J}_{n,\solid}(\mathrm{Ind}_{P_{n-d,d}}^{\mathrm{GL}_n(F)}(M_1 \boxtimes M_2)) \cong R\Gamma_{\solid}(X_{n,d}, \tilde{g}_d^\ast h_1^{\ast} M_1 \otimes \tilde{h}_{2}^\ast f_d^{\ast} M_2) \]
in $\D^{\sm,a}_\solid(D^\times, \ri_C/p)^\varphi$.
\end{lem}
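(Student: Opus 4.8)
The plan is to rewrite both $\mathcal{J}_{n,\solid}$ and parabolic induction entirely in terms of the six operations on classifying stacks, and then to recognize the resulting composite as $R\Gamma_\solid$ of $X_{n,d}$ by feeding in the cartesian squares of the large diagram above. Throughout I would work $\underline{D_{1/n}^\times}$-equivariantly, i.e.\ over $B\underline{D_{1/n}^\times}$; thus $X_{n,d}$ stands for $[\mathcal{M}_\infty/(\underline{P_{n-d,d}}\times\underline{D_{1/n}^\times})]$, its structure map to $B\underline{D_{1/n}^\times}$ is proper, and $R\Gamma_\solid(X_{n,d},-)$ denotes that pushforward, valued in $\Dcatrepsldasm{D^\times}{\ri_C/p}^\varphi$. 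Write $P := P_{n-d,d}$, let $M := \GL_{n-d}(F)\times\GL_d(F)$ be its Levi quotient, with unipotent radical $U\subset P$, and let $\pi\colon B\underline P\to B\underline G$ and $q\colon B\underline P\to B\underline M$ be the maps induced by $P\hookrightarrow G$ and $P\twoheadrightarrow M$. The base change of $\pi$ along $\ast\to B\underline G$ is the profinite set $\underline{G/P}$, so $\pi$ is bdcs and proper; pullback along $q$ is inflation, and since $P$ is cocompact in $G$ one has $\pi_*=\pi_!$, realizing (unnormalized) parabolic induction. Hence $\mathrm{Ind}_{P}^{G}(M_1\boxtimes M_2)\isom \pi_* q^*(M_1\boxtimes M_2)$.

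Next I would run the base-change computation. Using the presentation $[\mathcal{M}_\infty/(\underline G\times\underline{D_{1/n}^\times})]\isom[\Omega_C^{n-1}/\underline G]$ from the fundamental diagram and the identity $[\Omega_C^{n-1}/\underline G]\times_{B\underline G}B\underline P\isom[\Omega_C^{n-1}/\underline P]\isom X_{n,d}$, proper base change for $\pi$ gives $f^*\pi_*\isom\pi'_* f'^*$, where $\pi'\colon X_{n,d}\to[\mathcal{M}_\infty/(\underline G\times\underline{D_{1/n}^\times})]$ and $f'\colon X_{n,d}\to B\underline P$ are the projections. Since $g\comp\pi'$ is the structure map of $X_{n,d}$ over $B\underline{D_{1/n}^\times}$ and pullback is symmetric monoidal, setting $\ell:=q\comp f'\colon X_{n,d}\to B\underline M$ we obtain
\begin{align*}
\mathcal{J}_{n,\solid}\bigl(\mathrm{Ind}_{P}^{G}(M_1\boxtimes M_2)\bigr)=g_* f^* \pi_* q^*(M_1\boxtimes M_2)=R\Gamma_\solid\bigl(X_{n,d},\,\ell^*(M_1\boxtimes M_2)\bigr).
\end{align*}
Now $M_1\boxtimes M_2=p_1^* M_1\tensor p_2^* M_2$ on $B\underline M=B\underline{\GL_{n-d}(F)}\times B\underline{\GL_d(F)}$, with $p_1,p_2$ the two projections, so $\ell^*(M_1\boxtimes M_2)\isom(p_1\comp\ell)^* M_1\tensor(p_2\comp\ell)^* M_2$, and the lemma is reduced to the two identifications $p_1\comp\ell\simeq h_1\comp\tilde g_d$ and $p_2\comp\ell\simeq f_d\comp\tilde h_2$.

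This last identification is the real content, and here I would invoke the analysis of \cite[Appendix A]{johansson-ludwig} underlying the large diagram. By construction $p_1\comp\ell$ classifies the $\underline{\GL_{n-d}(F)}$-torsor over $X_{n,d}$ obtained from the tautological $\underline P$-torsor by pushing forward along $P\twoheadrightarrow\GL_{n-d}(F)$; this torsor is $X_+\to X_{n,d}$ with $X_+:=[\mathcal{M}_\infty/(\underline{U\cdot\GL_d(F)}\times\underline{D_{1/n}^\times})]$. Likewise $p_2\comp\ell$ classifies $X_-\to X_{n,d}$ with $X_-:=[\mathcal{M}_\infty/(\underline{U\cdot\GL_{n-d}(F)}\times\underline{D_{1/n}^\times})]$. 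On the other hand, the cartesian squares of the large diagram --- which encode \cite[Propositions A.3.2--A.3.4]{johansson-ludwig} together with the moduli description of $X_{n,d}$ recalled above --- identify $X_+\to X_{n,d}$ with the pullback of $\mathcal{I}\mathrm{nj}(\mathcal{O}^{n-d},\mathcal{O}(1/n))\to W_{n,d}$ along $\tilde g_d$, and $X_-\to X_{n,d}$ with the pullback of $[\mathcal{M}_{\infty,d}/\underline{D_{1/d}^\times}]\to[\mathbf{P}_C^{d-1}/\underline{D_{1/d}^\times}]$ along $\tilde h_2$. Since $\mathcal{I}\mathrm{nj}(\mathcal{O}^{n-d},\mathcal{O}(1/n))\to W_{n,d}$ is by definition the $\underline{\GL_{n-d}(F)}$-torsor classified by $h_1$, and $[\mathcal{M}_{\infty,d}/\underline{D_{1/d}^\times}]\to[\mathbf{P}_C^{d-1}/\underline{D_{1/d}^\times}]$ is the $\underline{\GL_d(F)}$-torsor classified by $f_d$, essential uniqueness of classifying maps gives $p_1\comp\ell\simeq h_1\comp\tilde g_d$ and $p_2\comp\ell\simeq f_d\comp\tilde h_2$, and plugging this back completes the proof. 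The main obstacle is genuinely this last step, namely extracting from \cite{johansson-ludwig} the precise compatibility of the group-theoretic quotients $X_\pm$ of the Lubin--Tate tower with the relevant Fargues--Fontaine moduli spaces; the reductions before it are formal six-functor manipulations, the only other points deserving care being that $\pi$ is bdcs and proper (so that proper base change is available in the stacky formalism of \cref{rslt:stacky-6-functor-formalism}), that all constructions are consistently tracked over $B\underline{D_{1/n}^\times}$, and that the conventions are fixed so that $\pi_* q^*$ is literally the parabolic induction appearing in the statement.
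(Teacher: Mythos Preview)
Your proof is correct and follows essentially the same route as the paper's own argument: express parabolic induction as $\pi_* q^*$ on classifying stacks, apply proper base-change along the cartesian square $X_{n,d} \to B\underline P$ over $[\mathbf{P}^{n-1}_C/\underline{D_{1/n}^\times}] \to B\underline G$, and then identify the resulting pullback $\ell^*(M_1\boxtimes M_2)$ via the equality of composite maps $X_{n,d}\to B\underline M$. The only cosmetic difference is that the paper carries both $X_{n,d}$ and $[X_{n,d}/\underline{D_{1/n}^\times}]$ in its diagram and performs the sheaf identification after forgetting the $D_{1/n}^\times$-equivariance, whereas you work equivariantly throughout; your version is arguably cleaner, and your torsor justification of $p_1\comp\ell\simeq h_1\comp\tilde g_d$ and $p_2\comp\ell\simeq f_d\comp\tilde h_2$ spells out what the paper simply asserts.
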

\begin{proof}
Consider the commutative diagram
\begin{center}\begin{tikzcd}
{X_{n,d}}\ar[d, "\tilde{\alpha}"] \arrow[rr] & & {\ast} \ar[d, "\alpha"] \\
    {[X_{n,d}/\underline D_{1/n}^\times]} \arrow[r,"\overline\pi_{\mathrm{GH}}"] \arrow[d,"r"] & {[\mathbf P_C^{n-1}/\underline D_{1/n}^\times]} \arrow[d,"f_n"] \arrow[r,"g_n"] & B\underline D_{1/n}^\times\\
    B\underline{P_{n-d,d}} \arrow[r,"q"] \arrow[d,"s"] & B\underline{\GL_n(F)}\\
    B\underline{\GL_{n-d}(F)} \times B\underline{\GL_d(F)}
\end{tikzcd}\end{center}
where the square and rectangle are both cartesian.
By definition, $\mathrm{Ind}_{P_{n-d,d}}^{\mathrm{GL}_n(F)}(M_1 \boxtimes M_2) = q_{\ast} s^{\ast}(M_1 \boxtimes M_2)$ and $\mathcal{J}_{n,\solid} = g_{n \ast} f_{n}^{\ast}$, so
\begin{align*}
    \mathcal{J}_{n,\solid}(\mathrm{Ind}_{P_{n-d,d}}^{\mathrm{GL}_n(F)}(M_1 \boxtimes M_2)) &= g_{n \ast} f_n^{\ast} q_{\ast} s^{\ast}(M_1 \boxtimes M_2),
\intertext{which by proper base-change transforms to}
    &= g_{n*} (\overline\pi_{\mathrm{GH}})_* r^* s^* (M_1 \boxtimes M_2)\\
    &= R\Gamma_\solid(X_{n,d}, r^* s^* (M_1 \boxtimes M_2)),
\end{align*}
where the $R\Gamma_\solid$ on the last line denotes the $D_{1/n}^\times$-equivariant cohomology. To finish the proof it remains to identify the $D_{1/n}^\times$-equivariant sheaf $r^* s^* (M_1 \boxtimes M_2)$ on $X_{n,d}$. After forgetting the $D_{1/n}^\times$-equivariance of the sheaf, i.e. after applying $\tilde{\alpha}^*$, we get
\begin{align*}
    \tilde{\alpha}^* r^* s^* (M_1 \boxtimes M_2) &= (s \circ r \circ \tilde{\alpha})^* (M_1 \boxtimes M_2) = ((h_1 \circ \tilde{g}_d) \times (f_d \circ \tilde{h}_2))^* (M_1 \boxtimes M_2)\\
    &= \tilde{g}_d^\ast h_1^{\ast} M_1 \otimes \tilde{h}_{2}^\ast f_d^{\ast} M_2,
\end{align*}
using that $s\circ r \circ \tilde{\alpha} = (h_1 \circ \tilde{g}_d) \times (f_d \circ \tilde{h}_2)$. This gives the result.
\end{proof}

\begin{proof}[Proof of Theorem \ref{partialkunneth}]
By some straightforward manipulations with the Riemann-Hilbert correspondence, we easily reduce to proving the solid version of the theorem. In other words, we need to show that for any $M_1 \in \D^{\sm,a}_\solid(\mathrm{GL}_{n-d}(F),\ri_C/p)^{\varphi}$ and $M_2 \in \D^{\sm,a}_\solid(\mathrm{GL}_{d}(F),\ri_C/p)^{\varphi}$,  there is a natural isomorphism
\[\mathcal{J}_{n,\solid}(\mathrm{Ind}_{P_{n-d,d}}^{\mathrm{GL}_n(F)}(M_1 \boxtimes M_2)) \cong R\Gamma_{\solid}(W_{n,d},h_1^\ast M_1 \otimes h_2^\ast \mathcal{J}_{d,\solid}(M_2)).
\]
By the previous lemma, we already have a canonical isomorphism \[\mathcal{J}_{n,\solid}(\mathrm{Ind}_{P_{n-d,d}}^{\mathrm{GL}_n(F)}(M_1 \boxtimes M_2)) \cong R\Gamma_{\solid}(X_{n,d}, \tilde{g}_d^\ast h_1^{\ast} M_1 \otimes \tilde{h}_{2}^\ast f_d^{\ast} M_2). \]  We then compute that
\begin{align*}
    R\Gamma_{\solid}(X_{n,d}, \tilde{g}_d^\ast h_1^{\ast} M_1 \otimes \tilde{h}_{2}^\ast f_d^{\ast} M_2) & \cong R\Gamma_{\solid}(W_{n,d}, \tilde{g}_{d\ast}(\tilde{g}_d^\ast h_1^{\ast} M_1 \otimes \tilde{h}_{2}^\ast f_d^{\ast} M_2)) \\
      &\cong R\Gamma_{\solid}(W_{n,d}, h_1^{\ast} M_1 \otimes \tilde{g}_{d\ast} \tilde{h}_{2}^\ast f_d^{\ast} M_2)) \\
      &\cong R\Gamma_{\solid}(W_{n,d}, h_1^{\ast} M_1 \otimes h_{2}^\ast g_{d\ast} f_d^{\ast} M_2)).
\end{align*}
Here the first isomorphism is trivial, the second follows from the projection formula, and the third follows from proper base change. Since $g_{d\ast} f_d^{\ast} M_2= \mathcal{J}_{d,\solid}(M_2)$ by definition, this gives the desired result.
\end{proof}

By work of Johansson-Ludwig \cite[Theorem A]{johansson-ludwig}, $W_{n,1}$ is a perfectoid space for all $n$. The proof of \cite[Theorem A]{johansson-ludwig} is a highly nontrivial argument using the $p$-adic geometry of global Shimura varieties. In the special case $n=2$, the first author reproved this result by purely local methods \cite[\S A.4]{johansson-ludwig}. However, the arguments in \cite[\S A.4]{johansson-ludwig} easily generalize to show that $W_{n,n-1}$ is also a perfectoid space for all $n$. It thus seems natural to make the following conjecture.

\begin{conjecture}\label{auxiliaryperfectoid}For all integers $0<d<n$, $W_{n,d}$ is a perfectoid space.
\end{conjecture}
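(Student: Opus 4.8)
The plan is to reduce the conjecture, for $1<d<n$, to the already-known cases $W_{n,1}$ and $W_{d,1}$ together with one essentially non-formal ``descent along a $p$-adic Lie group'' statement, which is where the real difficulty sits.

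\emph{An auxiliary flag space.} Interpose the moduli space $F=F_{n,d}$ over $\Spd C$ parametrizing, over a perfectoid $S/C$, chains of surjections $\mathcal{O}(1/n)\twoheadrightarrow\mathcal{E}\twoheadrightarrow\mathcal{L}$ of vector bundles on $\mathcal{X}_{S^\flat}$ with $\mathcal{E}\cong\mathcal{O}(1/d)$ and $\mathcal{L}\cong\mathcal{O}(1)$ at every geometric point. Forgetting $\mathcal{L}$ gives a proper surjection $F\to W_{n,d}$ whose fibre over $[\mathcal{O}(1/n)\twoheadrightarrow\mathcal{E}]$ is the twisted form of $W_{d,1}$ attached to $\mathcal{E}$; if $\widetilde W_{n,d}\to W_{n,d}$ denotes the $\underline{D_{1/d}^\times}$-torsor of trivializations of the universal quotient bundle then $F\cong\widetilde W_{n,d}\times^{\underline{D_{1/d}^\times}}W_{d,1}$. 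Forgetting $\mathcal{E}$ instead (but remembering the inclusion of kernels) gives a map $F\to W_{n,1}$: a slope computation shows that for any $[\mathcal{O}(1/n)\twoheadrightarrow\mathcal{L}]\in W_{n,1}$ the kernel $\ker(\mathcal{O}(1/n)\to\mathcal{L})$ is a geometrically trivial rank $n-1$ bundle, hence corresponds to a $\mathbf{Q}_p$-local system $L_0$, and the fibre of $F\to W_{n,1}$ over that point is the profinite set of rank $n-d$ subspaces of the stalk of $L_0$; thus $F\to W_{n,1}$ is pro-finite-\'etale.

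\emph{What comes for free.} Since $W_{n,1}$ and $W_{d,1}$ are perfectoid by \cite[Theorem A]{johansson-ludwig}, pro-finite-\'etale descent gives that $F$ is a perfectoid space. As $\widetilde W_{n,d}\times W_{d,1}\to F$ is a $\underline{D_{1/d}^\times}$-torsor (the diagonal action is already free on the factor $\widetilde W_{n,d}$) it is pro-\'etale, so $\widetilde W_{n,d}\times W_{d,1}$ is perfectoid; embedding $\widetilde W_{n,d}$ as the Zariski-closed subspace $\widetilde W_{n,d}\times\{x_0\}$ for any $x_0\in W_{d,1}(C)$ and invoking that Zariski-closed subspaces of perfectoid spaces are perfectoid shows $\widetilde W_{n,d}$ is perfectoid as well. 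So the whole ``upstairs'' picture is unconditionally available: $F$ and $\widetilde W_{n,d}$ are perfectoid, and we have a proper surjection $F\twoheadrightarrow W_{n,d}$ and a pro-\'etale $\underline{D_{1/d}^\times}$-torsor $\widetilde W_{n,d}\twoheadrightarrow W_{n,d}$.

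\emph{The obstacle.} What is left is to descend perfectoidness from $\widetilde W_{n,d}$ to $W_{n,d}$ along this torsor, and this is exactly the phenomenon that makes \cite[Theorem A]{johansson-ludwig} hard: the quotient of a perfectoid space by a free action of a $p$-adic Lie group need not be perfectoid -- for instance, finite-level Shimura varieties are pro-finite-\'etale quotients of perfectoid ones without themselves being perfectoid -- so no formal argument suffices. Factoring $\widetilde W_{n,d}\to\widetilde W_{n,d}/\underline{\mathcal{O}_{D_{1/d}}^\times}\to W_{n,d}$, the second map is a $\underline{\mathbf{Z}}$-cover and is harmless given that $W_{n,d}$ is proper over $\Spd C$, so the real difficulty is the profinite quotient $\widetilde W_{n,d}/\underline{\mathcal{O}_{D_{1/d}}^\times}$. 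I expect the two viable routes to be: (i) adapt the purely local analysis of \cite[\S A.4]{johansson-ludwig} (which handles $W_{2,1}$, and, as the authors note, also $W_{n,n-1}$) to extract affinoid perfectoid charts on $W_{n,d}$ directly from the Fargues--Fontaine description above, which is likeliest to succeed for small $n$; or (ii) realise $W_{n,d}$ via $p$-adic uniformization inside the infinite-level perfectoid space of a suitable PEL Shimura variety, chosen so that $P_{n-d,d}$ and the bundle $\mathcal{O}(1/d)$ appear directly, generalizing the global argument of \cite[Theorem A]{johansson-ludwig} -- the likeliest route to the conjecture in general, modulo finding the right Shimura datum.
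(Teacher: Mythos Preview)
This statement is labelled a \emph{Conjecture} in the paper, and the paper does not prove it. The surrounding text explains that $W_{n,1}$ is known to be perfectoid by \cite{johansson-ludwig}, that the local argument of \cite[\S A.4]{johansson-ludwig} also handles $W_{n,n-1}$, and that the general case is left open. There is therefore no proof in the paper to compare your proposal against.

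Your proposal is likewise not a proof, and you are candid about this: the final ``obstacle'' paragraph identifies exactly the point where the argument stops, namely descending perfectoidness along the $\underline{D_{1/d}^\times}$-torsor $\widetilde W_{n,d}\to W_{n,d}$, and correctly observes that this cannot be done formally. Your two suggested routes (i) and (ii) are precisely the two approaches the paper alludes to as evidence for the conjecture, so you have accurately located both the difficulty and the plausible lines of attack.

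For what it is worth, the reduction you carry out before hitting the obstacle is essentially sound. One small point: the fibre of $F\to W_{n,1}$ really is the \emph{full} profinite Grassmannian $\mathrm{Gr}(n-d,n-1)(\mathbf{Q}_p)$, not just an open subset. Given $K'\cong\mathcal{O}^{n-d}\subset K\cong\mathcal{O}^{n-1}$, the quotient $\mathcal{E}=\mathcal{O}(1/n)/K'$ has rank $d$ and degree $1$, and cannot admit $\mathcal{O}$ as a quotient since $\mathrm{Hom}(\mathcal{O}(1/n),\mathcal{O})=0$; this forces all HN slopes of $\mathcal{E}$ to be strictly positive, and a rank-$d$ degree-$1$ bundle with that property must be $\mathcal{O}(1/d)$. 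So the condition on $\mathcal{E}$ is automatic, and your claim that $F\to W_{n,1}$ is pro-finite-\'etale goes through cleanly.
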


This conjecture implies a vanishing result.

\begin{prop}Suppose $W_{n,d}$ is a perfectoid space. Then for any admissible smooth representations $\sigma_1 \in \mathrm{Rep}_{\mathbf{F}_{p}}^{\mathrm{sm}}(\mathrm{GL}_{n-d}(F))$ and $\sigma_2 \in \mathrm{Rep}_{\mathbf{F}_{p}}^{\mathrm{sm}}(\mathrm{GL}_{d}(F))$, $\mathcal{J}_{n}^{i}(\mathrm{Ind}_{P_{n-d,d}}^{\mathrm{GL}_n(F)}(\sigma_1 \boxtimes \sigma_2))=0$ for all $i>d+n-2$. 
\end{prop}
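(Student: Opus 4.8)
The plan is to feed Theorem~\ref{partialkunneth} into the cohomological dimension bound that makes the perfectoidness of $W_{n,d}$ relevant. By Theorem~\ref{partialkunneth} there is a natural isomorphism
\[
\mathcal{J}_n(\mathrm{Ind}_{P_{n-d,d}}^{\mathrm{GL}_n(F)}(\sigma_1 \boxtimes \sigma_2)) \cong R\Gamma(W_{n,d},\, h_{1,\et}^\ast \sigma_1 \otimes h_{2,\et}^\ast \mathcal{J}_d(\sigma_2)),
\]
so it suffices to bound the cohomological amplitude of the right-hand side. First, $h_{1,\et}^\ast \sigma_1$ is the ordinary \'etale $\F_p$-sheaf on $W_{n,d}$ associated to the representation $\sigma_1$ via the $\mathrm{GL}_{n-d}(F)$-torsor $h_1$, hence is concentrated in degree $0$. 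Second, $\mathcal{J}_d(\sigma_2) = g_{d,\et\ast} f_{d,\et}^\ast \sigma_2 \in \D^{\sm}(D_{1/d}^\times,\F_p)$ is concentrated in cohomological degrees $[0,2d-2]$: it sits in nonnegative degrees because $\sigma_2$ is a representation and $g_{d,\et\ast}$ is left $t$-exact, and it vanishes in degrees $>2d-2$ by the general vanishing theorem recalled in the introduction, which uses only that $\mathbf{P}^{d-1}_C$ is proper of dimension $d-1$. Since $\F_p$ is a field, tensoring against the degree-$0$ sheaf $h_{1,\et}^\ast\sigma_1$ preserves amplitudes, so $h_{1,\et}^\ast \sigma_1 \otimes h_{2,\et}^\ast \mathcal{J}_d(\sigma_2) \in \D_{\et}(W_{n,d},\F_p)^{[0,2d-2]}$.

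Next I would record that $W_{n,d}$ has dimension $n-d$. In the large diagram of small v-stacks above, the square spanned by $g_d$ and $h_2$ is cartesian, so $\tilde g_d\colon X_{n,d} \to W_{n,d}$ is the base change of $g_d\colon [\mathbf{P}^{d-1}_C/\underline{D_{1/d}^\times}] \to B\underline{D_{1/d}^\times}$, which is proper of relative dimension $d-1$; hence $\dim X_{n,d} = \dim W_{n,d} + (d-1)$. On the other hand $X_{n,d} = \mathcal{M}_{\infty}/\underline{P_{n-d,d}}$, and the composite $X_{n,d} \to \mathbf{P}^{n-1}_C$ is a torsor under the locally profinite set $\underline{\mathrm{GL}_n(F)/P_{n-d,d}}$ (as $\mathcal{M}_{\infty} \to \mathbf{P}^{n-1}_C$ is a $\underline{\mathrm{GL}_n(F)}$-torsor), so $X_{n,d}$ has the same dimension $n-1$ as $\mathbf{P}^{n-1}_C$. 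Therefore $\dim W_{n,d} = n-d$.

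The crucial input --- the only place the hypothesis enters --- is that, $W_{n,d}$ being a (spatial, proper) perfectoid space, the functor $R\Gamma(W_{n,d},-)$ has cohomological dimension $\le \dim W_{n,d} = n-d$ on $\D_{\et}(W_{n,d},\F_p)$. This is the same bound Johansson--Ludwig use in the case $d=1$; it follows by covering $W_{n,d}$ by affinoid perfectoid opens, each of $\F_p$-cohomological dimension $\le 1$ by the Artin--Schreier sequence together with the vanishing of higher coherent cohomology on affinoids, and combining this with a Grothendieck-type vanishing on the underlying spectral space of Krull dimension $n-d$. Granting this, $R\Gamma(W_{n,d},\, h_{1,\et}^\ast \sigma_1 \otimes h_{2,\et}^\ast \mathcal{J}_d(\sigma_2))$ lies in degrees $[0,(n-d)+(2d-2)] = [0,n+d-2]$, and the displayed isomorphism then gives $\mathcal{J}_n^i(\mathrm{Ind}_{P_{n-d,d}}^{\mathrm{GL}_n(F)}(\sigma_1 \boxtimes \sigma_2)) = 0$ for all $i > d+n-2$. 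I expect the main obstacle to be exactly this cohomological dimension bound: one must confirm that the now-perfectoid $W_{n,d}$ genuinely behaves like an $(n-d)$-dimensional space for $\F_p$-\'etale cohomology, which is the technical heart of the argument and the reason perfectoidness of $W_{n,d}$ is precisely what is needed.
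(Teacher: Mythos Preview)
Your overall plan matches the paper's: invoke Theorem~\ref{partialkunneth}, bound the amplitude of the coefficient complex by $2d-2$, and use perfectoidness of $W_{n,d}$ to cap the cohomological dimension at $n-d$. The dimension count for $W_{n,d}$ is also fine.

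The gap is in your justification of the cohomological dimension bound. You claim that $R\Gamma(W_{n,d},-)$ on $\D_\et(W_{n,d},\F_p)$ has amplitude $\le n-d$, arguing via Artin--Schreier on affinoid perfectoid opens (giving local $\F_p$-cohomological dimension $\le 1$) combined with Scheiderer--Grothendieck vanishing on the spectral space $|W_{n,d}|$ of Krull dimension $n-d$. But these two bounds add: the Leray spectral sequence for $\nu\colon (W_{n,d})_\et \to |W_{n,d}|$ only yields amplitude $\le (n-d)+1$, because $R^1\nu_*$ need not vanish with $\F_p$-coefficients. Concretely, for an affinoid perfectoid $U$ one has $H^1(U_\et,\F_p)=\mathrm{coker}(\varphi-1\text{ on }\mathcal O(U^\flat))$, which is typically nonzero (e.g.\ the Artin--Schreier cover $y^p-y=T$ of the perfectoid disc is nontrivial on every smaller disc), so the stalks of $R^1\nu_*\F_p$ do not vanish.

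The paper avoids this off-by-one problem by passing to $\mathcal O^+/p$-coefficients via the primitive comparison theorem, obtaining an almost isomorphism
\[
\mathcal J_n^i(\mathrm{Ind}_{P_{n-d,d}}^{\mathrm{GL}_n(F)}(\sigma_1\boxtimes\sigma_2))\otimes\mathcal O_C/p \;\cong^a\; H^i\!\bigl(W_{n,d},\,h_1^*\sigma_1\otimes h_2^*\mathcal J_d(\sigma_2)\otimes\mathcal O^+/p\bigr),
\]
and then uses that on affinoid perfectoids the \emph{$\mathcal O^+/p$}-cohomology is almost zero in all positive degrees (the ``usual almost equality of \'etale and analytic cohomology''). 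This gives local contribution $0$ rather than $1$, so the spectral sequence yields the sharp bound $n-d$. Note that this is also where the admissibility of $\sigma_1,\sigma_2$ is used, via the primitive comparison isomorphism; your argument never invoked admissibility, which is a hint that something is missing.
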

Note that $\mathcal{J}_{n}^{i}$ trivially vanishes in degrees $>2n-2$, while this result gives vanishing in degrees $> d+n-2=2n-2-(n-d)$, so we improve on the trivial bound by an interval of length $n-d$. When $d=1$, this specializes to the main vanishing theorem in \cite{johansson-ludwig} (using of course the perfectoidness of $W_{n,1}$ proved in loc. cit.).

\begin{proof}By the primitive comparison theorem plus some small additional arguments, we get an almost isomorphism
\[\mathcal{J}_{n}^{i}(\mathrm{Ind}_{P_{n-d,d}}^{\mathrm{GL}_n(F)}(\sigma_1 \boxtimes \sigma_2)) \otimes \mathcal{O}_C / p \cong H^i(W_{n,d}, h_1^{\ast} \sigma_1 \otimes h_{2}^\ast \mathcal{J}_d( \sigma_2) \otimes \mathcal{O}^+/p).
\]
It thus suffices to show that $H^i(W_{n,d}, h_1^{\ast} \sigma_1 \otimes h_{2}^\ast \mathcal{J}_d(\sigma_2) \otimes \mathcal{O}^+/p)$ is almost zero in the claimed range of degrees. Recall that $\mathcal{J}_d(\sigma_2)$ is concentrated in degrees $\leq 2d-2$. Note also that $W_{n,d}$ is perfectoid by assumption, so the functor $H^i(W_{n,d}, - \otimes \mathcal{O}^+/p)$ on overconvergent \'etale $\F_p$-sheaves is almost zero for all $i> \dim W_{n,d}=n-d$, by the usual almost equality of \'etale and analytic cohomology. The evident spectral sequence then gives almost vanishing in degrees $> 2d-2 + n-d=d+n-2$, whence the claim.
\end{proof}

When $n=3$, $W_{3,1}$ and $W_{3,2}$ are both perfectoid spaces. Combining this observation with the previous proposition, the right-exactness of $\mathcal{J}_{3}^{4}$, and the classification of irreducible admissible representations of $\mathrm{GL}_3(F)$ \cite[Theorem 1.1]{herzig-classification}, we get the following vanishing result.

\begin{cor}Let $\pi $ be an irreducible admissible representation of $\mathrm{GL}_3(F)$ which is not supersingular, and whose underlying vector space is infinite-dimensional. Then $\mathcal{J}^{4}(\pi)=0$.
\end{cor}

\printbibliography

\end{document}